\def\co{\colon\thinspace}
\newcommand{\bsmat}{\left(\begin{smallmatrix}}
\newcommand{\esmat}{\end{smallmatrix}\right)}
\newcommand{\into}{\hookrightarrow}
\newcommand{\Ts}{\widehat{T}_{\operatorname{st}}}
\newcommand{\Td}{\widehat{T}_{\operatorname{du}}}
\newcommand{\stand}{{\operatorname{st}}}
\newcommand{\dual}{{\operatorname{du}}}
\newcommand{\subL}{l}
\newcommand{\w}{ {\bf w} }
\newcommand{\s}{ {\bf s} }
\newcommand{\R}{ \mathbb{R} }
\newcommand{\Q}{ \mathbb{Q} }
\newcommand{\F}{ \mathbb{F} }
\newcommand{\Z}{ \mathbb{Z} }
\newcommand{\Ztwo}{ \mathbb{Z}/2\mathbb{Z} }
\newcommand{\Alg}{ \mathcal{A} }
\newcommand{\spinc}{ \mathfrak{s} }
\newcommand{\Ainfty}{ \mathcal{A}_\infty }
\newcommand{\CFDA}{ \widehat{\mathit{CFDA}} }
\newcommand{\CFAA}{ \widehat{\mathit{CFAA}} }
\newcommand{\CFDD}{ \widehat{\mathit{CFDD}} }
\newcommand{\CFAAid}{ \CFAA(\mathbb{I}) }
\newcommand{\CFDDid}{ \CFDD(\mathbb{I}) }
\newcommand{\HFhat}{ \widehat{\mathit{HF}} }
\newcommand{\CFhat}{ \widehat{\mathit{CF}} }
\newcommand{\pants}{ \mathcal{P} }
\DeclareMathOperator{\id}{id}
\DeclareMathOperator{\gr}{gr}
\newcommand{\CFD}{ \widehat{\mathit{CFD}} }
\newcommand{\CFA}{ \widehat{\mathit{CFA}} }
\newcommand{\CFDAA}{\widehat{\mathit{CFDAA}}}
\newcommand{\tw}{\textsc{t}}  
\newcommand{\du}{\textsc{h}} 
\newcommand{\twi}{\textsc{t}^{-1}}  
\newcommand{\dui}{\textsc{h}^{-1}} 
\newcommand{\ex}{\textsc{e}}  %
\newcommand{\me}{\textsc{m}}  %
\newcommand{\lp}{\boldsymbol\ell}
\newcommand{\twist}{\mathcal{T}}
\newcommand{\extend}{\mathcal{E}}
\newcommand{\merge}{\mathcal{M}}
\newcommand{\proplambda}{{\bf property} $\boldsymbol{\lambda}$}
\newcommand{\proplambdaDUAL}{{\bf property} $\boldsymbol{\lambda}^*$}
\newcommand{\frkA}{\mathfrak{A}}
\newtheorem{thm}{Theorem}[section]
\newtheorem{prop}[thm]{Proposition}
\newtheorem{lem}[thm]{Lemma}
\newtheorem{cor}[thm]{Corollary}
\newtheorem{conj}[thm]{Conjecture}
\newtheorem{obs}[thm]{Observation}
\theoremstyle{definition}
\newtheorem{definition}[thm]{Definition}
\newtheorem{example}[thm]{Example}
\theoremstyle{remark}
\newtheorem{remark}[thm]{Remark}
\title{A calculus for bordered Floer homology}
\author[Jonathan Hanselman]{Jonathan Hanselman}
\thanks{The first author was partially supported by NSF RTG grant DMS-1148490}
\address {Department of Mathematics, University of Texas at Austin.\newline {\it E-mail address:} {\tt hanselman@math.utexas.edu}}
\author[Liam Watson]{Liam Watson}
\thanks{The second author was partially supported by a Marie Curie Career Integration Grant (HFFUNDGRP)}
\address {School of Mathematics and Statistics, University of Glasgow.\newline {\it E-mail address:} {\tt liam.watson@glasgow.ac.uk}}
\begin{document}

\begin{abstract}
We consider a class of manifolds with torus boundary admitting bordered Heegaard Floer homology of a particularly simple form, namely, the type D structure may be described graphically by a disjoint union of loops. We develop a calculus for studying bordered invariants of this form and, in particular, provide a complete description of slopes giving rise to L-space Dehn fillings as well as necessary and sufficient conditions for L-spaces resulting from identifying two such manifolds along their boundaries. As an application, we show that Seifert fibered spaces with torus boundary fall into this class, leading to a proof that, among graph manifolds containing a single JSJ torus, the property of being an L-space is equivalent to non-left-orderability of the fundamental group and to the non-existence of a coorientable taut foliation. 
\end{abstract}

\maketitle

\section{Introduction}\label{sec:intro}

This paper is concerned with developing Heegaard Floer theory with a view to better understanding the relationship between coorientable taut foliations, left-orderable fundamental groups, and manifolds that do not have {\em simple} Heegaard Floer homology. Recall that manifolds with simplest possible Heegaard Floer homology, called L-spaces, are rational homology spheres $Y$ for which $\dim\HFhat(Y)=|H_1(Y;\Z)|$ (all Heegaard Floer-theoretic objects in this work will take coefficients in $\Z/2\Z$). On the other hand, a group $G$ is left-orderable if there exists a non-empty set $P\subset G$, called a positive cone, that is a closed sub-semigroup of $G$ and gives a partition of the group in the sense that $G=P\amalg\{1\}\amalg P^{-1}$. This auxiliary structure is equivalent to $G$ admitting an effective action on $\R$ by order-preserving homeomorphisms. The work of Boyer, Rolfsen and Wiest is a good introduction to left-orderable groups in the context of three-manifold topology, including the interaction with taut foliations \cite{BRW2005}; for closed, orientable, irreducible, three-manifolds it is conjectured that being an L-space is equivalent to having a non-left-orderable fundamental group \cite{BGW2013}. This conjecture holds for Seifert fibered spaces and, as a natural extension of this case, graph manifolds are a key family of interest; see \cite{BB,BC,CLW2013,Hanselman2013,Hanselman2014,Mauricio}. Towards establishing the conjecture for graph manifolds, we prove:

\begin{thm}\label{thm:main}Suppose that $Y$ is a graph manifold with a single JSJ torus, that is, $Y$ is constructed by identifying two Seifert fibered manifolds with torus boundary along their boundaries. Then the following are equivalent:
\begin{enumerate}
\item[(i)] $Y$ is an L-space;
\item[(ii)] $\pi_1(Y)$ is not left-orderable;
\item[(iii)] $Y$ does not admit a coorientable taut foliation. 
\end{enumerate}
\end{thm}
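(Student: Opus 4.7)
The plan is to combine the loop calculus developed in this paper with existing results characterizing left-orderability and coorientable taut foliations on graph manifolds. Write $Y = Y_1 \cup_\phi Y_2$, where each $Y_i$ is a Seifert fibered manifold with torus boundary and $\phi$ identifies the two boundary tori. The first input, as announced in the abstract, is that every such $Y_i$ has bordered Heegaard Floer homology of loop-type. Once this is established, the calculus of loops applies, and the gluing theorem of the paper reduces the question of whether $Y$ is an L-space to a purely combinatorial comparison of the loop invariants of $Y_1$ and $Y_2$ across $\phi$.

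Next I would extract from each loop invariant the set of L-space filling slopes of $Y_i$, which is exactly the description of L-space slopes promised in the abstract. The gluing theorem then recasts ``$Y$ is an L-space'' as an explicit condition on how these two slope sets interact under $\phi$: roughly, every slope on $\partial Y_1$ that fails to be an L-space filling slope of $Y_1$ must be carried by $\phi$ to an L-space filling slope of $Y_2$, and symmetrically. After identifying the loops with the Seifert invariants by a direct computation, this criterion becomes a piecewise-linear inequality in the Seifert invariants of the two pieces and the slope determined by $\phi$.

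The remaining work is to match (i) against (ii) and (iii). The implication (i) $\Rightarrow$ (iii) is a theorem of Ozsv\'ath--Szab\'o (strengthened by Kazez--Roberts and Bowden), while (i) $\Rightarrow$ (ii) in the graph manifold setting under consideration follows from the cited work of Boyer--Clay. For the reverse directions I would invoke the Eisenbud--Hirsch--Neumann and Jankins--Neumann classification of horizontal foliations on Seifert fibered spaces with torus boundary, together with the foliation-gluing results of Brittenham and Naimi, to construct a coorientable taut foliation on $Y$ whenever the L-space criterion derived above fails. Analogously, I would use the detected-slope constructions of Clay--Lidman--Watson, Hanselman, and the other works cited in the introduction to produce a left ordering on $\pi_1(Y)$ for exactly the same range of gluings.

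The main obstacle I anticipate is the precise bookkeeping in this final matching step. The L-space filling slopes, the boundary slopes of horizontal foliations, and the detected slopes for left orderings all live in the same slope space on $\partial Y_i$, but are extracted by very different means; the theorem asserts that, after the gluing $\phi$, they produce matching yes/no answers. I would carry out the comparison by reducing all three to the same piecewise-linear inequalities in the Seifert invariants, so that agreement on one side of the JSJ torus forces agreement on the other, and the three-way equivalence drops out uniformly across all gluings.
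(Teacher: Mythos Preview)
Your outline is broadly sound, but it diverges from the paper's argument at the crucial matching step, and this is worth highlighting.

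The paper does \emph{not} reduce the L-space criterion to a piecewise-linear inequality in the Seifert invariants and then separately compare with the Jankins--Neumann and foliation-gluing literature. Instead, it works entirely at the level of slope sets and uses Boyer--Clay's framework as a black box. Concretely: Boyer--Clay already prove (ii) $\Leftrightarrow$ (iii) and, more importantly, they phrase their gluing criterion for orderability/foliations in terms of \emph{detected slopes} on each Seifert piece. The paper's contribution is to show, via Theorem~\ref{thm:detection-via-TIBOKB} (detection using the twisted $I$-bundle over the Klein bottle), that the complement of the strict L-space slope set $\mathcal{L}_M^\circ$ coincides exactly with Boyer--Clay's NLS-detected slopes. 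Once this identification of slope sets is made, the paper's gluing Theorem~\ref{thm:gluing} and Boyer--Clay's \cite[Theorem~1.7]{BC} are formally the same statement, so (i) $\Leftrightarrow$ (ii) $\Leftrightarrow$ (iii) follows immediately without ever writing down Seifert invariants or constructing foliations by hand.

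Your route---extracting explicit PL inequalities from the loop invariants and matching them against the Eisenbud--Hirsch--Neumann/Jankins--Neumann picture---is closer in spirit to the Rasmussen--Rasmussen approach \cite{RR} and could be made to work, but it requires substantial extra input not developed here. In particular, the step ``after identifying the loops with the Seifert invariants by a direct computation, this criterion becomes a piecewise-linear inequality'' is exactly what this paper avoids proving: the L-space slope interval is characterized combinatorially via loop calculus, not as an explicit function of the Seifert data. The paper's approach buys a much shorter proof by outsourcing the foliation/order side entirely to Boyer--Clay and matching only the abstract slope-detection conditions; your approach would buy a more self-contained argument at the cost of redoing much of \cite{BC} and \cite{RR} along the way.
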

The equivalence between (ii) and (iii) is due to Boyer and Clay \cite{BC}; the focus of this paper is understanding the behaviour of Heegaard Floer homology in this setting. To do this, we make use of bordered Heegaard Floer homology, a variant of Heegaard Floer homology adapted to cut-and-paste arguments. Briefly, this theory assigns a differential graded module over a particular algebra to each manifold with torus boundary. A chain complex for the Heegaard Floer homology of the associated closed manifold is obtained from a pairing theorem due to Lipshitz, Ozsv\'ath and Thurston \cite{LOT-bordered}.
  
Our approach to this problem is to work in a more general setting. We consider a particular class of differential graded modules which we call loop-type (Definition \ref{def:loop}), and introduce a calculus for studying loops; the bulk of this paper is devoted to developing this calculus in detail. Given a three-manifold $M$ with torus boundary, $M$ is called loop-type if its associated bordered invariants are loop-type up to homotopy (Definition \ref{def:loop-type}). Recall that a slope in $\partial M$ is the isotopy class of an essential simple closed curve in $\partial M$, and denote by $M(\gamma)$ the closed three manifold resulting from Dehn filling along a slope $\gamma$. The set of slopes may be (non-canonically) identified with the extended rationals $\Q\cup\{\frac{1}{0}\}$.  Of central interest  is the subset $\mathcal{L}_M$ consisting of those slopes giving rise to L-spaces after Dehn filling. We prove:

\begin{thm}[Detection]\label{thm:detection}
Suppose that $M$ is a loop-type rational homology torus. Then there is a complete, combinatorial description of the set $\mathcal{L}_M$ in terms of loop calculus. In particular, $\mathcal{L}_M$ may be identified with the restriction to $\Q\cup\{\frac{1}{0}\}$ of a connected interval in $\R\cup\{\infty\}$. Moreover, if $M$ is simple loop-type, this interval has rational endpoints.  
\end{thm}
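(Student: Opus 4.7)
The plan is to analyze the L-space property by first invoking the bordered pairing theorem. A Dehn filling $M(\gamma)$ is obtained by gluing $M$ to a solid torus $S_\gamma$ whose meridian is identified with $\gamma$, and the pairing theorem of Lipshitz--Ozsv\'ath--Thurston gives
$$\HFhat(M(\gamma)) \cong H_*\bigl(\CFD(M) \boxtimes \CFA(S_\gamma)\bigr),$$
where $\CFA(S_\gamma)$ has an explicit, slope-dependent combinatorial description. Since $M$ is loop-type, up to homotopy $\CFD(M) \simeq \ell_1 \sqcup \cdots \sqcup \ell_n$ splits as a disjoint union of loops. The box tensor product respects this splitting, so $\dim \HFhat(M(\gamma)) = \sum_i r_i(\gamma)$ with $r_i(\gamma) := \dim H_*(\ell_i \boxtimes \CFA(S_\gamma))$. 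Each summand satisfies a uniform lower bound $r_i(\gamma) \geq c_i$, and $M(\gamma)$ is an L-space if and only if every $r_i$ achieves its minimum. Consequently $\mathcal{L}_M = \bigcap_{i=1}^n \mathcal{L}(\ell_i)$, where $\mathcal{L}(\ell_i)$ denotes the L-space slope set of a single loop, and the strategy is to establish the theorem one loop at a time.

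To treat a single loop $\ell$, I would use the calculus developed in the preceding sections to reduce $\ell$ to a canonical normal form. The loop moves are designed to preserve the homotopy type of $\ell$, hence the pairing with every $\CFA(S_\gamma)$ simultaneously, so $\mathcal{L}(\ell)$ is an invariant of the equivalence class. Pairing the combinatorial data of a normal form loop (its oriented sequence of arcs and their labels) against the explicit chain-level structure of $\CFA(S_\gamma)$ then yields, for each slope $\gamma$, an explicit count of surviving generators after cancellation; membership in $\mathcal{L}(\ell)$ becomes a concrete inequality on $\gamma$ in terms of the loop parameters. This gives the promised combinatorial description of $\mathcal{L}_M$, because the intersection $\bigcap_i \mathcal{L}(\ell_i)$ is computed by a finite list of such inequalities.

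The main obstacle is to show that each $\mathcal{L}(\ell)$ is connected, which is what ultimately forces $\mathcal{L}_M$ to correspond to an interval in $\R\cup\{\infty\}$. I expect to argue this via a monotonicity principle along the Farey tessellation: as $\gamma$ crosses a Farey edge, $\CFA(S_\gamma)$ undergoes a controlled local modification, and one verifies that an obstructing homology generator can persist in the pairing with $\ell$ only on a contiguous arc of the circle of slopes. Once connectedness of each $\mathcal{L}(\ell_i)$ is established, the intersection $\mathcal{L}_M$ is the restriction to $\Q\cup\{\tfrac10\}$ of a (possibly empty or singleton) connected interval. Finally, in the simple loop-type case the normal form is described by finitely many bounded integer parameters, so the endpoints of the bounding inequalities are rational; I therefore expect the rational endpoint claim to follow immediately once the interval structure and the explicit bounds from the per-loop analysis are in hand. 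The delicate technical point throughout is matching the geometry of slopes encoded by the Farey graph with the algebraic manipulations of loop calculus, and this matching is what I anticipate occupying the bulk of the detailed work.
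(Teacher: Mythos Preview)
Your high-level decomposition is correct and matches the paper: reduce to single loops via $\mathcal{L}_M = \bigcap_i \mathcal{L}(\ell_i)$, establish connectedness per loop, then handle rational endpoints in the simple case. However, your proposed execution diverges from the paper's in a way that leaves real gaps.

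The paper does \emph{not} vary $\CFA(S_\gamma)$ along Farey edges while holding the loop fixed. Instead it reparametrizes the loop itself via the Dehn-twist operations $\tw$ and $\du$, reducing every question about a general slope to one about the slopes $0$ and $\infty$, which admit clean combinatorial characterizations (Proposition~\ref{prop:Dehn_filling_Lspaces}: $\infty$ is an L-space slope iff the loop has a $d_k$ segment and no $c_k$; dually for $0$). Connectedness is then established in two complementary halves: if $0$ and $\infty$ are both L-space slopes, one of the two arcs between them lies entirely in $\mathcal{L}$ (Corollary~\ref{cor:L-space-closed-interval}); if both are non-L-space, one arc is entirely non-L-space (Proposition~\ref{prp:non-L-space endpoints}). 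The latter requires a detailed case analysis tracking how certain subword sets $A_1,\ldots,A_4$ behave under $\tw^{\pm1}$ and $\du^{\pm1}$ (Lemmas~\ref{lem:1-2-closure}--\ref{lem:hard-case-4}), and the final assembly is an induction on continued-fraction length (Lemma~\ref{lem:integral_slopes} and the proof of Theorem~\ref{thm:intervals}). Your ``monotonicity along the Farey graph'' is a plausible heuristic, but the actual argument is not a monotonicity of a single count: the L-space condition is a sign/type condition on which unstable chains appear, and showing it cuts out an interval genuinely requires the closure properties of the $A_i$ under twists rather than persistence of a single obstructing generator.

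Your rational-endpoints argument has a more concrete gap. Simple loops are \emph{not} described by ``finitely many bounded integer parameters'' --- the subscripts on segments are unbounded integers --- so there is no finiteness to exploit directly. The paper instead runs a terminating algorithm (proof of Proposition~\ref{prp:rational endpoints}): starting from a parametrization with only $d_k$ segments, it iteratively applies $\tw$ and the composite $\ex = \tw\circ\du^{-1}\circ\tw$, tracking two nonnegative integers $(\kappa_i,\lambda_i)$ that decrease lexicographically, until the loop lands in one of three explicit normal forms; in the generic form the slope $0$ is visibly a boundary of the L-space interval, and undoing the finitely many twists shows that boundary is rational. You would need to supply something of this kind; the ``bounded parameters $\Rightarrow$ rational'' shortcut does not go through.
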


We remark that this is the expected behaviour from the foliations/orderability vantage point, at least for graph manifolds. It is interesting that the analogous behaviour on the Heegaard Floer side appears to be intrinsic to the algebraic structures that arise. Namely, this is a statement that makes sense for loop-type bordered invariants, without reference to any three-manifold. The introduction of the technical notion of a {\em simple loop} (Definition \ref{def:simple}) also allows us to state and prove a gluing theorem. Let $\mathcal{L}_M^\circ$ denote the interior of $\mathcal{L}_M$. 

\begin{thm}[Gluing]\label{thm:gluing} Suppose that $M_1$ and $M_2$ are simple loop-type rational homology tori and neither is solid torus-like. Then given a homeomorphism $h\co \partial M_1\to\partial M_2$, the closed manifold $M_1\cup_h M_2$ is an L-space if and only if, for every slope $\gamma$ in $\partial M_1$, either $\gamma\in\mathcal{L}_{M_1}^\circ$ or $h(\gamma)\in\mathcal{L}_{M_2}^\circ$.
\end{thm}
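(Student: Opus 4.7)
The plan is to combine the pairing theorem of Lipshitz-Ozsv\'ath-Thurston with the loop decomposition provided by the hypothesis, reducing the gluing question to a rank calculation on pairs of individual loops.

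First I would express $\HFhat(M_1 \cup_h M_2)$ as the homology of a box tensor product built from $\CFA(M_1)$, $\CFD(M_2)$, and a bimodule encoding the gluing $h$. Because $M_1$ and $M_2$ are simple loop-type, up to homotopy each $\CFD(M_i)$ splits as a direct sum $\bigoplus_j \ell^{(i)}_j$ of loops. Distributivity of the pairing then expresses the total homology as a direct sum of contributions indexed by pairs $(\ell^{(1)}_j, \ell^{(2)}_k)$. Since $M_1 \cup_h M_2$ is an L-space iff the total dimension of this homology equals $|H_1(M_1 \cup_h M_2)|$, and since each loop pair contributes at least a calculable minimum rank, the L-space criterion reduces to the statement that every pair attains its minimum contribution.

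The central technical step is a single-pair lemma: for loops $\ell_1, \ell_2$ glued via $h$, the rank of their pairing is minimal precisely when, for every slope $\gamma\subset\partial M_1$, either $\gamma$ lies in the open L-space interval of $\ell_1$ or $h(\gamma)$ lies in the open L-space interval of $\ell_2$, where these intervals are those produced by applying the Detection Theorem to each loop individually. Granting this lemma, the glued manifold is an L-space iff the covering condition holds simultaneously across all loop pairs. Since $\mathcal{L}_{M_i}^\circ$ equals the intersection over $j$ of the open L-space intervals of the loops $\ell^{(i)}_j$, this simultaneous condition reorganizes into exactly the statement of the theorem.

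The main obstacle is proving the single-pair lemma. The delicate point is that slopes lying on the boundary of a loop's L-space interval contribute extra summands to homology, spoiling the L-space property; this is why the interiors, rather than the closures, must appear in the covering condition. The simple loop-type hypothesis is essential for pinning down these boundary slopes as rational and keeping the closure-versus-interior distinction combinatorial, and the exclusion of solid torus-like manifolds rules out the degenerate case in which a loop's L-space interval is all of $\R\cup\{\infty\}$ and the covering condition collapses trivially. The execution would proceed by standardizing each loop using the twist, merge, and extend moves of the calculus and then carrying out a finite case analysis of local pairing configurations under the box tensor product, attributing every excess contribution to rank to a slope that lies in neither open interval.
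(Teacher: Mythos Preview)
Your skeleton matches the paper's proof closely: the paper also reduces to a single-pair statement for abstract loops (its Proposition~\ref{prop:gluing_loops}) and then aggregates over the loop decomposition (its Theorem~\ref{thm:gluing_manifolds}). Two points of your execution sketch need correcting. First, the normalization step in the single-pair lemma uses only the twist operations $\tw$ and $\du$, together with the key invariance $\lp_1^A\boxtimes\lp_2 \simeq (\tw^n\lp_1)^A\boxtimes(\du^n\lp_2)$ (Proposition~\ref{prop:slope-trick}); the merge and extend moves belong to the graph-manifold construction in Section~\ref{sec:manifolds} and play no role here. Second, your aggregation step hides a subtlety: even when $M_i$ is not solid torus-like, individual loops $\ell^{(i)}_j$ in its decomposition can be, and the single-pair lemma does not apply to such loops. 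The paper closes this gap by observing that if a solid-torus-like $\ell^{(1)}_j$ fails the strict-L-space condition at $\gamma$, then $\gamma$ must be the rational longitude of $M_1$, which is the same for every loop in the decomposition and is never a strict L-space slope; since $M_1$ is not solid torus-like, one can pass to a non-solid-torus-like loop that also fails at $\gamma$. Finally, your description of the solid-torus-like case is slightly off: such a loop has every slope \emph{except} the rational longitude as an L-space slope, and the issue is not that the interval becomes everything but that the correct gluing criterion there uses $\mathcal{L}$ rather than $\mathcal{L}^\circ$ (Proposition~\ref{prop:gluing_loops_solid_torus_like}).
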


Note that Theorem \ref{thm:gluing} does not hold if either $M_1$ or $M_2$ is a solid torus. If $M_1$ is a solid torus with meridian $m=\partial D^2\times\{\text{pt}\}$ then, according to the definition of an L-space slope, $M_1 \cup_h M_2$ is an L-space if and only if $h(m)\in\mathcal{L}_{M_2}$; equivalently, the statement of Theorem \ref{thm:gluing} holds if we use $\mathcal{L}_{M_i}$ in place of $\mathcal{L}_{M_i}^\circ$. When both $M_1$ and $M_2$ are solid tori this simply amounts to the construction of lens spaces interpreted in our notation. More generally, however, there are bordered invariants that arise in the loop setting that behave just like solid tori with respect to gluing. We will need to deal with these explicitly; this amounts to defining a class of manifolds and loops which are referred to as solid torus-like (Definition \ref{def:solid torus-like}).

Both Theorem \ref{thm:detection} and Theorem \ref{thm:gluing} follow from working with loops in the abstract. Towards the proof of Theorem \ref{thm:main}, and in the interest of establishing an existence result, a key class of loop-type manifolds is provided by Seifert fibered spaces. 

\begin{thm}\label{thm:Seifert-simple}Suppose $M$ is a rational homology solid torus admitting a Seifert fibered structure. Then $M$ has simple loop-type bordered Heegaard Floer homology. \end{thm}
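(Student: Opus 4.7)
The plan is induction on the number $n$ of exceptional fibers in a Seifert fibered structure on $M$. With the base orbifold of $M$ a disk carrying $n$ cone points of multiplicities $\alpha_1/\beta_1,\ldots,\alpha_n/\beta_n$, one has the standard decomposition
\[
M \;=\; \bigl(\Sigma_{0,n+1}\times S^1\bigr)\;\cup_{\partial}\;\bigl(V_1\sqcup\cdots\sqcup V_n\bigr),
\]
where $\Sigma_{0,n+1}$ is a disk with $n$ open disks removed and each $V_i$ is a solid torus attached along the slope prescribed by $\alpha_i/\beta_i$; the surviving boundary component of $\Sigma_{0,n+1}\times S^1$ supplies $\partial M$. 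Thus $M$ is built from the solid torus $D^2\times S^1$ by successively adjoining $n$ exceptional-fiber neighborhoods.

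The base case $n=0$ is the solid torus itself, whose bordered invariant $\CFD$ is, up to homotopy, a single arc and is trivially simple loop-type; indeed it is solid torus-like in the sense of Definition \ref{def:solid torus-like}. For the inductive step I would realize an $n$-exceptional-fiber example $M$ as the boundary-pairing of an $(n{-}1)$-exceptional-fiber example $M'$---simple loop-type by the inductive hypothesis---with a two-boundary Seifert fibered piece that carries a single exceptional fiber of multiplicity $\alpha/\beta$. The Lipshitz--Ozsv\'ath--Thurston pairing theorem then expresses $\CFD(M)$ as the result of tensoring $\CFD(M')$ with the type $DA$ bimodule of this two-boundary piece, reducing the problem to the following claim: for every coprime pair $(\alpha,\beta)$, the exceptional-fiber bimodule sends simple loop-type type D structures to simple loop-type type D structures.

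To prove the claim I would factor the bimodule according to the continued fraction expansion of $\alpha/\beta$, writing it as a product of elementary Dehn-twist bimodules on the torus algebra composed with a solid torus cap-off bimodule at an integral slope. Each elementary factor translates, via the loop calculus developed in the bulk of the paper, into an explicit local move on the combinatorial loop, and both of the properties in question---being loop-type (Definition \ref{def:loop}) and being simple (Definition \ref{def:simple})---can then be checked move by move. The main obstacle I anticipate is uniformity in $(\alpha,\beta)$: individual small cases are mechanical, but one must show that every finite sequence of these elementary moves preserves the local arc structure characterizing loops together with the additional constraints that single out simple loops. Once this stability property is in hand, iterated application to the base case solid torus yields Theorem \ref{thm:Seifert-simple}.
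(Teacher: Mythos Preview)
Your inductive scheme for the disk base case is essentially the paper's approach reorganized: the paper builds the star-shaped plumbing tree using the operations $\twist$, $\extend$, and $\merge$ and shows (Proposition~\ref{prop:good_vertices}) that when all non-central vertices are ``good'' the resulting $\CFD$ consists only of unstable chains. Your ``solid torus cap-off bimodule'' is the paper's $\merge$ with a framed solid torus (Proposition~\ref{prop:merge_unstable_chains}), and the continued-fraction Dehn twists are its $\twist$ and $\extend$. So for base orbifold $D^2$ your outline is on the right track, though the ``uniformity'' you flag as an obstacle is exactly what Proposition~\ref{prop:merge_unstable_chains} settles: merging a loop of only $d_k$ segments with another such loop again yields only $d_k$ segments, so no case-by-case analysis in $(\alpha,\beta)$ is needed.

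There is, however, a genuine gap: your decomposition assumes the base orbifold is a disk, but a Seifert fibered rational homology solid torus can also have base orbifold the M\"obius band, and for generic exceptional-fiber data such a manifold has no alternative Seifert structure over $D^2$. The paper handles this case separately by realizing it as the merge of a disk-base piece with the plumbing tree $\Gamma_N$ for the twisted $I$-bundle over the Klein bottle; the explicit computation $\CFD(\Gamma_N)\sim(d_0^*d_0^*)\amalg(d_1^*d_{-1}^*)$ (Proposition~\ref{lem:CFD_N2}) is needed here because $\Gamma_N$ contains a vertex of weight $+2$ and so falls outside the good-vertex framework. Your induction never reaches these manifolds.
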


Let $N$ denote the twisted $I$-bundle over the Klein bottle, and recall that the rational longitude $\lambda$ for this manifold with torus boundary may be identified with a fiber in a Seifert structure over the M\"obius band. As a Seifert fibered rational homology solid torus, $N$ is a simple loop-type manifold; compare \cite{BGW2013}. The twisted $I$-bundle over the Klein bottle allows for an alternative detection statement for L-space slopes.  

\begin{thm}[Detection via the twisted $I$-bundle over the Klein bottle]\label{thm:detection-via-TIBOKB}
Suppose that $M$ is a loop-type rational homology torus. Then $\gamma\in\mathcal{L}_M^\circ$ if and only if $N\cup_h M$ is an L-space where $h(\lambda)=\gamma$.  \end{thm}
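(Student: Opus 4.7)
The plan is to apply the Gluing theorem (Theorem~\ref{thm:gluing}) with $M_1 = N$ and $M_2 = M$, after pinning down $\mathcal{L}_N^\circ$ exactly.

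Since $N$ is a rational homology solid torus admitting a Seifert fibered structure, Theorem~\ref{thm:Seifert-simple} gives that $N$ has simple loop-type bordered invariants. The filling $N(\lambda)$ is reducible (it is $\R P^3 \# \R P^3$) and hence not an L-space; this also prevents $N$ from being solid torus-like, since a solid torus-like manifold yields L-spaces at every slope other than a single distinguished one producing an $S^1\times S^2$-type summand, a condition that fails for $N$. On the other hand, every slope $\alpha \neq \lambda$ gives a Seifert fibered L-space filling of $N$ (see, for example, \cite{BGW2013}). Thus $\mathcal{L}_N$ consists of all rational slopes in $\partial N$ except $\lambda$, and by Theorem~\ref{thm:detection} the corresponding interval in $\R\cup\{\infty\}$ must be $(\R\cup\{\infty\})\setminus\{\lambda\}$. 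This interval is open, so $\mathcal{L}_N^\circ = \mathcal{L}_N$ consists of every rational slope in $\partial N$ other than $\lambda$.

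Now assume first that $M$ is not solid torus-like. Applying Theorem~\ref{thm:gluing} with $M_1 = N$, the manifold $N \cup_h M$ is an L-space if and only if for every slope $\alpha \subset \partial N$, either $\alpha \in \mathcal{L}_N^\circ$ or $h(\alpha) \in \mathcal{L}_M^\circ$. Since $\mathcal{L}_N^\circ$ omits only the single slope $\lambda$, the first alternative is automatic for all $\alpha\neq\lambda$, and the condition collapses to the single requirement $h(\lambda) = \gamma \in \mathcal{L}_M^\circ$. When $M$ is solid torus-like, the interval presenting $\mathcal{L}_M$ is open in $\R\cup\{\infty\}$ (so $\mathcal{L}_M^\circ = \mathcal{L}_M$), and the variant of the Gluing theorem for solid-torus-like pieces indicated after Theorem~\ref{thm:gluing} yields the same equivalence.

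The main technical obstacle is the first paragraph: translating the geometric fact that $\lambda$ is the unique non-L-space filling slope of $N$ into the exact loop-calculus description $\mathcal{L}_N^\circ = \{\text{slopes}\} \setminus \{\lambda\}$, together with verifying that $N$ itself does not fall into the solid-torus-like class. Once this is in hand, the rest is a direct application of Theorem~\ref{thm:gluing}, which is what makes $N$ the canonical ``tester'' whose rational longitude detects the L-space interval of any loop-type $M$.
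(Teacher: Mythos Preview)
Your argument is correct and follows the same route as the paper: show that $N$ is simple loop-type and not solid torus-like with $\mathcal{L}_N^\circ$ equal to all slopes except $\lambda$, then apply Theorem~\ref{thm:gluing} so the criterion collapses to the single requirement $\gamma \in \mathcal{L}_M^\circ$; the paper packages this via the more general Theorem~\ref{thm:characterize}, but the content is identical. One caveat that applies equally to the paper's own proof: Theorem~\ref{thm:gluing} requires $M$ to be \emph{simple} loop-type, so strictly speaking both your argument and the paper's establish the result under that stronger hypothesis rather than mere loop-type as stated.
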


This answers a question of Boyer and Clay in the case of connected boundary; compare \cite[Question 1.8]{BC}. In particular, our notion of detection aligns precisely with the characterization given by Boyer and Clay \cite{BC}. Indeed, we prove that, more generally, the twisted $I$-bundle over the Klein bottle used in Theorem \ref{thm:detection-via-TIBOKB} may be replaced with any simple loop-type manifold for which every non-longitudinal filling is an L-space; see Theorem \ref{thm:characterize}. There are many examples of these provided by {\em Heegaard Floer homology solid tori}; for more on this class of manifolds see \cite{Watson}. 

Note that the interior of $\mathcal{L}_M$, denoted $\mathcal{L}_M^\circ$, is the set of strict L-space slopes.  The complement of $\mathcal{L}_M^\circ$, according to Theorem \ref{thm:detection-via-TIBOKB}, corresponds to the set of {\it non-L-space (NLS) detected slopes} in the the sense of Boyer and Clay \cite[Definition 7.16]{BC}. 

According to Theorem \ref{thm:Seifert-simple}, the exterior of any torus knot in the three-sphere gives an example of a simple loop-type manifold (indeed, this follows from work of Lipshitz, Ozsv\'ath and Thurston \cite{LOT-bordered}). More generally, if $K$ is a knot in the three-sphere admitting an L-space surgery (an L-space knot), then  $S^3\smallsetminus \nu(K)$ is a simple loop-type manifold. In this setting it is well known that $\mathcal{L}_M$ is (the restriction to the rationals of) $[2g-1,\infty]$ where $g$ is the Seifert genus of $K$ whenever $K$ admits a positive L-space surgery. Specializing Theorem \ref{thm:gluing} to this setting, we have:

\begin{thm}\label{thm:gen-splice}
Let $K_i$ be an L-space knot and write $M_i=S^3\smallsetminus\nu(K)$ for $i=1,2$. Given a homeomorphism $h\co \partial M_1\to \partial M_2$ the closed manifold $M_1\cup_h M_2$ is a L-space if and only if $h(\mathcal{L}_{M_1}^\circ)\cup \mathcal{L}_{M_2}^\circ \cong\Q\cup\{\frac{1}{0}\}$. 
\end{thm}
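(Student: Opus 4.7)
The plan is to specialize Theorem \ref{thm:gluing} to the case $M_i = S^3 \smallsetminus \nu(K_i)$. First I would verify the hypotheses. Each $M_i$ is an integer homology solid torus, in particular a rational homology torus. The paragraph immediately preceding the statement records that the complement of an L-space knot is simple loop-type, with $\mathcal{L}_{M_i}$ equal to (the rational points of) the closed arc $[2g_i-1, \infty]$ in the slope circle $\R \cup \{\infty\}$, where $g_i$ is the Seifert genus of $K_i$. Provided $K_i$ is non-trivial, this arc is a proper subset of the slope circle whose complement contains more than one slope, so $M_i$ is not solid torus-like in the sense of Definition \ref{def:solid torus-like}.

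With the hypotheses verified, Theorem \ref{thm:gluing} says $M_1 \cup_h M_2$ is an L-space if and only if every slope $\gamma$ on $\partial M_1$ satisfies $\gamma \in \mathcal{L}_{M_1}^\circ$ or $h(\gamma) \in \mathcal{L}_{M_2}^\circ$. Since $h$ induces a bijection between the slopes on $\partial M_1$ and those on $\partial M_2$, this condition is equivalent to saying that the complement of $h(\mathcal{L}_{M_1}^\circ)$ in $\Q \cup \{\tfrac{1}{0}\}$ is contained in $\mathcal{L}_{M_2}^\circ$, that is, $h(\mathcal{L}_{M_1}^\circ) \cup \mathcal{L}_{M_2}^\circ = \Q \cup \{\tfrac{1}{0}\}$, which is the asserted statement.

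The main technical point is verifying the \emph{not solid torus-like} hypothesis cleanly from Definition \ref{def:solid torus-like}; this should be immediate from the explicit form of $\mathcal{L}_{M_i}$ but requires a brief appeal to the definition. An edge case to flag is when $K_i$ is the unknot: then $M_i$ is a genuine solid torus, and Theorem \ref{thm:gluing} does not apply as written. In that case one uses the variant noted in the remark following Theorem \ref{thm:gluing} with $\mathcal{L}_{M_i}$ in place of $\mathcal{L}_{M_i}^\circ$; the statement then reduces to the definition of an L-space slope applied to the Dehn filling along $h(\mu_i)$, and the claimed formula still holds. If L-space knots are taken to be non-trivial by convention, this subtlety does not arise and the theorem follows immediately.
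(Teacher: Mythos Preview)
Your proposal is correct and follows the same route as the paper: the paper derives Theorem~\ref{thm:gen-splice} from Corollary~\ref{cor:splice}, whose proof is simply the combination of the gluing theorem (Theorem~\ref{thm:gluing_manifolds}, equivalently Theorem~\ref{thm:gluing}) with Proposition~\ref{prp:L-space-knots-loop}, exactly as you outline. One small slip in your unknot discussion: the meridian of the solid torus $M_i = S^3\smallsetminus\nu(K_i)$ is the Seifert longitude $\lambda_i$, not the knot meridian $\mu_i$, and with that correction the formula of Theorem~\ref{thm:gen-splice} need not literally hold in the unknot case (one must use $\mathcal{L}_{M_i}$ rather than $\mathcal{L}_{M_i}^\circ$, as you note); the paper, like you, treats this as a degenerate boundary case.
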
 

Note that this solves \cite[Problem 1.11]{BC} by resolving (much more generally and in the affirmative) one direction of \cite[Conjecture 1]{Hanselman2014} (a special case of \cite[Conjecture 1.10]{BC} or \cite[Conjecture 4.3]{CLW2013}).  It should also be noted that, owing to the existence of hyperbolic L-space knots, Theorem \ref{thm:gen-splice} provides additional food for thought regarding  \cite[Question 1.12]{BC}. Namely, one would like to know if the order detected/foliation detected slopes in the boundary of the exterior of an L-space knot $K$ coincide with the complement of $\mathcal{L}_M^\circ$, where $M=S^3\smallsetminus\nu(K)$.  Note that for integer homology three-spheres resulting from surgery on a knot in $S^3$, there has been considerable progress in this vein; see Li and Roberts for foliations \cite{LR2014}, Boileau and Boyer for left-orders \cite{BB}, and also Hedden and Levine for splicing results \cite{HL}. 

\subsection*{Related work}
Non-simplicity is somewhat sporadic, however it can be shown explicitly that non-simple graph manifolds exist. On the other hand, it is clear from examples that  certain graph manifolds with torus boundary do not give rise to loop-type bordered invariants, at least not in any obvious way. These subtleties seem particularly interesting when contrasted with the behaviour of foliations and orders: For more complicated graph manifolds, more subtle notions of foliations and orders (relative to the boundary tori) need to be considered \cite{BC}. On the other hand, it is now clear from work of Rasmussen and Rasmussen \cite{RR} that simplicity (for loops) is more than a convenience: The class of simple loop-type manifolds is equivalent to the class of (exteriors of) Floer simple knots; see \cite[Proposition 6]{HRRW} in particular. As a result, Theorem \ref{thm:gluing} may be recast in terms of Floer simple manifolds \cite[Theorem 7]{HRRW}. This observation gives rise to an extension of Theorem \ref{thm:main} to the case of general graph manifolds; this is the main result of a joint paper of the authors with Rasmussen and Rasmussen \cite{HRRW}, illustrating a fruitful overlap between these two independent projects. We note that the methods used by Rasmussen and Rasmussen are different from those used in our work. They appeal to knot Floer homology instead of bordered Heegaard Floer homology. This leads to somewhat divergent results and emphasis: While Rasmussen and Rasmussen give a clear picture of the interval of L-space slopes in terms of classical invariants, our machinery is better suited to gluing results.  

\subsection*{Structure of the paper}
Section \ref{sec:background} collects the essentials of bordered Heegaard Floer homology and puts in place our conventions. In particular, the definitions of L-space and strict L-space slope are found here.

Section \ref{sec:loop} is devoted to defining loops and loop calculus. This represents the key tool; loop calculus provides a combinatorial framework for studying bordered Heegaard Floer homology. Note that we define and work in an {\em a priori} broader setting of abstract loops. It seems likely that many of the loops considered do not  represent the type D structure of any bordered three-manifold.  This  calculus in applied towards two distinct ends: Detection and gluing. 

Section \ref{sec:char} gives characterizations of L-space slopes and strict L-space slopes in the loop setting. In particular, we prove Theorem \ref{thm:detection}. The proof of the second part of this result -- identifying the set of L-space slopes as the restriction of an interval with rational endpoints -- requires a detailed study of non-L-space slopes. While interesting in its own right, this fact is essential to the gluing results that follow.

Section \ref{sec:glue} proves Theorem \ref{thm:gluing} by first establishing the appropriate gluing statements for abstract loops. The section concludes with a complete characterization of L-spaces resulting from generalized splices of L-space knots in the three-sphere, proving Theorem \ref{thm:gen-splice}. 

Section \ref{sec:manifolds} turns to the study loop-type manifolds. We describe an algorithm for constructing rational homology sphere graph manifolds by way of three moves, and determine the effect of these moves on bordered invariants. Using this, we establish a class of graph manifold rational homology tori, subsuming Seifert fibered rational homology tori, for which we now have a complete understanding of gluing in Heegaard Floer homology according to the material in the preceding two sections. 

Section \ref{sec:proofs} collects all of the forgoing material in order to prove our main results: This section includes the proofs of Theorem \ref{thm:Seifert-simple} and Theorem \ref{thm:detection-via-TIBOKB}, and from these results we prove Theorem \ref{thm:main}.  

\subsection*{Acknowledgements} We are very grateful to Jake and Sarah Rasmussen for their interest in this work and for their willingness to share their own \cite{RR}. This strengthened the present work considerably and facilitated the joint paper at the intersection of the two projects \cite{HRRW}. 

\section{Background and conventions}\label{sec:background}

We begin by briefly recalling the essentials of bordered Heegaard Floer homology; for details see \cite{LOT-bordered}. We restrict attention to compact, orientable three-manifolds $M$ with torus boundary. Let $\F$ denote the two-element field. 

\subsection{Bordered structures} A bordered three-manifold, in this setting, is a pair $(M,\Phi)$ where $\Phi\co S^1\times S^1 \to \partial M$ is a fixed homeomorphism satisfying $\Phi(S^1\times\{\text{pt}\})=\alpha$ and $\Phi(\{\text{pt}\}\times S^1)=\beta$ for slopes $\alpha$ and $\beta$ in $\partial M$ satisfying $\Delta(\alpha,\beta)=1$. Recall that a slope in $\partial M$ is the isotopy class of an essential simple closed curve in $\partial M$ or, equivalently, a primitive class in $H_1(\partial M;\Z)/\{\pm 1\}$. The distance $\Delta(\cdot,\cdot)$ is measured by considering the minimal geometric intersection between slopes, thus the requirement that  $\Delta(\alpha,\beta)=1$ ensures that the pair $\{\alpha,\beta\}$, having chosen  orientations, forms a basis for $H_1(\partial M;\Z)$. 

As a result, any bordered manifold $(M,\Phi)$ may be represented by the ordered triple $(M,\alpha,\beta)$, with the understanding that $(M,\alpha,\beta)$ and $(M,\beta,\alpha)$ differ as bordered manifolds (that is, these represent different bordered structures on the same underlying manifold $M$). We will adhere to this convention for describing bordered manifolds as it makes clear that bordered manifolds come with a pair of preferred slopes. 

\begin{definition}For a given bordered manifold $(M,\alpha,\beta)$ the slope $\alpha$ is referred to as the standard slope and the slope $\beta$ is referred to as the dual slope. \end{definition}

\subsection{The torus algebra} The torus algebra $\Alg$ is generated (as a vector space over $\F$) by elements 
\[\iota_0, \iota_1, \rho_1,\rho_2,\rho_3, \rho_{12},\rho_{23},\rho_{123}\]
with multiplication defined by 
\[\rho_1\rho_2=\rho_{12},\quad\rho_2\rho_3=\rho_{23}, \quad \rho_1\rho_{23}=\rho_{123}= \rho_{12}\rho_{3}\]
(all other products $\rho_I\rho_J$ vanish) and
\[\iota_0\rho_1=\rho_1=\rho_1\iota_1, \quad \iota_1\rho_2=\rho_2=\rho_2\iota_0, \quad \iota_0\rho_3=\rho_3=\rho_3\iota_1\]
so that $\iota_0+\iota_1$ is a unit.  Denote by $\mathcal{I}$ the subring of idempotents in $\Alg$ generated by $\iota_0$ and $\iota_1$. This algebra has various geometric interpretations; see \cite{LOT-bordered}. The bordered Heegaard Floer invariants of $(M,\alpha,\beta)$ are modules of various types over $\Alg$, as we now describe. 

\subsection{Type D structures}\label{sub:typeD} A type D structure over $\Alg$ is a $\F$-vector space $N$ equipped with a left action of the idempotent subring $\mathcal{I}$ so that $N=\iota_0 N\oplus \iota_1 N$, together with a map \[\delta^1\co N \to \Alg\otimes_\mathcal{I} N\] 
satisfying a compatibility condition with the multiplication on $\Alg$ \cite{LOT-bordered}. The compatibility ensures that the map \begin{align*}\partial\co &\Alg\otimes_\mathcal{I}N\to \Alg\otimes_\mathcal{I}N\\
& a\otimes x \mapsto a\cdot\delta^1(x) 
\end{align*} promotes $\Alg\otimes N$ to a left differential module over $\Alg$ (in particular, $\partial^2=0$), where $a\cdot(b\otimes y)=ab\otimes y$. While we will generally confuse type D structures and their associated differential modules, the advantage of the type D structure is in an iterative definition \[\delta^k\co N \to \Alg^{\otimes k}\otimes_\mathcal{I} N\]
where $\delta^{k+1}=(\id_{\Alg^{\otimes {k}}}\otimes\delta^1)\circ\delta^{k}$ for $k>1$. The type D structure is bounded if all $\delta^k$ are identically zero for sufficiently large $k$.

\parpic[r]{\begin{tikzpicture}[>=latex] 
\node at (1,1) {$\bullet$}; 
\node at (0.25,0) {$\circ$};
\node at (1.75,0) {$\circ$};
\draw[thick, ->,shorten >=0.1cm, shorten <=0.1cm] (1,1) -- (0.25,0);\node at (0.4,0.65) {$\scriptsize\rho_{3}$};
\draw[thick, ->,shorten >=0.1cm, shorten <=0.1cm] (1,1) -- (1.75,0);\node at (1.6,0.65) {$\scriptsize\rho_{1}$};
\draw[thick, ->,shorten >=0.1cm, shorten <=0.1cm] (0.25,0) -- (1.75,0);\node at (1,-0.25) {$\scriptsize\rho_{23}$};
\end{tikzpicture}}
This structure may be described graphically. An $\Alg$-decorated graph is a directed graph with vertex set labeled by $\{\bullet,\circ\}$ and edge set labelled by elements of $\Alg$ consistent with the edge orientations. The vertex labelings specifies the splitting of the generating set according to the idempotents, while the edge set encodes the differential. For example, the $\Alg$-decorated graph on the right encodes the fact that there is a single generator $x$ in the $\iota_0$-idempotent with $\delta^1(x)=\rho_1\otimes u + \rho_3\otimes v$ (or, $\partial(x)=\rho_1u + \rho_3v$). The higher maps in the type D structure can be extracted from following directed paths in the graph, for example, on the right we have $\delta^2(x)= \rho_3\otimes \rho_{23}\otimes u$.

\begin{definition}
An $\Alg$-decorated graph is reduced if no edge is labeled by the identity element of $\Alg$. 
\end{definition}

We remark that, since we will only be interested in the homotopy class of differential modules represented by an $\Alg$-decorated graph, it is always possible to restrict attention to reduced $\Alg$-decorated graphs. This is due to the edge reduction algorithm, as described for example by Levine \cite[Section 2.6]{Levine2012}. Briefly, any segment of a graph of the form
\[
\begin{tikzpicture}[>=latex] 
		\node at (0,0) {$\bullet$}; 
		\node at (1,0) {$\circ$}; 
		\node at (2,0) {$\circ$};
		\node at (3,0) {$\bullet$}; 
		\draw[thick, ->,shorten >=0.1cm, shorten <=0.1cm] (0,0) -- (1,0); \node at (0.5,0.25) {$\scriptsize\rho_{I}$};
		\draw[thick, <-,shorten >=0.1cm, shorten <=0.1cm] (1,0) -- (2,0); 
		\draw[thick, ->,shorten >=0.1cm, shorten <=0.1cm] (2,0) -- (3,0); \node at (2.5,0.25) {$\scriptsize\rho_{J}$};
	\end{tikzpicture}
\]
may be replaced by a single edge 
\[
\begin{tikzpicture}[>=latex] 
		\node at (0,0) {$\bullet$}; 
		\node at (3,0) {$\bullet$}; 
		\draw[thick, ->,shorten >=0.1cm, shorten <=0.1cm] (0,0) -- (3,0); \node at (1.5,0.25) {$\scriptsize\rho_{I}\rho_{J}$};
	\end{tikzpicture}
\]
where the edge is simply deleted if the product $\rho_{I}\rho_{J}$ vanishes (note that we have chosen specific vertex labelling for illustration only). The result is two different representatives of the same homotopy class of differential modules over $\Alg$.

Given a bordered three-manifold $(M,\alpha,\beta)$, Lipshitz, Ozsv\'ath and Thurston define a type D structure $\CFD(M,\alpha,\beta)$ over $\Alg$ that is an invariant of the bordered manifold up to quasi-isomorphism \cite{LOT-bordered}. As explained above, we will sometimes regard this object as a differential module over $\Alg$. 

\subsection{Type A structures}\label{sub:typeA} A type A structure over $\Alg$ is a $\F$-vector space $M$ equipped with a right action of the idempotent subring $\mathcal{I}$ so that $M=M\iota_0\oplus M\iota_1$, together with maps
\[m_{k-1}\co M\otimes_\mathcal{I}\Alg^{\otimes k} \to M\] satisfying the $\Alg_\infty$ relations; see \cite{LOT-bordered}. That is, a type A structure is a right $\Alg_\infty$-algebra over $\Alg$. A type A structure is bounded if the $m_k$ vanish for all sufficuently large $k$. Given a bordered three-manifold $(M,\alpha,\beta)$, Lipshitz, Ozsv\'ath and Thurston define a type A structure $\CFA(M,\alpha,\beta)$ over $\Alg$ that is an invariant of the bordered manifold up to quasi-isomorphism. 

\parpic[r]{\begin{tikzpicture}[>=latex] 
\node at (1,1) {$\bullet$}; 
\node at (0.25,0) {$\circ$};
\node at (1.75,0) {$\circ$};
\draw[thick, ->,shorten >=0.1cm, shorten <=0.1cm] (1,1) -- (0.25,0);\node at (0.4,0.65) {$\scriptsize \rho_{1}$};
\draw[thick, ->,shorten >=0.1cm, shorten <=0.1cm] (1,1) -- (1.75,0);\node at (1.6,0.65) {$\scriptsize \rho_{3}$};
\draw[thick, ->,shorten >=0.1cm, shorten <=0.1cm] (0.25,0) -- (1.75,0);\node at (1,-0.25) {$\scriptsize \rho_{2},\rho_1$};
\end{tikzpicture}}There is a similar graphical representation for type A structures. Indeed, owing to a duality between type D and type A structures for three-manifolds, $\CFA(M,\alpha,\beta)$ may be deduced from the graph describing $\CFD(M,\alpha,\beta)$ by appealing to an algorithm described by Hedden and Levine \cite{HL}. This algorithm takes subscripts $1\mapsto 3$ and $3\mapsto 1$ while fixing $2$, with the convention that a conversion of the form $23\mapsto21$ must be parsed as $2,1$ (this example is shown on the right, as it occurs in the conversion of the sample graph shown previously). In this type A context, sequences of directed edges must be concatenated in order to obtain all of the multiplication maps. For example, labeling the generators as before, the graph on the right ecodes opperations $m_2(x,\rho_3) = v$, $m_2(x,\rho_1) = u$, and $m_3(u,\rho_2,\rho_1)=v$, as well as   $m_3(x,\rho_{12},\rho_2)=v$.

\subsection{Pairing}\label{sub:pairing} Consider a closed, orientable three-manifold $Y$ decomposed along a (possibly essential) torus so that $Y=M_1\cup_h M_2$ for some homeomorphism $h\co\partial M_1\to\partial M_2$. If $h$ has the property that $h(\alpha_1)=\beta_2$ and $h(\beta_1)=\alpha_2$ then we will write this decomposition as $Y=(M_1,\alpha_1,\beta_1)\cup (M_2,\alpha_2,\beta_2)$. The reason for this convention is to ensure compatibility with the paring theorem established by Lipshitz, Ozsv\'ath and Thurston \cite{LOT-bordered}. In particular, they prove that
\[\CFhat(Y) \cong \CFA(M_1,\alpha_1,\beta_1)\boxtimes \CFD(M_2,\alpha_2,\beta_2) \]
where $\CFhat(Y)$ is a chain complex with homology $\HFhat(Y)$. As a vector space over $\F$, this chain complex is generated by tensors (over $\mathcal{I}\in\Alg$) of the form $x\otimes_\mathcal{I}y$ where $x\in\CFA(M_1,\alpha_1,\beta_1)$ and $y\in \CFD(M_2,\alpha_2,\beta_2)$ the differential
\[\partial^\boxtimes(x\otimes_\mathcal{I}y)=\sum_{k=0}^\infty\big(m_{k+1}\otimes\id\big)\big(x\otimes_\mathcal{I}\delta^k(y)\big)\]
which is a finite sum provided at least one of the modules in the pairing is bounded. 

As a particular special case, consider the pairing theorem in the context of Dehn filling. Given a three-manifold $M$ with torus boundary, write $M(\alpha)$ to denote the result of Dehn filling $M$ along the slope $\alpha$, that is, $M(\alpha)=(D^2\times S^1)\cup_h M$ where the homeomorphism $h$ is determined by $h(\partial D^2\times\{\text{pt}\})=\alpha$. In particular,  \[\CFhat(M(\alpha))\cong \CFA(D^2\times S^1,l,m)\boxtimes\CFD(M,\alpha,\beta)\]
where $m=\partial D^2\times\{\text{pt}\}$. Note that, in this context, any choice of slopes $l$ dual to $m$ and $\beta$ dual to $\alpha$ will do, since the family $(D^2\times S^1, l+nm,m)$ are all homeomorphic as bordered manifolds. This is due to the Alexander trick; the Dehn twist along $m$ in $\partial(D^2\times S^1)$ extends to a homeomorphism of $D^2\times S^1$.

Since every bordered manifold is equipped with a preferred choice of slopes, it will be important to distinguish between the two Dehn fillings along these slopes. 

\begin{definition}\label{def:two-types-of-filling}
Let $(M,\alpha,\beta)$ be a bordered three-manifold. The standard filling is the Dehn filling \[M(\alpha)=(D^2\times S^1,l,m)\cup (M,\alpha,\beta)\] (that is, $m\mapsto \alpha$) and the dual filling is the Dehn filling 
\[M(\beta)=(D^2\times S^1,m,l)\cup (M,\alpha,\beta)\] (that is, $m\mapsto \beta$). \end{definition}

More generally, given $(M,\alpha,\beta)$ we would like to compute $\HFhat(M(\gamma))$ for any slope $\gamma$ expressed in terms of $\alpha$ and $\beta$. In particular, we will always make a choice of orientations so that $\alpha\cdot\beta=+1$ resulting in slopes of the form $\gamma=\pm(p\alpha+q\beta)\in H_1(\partial M;\Z)/\{\pm 1\}$.  As is familiar, the fixed choice $\{\alpha,\beta\}$ gives rise to an identification of the set of slopes and the extended rational numbers $\hat\Q := \Q\cup\{\frac{1}{0}\}$. Our convention is that the slope $p\alpha+q\beta$ is identified with  $\frac{p}{q}\in\hat\Q$.  We will return to a detailed description of the pairing theorem for an arbitrary Dehn filling in the next section, since the following definition will be of  central importance.

\begin{definition}\label{def:L-space}An L-space is a rational homology sphere $Y$ for which $\dim\HFhat(Y)=|H_1(Y;\Z)|$. An L-space slope is a slope $\gamma$ in $\partial M$ for which the result of Dehn filling $M(\gamma)$ is an L-space. For any $M$ with torus boundary, let $\mathcal{L}_M$ denote the set of L-space slopes in $\partial M$.   \end{definition}

We will need to distinguish certain L-space slopes. To do this, consider the natural inclusion \[\hat\Q\into \hat\R = \R\cup\{\textstyle\frac{1}{0}\}\] arising from orienting the basis slopes so that $\alpha\cdot\beta=+1$, and endow $\hat\Q$ with the subspace topology. With this identification, $\mathcal{L}_M\subset\hat\Q$.

\begin{definition} The set of strict L-space slopes, denoted $\mathcal{L}_M^\circ$, is the interior of the subset $\mathcal{L}_M$.\end{definition} 

Recall that if $a,b\in\hat\Q$, then the subsets $(a,b)\cap\hat\Q$ and $[a,b]\cap\hat\Q$ are open and closed, respectively, in $\hat\Q$. By abuse, we will write simply $(a,b)$ and $[a,b]$ with the understanding that these describe subsets of $\hat\Q$. 

A key example to consider is that of the exterior of a non-trivial knot $K$ in $S^3$, with $M=S^3\smallsetminus\nu K$. In this case it is well known that if $M$ admits a non-trivial L-space filling, then $\mathcal{L}_M$ is either $[2g-1,\infty]$ or $[\infty, 1-2g]$ relative to the preferred basis consisting of the knot meridian $\mu$ (corresponding to $\frac{1}{0}$) and the Seifert longitude $\lambda$ (corresponding to $0$), where $g$ denotes the Seifert genus of $K$. Notice that $\mu$ and $(2g-1)\mu+\lambda$ are non-strict L-space slopes by definition. On the other hand, if $K$ is the trivial knot then $M\cong D^2\times S^1$ and $\mathcal{L}_M=\mathcal{L}_M^\circ=\hat\Q\smallsetminus\{0\}$ since these are precisely the fillings that give lens spaces. 

Every bordered manifold comes with a preferred identification of the set of slopes with $\hat\Q$, in particular, the notation $\frac{p}{q}\in\mathcal{L}(M,\alpha,\beta)$ should be understood to mean the slope $\pm(p\alpha+q\beta)\in\mathcal{L}_M$. We will adhere to this convention, and use the two interchangeably where there is no potential for confusion. 

\subsection{Gluing via change of framing}\label{sec:change_of_framing} In the interest of determining the set $\mathcal{L}_M$ we will need a means of describing any slope $\gamma$ in $\partial M$ in terms of a fixed basis of slopes $\{\alpha,\beta\}$. Suppose $(M,\alpha,\beta)$ is given, and we would like to calculate $\HFhat(M(p\alpha+q\beta))$. Then, according to the pairing theorem 
\[\CFhat(M(p\alpha+q\beta)) \cong \CFA(D^2\times S^1,m,l)\boxtimes \CFD(M,r\alpha+s\beta,p\alpha+q\beta) \]
where $\bsmat q&p\\s&r\esmat\in \mathit{SL}_2(\Z)$. Notice that  $p=0$ recovers a chain complex for the dual filling. 

Fixing a basis so that $\bsmat 1\\0\esmat$ represents the standard slope and $\bsmat 0\\1\esmat$ represents the dual slope, notice that the Dehn twist along $\alpha$ carrying $\beta\mapsto\alpha+\beta$ is encoded by the matrix $\bsmat 1&1\\0&1\esmat$. Call this the standard Dehn twist and denote it by $T_\stand$. Similarly, the Dehn twist along $\beta$ carrying $\alpha\mapsto \alpha+\beta$ is encoded by the matrix $\bsmat 1&0\\1&1\esmat$. Call this the (negative) dual Dehn twist and denote it by $T_\dual^{-1}$. Associated with each Dehn twist is a mapping cylinder and to this Lipshitz, Ozsv\'ath and Thurston assign a type DA bimodule. For the Dehn twists $T_\stand^{\pm 1}$ and $T_\dual^{\pm 1}$, we use  $\Ts^{\pm 1}$ and $\Td^{\pm 1}$ to denote the corresponding bimodules\footnote{In the notation of \cite[Section 10]{LOT-bimodules}, we have $\Ts = \CFDA(\tau_m)$ and $\Td = \CFDA(\tau_\ell^{-1})$.}.

Given an odd-length continued fraction expansion $\frac{p}{q}=[a_1,a_2,\ldots,a_n]$ we obtain a decomposition according to Dehn twists:
\[\bsmat q&p\\s&r\esmat = \bsmat 1&1\\0&1\esmat^{a_n}\cdots \bsmat 1&0\\1&1\esmat^{a_2}\bsmat 1&1\\0&1\esmat^{a_1}\]
The bordered manifold $(M,r\alpha+s\beta,p\alpha+q\beta)$ is obtained from $(M,\alpha,\beta)$ by attaching the mapping cylinder of a homeomorphism $h$ with representative $h_*=\bsmat q&p\\s&r\esmat$, and so
\[\CFD(M,r\alpha+s\beta,p\alpha+q\beta) \cong \Ts^{a_n}\cdots\boxtimes\Td^{-a_2} \boxtimes\Ts^{a_1} \boxtimes \CFD(M,\alpha,\beta)\]
where $\Ts^n=\underbrace{\Ts\boxtimes\cdots\boxtimes\Ts}_n$.

More generally, given bordered manifolds $(M_1,\alpha_1,\beta_1)$ and $(M_2,\alpha_2,\beta_2)$, we can calculate\[\CFA(M,\alpha_1,\beta_1) \boxtimes \CFD(M,r\alpha_2+s\beta_2,p\alpha_2+q\beta_2)\] by considering a homeomorphism $h$ as above. This gives a complex computing $\HFhat(Y)$ where \[Y\cong M_1\cup_h M_2 \cong (M_1,\alpha_1,\beta_1) \cup (M_2,r\alpha_2+s\beta_2,p\alpha_2+q\beta_2)\] and
$h\co \partial M_1 \to \partial M_2$ is specified by 
\[\alpha_1 \mapsto r\alpha_2+s\beta_2, \quad \beta_1\mapsto p\alpha_2+q\beta_2.\]
With these gluing conventions in place, we make an observation that will be of use in the sequel:

\begin{prop}\label{prop:slope-trick}
Given type D modules $N_1$ and $N_2$, let $N'_1 = \Ts^n \boxtimes N_1$ and $N'_2 = \Td^{-n}\boxtimes N_2$ for some $n\in\Z$. There is a homotopy equivalence 
\[N_1 \boxtimes \CFAAid \boxtimes N_2 \cong N'_1 \boxtimes \CFAAid \boxtimes N'_2\]
where $\CFAAid$ is the type AA identity bimodule. 
\end{prop}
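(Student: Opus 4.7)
My plan is to prove this by interpreting the equivalence via the pairing theorem: both sides compute the Heegaard Floer homology of the same closed manifold when the $N_i$ arise as bordered invariants, and the underlying algebraic content is an identity of bimodules that carries over to abstract type D structures.

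First, I would use the twist formulas from Section~\ref{sec:change_of_framing}. Recall that $\Ts$ corresponds to the matrix $\bsmat 1&1\\0&1\esmat$ and $\Td^{-1}$ to $\bsmat 1&0\\1&1\esmat$, so iterating gives $\Ts^n = \bsmat 1&n\\0&1\esmat$ and $\Td^{-n} = \bsmat 1&0\\n&1\esmat$. If $N_i = \CFD(M_i,\alpha_i,\beta_i)$, the discussion there yields
\[\Ts^n\boxtimes N_1 \simeq \CFD(M_1,\alpha_1,n\alpha_1+\beta_1) \quad\text{and}\quad \Td^{-n}\boxtimes N_2 \simeq \CFD(M_2,\alpha_2+n\beta_2,\beta_2).\]

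Next, I would verify that a single gluing homeomorphism realizes both pairings. A homeomorphism $h\co \partial M_1\to \partial M_2$ with $h(\alpha_1)=\beta_2$ and $h(\beta_1)=\alpha_2$ automatically satisfies $h(n\alpha_1+\beta_1) = n\beta_2+\alpha_2 = \alpha_2+n\beta_2$. Matching this against the pairing convention (standard slope on one side equals $h^{-1}$ of the dual slope on the other), the same $h$ glues the twisted bordered manifolds $(M_1,\alpha_1,n\alpha_1+\beta_1)$ and $(M_2,\alpha_2+n\beta_2,\beta_2)$ to produce exactly the closed manifold $Y = M_1\cup_h M_2$. By the Lipshitz--Ozsv\'ath--Thurston pairing theorem, both sides of the asserted equivalence then compute $\CFhat(Y)$, proving the proposition in the bordered-manifold case.

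For arbitrary abstract type D modules, the same conclusion follows once one knows the bimodule-level identity
\[\Ts^n\boxtimes \CFAAid \boxtimes \Td^{-n} \simeq \CFAAid\]
(interpreted via the appropriate AA--DA pairings on each side). By induction on $|n|$ this reduces to the case $n=1$, namely $\Ts\boxtimes\CFAAid\boxtimes\Td^{-1}\simeq\CFAAid$, which can be verified directly from the explicit Lipshitz--Ozsv\'ath--Thurston models of the three bimodules. The main obstacle is precisely this explicit algebraic verification: the geometric argument via the pairing theorem makes the equivalence transparent when $N_1,N_2$ come from manifolds, but a direct chain-homotopy construction in the general case requires careful bookkeeping of the A-infinity operations and idempotents involved in the AA-DA pairing. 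In practice, one invokes the result at the bordered level and then notes that the identity of bimodules suffices to propagate the equivalence to arbitrary $N_1,N_2$.
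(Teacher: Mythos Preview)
Your geometric argument via the pairing theorem is correct and gives the right intuition, but it only proves the proposition when $N_1$ and $N_2$ arise as $\CFD$ of actual bordered manifolds. The proposition is stated for arbitrary type D modules, and this generality is used later in the paper (for instance, in the gluing results of Section~\ref{sec:glue}, where one reparametrizes pairs of abstract loops). So the abstract case is not optional.

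You correctly identify that what is needed in general is a bimodule equivalence, essentially $\Ts\boxtimes\CFAAid\boxtimes\Td^{-1}\simeq\CFAAid$, but you do not prove it; you say it ``can be verified directly'' and then acknowledge that this verification is ``the main obstacle.'' That is the gap. The paper closes it with a single clean observation rather than a brute-force computation: the DA bimodule $\Ts$ is homotopy equivalent to the AD bimodule $\CFAAid\boxtimes\Td\boxtimes\CFDDid$, because the Heegaard diagram for $\tau_m$ is obtained from that for $\tau_\ell$ by a $180^\circ$ rotation (which exchanges the roles of type A and type D boundaries). Substituting this into $(\Ts^n\boxtimes N_1)\boxtimes\CFAAid\boxtimes\Td^{-n}\boxtimes N_2$ and collapsing the $\CFAAid$/$\CFDDid$ pairs turns the $\Ts^n$ on the left into a $\Td^n$ sitting to the right of $\CFAAid$, where it cancels against $\Td^{-n}$. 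This avoids any explicit chain-homotopy bookkeeping and works for arbitrary $N_1,N_2$.
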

\begin{remark}The type AA identity bimodule is defined in \cite{LOT-bimodules} and, in particular, gives rise to $\CFA(M,\alpha,\beta)\cong \CFAAid \boxtimes\CFD(M,\alpha,\beta)$. This observation leads to the algorithm described in Section \ref{sub:typeA}. \end{remark}
\begin{proof}[Proof of Proposition \ref{prop:slope-trick}]The right hand side of the claimed equivalence can be written as
\[ \left( \Ts^n \boxtimes N_1\right) \boxtimes \CFAAid \boxtimes \Td^{-n} \boxtimes N_2.\] 
The $DA$-bimodule $\Ts$ is homotopy equivalent to the $AD$-bimodule $\CFAAid \boxtimes \Td \boxtimes \CFDDid$; to see this, note that the Heegaard diagram for $T_{st} = \tau_m$ in \cite[Figure 25 ]{LOT-bimodules} can be obtained from the Heegaard diagram for $T_{\dual} = \tau_l$ by rotating 180 degrees. Thus the right side above simplifies to
\begin{align*}
&N_1\boxtimes \left( \CFAAid \boxtimes \Td \boxtimes \CFDDid \right)^n \boxtimes \CFAAid \boxtimes \Td^{-n} \boxtimes N_2\\
&\cong N_1\boxtimes \CFAAid \boxtimes \Td^n  \boxtimes \Td^{-n} \boxtimes N_2\\
&\cong N_1\boxtimes \CFAAid \boxtimes N_2\qedhere\\
\end{align*}
\end{proof}

\begin{remark}
If $N_1 = \CFD(M_1,\alpha_1,\beta_1)$ and $N_2 = \CFD(M_2,\alpha_2,\beta_2)$, the homotopy equivalence above corresponds to the fact that, with the gluing conventions described above,
\[(M_1,\alpha_1,\beta_1)\cup(M_2,\alpha_2,\beta_2)\cong(M_1,\alpha_1,\beta_1+n\alpha_1)\cup(M_2,\alpha_2+n\beta_2,\beta_2)\] for any $n\in\Z$. 
\end{remark}

\subsection{Grading} \label{sub:grading}
We conclude this discussion with a description of the relative $(\Ztwo)$-grading on the bordered invariants, summarizing the discussion in \cite[Section 2.2]{Hanselman2014}. For more details and developments, see \cite{LHW,Petkova-decategorification}

The torus algebra $\Alg$ may be promoted to a graded algebra by defining $\gr(\rho_1)=\gr(\rho_3)=0$ and $\gr(\rho_2)=1$, and extending according to $\gr(\rho_I\rho_J)\equiv \gr(\rho_I)+\gr(\rho_J)$ reduced modulo 2 (we will always drop this reduction from the notation). In particular, the grading is 1 on all the remaining non-zero products $\rho_{12},\rho_{23},\rho_{123}$.  

The relative $(\Ztwo)$-grading on elements $x$ of a  type D structure is determined by \[\gr(\rho_I\cdot x) \equiv \gr (\rho_I) + \gr(x) \ \text{and} \  \gr(\partial x) \equiv \gr(x) + 1\] In particular, given a reduced $\Alg$-decorated graph this relative grading is determined by choosing the grading on a given vertex, and then noting that only $\rho_1$- and $\rho_3$-labeled edges alter the grading.  

The relative $(\Ztwo)$-grading on elements $x$ of a  type A structure is determined by \[\gr(m_{k+1}(x,\rho_{I_1},\ldots,\rho_{I_k}))-k-1\equiv \gr(x)+ \sum_{j=1}^k \gr(\rho_{I_j})\] Notice that, given a type $D$ structure with a choice of relative grading, a relative grading on the associated type A structure is obtained by switching the grading of each generator with idempotent $\iota_0$. 

If $Y\cong (M_1,\alpha_1,\beta_2) \cup (M_2,\alpha_2,\beta_2)$, choices of relative $(\Ztwo)$-gradings on each of  the objects $\CFA(M_1,\alpha_1,\beta_2)$ and $\CFD(M_2,\alpha_2,\beta_2)$ give rise to a relative grading on $\CFhat(Y)$ via the pairing theorem and the rule $\gr(x\otimes y) = \gr(x)+ \gr(y)$. This agrees with the usual  relative $(\Ztwo)$-grading on $\CFhat(Y)$ so that, in particular,  $|\chi\CFhat(Y)|\ge|H_1(Y;\Z)|$ for any rational homology sphere $Y$. At the level of homology, it follows immediately that $Y$ is an L-space if and only if every generator of $\HFhat(Y)$ has the same grading. 

Where required, we will make use of an additional marking on the vertices of an $\Alg$-decorated graph by $\{\bullet^+,\bullet^-,\circ^+,\circ^-\}$ to indicate a choice of relative $(\Ztwo)$-grading on the underlying differential module. In addition, given a type D structure $N$ with a choice of relative $(\Ztwo)$-grading we define the idempotent Euler characteristics \[\chi_\bullet(N) = \chi(\iota_0 N) \ \text{and} \  \chi_\circ(N)= \chi(\iota_1 N)\]

The following Lemma records how $\chi_\bullet$ and $\chi_\circ$ change under reparametrization of the boundary.
\begin{lem}\label{lem:Euler_char_twists}
Let $\bsmat q&p\\s&r\esmat \in \mathit{SL}_2(\Z)$. Then
\begin{align*}
\pm \chi_\bullet \CFD(M, r\alpha + s\beta, p\alpha + q\beta) &= r \chi_\bullet \CFD(M, \alpha, \beta) + s \chi_\circ \CFD(M, \alpha, \beta) \\
\pm \chi_\circ \CFD(M, r\alpha + s\beta, p\alpha + q\beta) &= p \chi_\bullet \CFD(M, \alpha, \beta) + q \chi_\circ \CFD(M, \alpha, \beta)
\end{align*}
Here the $\pm$ choice depends on the choice of absolute $(\Ztwo)$-grading on $\CFD(M, \alpha, \beta)$ and $\CFD(M, r\alpha + s\beta, p\alpha + q\beta)$.
\end{lem}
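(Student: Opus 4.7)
The plan is to reduce the identity to the case of elementary Dehn twists and then verify it directly, exploiting the fact that both sides behave multiplicatively under composition of reparametrizations.

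First, by the discussion in Section \ref{sec:change_of_framing}, any matrix $\bsmat q & p \\ s & r \esmat \in \mathit{SL}_2(\Z)$ decomposes as a product of the elementary matrices $T_\stand = \bsmat 1 & 1 \\ 0 & 1 \esmat$ and $T_\dual^{-1} = \bsmat 1 & 0 \\ 1 & 1 \esmat$ (and their inverses). Correspondingly, $\CFD(M, r\alpha + s\beta, p\alpha + q\beta)$ is obtained from $\CFD(M, \alpha, \beta)$ by iteratively tensoring with the $DA$-bimodules $\Ts^{\pm 1}$ and $\Td^{\pm 1}$. A direct calculation shows that the linear transformation asserted by the lemma composes correctly: if the formula holds for matrices $A_1$ and $A_2$ (applied to $N$ and $\mathcal{B}_1\boxtimes N$ respectively), then it holds for the product $A_1 A_2$ applied to $N$, with the matrix entries of $A_1 A_2$ falling into the prescribed positions. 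Consequently, it suffices to verify the lemma in the two elementary cases $\mathcal{B}=\Ts$ and $\mathcal{B}=\Td^{-1}$.

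Second, for each elementary bimodule I would perform a direct count. Using the explicit descriptions of $\Ts$ and $\Td$ as $DA$-bimodules given in \cite[Section 10]{LOT-bimodules}, one identifies the generators of $\Ts \boxtimes N$ (resp.\ $\Td^{-1} \boxtimes N$) as pairs consisting of a generator of the bimodule and a generator of $N$ with matching idempotents. Tallying these in each of the idempotents $\iota_0$ and $\iota_1$ of the resulting type D structure, and keeping track of the relative $(\Ztwo)$-grading on each contribution, one computes $\chi_\bullet$ and $\chi_\circ$ of the tensor product in terms of $\chi_\bullet(N)$ and $\chi_\circ(N)$. Matching these with the formulas predicted by the lemma for $\bsmat 1 & 1 \\ 0 & 1 \esmat$ and $\bsmat 1 & 0 \\ 1 & 1 \esmat$ respectively completes the elementary cases.

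The main obstacle is careful bookkeeping of signs arising from the relative $(\Ztwo)$-grading. The grading on $\Ts \boxtimes N$ depends on choices of grading on both $\Ts$ and $N$, and propagating these choices through the iterative tensoring can flip the overall parity; this is precisely the ambiguity absorbed by the $\pm$ in the statement. Crucially, both displayed equations carry the \emph{same} sign, since both are computed from a single choice of absolute $(\Ztwo)$-grading on the reparametrized module $\CFD(M, r\alpha + s\beta, p\alpha + q\beta)$. Once this consistency is verified in the elementary cases, it persists through the inductive composition, yielding the result.
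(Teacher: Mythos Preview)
Your proposal is correct and follows essentially the same approach as the paper: reduce to elementary Dehn twists via the $\mathit{SL}_2(\Z)$ decomposition, then verify the formula for $\Ts$ and $\Td^{\pm 1}$ by a direct generator-and-idempotent count using the explicit bimodule descriptions from \cite{LOT-bimodules}. The paper carries out the inductive step concretely (listing the three generators of each bimodule, their idempotents, and their relative gradings) whereas you gesture at this count, and the paper checks $\Td$ rather than $\Td^{-1}$, but these are cosmetic differences in the same argument.
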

\begin{proof}
This is true when $r = q = 1$ and $s = p = 0$. Assuming the claim is true for given $r, s, p$ and $q$, we check that it is true when $p$ and $q$ are replaced by $p+r$ and $q+s$ by examining the effect of the standard Dehn twist. The relevant bimodule $\Ts$ is pictured in Figure \ref{fig:Dehn-twist-bimodules}. It has three generators: one pairs with $\iota_0$ generators to produce $\iota_0$ generators, one pairs with $\iota_1$ generators to produce $\iota_1$ generators, and one pairs with $\iota_0$ generators to produce $\iota_1$ generators. All three generators have the same relative $(\Ztwo)$-grading; we will assume they have grading 0. It follows that 
\begin{align*}
 &\chi_\bullet \CFD(M, r\alpha + s\beta, (p+r)\alpha + (q+s)\beta) \\
 &= \chi_\bullet\left( \Ts \boxtimes \CFD(M, r\alpha + s\beta, p\alpha + q\beta) \right) \\
& = \chi_\bullet \CFD(M, r\alpha + s\beta, p\alpha + q\beta) \\
& = r \chi_\bullet \CFD(M, \alpha, \beta) + s \chi_\circ \CFD(M, \alpha, \beta) \\[4pt]
& \chi_\circ \CFD(M, r\alpha + s\beta, (p+r)\alpha + (q+s)\beta) \\
 &= \chi_\circ\left( \Ts \boxtimes \CFD(M, r\alpha + s\beta, p\alpha + q\beta) \right) \\
&= \chi_\circ \CFD(M, r\alpha + s\beta, p\alpha + q\beta) + \chi_\bullet \CFD(M, r\alpha + s\beta, p\alpha + q\beta) \\
& = (p+r) \chi_\bullet \CFD(M, \alpha, \beta) + (q+s) \chi_\circ \CFD(M, \alpha, \beta) \end{align*}
Similarly, we check the claim for when $r$ and $s$ are replaced with $r-p$ and $s-q$ by considering the dual Dehn twist bimodule $\Td$ (pictured in Figure \ref{fig:Dehn-twist-bimodules-DUAL}). This bimodule has three generators with relative $(\Ztwo)$-grading 0, 0, and 1: the first pairs with $\iota_0$ generators to produce $\iota_0$ generators, the second pairs with $\iota_1$ generators to produce $\iota_1$ generators, and the third pairs with $\iota_1$ generators to produce $\iota_0$ generators. It follows that
\begin{align*} & \chi_\bullet \CFD(M, (r-p)\alpha + (s-q)\beta, p\alpha + q\beta) \\
&= \chi_\bullet\left( \Td \boxtimes \CFD(M, r\alpha + s\beta, p\alpha + q\beta) \right) \\
&= \chi_\bullet \CFD(M, r\alpha + s\beta, p\alpha + q\beta) - \chi_\circ \CFD(M, r\alpha + s\beta, p\alpha + q\beta) \\
& = (r-p) \chi_\bullet \CFD(M, \alpha, \beta) + (s-q) \chi_\circ \CFD(M, \alpha, \beta) \end{align*}
\begin{align*}
\chi_\circ \CFD(M, (r-p)\alpha + (s-q)\beta, p\alpha + q\beta) 
&= \chi_\circ\left( \Td \boxtimes \CFD(M, r\alpha + s\beta, p\alpha + q\beta) \right) \\
&= \chi_\circ \CFD(M, r\alpha + s\beta, p\alpha + q\beta) \\
&= p \chi_\circ \CFD(M, \alpha, \beta) + q \chi_\circ \CFD(M, \alpha, \beta) \end{align*}
The argument extends easily to multiples and inverses of Dehn twists. The claim follows by decomposing an arbitrary element of $\mathit{SL}_2(\Z)$ into Dehn twists.
\end{proof}

Given a rational homology solid torus $M$ with bordered structure $(\alpha, \beta)$, there is a distinguished slope $\frac{p}{q}$ on the boundary for which some multiple of the curve $p \alpha + q \beta$ in $\partial M$ is nullhomologous in $M$. This slope is called the \emph{rational longitude}. The $(\Ztwo)$-grading allows us to recover this data from $\CFD(M, \alpha, \beta)$ as follows:

\begin{prop}\label{prop:rational_longitude}
Let $(M, \alpha, \beta)$ be a bordered manifold, let $\spinc$ be a spin$^c$-structure on $M$, and let $\chi_\bullet$ and $\chi_\circ$ denote the Euler characteristics of $\CFD(M, \alpha, \beta; \spinc)$ in the appropriate idempotent. If $\chi_\bullet = \chi_\circ = 0$, then $M$ is not a rational homology solid torus. Otherwise, $M$ is a rational homology solid torus, and the nullhomologous curves in $\partial M$ are the multiples of $\chi_\circ \alpha - \chi_\bullet \beta$ (in particular, the rational longitude is $-\frac{\chi_\circ}{\chi_\bullet}$).
\end{prop}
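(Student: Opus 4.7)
The plan is to combine Lemma \ref{lem:Euler_char_twists} with the pairing theorem in order to extract the rational longitude from the idempotent Euler characteristics of $\CFD(M,\alpha,\beta;\spinc)$. The first step is to establish that, for any filling slope $\gamma = p\alpha + q\beta$,
\[
\chi\HFhat(M(\gamma);\spinc') \;=\; \pm\bigl(p\chi_\bullet + q\chi_\circ\bigr),
\]
where $\spinc'$ ranges over spin$^c$-structures on $M(\gamma)$ restricting to $\spinc$. This will follow by reparametrizing the bordered structure so that $\gamma$ is the dual slope (using Lemma \ref{lem:Euler_char_twists} to convert back to the given framing) and then pairing with the bordered type A module of the solid torus. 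The latter contributes a single generator in a single idempotent; this last observation is what forces the answer to be independent of the auxiliary choice of $(r,s)\in\mathit{SL}_2(\Z)$ and isolates the expression above (rather than the alternative $r\chi_\bullet + s\chi_\circ$).

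The next ingredient is a pair of standard topological facts: (a) a closed three-manifold $Y$ is a rational homology sphere if and only if $\chi\HFhat(Y)\neq 0$, in which case $|\chi\HFhat(Y)| = |H_1(Y;\Z)|$; and (b) any compact orientable $M$ with $\partial M = T^2$ has $\chi(M)=0$ and hence $b_1(M)\ge 1$, with equality precisely when $M$ is a rational homology solid torus. In the equality case, the rational longitude is the unique slope (up to sign) whose Dehn filling has $b_1\ge 1$ and therefore fails to be a rational homology sphere; every other filling is a rational homology sphere. When $b_1(M)\ge 2$, the long exact sequence of the pair $(M,T^2)$ forces $b_1(M(\gamma))\ge 1$ for every $\gamma$, so no filling is a rational homology sphere.

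These ingredients then combine cleanly. If $\chi_\bullet = \chi_\circ = 0$, the displayed formula kills $\chi\HFhat(M(\gamma))$ for every slope $\gamma$, so no filling is a rational homology sphere, and $M$ cannot be a rational homology solid torus. Otherwise $(\chi_\bullet,\chi_\circ)\neq(0,0)$, and the linear equation $p\chi_\bullet + q\chi_\circ = 0$ has a unique primitive solution $\pm(\chi_\circ,-\chi_\bullet)/\gcd(\chi_\bullet,\chi_\circ)$; this must correspond to the unique non-rational-homology-sphere filling, identifying $M$ as a rational homology solid torus whose rational longitude is $\chi_\circ\alpha - \chi_\bullet\beta$ (equivalently, $-\chi_\circ/\chi_\bullet$ in our slope conventions).

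The main obstacle I anticipate is the per-spin$^c$ refinement of the Euler characteristic formula in the first step: the pairing theorem splits by spin$^c$-structures on $M(\gamma)$, and one must track carefully how these restrict to $\spinc$ on $M$ and to a fixed spin$^c$ on the gluing solid torus. The total Euler characteristic version (summing over all spin$^c$-structures on $M$) is immediate from the pairing theorem plus Lemma \ref{lem:Euler_char_twists}; the spin$^c$-by-spin$^c$ refinement is essentially the bordered Floer incarnation of the fact that Turaev torsion detects the rational longitude, and the linearity in $(p,q)$ of the displayed expression is inherited directly from Lemma \ref{lem:Euler_char_twists} applied in each spin$^c$-summand.
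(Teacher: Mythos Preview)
Your approach is essentially the same as the paper's for the first half: both reparametrize so that the filling slope becomes the dual slope, pair with the solid torus, and invoke Lemma~\ref{lem:Euler_char_twists} to obtain $\chi\HFhat(M(p\alpha+q\beta);\spinc') = \pm(p\chi_\bullet + q\chi_\circ)$. From this you correctly extract that $M$ is a rational homology solid torus if and only if $(\chi_\bullet,\chi_\circ)\neq(0,0)$, and that the rational longitude has slope $-\chi_\circ/\chi_\bullet$.

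However, there is a genuine gap. The statement asserts more than the slope: it claims that the nullhomologous curves in $\partial M$ are exactly the integer multiples of $\chi_\circ\alpha - \chi_\bullet\beta$, i.e.\ that this particular curve (not its primitive part) generates $\ker\bigl(H_1(\partial M)\to H_1(M)\bigr)$. Your argument only shows that this kernel is the rank-one sublattice in the direction $(\chi_\circ,-\chi_\bullet)$; you explicitly divide by $\gcd(\chi_\bullet,\chi_\circ)$ to get the primitive slope and then never recover the multiplicity. Nothing in your outline pins down that $\gcd(\chi_\bullet,\chi_\circ)$ equals the order $n$ of the primitive longitudinal class in $H_1(M)$. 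The paper handles this by a second application of the same Euler-characteristic machinery: it fills along a slope $r\alpha+s\beta$ with $rq-ps=1$ (the standard filling after reparametrization), counts that there are exactly $n$ spin$^c$-structures on this rational homology sphere restricting to $\spinc$, and reads off $r\chi_\bullet + s\chi_\circ = \pm n$. Combined with $rq-ps=1$ this forces $(\chi_\bullet,\chi_\circ)=\pm(nq,-np)$, which is exactly the missing multiplicity statement. You have all the ingredients for this step already set up; you just need to run the argument once more on a transverse slope.
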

\begin{proof}
Given $\frac{p}{q}$ in $\hat\Q$, consider the Euler characteristic of $\HFhat(M(p\alpha + q\beta); \spinc')$, where $\spinc'$ is the spin$^c$-structure that restricts to $\spinc$ on $M$. This is nonzero if and only if $M(p\alpha + q\beta)$ is a rational homology sphere. $M(p\alpha + q\beta)$ is the dual filling of $(M,r\alpha+s\beta,p\alpha+q\beta)$ for any $r, s$ with $rq-ps = 1$. Thus, using Lemma \ref{lem:Euler_char_twists}, $\chi \HFhat(M(p\alpha + q\beta); \spinc')$ is given (up to sign) by
\[\chi_\circ \CFD(M, r\alpha + s\beta, p\alpha + q\beta; \spinc') = p \chi_\bullet + q \chi_\circ\]
Note that if $\chi_\bullet$ and $\chi_\circ$ are both zero, then $M(p\alpha + q\beta)$ is not a rational homology sphere for any $\frac{p}{q}$, and so $M$ is not a rational homology solid torus. Otherwise, $M(p\alpha + q\beta)$ is a rational homology sphere unless $\frac{p}{q} = -\frac{\chi_\circ}{\chi_\bullet} \in \hat\Q$. It follows that $M$ is a rational homology solid torus, and the slope of the rational longitude is $-\frac{\chi_\circ}{\chi_\bullet}$. 

From now on, let $\frac{p}{q} = -\frac{\chi_\circ}{\chi_\bullet} $ with $p,q$ relatively prime. The curve $np\alpha + nq\beta$ is nullhomologous in $M$ for some $n>0$. To determine the smallest such $n$, we consider the standard filling of $(M,r\alpha+s\beta,p\alpha+q\beta)$, for some $r,s$ with $rq-ps = 1$. This manifold is obtained from $M$ by adding a solid torus, identifying the longitude $l$ with $p\alpha+q\beta$. $H_1$ of this manifold is a direct sum of $H_1(M, \partial M)$ and $H_1(D^2 \times S^1)$ (which is generated by the longitude $l$) with the relation $nl = np\alpha + nq\beta = 0$. Recall that $H_1(M, \partial M)$ indexes the spin$^c$-structures on $M$. Thus for each spin$^c$ structure $\spinc$ on $M$, there are $n$ elements of $H_1(M(r\alpha+s\beta))$. It follows that $\chi \HFhat( M(r\alpha + s\beta; \spinc') = n$ for each $\spinc'$. By Lemma \ref{lem:Euler_char_twists}, we have
\[ n = \chi_\bullet \CFD(M, r\alpha + s\beta, p\alpha + q\beta; \spinc') = r \chi_\bullet + s \chi_\circ \]
We also have that $n = nqr - nps$, so up to sign $(\chi_\bullet, \chi_\circ) = (nq, -np)$. The smallest nullhomologous curve in $\partial M$ is $\chi_\circ \alpha - \chi_\bullet \beta$.
\end{proof}

\section{Loop calculus}\label{sec:loop}

A focus of this work is the development of a calculus for studying bordered invariants. This will be achieved by restricting to a class of manifolds whose bordered invariants can be represented by certain valence two $\Alg$-decorated graphs, which we will call loops.

\subsection{Loops and loop-type manifolds}\label{sub:loop-type}
Towards defining loops, consider the following arrows which may appear in an $\Alg$-decorated graph.
\[
\begin{tikzpicture}[>=latex] 
		\node at (0,0) {$\bullet$}; 
		\node at (0,1) {$\bullet$}; 
		\node at (0,2) {$\bullet$};
\draw[thick, ->,shorten <=0.1cm] (0,0) -- (1,0); \node at (0.5,0.25) {$\scriptsize\rho_{123}$};
\draw[thick, ->, shorten <=0.1cm] (0,1) -- (1,1); \node at (0.5,1.25) {$\scriptsize\rho_{12}$};
\draw[thick, ->, shorten <=0.1cm] (0,2) -- (1,2); \node at (0.5,2.25) {$\scriptsize\rho_{1}$}; \node at (0.5,-0.75){\bf{I}$_\bullet$};

		\node at (2,0) {$\bullet$}; 
		\node at (2,1) {$\bullet$}; 
		\node at (2,2) {$\bullet$};
\draw[thick, <-,shorten <=0.1cm] (2,0) -- (3,0); \node at (2.5,0.25) {$\scriptsize\rho_{12}$};
\draw[thick, <-, shorten <=0.1cm] (2,1) -- (3,1); \node at (2.5,1.25) {$\scriptsize\rho_{2}$};
\draw[thick, ->, shorten <=0.1cm] (2,2) -- (3,2); \node at (2.5,2.25) {$\scriptsize\rho_{3}$}; \node at (2.5,-0.75){\bf{II}$_\bullet$};
		
		\node at (4,0) {$\circ$}; 
		\node at (4,1) {$\circ$}; 
		\node at (4,2) {$\circ$};
\draw[thick, <-,shorten <=0.1cm] (4,0) -- (5,0); \node at (4.5,0.25) {$\scriptsize\rho_{1}$};
\draw[thick, ->, shorten <=0.1cm] (4,1) -- (5,1); \node at (4.5,1.25) {$\scriptsize\rho_{23}$};
\draw[thick, ->, shorten <=0.1cm] (4,2) -- (5,2); \node at (4.5,2.25) {$\scriptsize\rho_{2}$}; \node at (4.5,-0.75){\bf{I}$_\circ$};

		\node at (6,0) {$\circ$}; 
		\node at (6,1) {$\circ$}; 
		\node at (6,2) {$\circ$};
\draw[thick, <-,shorten <=0.1cm] (6,0) -- (7,0); \node at (6.5,0.25) {$\scriptsize\rho_{123}$};
\draw[thick, <-, shorten <=0.1cm] (6,1) -- (7,1); \node at (6.5,1.25) {$\scriptsize\rho_{23}$};
\draw[thick, <-, shorten <=0.1cm] (6,2) -- (7,2); \node at (6.5,2.25) {$\scriptsize\rho_{3}$}; \node at (6.5,-0.75){\bf{II}$_\circ$};
	\end{tikzpicture}	\]
We will be interested in valence two $\Alg$-decorated graphs subject to the following restriction:
\begin{itemize}
\item[($\star$)] Each $\iota_0$-vertex is adjacent to an edge of type {\bf I}$_\bullet$ and an edge of type {\bf II}$_\bullet$, and each $\iota_1$-vertex is adjacent to an edge of type {\bf I}$_\circ$ and an edge of type {\bf II}$_\circ$
\end{itemize}
\begin{definition}\label{def:loop}
A loop is a connected valence two $\Alg$-decorated graph satisfying $(\star)$.
\end{definition}

Notice that since any loop describes a differential module over $\Alg$, a loop may be promoted to a graded loop via the $(\Ztwo)$-grading described in Section \ref{sub:grading}. In particular, where needed, the vertex set will be extended to $\{\bullet^+,\bullet^-,\circ^+,\circ^-\}$.

As abstract combinatorial objects loops provide a tractable structure to work with; this section develops a calculus for doing so. The results derived from this calculus apply to the following class of bordered invariants:

\begin{definition}\label{def:loop-type}The bordered invariant $\CFD(M,\alpha,\beta)$ is said to be of loop-type if, up to homotopy, it may be represented by a collection of loops, that is, by a (possibly disconnected) $\Alg$-decorated valence two graph satisfying $(\star)$. For simplicity, in this paper we will make the additional assumption that the number of connected components of the valence two graph describing $\CFD(M,\alpha,\beta)$ coincides with $|\operatorname{Spin}^c(M)|$.  \end{definition}

We will refer to the bordered manifold $(M,\alpha,\beta)$ being of loop-type when the associated bordered invariant has this property. Some motivation for this definition is provided by the following:

\begin{prop}
Let $\mathcal{H}$ be a bordered Heegaard diagram describing $(M,\alpha,\beta)$ and  suppose  $\CFD(M,\alpha,\beta)=\CFD(\mathcal{H})$ is reduced and represented by a valence two $\Alg$-decorated graph having a single connected component per $\operatorname{spin}^c$-structure. Then $\CFD(M,\alpha,\beta)$ is loop-type. 
\end{prop}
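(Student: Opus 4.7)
The plan is to analyze the local form of $\delta^1$ at each generator of $\CFD(\mathcal{H})$ using the geometry of a bordered Heegaard diagram with torus boundary, and to use this to classify the possible edge patterns at every vertex.

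First I would exploit the fact that each $\iota_0$-generator $x$ uses a fixed $\alpha$-arc on the parameterized torus boundary, and that this arc has exactly two endpoints on the boundary circle of the Heegaard surface. Every Maslov-index-one disk contributing an edge incident to $x$ (either outgoing from $x$ or with target $x$) meets the boundary near one of these two endpoints. A direct enumeration of the geometrically admissible extensions at each endpoint shows that edges near the endpoint adjacent to the $\rho_1$-interval carry labels in $\{\rho_1,\rho_{12},\rho_{123}\}$, all outgoing, which is exactly the I$_\bullet$ collection, while edges near the endpoint adjacent to the $\rho_3$-interval consist of an outgoing $\rho_3$-arrow together with incoming $\rho_2$- and $\rho_{12}$-arrows, which is exactly the II$_\bullet$ collection. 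An analogous dichotomy at $\iota_1$-generators, using the second $\alpha$-arc, produces the I$_\circ$ and II$_\circ$ types.

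Next I would show that, under the reducedness hypothesis, each endpoint of an $\alpha$-arc contributes at most one adjacent edge per generator. If a vertex $x$ carried two edges attached to the same endpoint, then in a valence-two graph the other endpoint would contribute no edges at all. Tracing the resulting label sequence around the unique cycle through $x$ (unique by the connected-component-per-$\spinc$ hypothesis) produces either an uncancelled contribution to $d^2$ or a configuration that reduces to an identity-labeled edge after Gaussian elimination. The calculation uses the nonvanishing products $\rho_1\rho_2=\rho_{12}$, $\rho_2\rho_3=\rho_{23}$, and $\rho_1\rho_{23}=\rho_{12}\rho_3=\rho_{123}$ of the torus algebra. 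Once at most one edge per type is established, the valence-two hypothesis forces exactly one edge of each type at every vertex, yielding property $(\star)$. The single-connected-component-per-$\spinc$ assumption then matches the additional stipulation in Definition \ref{def:loop-type} verbatim, so the graph is a valid loop-type representative of $\CFD(M,\alpha,\beta)$.

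The main obstacle will be the second step. The relation $d^2=0$ alone does not force property $(\star)$: one can write down reduced valence-two type D structures over $\Alg$ that satisfy $d^2=0$ and are connected yet violate $(\star)$ (for instance, a four-vertex cycle with labels $\rho_1,\rho_{23},\rho_3,\rho_{12}$, in which the contributions $\rho_1\rho_{23}$ and $\rho_{12}\rho_3$ cancel in characteristic two while every vertex carries two edges of the same type). The argument must therefore leverage the geometric origin of $\CFD(\mathcal{H})$ — specifically, that the coefficients of $\delta^1$ count Maslov-index-one disks in a genuine bordered Heegaard diagram, and that the local structure of such a diagram near the torus boundary forbids configurations with two reduced disks exiting the same endpoint of an $\alpha$-arc. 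Carrying out the required case analysis is the crux of the proof.
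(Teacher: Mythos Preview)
Your endpoint classification in step one is correct: for the torus pointed matched circle the two endpoints of the $\iota_0$ arc sit at the initial point of $\rho_1$ and the initial point of $\rho_3$, and edges at an $\iota_0$-vertex do sort into types {\bf I}$_\bullet$ and {\bf II}$_\bullet$ according to which endpoint the contributing curve meets. But this observation by itself does nothing: as you yourself note, $\partial^2=0$ alone does not force one edge of each type, and your proposed remedy---that the local geometry of a genuine bordered diagram ``forbids configurations with two reduced disks exiting the same endpoint''---is asserted rather than argued. There is no general principle preventing two distinct index-one curves from the same generator from meeting the same $\alpha$-arc endpoint, so the case analysis you allude to has no obvious leverage. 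Your proposal identifies the crux but does not supply it.

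The paper closes this gap with a different tool: the \emph{generalized coefficient maps} of \cite[Section~11.6]{LOT-bordered}. One extends $\Alg$ by Reeb chords crossing the basepoint ($\rho_0, \rho_{01}, \ldots, \rho_{1230}$) and extends $\partial$ by counting curves with such asymptotics. The relation $\partial^2=0$ is then replaced, for an $\iota_0$-generator $x$, by $\partial^2(x) = \rho_{1230}x + \rho_{3012}x$. The term $\rho_{1230}x$ must factor through the (reduced) differential as $\rho_{I_1}\rho_{I_2}$ with $I_1 \in \{1,12,123\}$, producing an edge of type {\bf I}$_\bullet$ at $x$; the term $\rho_{3012}x$ similarly forces an edge of type {\bf II}$_\bullet$. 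Valence two then gives exactly one of each, and the $\iota_1$ case is parallel. This is precisely the extra geometric input your plan lacks: the basepoint-crossing curves encode the fact that both endpoints of each $\alpha$-arc are genuinely used, which is exactly what you were hoping to extract from a direct disk count but could not.
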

\begin{proof}
The hypothesis $\CFD(M,\alpha,\beta)=\CFD(\mathcal{H})$ allows us to use the notion of generalized coefficient maps developed in \cite[Section 11.6]{LOT-bordered}, which force restrictions on the type D modules that can occur as invariants of manifolds with torus boundary. Briefly, generalized coefficient maps are extra differentials obtained by counting holomorphic curves that run over the basepoint. The torus algebra is extended with additional Reeb chords: $\rho_0$, $\rho_{01}$, $\rho_{30}$, $\rho_{012}$, $\rho_{301}$, $\rho_{230}$, $\rho_{0123}$, $\rho_{3012}$, $\rho_{2301}$, and $\rho_{1230}$. With these additional differentials, $\partial^2 = 0$ is no longer satisfied. However, we have instead that
$$\partial^2(x) = \rho_{1230}x + \rho_{3012}x  \: \text{  for any $x$ in}\: \iota_0 \CFD(M,\alpha,\beta)$$
$$\partial^2(x) = \rho_{2301}x + \rho_{1230}x  \: \text{  for any $x$ in}\: \iota_1 \CFD(M,\alpha,\beta)$$

Let $x$ be a generator with idempotent $\iota_0$. Since $\partial^2(x)$ contains the term $\rho_{1230}x$, $\partial(x)$ contains the term $\rho_{I_1} y$ and $\partial(y)$ contains the term $\rho_{I_2} x$ for some $y \in \CFD(M,\alpha,\beta)$ and some (generalized) Reeb chords $\rho_{I_1}$ and $\rho_{I_2}$ such that $\rho_{I_1}\rho_{I_2} = \rho_{1230}$. Since we assumed that $\CFD(M,\alpha,\beta)$ is reduced, $I_1$ and $I_2$ are not $\emptyset$. It follows that $I_1 \in \{1, 12, 123\}$ and, in the graphical representation, the vertex corresponding to to $x$ has an incident edge of type {\bf I}$_\bullet$.

Since $\partial^2(x)$ contains the term $\rho_{3012}x$, $\partial(x)$ contains the term $\rho_{I_1} y$ and $\partial(y)$ contains the term $\rho_{I_2} x$ for some $y \in \CFD(M,\alpha,\beta)$ and some (generalized) Reeb chords $\rho_{I_1}$ and $\rho_{I_2}$ such that $\rho_{I_1}\rho_{I_2} = \rho_{3012}$. It follows that either $I_1 = 3$ or $I_2 \in \{2, 12\}$ and, in the graphical representation, the vertex corresponding to to $x$ has an incident edge of type {\bf II}$_\bullet$. Since any vertex has valence two by assumption, the vertex corresponding to to $x$ must have exactly one edge from each of {\bf I}$_\bullet$ and {\bf II}$_\bullet$.

The argument when $x$ has idempotent $\iota_1$ is similar. Since $\partial^2(x)$ contains the term $\rho_{2301}x$, it follows that the vertex corresponding to $x$ is adjacent an edge of type {\bf I}$_\circ$; and since $\partial^2(x)$ contains the term $\rho_{0123}x$ it follows that the corresponding vertex is adjacent to an edge of type {\bf II}$_\circ$.\end{proof}

\begin{remark}
While this argument required a particular choice of Heegaard diagram, it seems likely that this hypothesis is not a necessary. However, for the purposes of this work the more general statement will not be required -- we simply restrict to the graphs satisfying $(\star)$ by definition -- and leave developing the necessary algebra to future work. 
\end{remark}

It is important to note that loops in the abstract need not be related to three-manifolds: It is not necessarily true that every loop (or disjoint union of loops) arises as the bordered invariant of some three-manifold with torus boundary. This section concludes with an explicit example for illustration.

\subsection{Standard and dual notation}\label{sec:notations_for_loops} It is natural to decompose a loop  into pieces by breaking along vertices corresponding to one of the two idempotents; the constraint $(\star)$ suggests that the pieces resulting from such a decomposition are quite limited. Indeed, breaking  along generators with idempotent $\iota_0$, five essentially different chains are possible and these possibilities are listed in Figure \ref{fig:puzzle_pieces} (on the left hand side). Since $(\star)$ also puts restrictions on how these segments can be concatenated, each piece is depicted with puzzle-piece ends. For instance, a type $a$ piece can be followed by a type $b$ piece, but not by a type $c$ piece. Any given piece may also appear backwards; we denote this with a {\it bar}.

\begin{figure}
\begin{tabular}{lr}
\labellist
\pinlabel {\begin{tikzpicture}[>= latex]
\node at (0,0) {$\bullet$}; \node at (1.5,0) {$\circ$};  \node at (3,0) {$\circ$};  \node at (4.5,0) {$\bullet$};
\draw[->, thick, shorten >=0.1cm,shorten <=0.1cm]  (0,0) to (1.5,0);
\draw[->, thick, shorten >=0.1cm,shorten <=0.1cm, dashed]  (1.5,0) to (3,0);
\draw[->, thick, shorten >=0.1cm,shorten <=0.1cm] (3,0) to (4.5,0);
\node at (0.75,0.2) {$\rho_3$};\node at (2.25,0.2) {$\rho_{23}$};\node at (3.75,0.2) {$\rho_2$};
\end{tikzpicture}} at 76.5 32 \pinlabel {$a_k$} at -30 32 
\pinlabel {$\underbrace{\phantom{aaaaaaa}}_k$} at 76.5 20
\endlabellist
\includegraphics[scale=1]{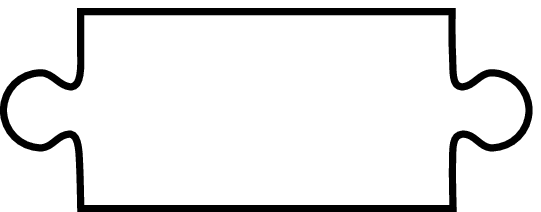}\qquad\qquad\qquad\qquad
&
\labellist
\pinlabel {\begin{tikzpicture}[>= latex]
 \node at (1.5,0) {$\bullet$};  \node at (3,0) {$\bullet$};  
\draw[<-, thick, shorten >=0.1cm,shorten <=0.4cm]  (0,0) to (1.5,0);
\draw[->, thick, shorten >=0.1cm,shorten <=0.1cm, dashed]  (1.5,0) to (3,0);
\draw[->, thick, shorten >=0.4cm,shorten <=0.1cm] (3,0) to (4.5,0);
\node at (0.75,0.2) {$\rho_3$};\node at (2.25,0.2) {$\rho_{12}$};\node at (3.75,0.2) {$\rho_{123}$};
\end{tikzpicture}} at 76.5 32 \pinlabel {$a_k^*$} at -30 32 
\pinlabel {$\underbrace{\phantom{aaaaaaa}}_k$} at 76.5 20
\endlabellist
\includegraphics[scale=1]{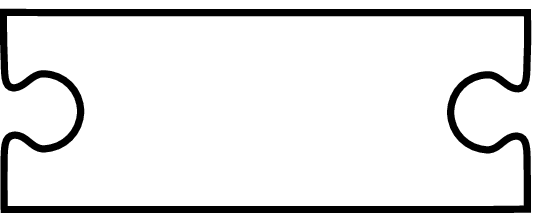}\\[10pt]
\labellist
\pinlabel {\begin{tikzpicture}[>= latex]
 \node at (1.5,0) {$\circ$};  \node at (3,0) {$\circ$}; 
 \draw[->, thick, shorten >=0.1cm,shorten <=0.4cm]  (0,0) to (1.5,0);
\draw[->, thick, shorten >=0.1cm,shorten <=0.1cm, dashed]  (1.5,0) to (3,0);
\draw[<-, thick, shorten >=0.4cm,shorten <=0.1cm] (3,0) to (4.5,0);
\node at (0.75,0.2) {$\rho_{123}$};\node at (2.25,0.2) {$\rho_{23}$};\node at (3.75,0.2) {$\rho_1$};
\end{tikzpicture}} at 76.5 32  \pinlabel {$b_k$} at -30 32
\pinlabel {$\underbrace{\phantom{aaaaaaa}}_k$} at 76.5 18
\endlabellist
\includegraphics[scale=1]{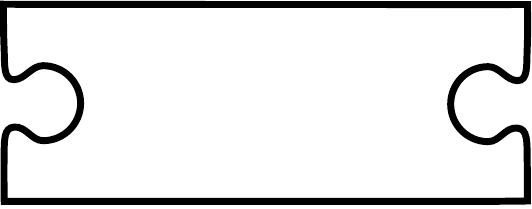}\qquad\qquad\qquad\qquad
&
\labellist
\pinlabel {\begin{tikzpicture}[>= latex]
\node at (0,0) {$\circ$}; \node at (1.5,0) {$\bullet$};  \node at (3,0) {$\bullet$}; \node at (4.5,0) {$\circ$};
 \draw[->, thick, shorten >=0.1cm,shorten <=0.1cm]  (0,0) to (1.5,0);
\draw[->, thick, shorten >=0.1cm,shorten <=0.1cm, dashed]  (1.5,0) to (3,0);
\draw[->, thick, shorten >=0.1cm,shorten <=0.1cm] (3,0) to (4.5,0);
\node at (0.75,0.2) {$\rho_{2}$};\node at (2.25,0.2) {$\rho_{12}$};\node at (3.75,0.2) {$\rho_1$};
\end{tikzpicture}} at 76.5 32  \pinlabel {$b_k^*$} at -30 32
\pinlabel {$\underbrace{\phantom{aaaaaaa}}_k$} at 76.5 18
\endlabellist
\includegraphics[scale=1]{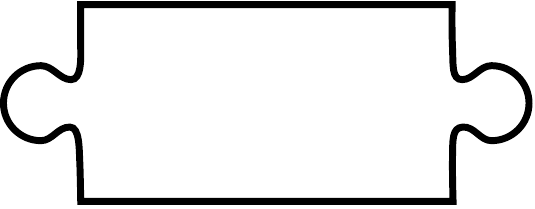}\\[10pt]
\labellist
\pinlabel {\begin{tikzpicture}[>= latex]
\node at (0,0) {$\bullet$}; \node at (1.5,0) {$\circ$};  \node at (3,0) {$\circ$}; 
\draw[->, thick, shorten >=0.1cm,shorten <=0.1cm]  (0,0) to (1.5,0);
\draw[->, thick, shorten >=0.1cm,shorten <=0.1cm, dashed]  (1.5,0) to (3,0);
\draw[<-, thick, shorten >=0.4cm,shorten <=0.1cm] (3,0) to (4.5,0);
\node at (0.75,0.2) {$\rho_3$};\node at (2.25,0.2) {$\rho_{23}$};\node at (3.75,0.2) {$\rho_1$};
\end{tikzpicture}} at 73 32  \pinlabel {$c_k$} at -30 32
\pinlabel {$\underbrace{\phantom{aaaaaaa}}_k$} at 76 20
\endlabellist
\includegraphics[scale=1]{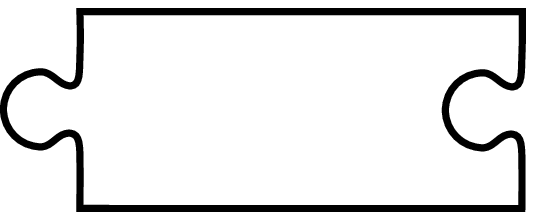}\qquad\qquad\qquad\qquad 
&
\labellist
\pinlabel {\begin{tikzpicture}[>= latex]
 \node at (1.5,0) {$\bullet$};  \node at (3,0) {$\bullet$}; \node at (4.5,0) {$\circ$};
\draw[<-, thick, shorten >=0.1cm,shorten <=0.4cm]  (0,0) to (1.5,0);
\draw[->, thick, shorten >=0.1cm,shorten <=0.1cm, dashed]  (1.5,0) to (3,0);
\draw[->, thick, shorten >=0.1cm,shorten <=0.1cm] (3,0) to (4.5,0);
\node at (0.75,0.2) {$\rho_3$};\node at (2.25,0.2) {$\rho_{12}$};\node at (3.75,0.2) {$\rho_1$};
\end{tikzpicture}} at 79.5 32  \pinlabel {$c_k^*$} at -30 32
\pinlabel {$\underbrace{\phantom{aaaaaaa}}_k$} at 76 20
\endlabellist
\includegraphics[scale=1]{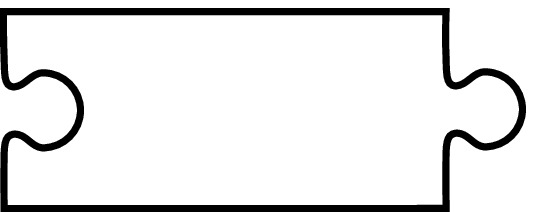}\\[10pt]
\labellist
\pinlabel {\begin{tikzpicture}[>= latex]
\node at (1.5,0) {$\circ$};  \node at (3,0) {$\circ$};  \node at (4.5,0) {$\bullet$};
\draw[->, thick, shorten >=0.1cm,shorten <=0.4cm]  (0,0) to (1.5,0);
\draw[->, thick, shorten >=0.1cm,shorten <=0.1cm, dashed]  (1.5,0) to (3,0);
\draw[->, thick, shorten >=0.1cm,shorten <=0.1cm] (3,0) to (4.5,0);
\node at (0.75,0.2) {$\rho_{123}$};\node at (2.25,0.2) {$\rho_{23}$};\node at (3.75,0.2) {$\rho_2$};
\end{tikzpicture}} at 79.5 32  \pinlabel {$d_k$} at -30 32
\pinlabel {$\underbrace{\phantom{aaaaaaa}}_k$} at 76.5 20
\endlabellist
\includegraphics[scale=1]{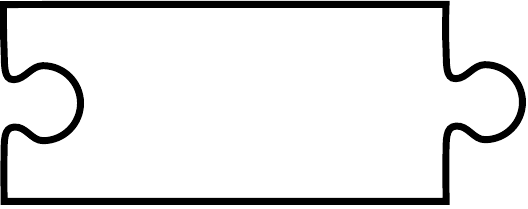}\qquad\qquad\qquad\qquad &
\labellist
\pinlabel {\begin{tikzpicture}[>= latex]
\node at (0,0) {$\circ$};\node at (1.5,0) {$\bullet$};  \node at (3,0) {$\bullet$}; 
\draw[->, thick, shorten >=0.1cm,shorten <=0.1cm]  (0,0) to (1.5,0);
\draw[->, thick, shorten >=0.1cm,shorten <=0.1cm, dashed]  (1.5,0) to (3,0);
\draw[->, thick, shorten >=0.4cm,shorten <=0.1cm] (3,0) to (4.5,0);
\node at (0.75,0.2) {$\rho_{2}$};\node at (2.25,0.2) {$\rho_{12}$};\node at (3.5,0.2) {$\rho_{123}$};
\end{tikzpicture}} at 73 32  \pinlabel {$d_k^*$} at -30 32
\pinlabel {$\underbrace{\phantom{aaaaaaa}}_k$} at 76.5 20
\endlabellist
\includegraphics[scale=1]{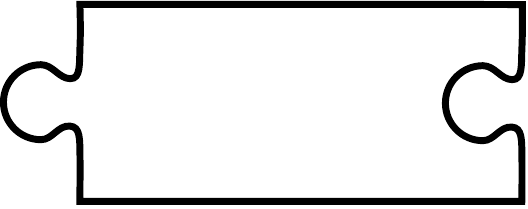}\\[10pt]
\labellist
\pinlabel {\begin{tikzpicture}[>= latex]
 \node at (4.5,0) {$\bullet$};
\draw[->, thick, shorten >=0.1cm,shorten <=0.4cm]  (0,0) to (4.5,0);
\node at (2.25,0.2) {$\rho_{12}$};
\end{tikzpicture}} at 79.5 32  \pinlabel {$e$} at -30 32
\endlabellist
\includegraphics[scale=1]{figures/in-out}\qquad\qquad\qquad\qquad &
\labellist
\pinlabel {\begin{tikzpicture}[>= latex]
\node at (0,0) {$\circ$}; 
\draw[->, thick, shorten >=0.4cm,shorten <=0.1cm]  (0,0) to (4.5,0);
\node at (2.25,0.2) {$\rho_{23}$};
\end{tikzpicture}} at 73 32  \pinlabel {$e^*$} at -30 32
\endlabellist
\includegraphics[scale=1]{figures/out-in}\\[10pt]
\end{tabular}
\caption{Possible segments: Standard notation (left) is obtained by breaking a loop along $\iota_0$ idempotents ($\bullet$-vertices) and dual notation (right) is obtained by breaking a loop along $\iota_1$ idempotents ($\circ$-vertices).}
\end{figure}\label{fig:puzzle_pieces}

Denote the standard alphabet by \[\mathfrak{A}=\{a_i,b_i,c_j,d_j\}\] where $i\in\Z\smallsetminus\{0\}$ and $j\in\Z$. The letters in the standard alphabet correspond directly to the segments depicted in Figure \ref{fig:puzzle_pieces}, with the relationships 
\[a_{-i}=\bar a_i \ \text{and}\  b_{-i}=\bar b_i\] for $i>0$ as well as 
\[c_{-j}=\bar d_j \ \text{and}\ d_{-j}=\bar c_j\]  for $j\ge0$ with  $d_0=e=\bar c_0$ and $\bar d_0=\bar e=c_0$. Throughout this paper we will always assume these shorthand relationships and, by abuse, we will often not distinguish between segments and letters. For instance, the symbols $d_0$ and $e$ will be used interchangeably and may refer either to a letter in the standard alphabet or the corresponding $\Alg$-decorated graph segment. Note that as a result of this equivalence, the notation $\bar s$ makes sense for any standard letter $s$, with the understanding that $\bar{ \bar s} = s$.

We will be interested in the set of cyclic words $W_{\frkA}$ on the alphabet $\frkA$ that are consistent with the puzzle-piece notation of of Figure {\ref{fig:puzzle_pieces}, which encodes the restriction $(\star)$. Thus, for example, the cyclic word $c_1$ is an element of $W_{\frkA}$ while the cyclic word $a_1$ is not. Another immediate restriction is that  an element of $W_{\frkA}$ must contain  an equal number of $a$-type letters and $b$-type letters.

\begin{prop}\label{prp:lp-std}
A loop $\lp$ with at least one $\iota_0$ generator may be represented as a cyclic word $\w_{\lp}$ in $W_{\frkA}$. Moreover, this representation is unique up to overall reversal of the word $\w_{\lp}$; that is, writing $\w_{\lp}$ with the opposite cyclic ordering and replacing each letter $s$ with $\bar s$.

\end{prop}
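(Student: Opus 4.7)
The plan is to build a cyclic word $\w_\lp$ from $\lp$ by cutting at the $\iota_0$-vertices, identifying each resulting arc with a letter of $\frkA$, and then verifying both that every arc matches one of the listed segments and that consecutive arcs concatenate exactly as dictated by the puzzle-piece ends. By hypothesis $\lp$ has at least one $\bullet$-vertex; together with connectedness and the valence two condition this means $\lp$ is (topologically) a circle. Picking a direction of traversal, list the $\bullet$-vertices in cyclic order as $v_0, \dots, v_{m-1}$ and let $S_i$ be the arc from $v_i$ to $v_{i+1}$ (indices mod $m$); by construction all interior vertices of $S_i$ are $\circ$-vertices.

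The core classification argument uses the algebra of $\Alg$ together with $(\star)$. First, the only non-identity element of $\Alg$ satisfying $a = \iota_1 a \iota_1$ is $\rho_{23}$, so every edge whose both endpoints are $\circ$-vertices carries the label $\rho_{23}$. Applying $(\star)$ at an interior $\circ$-vertex of $S_i$ forces its two incident edges to be one $\mathbf{I}_\circ$ edge and one $\mathbf{II}_\circ$ edge; since both edges meet only $\circ$-vertices, the only compatible configuration is one outgoing $\rho_{23}$ edge (matching $\mathbf{I}_\circ$) and one incoming $\rho_{23}$ edge (matching $\mathbf{II}_\circ$). Hence the interior of each $S_i$ is a coherently oriented $\rho_{23}$-chain of some length $k\ge 0$. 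The first and last edges of $S_i$ are the edges adjacent to $v_i$ and $v_{i+1}$: if $k=0$ then $S_i$ is a single $\bullet$-$\bullet$ edge labeled $\rho_{12}$ (giving $e$ or $\bar e$); otherwise each of these edges connects a $\bullet$-vertex to the extreme $\circ$-vertex of the chain with label in $\{\rho_1,\rho_2,\rho_3,\rho_{123}\}$. Checking compatibility of these labels and arrow orientations against the direction of the $\rho_{23}$-chain, so that the extreme $\circ$-vertex still has one $\mathbf{I}_\circ$ and one $\mathbf{II}_\circ$ incident edge, yields precisely the eight segment types $a_k,\bar a_k,b_k,\bar b_k,c_k,\bar c_k,d_k,\bar d_k$ drawn on the left of Figure~\ref{fig:puzzle_pieces}, after making the conventional identifications $d_0=e$ and $\bar c_0=\bar e$.

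Once the classification is in hand, concatenation and uniqueness are essentially bookkeeping. The edge of $S_i$ at $v_i$ is either of type $\mathbf{I}_\bullet$ or $\mathbf{II}_\bullet$; $(\star)$ at $v_i$ demands one of each, so the puzzle-piece end of $S_{i-1}$ at $v_i$ must match the opposite puzzle-piece end of $S_i$ at $v_i$, which is exactly the combinatorial condition defining admissible words in $W_\frkA$. The cyclic word $\w_\lp$ then depends only on the choice of starting vertex $v_0$ and the direction of traversal; changing $v_0$ cyclically permutes the letters and hence yields the same cyclic word, while reversing the direction of traversal both reverses the cyclic order of the letters and reverses the orientation within each segment, the latter being exactly the involution $s \mapsto \bar s$ on $\frkA$. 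The main obstacle is the case analysis underlying the classification in the previous paragraph: although each case is straightforward, one has to handle all combinations of mixed-endpoint labels $\{\rho_1,\rho_2,\rho_3,\rho_{123}\}$ at $v_i,v_{i+1}$ consistently with the $\mathbf{I}_\circ/\mathbf{II}_\circ$ constraint imposed by $(\star)$ at the adjacent $\circ$-vertex, and keep careful track of which putatively distinct segments are in fact related by the barring involution so that the enumeration matches Figure~\ref{fig:puzzle_pieces} exactly.
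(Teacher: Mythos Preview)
Your proof is correct and follows essentially the same approach as the paper, which simply states ``Immediate from the definitions.'' You have carefully unpacked exactly those definitions---cutting at $\bullet$-vertices, classifying each arc via the idempotent constraints and $(\star)$, and reading off the puzzle-piece compatibility and reversal ambiguity---so your argument is a faithful (and thorough) expansion of what the authors left implicit.
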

\begin{proof}Immediate from the definitions.\end{proof}

As a result, we will be interested in the equivalence class which identifies $\w_{\lp}$  and $\overline{\w}_{\lp}$, where $\overline{\w}_{\lp}$ denotes the reversal of $\w_{\lp}$. Denote this equivalence class of cyclic words by $(\w_{\lp})$; by abuse we will continue referring to $(\w_{\lp})$ as a cyclic word. Note that by Proposition \ref{prp:lp-std} this sets up a one-to-one correspondence $\lp \leftrightarrow (\w_{\lp})$ between loops with at least one $\iota_0$-decorated vertex and (equivalence classes of) cyclic words in $W_{\frkA}$.

\begin{definition}\label{def:lp-std} 
When $\lp$ is represented by a cyclic word $(\w_{\lp})$ using the standard alphabet, we say $\lp$ is written in standard notation.
\end{definition}

A loop cannot be written in standard notation if it does not contain an $\iota_0$-decorated vertex. This suggests it will sometimes be useful break a loop along $\iota_1$-decorated vertices. There are again five types of chains possible as listed in Figure \ref{fig:puzzle_pieces} (on the right hand side). Denote the dual alphabet by \[\mathfrak{A}^*=\{a_i^*,b_i^*,c_j^*,d_j^*\} \] ($i\in\Z\smallsetminus\{0\}$ and $j\in\Z$) where, as before, $a_{-i}^*=\bar a_i^*$, $b_{-i}^*=\bar b_i^*$ (for $i>0$), $c_{-j}^*=\bar d_j^*$, $d_{-j}^*=\bar c_j^*$ (for $j\ge0$), and $d_0^*=e^*=\bar c_0^*$. Proceeding as before, let $W_{\frkA^*}$ denote the set of cyclic words in the alphabet $\frkA^*$ which are consistent with the puzzle-piece notation. 

\begin{prop}\label{prp:lp-std}
A loop $\lp$ with at least one $\iota_1$ generator may be represented as a cyclic word $\w_{\lp}^*$ in $W_{\frkA^*}$. Moreover, this representation is unique up to overall reversal of the word $\w_{\lp}^*$. 
\end{prop}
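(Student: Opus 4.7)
The plan is to mirror the proof of the preceding proposition, interchanging the roles of the two idempotents. Since a loop $\lp$ is a connected valence-two $\Alg$-decorated graph satisfying $(\star)$, I would cut $\lp$ at every $\iota_1$-decorated vertex (of which there is at least one by hypothesis). The resulting collection of arcs is disjoint, covers the loop, and each arc has all of its interior vertices (if any) $\iota_0$-decorated while its endpoints sit at $\iota_1$-vertices; the endpoint edges moreover inherit from the original loop a specific ``half'' of a type-$\mathbf{I}_\circ$ or type-$\mathbf{II}_\circ$ edge at each $\iota_1$-vertex they touch.

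Next I would carry out a finite case analysis on the incident-edge data at the two endpoints and on the types of interior $\iota_0$-vertex edges, using the constraint $(\star)$ at every interior $\iota_0$-vertex. Exactly as in the standard-notation case, this analysis shows that each arc is isomorphic (as a decorated graph with half-edge data at its endpoints) to one of the five dual building blocks $a^*_k,\, b^*_k,\, c^*_k,\, d^*_k,\, e^*$ of Figure \ref{fig:puzzle_pieces} (right); this is where one reuses the reasoning that produced the five blocks in the standard alphabet, now applied with $\bullet$ and $\circ$ swapped and with edge types $\mathbf{I}_\bullet,\mathbf{II}_\bullet$ replaced by $\mathbf{I}_\circ,\mathbf{II}_\circ$. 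Reading off the arcs in the cyclic order induced by a chosen traversal of $\lp$ then produces a cyclic word $\w^*_{\lp}$; the puzzle-piece compatibility at each $\iota_1$-vertex is precisely the requirement that consecutive letters be allowed to abut, so $\w^*_{\lp}\in W_{\frkA^*}$.

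For uniqueness, the only choices made in the construction are (i) which $\iota_1$-vertex to list first, which is absorbed by the cyclic structure of the word, and (ii) the direction of traversal of $\lp$. Reversing the traversal direction replaces each arc by its reverse, which at the level of the dual alphabet sends each letter $s^*$ to $\bar{s}^*$ and reverses the cyclic order, producing $\overline{\w}^*_{\lp}$. Hence the representation is well defined up to overall reversal.

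I do not expect a genuine obstacle: the argument is formally the same as for the previous proposition, which was recorded as immediate. The only minor point to watch is the degenerate case in which $\lp$ has exactly one $\iota_1$-vertex (or otherwise produces a single arc of type $e^*$); here one should check by hand that the single resulting arc is indeed one of the listed blocks, with the cyclic word of length one, and that the reversal symmetry is consistent with $\bar{e}^*$ (interpreted via the conventions $d^*_0=e^*=\bar{c}^*_0$, etc.). Once this case is dispatched, the general bijective correspondence $\lp\leftrightarrow(\w^*_{\lp})$ follows.
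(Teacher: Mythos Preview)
Your proposal is correct and is essentially the same approach as the paper, which records the proof as ``Immediate from the definitions.'' You have simply spelled out the case analysis and traversal argument that the paper leaves implicit; there is nothing to add or correct.
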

\begin{proof}Immediate from the definitions.\end{proof}

Again, we denote by $(\w_{\lp}^*)$ the equivalence class identifying $\w_{\lp}^*$  and $\overline{\w}^*_{\lp}$, and remark that this gives rise to a second one-to-one correspondence $\lp \leftrightarrow (\w_{\lp}^*)$ for loops $\lp$ with at least one $\iota_1$-decorated vertex.

\begin{definition}\label{def:lp-dul}
When $\lp$ is represented by a cyclic word $(\w_{\lp}^*)$ using the dual alphabet, we say $\lp$ is written in dual notation.
\end{definition}

Collecting these observations, notice that whenever $\lp$ contains instances of both idempotents the pair of correspondences $(\w_{\lp}^*)\leftrightarrow\lp \leftrightarrow (\w_{\lp})$ sets up a natural map between the standard representation and the dual representation. In particular, this allows us to set $(\w_{\lp})^*=(\w_{\lp}^*)$ in a well defined way; we refer to $(\w_{\lp})^*$ as the dual of $(\w_{\lp})$ and remark that $(\w_{\lp})^{**}=(\w_{\lp})$.

This description of loops as cyclic words is the essential starting point for loop calculus. A typical loop will have vertices in both idempotents, it is thus expressible in both standard and dual notation; switching between the two notations will be a key part of the loop calculus. We now make this  process explicit.

First notice that, given $(\w_{\lp})$ representing a loop $\lp$ in standard notation, there is a natural normal form $(\w_{\lp})=( {\bf u}_1 {\bf v}_1{\bf u}_2 {\bf v}_2 \ldots {\bf u}_{n} {\bf v}_n )$ where
\begin{itemize}
\item[(N1)] the subword ${\bf u}_i$ is a standard letter with subscript $k_i \neq 0$; and 
\item[(N2)] the subword ${\bf v}_i$ is $n_i\ge0$ consecutive copies of either $d_0$ or $c_0$  (here $n_i$ may be zero).
\end{itemize}
This normal form makes sense for any $\w_{\lp}$ that does not consist only of $d_0=e$ or $c_0=\bar e$ letters; we may safely ignore these sporadic examples since, in these cases, $\lp$ has no $\iota_1$-decorated vertices and cannot be expressed in dual notation.

Now the dual word $\w^*_{\lp}$ is obtained as follows.

\begin{itemize}
\item[(D1)] Replace each ${\bf v}_i$ by a dual letter with subscript $n_i + 1$ and type\footnote{Recall that a letter of type $a$, $b$, $c$, or $d$ with negative subscript can also be written as a letter of type $\bar a$, $\bar b$, $\bar d$, or $\bar c$ with positive subscript. Here the \emph{type} of a standard letter (other than $d_0$ and $c_0$) refers to the element of $\{a, \bar a, b, \bar b, c, \bar c, d, \bar d\}$ corresponding to the representation with positive subscript. Similarly, a dual letter other than $d^*_0$ and $c^*_0$ has a well-defined type in $\{a^*, \bar a^*, b^*, \bar b^*, c^*, \bar c^*, d^*, \bar d^*\}$.}
 determined by (the types of) the ordered pair  $\{{\bf u}_{i}, {\bf u}_{i+1}\}$ (written ${\bf u}_{i}{\bf u}_{i+1}$ for brevity) from the subword ${\bf u}_{i}{\bf v}_i{\bf u}_{i+1}$ according to the following table:
\[
\begin{array}{c @{\hskip 2 cm} c}
\bar{a}b, \bar{a}d, \bar{c}b, \bar{c}d \to a^* &  \bar{b}a, \bar{b}c, \bar{d}a, \bar{d}c \to \bar{a}^* \\
a\bar{b}, a\bar{c}, d\bar{b}, d\bar{c} \to b^* & b\bar{a}, b\bar{d}, c\bar{a}, c\bar{d} \to \bar{b}^* \\
\bar{a}\bar{b}, \bar{a}\bar{c}, \bar{c}\bar{b}, \bar{c}\bar{c} \to c^* & ba, bc, ca, cc \to \bar{c}^* \\
ab, ad, db, dd \to d^* & \bar{b}\bar{a}, \bar{b}\bar{d}, \bar{d}\bar{a}, \bar{d}\bar{d} \to \bar{d}^*
\end{array}
\] Note that indices are taken mod $n$, so ${\bf u}_{n+1}$ is identified with ${\bf u}_1$.
\item[(D2)]  Replace each ${\bf u}_i$ with the subword consisting of $k_i - 1$ consecutive $d^*_0$ letters if $k_i > 0$ or $1 - k_i$ consecutive $c^*_0$ letters if $k_i < 0$.
\end{itemize}

To convert from dual notation to standard notation, we use exactly the same procedure (interchanging the words {\em standard} and {\em dual}  and adding/removing stars where appropriate in the discussion above). Note that letters in dual (resp. standard) notation correspond to consecutive pairs of letters in standard (resp. dual) notation after we ignore letters with subscript 0. We make the following observation about this correspondence:

\begin{obs}\label{obs:dual_stable_chains}
Stable chains in dual (resp. standard) notation correspond precisely to consecutive pairs of letters in standard (resp. dual) notation, ignoring letters with subscript 0, whose subscripts have opposite signs.
\end{obs}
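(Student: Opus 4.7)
The plan is to verify the observation by direct inspection of the conversion rules (D1)--(D2) that relate standard and dual notation, since it is there that all the content of the statement resides. I first unravel the sign conventions: by the shorthand relations $a_{-i} = \bar a_i$, $b_{-i} = \bar b_i$, $c_{-j} = \bar d_j$, $d_{-j} = \bar c_j$ (together with the special cases $d_0 = e = \bar c_0$ and $c_0 = \bar e = \bar d_0$), a letter of nonzero subscript carries a bar in its normal-form symbol if and only if that subscript is negative. Hence ``consecutive pair of standard letters, ignoring subscript-$0$ letters, whose subscripts have opposite signs'' translates cleanly into ``consecutive pair ${\bf u}_i {\bf u}_{i+1}$ in the standard normal form such that exactly one of the two is barred.''

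With this translation in place, the proof reduces to reading off the table in step (D1). The eight possible output types split into two disjoint groups. The first four rows produce $a^*,\bar a^*, b^*,\bar b^*$ (the types arising in stable chains on the dual side), and one checks that every entry in these four rows has exactly one barred letter among the two inputs. The remaining four rows produce $c^*,\bar c^*, d^*,\bar d^*$, and in every such entry the two input letters are either both barred or both unbarred. This direct dichotomy gives a bijection between ``opposite-sign pairs in standard notation'' and ``stable-chain letters in dual notation,'' which is exactly the content of the observation.

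For the parenthetical direction, I would appeal to the involutive property of the conversion, $(\w_\lp)^{**} = (\w_\lp)$: because the inverse conversion is defined by the same procedure with the roles of the two notations interchanged, an identical case check (now applied to the standard-side table) establishes the symmetric statement. The main subtlety, and the only possible source of error, is keeping track of the bar convention through the identifications $c_{-j} = \bar d_j$ and $d_0 = \bar c_0$; these ensure that a ``positive-subscript'' $c$ or $d$ in a nonzero-subscript ${\bf u}_i$ means a subscript strictly greater than zero, while the zero-subscript case is absorbed into the intermediate ${\bf v}_i$ subwords and is precisely the data the statement asks us to ignore. Beyond this bookkeeping the argument is entirely mechanical, which is why the authors record the statement as an observation rather than a theorem.
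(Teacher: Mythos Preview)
Your proposal is correct. The paper states this as an observation without proof, and your approach---translating ``opposite signs'' to ``exactly one barred'' via the conventions $a_{-i}=\bar a_i$, $c_{-j}=\bar d_j$, etc., and then reading off the (D1) table to see that the stable outputs $a^*,\bar a^*,b^*,\bar b^*$ arise exactly from mixed barred/unbarred pairs while $c^*,\bar c^*,d^*,\bar d^*$ arise from same-sign pairs---is precisely the intended verification and the only natural one.
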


With the forgoing in place we will not distinguish between loops in standard or in dual notation; for a given loop $\lp$ containing (vertices decorated by) both idempotents, it is always possible to choose a representative for $\lp$ in either of the alphabets $\mathfrak{A}$ or $\mathfrak{A}^*$. In summary: We will regard a loop as both a graph-theoretic object and as an equivalence class of words $(\w_{\lp})$ and $(\w_{\lp}^*)$ modulo dualizing. In particular, we will adopt the abuse of notation that $\lp$ is such an equivalence class of words, where convenient, and think  of loops as graph-theoretic objects and word-theoretic objects interchangeably. In particular, we will let the notation $\lp$ stand in for a choice of cyclic word representative (in either alphabet). 

We will often refer to a sub-words  $\w$ of a given loop $\lp$, so the notation $(\w)$ indicating the cyclic closure of a word (when it exists) will be used to distinguish sub-words from (equivalence classes of) cyclic words. The length of a sub-word is the number of pieces (or, letters) in the word. 

\parpic[r]{\begin{tikzpicture}[>=latex, thick] 
\node (a) at (2,0) {$\bullet$};
\node (b) at (1,0) {$\circ$};
\node (c) at (0,0) {$\bullet$};
\node (d) at (0,1) {$\circ$};
\node (e) at (0,2) {$\bullet$};
\node (f) at (1,2) {$\circ$};
\node (g) at (2,1) {$\circ$};
\draw[->] (a) to node[below]{$\rho_3$} (b);
\draw[->] (b) to node[below]{$\rho_2$} (c);
\draw[->] (c) to node[left]{$\rho_{123}$} (d);
\draw[->] (e) to node[left]{$\rho_1$} (d);
\draw[->] (f) to node[above]{$\rho_2$} (e);
\draw[->] (g) to node[above right]{$\rho_{23}$} (f);
\draw[->] (a) to node[right]{$\rho_{123}$} (g);
\end{tikzpicture}} 
To conclude with a particular example, let $M$ denote the complement of the left-hand trefoil and consider the bordered manifold $(M,\mu,\lambda)$, where $\mu$ is the knot meridian and $\lambda$ is the Seifert longitude. Following \cite[Chapter 11]{LOT-bordered}, $\CFD(M,\mu,\lambda)$ is described by a loop, as shown on the right.  This may be expressed in standard notation as $(a_1b_1\bar{d}_2)$ or in dual notation as $(a^*_1e^*b^*_1\bar{d}^*_1)$.  In particular, according to the discussion above, this pair of words is a representative of the same equivalence  specifying the  loop $\lp$ shown. That is, we regard $\lp\sim(a_1b_1\bar{d}_2)\sim(a^*_1e^*b^*_1\bar{d}^*_1)$. More generally, the homotopy type of the invariant $\CFD(M,\mu,\lambda-(2+n)\mu)$  is represented by the loop $(a_1b_1c_n)$ for $n\in\Z$, following the convention that $c_0=\bar e$ and $c_n=\bar d_{-n}$ when $n<0$.

\subsection{Operations on loops}\label{subsec:operations} We now define two abstract operations on loops: $\tw$ and $\du$. These operations are easy to describe for loops in terms of standard and dual notation, respectively, and we will see in the next subsection that they correspond to important bordered Floer operations.

If a loop $\lp$ cannot be written in standard notation (that is, it is a collection of only $e^*$ segments), then $\tw(\lp) = \lp$. Otherwise, express $\lp$ in standard notation and consider the operation $\tw$ determined on individual letters via
\[\tw(a_i) = a_i, \tw(b_i) = b_i, \tw(c_j)=c_{j-1}, \tw(d_j)=d_{j+1}\]
for any $i \in \Z \smallsetminus \{0\}$ and $j\in\Z$. For collections of loops, we also define $\tw(\{\lp_i\}_{i=1}^n)=\{\tw(\lp_i)\}_{i=1}^n$. The operation $\tw$ is invertible; denote the inverse $\twi$.

Note that the letters $a_i$ and $b_i$ are fixed by $\tw$. As a result, these are sometimes refered to as stable chains (or, standard stable chains). The (standard) unstable chains are the letters $c_j$ and $d_j$. This is consistent with the notion of stable and unstable chains from \cite{LOT-bordered}.

The operator $\du$ is defined similarly, but with respect to dual notation. If a loop $\lp$ cannot be written in dual notation, then $\du(\lp) = \lp$. Otherwise, express $\lp$ in dual notation and consider the operation $\du$ on $\lp$ defined on individual dual letters via
\[\du(a_i^*) = a_i^*, \du(b_i^*) = b_i^*, \du(c_j^*)=c_{j-1}^*, \du(d_j^*)=d_{j+1}^*\]
for any $i \in \Z \smallsetminus \{0\}$ and $j\in\Z$. For collections of loops, we also define $\du(\{\lp_i\}_{i=1}^n)=\{\du(\lp_i)\}_{i=1}^n$. As with $\tw$, $\du$ is an invertible operation with inverse $\du^{-1}$. 

Note that $a^*_i$ and $b^*_i$ are fixed by $\du$. When we have need for it, $a^*$- and $b^*$-type letters will be referred to as  dual stable chains, while $c^*$- and $d^*$-type letters will be referred to as dual unstable chains.

The operation $\tw$ is easy to define for loops in standard notation, but it would be difficult to describe purely in terms of dual letters. Similarly, the operation $\du$ is simple to define in dual notation, but would be complicated in terms of standard letters. This suggests the need to comfortably switch between the two notations. In particular, given a loop $\lp$ expressed in standard notation, finding $\du(\lp)$ in standard notation is a three step process: dualize, apply $\du$, and dualize again.

We will often need to apply combinations of the operations $\tw$ and $\du$ to a loop. The composition $\tw \circ \dui \circ \tw$, in particular, appears often; it will be convenient to regard this composition as another loop operation, which we call $\ex$. The following Lemma describes the effect of $\ex$ on a cyclic word in standard notation; this provides a convenient shortcut compared with computing the operations $\tw$, $\dui$, and $\tw$ individually. We state the Lemma for general loops, but we will only prove it in a special case.

\begin{lem}\label{lem:ex_operation}
If $\lp$ is written in standard notation, then $\ex(\lp)$ is determined by the following action of $\ex$ on standard letters:
\[\ex(a_k) = a^*_{-k}, \ex(b_k) = b^*_{-k}, \ex(c_k)=c^*_{-k}, \ex(d_k)=d^*_{-k}\]
\end{lem}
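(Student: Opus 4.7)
The plan is to verify the identity by direct computation, tracking a single standard letter through the composition $\ex = \tw \circ \du^{-1} \circ \tw$. Recall that $\tw$ acts letter-wise in standard notation, fixing $a_k$ and $b_k$ and shifting $c_k \mapsto c_{k-1}$, $d_k \mapsto d_{k+1}$, while $\du^{-1}$ acts letter-wise in dual notation, fixing $a^*_k$ and $b^*_k$ and shifting $c^*_k \mapsto c^*_{k+1}$, $d^*_k \mapsto d^*_{k-1}$. The translation between the two notations is governed by rules (D1)--(D2) of Section~\ref{sec:notations_for_loops}. To evaluate $\ex$ on a loop one therefore applies $\tw$ in standard notation, dualizes, applies $\du^{-1}$, dualizes back, and applies $\tw$ once more.

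For the representative case $s = a_k$ with $k > 0$, locate $a_k$ in the standard normal form of $\lp$ as a letter ${\bf u}_i$ of subscript $k$. The first $\tw$ fixes $a_k$ and at most shifts the subscripts of neighbouring unstable letters by $\pm 1$, causing a controlled reorganization of the ${\bf u}$- and ${\bf v}$-blocks. Upon dualizing, rule (D2) replaces $a_k$ by $k-1$ consecutive $d^*_0$ letters, flanked by dual letters supplied by rule (D1) from the adjacent ${\bf v}$-blocks. Applying $\du^{-1}$ sends each $d^*_0$ to $d^*_{-1}$, promoting this block to a sequence of dual ${\bf u}^*$-letters of subscript $-1$ while fixing the surrounding dual stable letters. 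Converting back via the inverse of (D1)--(D2) and applying $\tw$ a second time, the block collapses to a single standard letter whose dual-notation expression is precisely $a^*_{-k}$. The cases $s \in \{b_k, c_k, d_k\}$ follow by identical computations, differing only in which dual letter type is produced by (D2) from the central letter and in the specific reorganization of the surrounding ${\bf v}$-blocks.

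The chief obstacle is careful bookkeeping of the normal form, since each of the five constituent operations may split a single ${\bf u}$- or ${\bf v}$-block into several or merge several into one; in particular, a shift by $\tw$ can move an unstable letter with subscript $\pm 1$ into a ${\bf v}$-block, and $\du^{-1}$ correspondingly promotes zero-subscript dual letters to ${\bf u}^*$-letters. A careful reading of (D1)--(D2) reveals that these reorganizations cancel in the composition, producing the clean letter-wise formula stated in the lemma. The special case alluded to in the statement amounts to verifying this cancellation on a single short subword, with the general case following by the same local combinatorics.
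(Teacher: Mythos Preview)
Your approach is the same as the paper's: unwind the composition $\ex = \tw \circ \dui \circ \tw$ step by step, using the standard-to-dual dictionary of Section~\ref{sec:notations_for_loops}. The paper, however, does not attempt the general case letter by letter. It treats only the special case where $\lp$ consists entirely of $d_k$ segments with $k \ge 0$, carries out the five-step computation explicitly on that loop, and leaves the remaining cases to the reader.

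That choice is not accidental. In the all-$d_k$ case the neighbouring letters are of uniform type, so rules (D1)--(D2) produce a clean pattern at every stage and no boundary bookkeeping is needed. Your outline for a general letter (say $a_k$) is correct in spirit, but the step where you write ``the block collapses to a single standard letter whose dual-notation expression is precisely $a^*_{-k}$'' is precisely the place where the work lives: the dual letters flanking the $(k-1)$-fold $d^*_0$ block depend on what came before and after $a_k$ in $\lp$, and you have not verified that after $\dui$, re-dualizing, and the final $\tw$, those flanking pieces reassemble into exactly the right adjacencies to yield $a^*_{-k}$ in dual notation. Asserting that ``these reorganizations cancel in the composition'' is the content of the lemma, not a proof of it. If you want to pursue the general case rather than a representative special case, you should at minimum write out one full computation with a generic pair of neighbours on each side of $a_k$; otherwise, the paper's strategy of doing one fully explicit special case is both shorter and more convincing.
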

\begin{proof}
We give the proof in the special case that $\lp$ consists only of $d_k$ segments with $k\ge 0$. The general proof is left to the reader.

Let $\lp = (d_{k_1} d_{k_2} \ldots d_{k_n})$, with $k_i \ge 0$. Then we have $\tw(\lp) = (d_{k_1+1} \ldots d_{k_n+1})$. Writing this in dual notation and applying $\dui$, we have
\[\tw(\lp) = (d^*_1 \underbrace{d^*_0 \ldots d^*_0}_{k_1} d^*_1 \underbrace{d^*_0 \ldots d^*_0}_{k_2} \ldots d^*_1 \underbrace{d^*_0 \ldots d^*_0}_{k_n}) \]
\[ \dui \circ \tw(\lp) = (d^*_0 \underbrace{d^*_{-1} \ldots d^*_{-1}}_{k_1} d^*_0 \underbrace{d^*_{-1} \ldots d^*_{-1}}_{k_2} \ldots d^*_0 \underbrace{d^*_{-1} \ldots d^*_{-1}}_{k_n}) \]
Dualizing and twisting again gives
\[ \dui \circ \tw(\lp) = (c_2 \underbrace{c_1 \ldots c_1}_{k_1-1} c_2 \underbrace{c_1 \ldots c_1}_{k_2-1} \ldots c_2 \underbrace{c_1 \ldots c_1}_{k_n-1}) \]
\[ \tw \circ \dui \circ \tw(\lp) = (c_1 \underbrace{c_0 \ldots c_0}_{k_1-1} c_1 \underbrace{c_0 \ldots c_0}_{k_2-1} \ldots c_1 \underbrace{c_0 \ldots c_0}_{k_n-1}) \]
\[ \tw \circ \dui \circ \tw(\lp) = (d^*_{-k_1} d^*_{-k_2} \ldots d^*_{-k_n}) \qedhere\]
\end{proof}

As suggested by the example of the left-hand trefoil exterior, expressing loops as cyclic words gives rise to clean way of describing a type D structure and its reparametrization. Recall that when $M$ is the exterior of the left-hand trefoil, $\CFD(M,\mu,\lambda-(2+n)\mu)$ is represented by the loop $(a_1 b_1 c_n)$ for $n\in\Z$. This loop is simply $\tw^{-n}(\lp)$, where $\lp=(a_1 b_1 c_0)$.

\subsection{Dehn twists} The operators $\tw$ and $\du$ naturally encode the effect of a Dehn twist on a loop representing the type D structure of a bordered manifold. Recall that \[\Ts^{\pm1}\boxtimes\CFD(M,\alpha,\beta)\cong\CFD(M,\alpha,\beta\pm\alpha)\] and, more generally, for any type D structure $N$  over $\Alg$ the pair of type D structures $\Ts^{\pm1}\boxtimes N$ is well defined.

\begin{figure}[ht!]
\begin{tikzpicture}[>=latex] 
\node at (0,0) {$\ast$}; 
\node at (-2,3) {$\bullet$}; 	
\node at (2,3) {$\circ$};
\draw[thick, shorten <=0.1cm] (2,3) arc (180:360:0.5);\draw[thick, ->] (3,3) arc (0:173:0.5); 
\draw[thick, shorten <=3pt, shorten >= 3pt, ->] (-2,3) to[bend left, looseness=0.75] (2,3); 
\draw[thick, shorten <=3pt, shorten >= 3pt, ->] (0,0) to[bend right, looseness=1] (-2,3);
\draw[thick, shorten <=3pt, shorten >= 3pt, <-] (0,0) to[bend left, looseness=1] (-2,3); 
\draw[thick, shorten <=3pt, shorten >= 3pt, <-] (0,0) to[bend right, looseness=1] (2,3);
\draw[thick, shorten <=3pt, shorten >= 3pt, ->] (0,0) to[bend left, looseness=1] (2,3); 
\node at (2.5,3.7) {$\scriptstyle \rho_{23}\otimes\rho_{23}$};
\node at (-0.3,4.25) {$\scriptstyle \rho_1\otimes \rho_1$};
	\node at (0,3.95) {$\scriptstyle +\rho_{123}\otimes \rho_{123}$};
	\node at (0.12,3.65) {$\scriptstyle +\rho_3\otimes (\rho_3,\rho_{23})$};	
\node at (-1.1,1.9) {$\scriptstyle \rho_{2}\otimes1$};
\node at (-2.1,0.8)  {$\scriptstyle \rho_{123}\otimes\rho_{12}$};\node at (-1.9,0.5)  {$\scriptstyle +\rho_{3}\otimes(\rho_{3},\rho_{2})$};
\node at (1.1,1.9) {$\scriptstyle 1\otimes\rho_{3}$};
\node at (1.9,0.8) {$\scriptstyle \rho_{23}\otimes\rho_{2}$};
\node at (7,0) {$\ast$}; 
\node at (5,3) {$\bullet$}; 	
\node at (9,3) {$\circ$};
\draw[thick, ->, shorten <=0.1cm] (5,3) arc (0:352:0.5);
\draw[thick, shorten <=0.1cm] (9,3) arc (180:360:0.5);\draw[thick, ->] (10,3) arc (0:173:0.5); 
\draw[thick, shorten <=3pt, shorten >= 3pt, ->] (5,3) to[bend left, looseness=0.75] (9,3); 
\draw[thick, shorten <=3pt, shorten >= 3pt, <-] (5,3) to (9,3);
\draw[thick, shorten <=3pt, shorten >= 3pt, ->] (7,0) to[bend right, looseness=1] (5,3);
\draw[thick, shorten <=3pt, shorten >= 3pt, <-] (7,0) to[bend left, looseness=1] (5,3); 
\draw[thick, shorten <=3pt, shorten >= 3pt, <-] (7,0) to[bend right, looseness=1] (9,3);
\draw[thick, shorten <=3pt, shorten >= 3pt, ->] (7,0) to[bend left, looseness=1] (9,3); 
\node at (4.5,3.7) {$\scriptstyle \rho_{12}\otimes(\rho_{123}\otimes\rho_2)$};
\node at (9.5,3.7) {$\scriptstyle \rho_{23}\otimes\rho_{23}$};
\node at (6.7,3.95) {$\scriptstyle \rho_1\otimes \rho_1$};
	\node at (7,3.65) {$\scriptstyle +\rho_{123}\otimes \rho_{123}$};
	\node at (7,2.75) {$\scriptstyle \rho_2\otimes (\rho_{23},\rho_{2})$};	
\filldraw [fill=white,white] (6,1.7) rectangle (7,2);\node at (6.2,1.9) {$\scriptstyle \rho_{2}\otimes(\rho_3,\rho_2)$};
\node at (4.8,0.8)  {$\scriptstyle \rho_{3}\otimes1+\rho_1\otimes\rho_{12}$};
\node at (8.15,1.9) {$\scriptstyle \rho_{23}\otimes\rho_{3}$};
\node at (8.8,0.8) {$\scriptstyle 1\otimes\rho_{2}$};
\end{tikzpicture}		
\caption{Graphical representations of the Dehn twist bimodules $\Ts$ (left) and $\Ts^{-1}$ (right), following \cite[Section 10]{LOT-bimodules}.} 	\label{fig:Dehn-twist-bimodules}
\end{figure}

\begin{prop}
\label{prop:effect_of_Tstd}
If $\lp$ is a loop with corresponding type D structure $N_{\lp}$ then $N_{\lp}^{\pm} = \Ts^{\pm1} \boxtimes N_{\lp}$ is a loop-type module represented by the loop $\tw^\pm(\lp)$.
\end{prop}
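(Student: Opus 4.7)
The plan is a direct computation of $\Ts^{\pm 1} \boxtimes N_\lp$ using the explicit description of the Dehn twist bimodules given in Figure \ref{fig:Dehn-twist-bimodules}. Let $p$, $q$, $r$ denote the three generators of $\Ts$, with idempotents $(\iota_0,\iota_0)$, $(\iota_1,\iota_1)$, and $(\iota_0,\iota_1)$ respectively. Each $\iota_0$-vertex $x$ of $\lp$ contributes generators $p \otimes x$ and $r \otimes x$ to $\Ts \boxtimes N_\lp$, while each $\iota_1$-vertex $y$ contributes $q \otimes y$. The induced differentials come from matching the $\Ainfty$-operations on $\Ts$ with the iterated maps $\delta^k$ of $N_\lp$; since any loop is bounded, these sums are finite.

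Next, I would express $\lp$ in standard notation and carry out the computation segment by segment. The key point is that the box tensor product interacts locally with the chain decomposition: the action of $\Ts$ on a maximal run $x_0 \to x_1 \to \cdots \to x_m$ of arrows within a single standard letter ${\bf u}_i$ depends only on the Reeb chords appearing in that segment, together with how the endpoint vertices are concatenated with the adjacent segments ${\bf u}_{i\pm 1}$. After applying edge reduction in the sense of Section \ref{sec:background} to eliminate identity-labeled arrows (and in particular to cancel the pairs of $r$-generators introduced at each $\bullet$-vertex against one adjacent edge), each segment transforms as follows: a stable chain $a_k$ or $b_k$ returns unchanged, the unstable chain $d_j$ becomes $d_{j+1}$ (an additional $\rho_{23}$ step is inserted coming from $\Ts$), and the unstable chain $c_j$ becomes $c_{j-1}$ (one $\rho_{23}$ step is absorbed). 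This matches the definition of $\tw$ on standard letters.

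The main obstacle is to verify that the local calculation assembles into a global loop in the expected form, and in particular that the reductions at segment boundaries are compatible with the puzzle-piece concatenation rules. Concretely, one must check that whenever consecutive standard letters ${\bf u}_i {\bf u}_{i+1}$ meet at a $\bullet$-vertex, the mixed generator $r \otimes x$ produced at that vertex is canceled by a unique identity-labeled arrow, regardless of which pair of types ${\bf u}_i, {\bf u}_{i+1}$ one encounters. This is a finite case analysis over the admissible adjacencies permitted by $(\star)$, and in each case the outcome is that the resulting reduced graph satisfies $(\star)$ and has the word structure predicted by $\tw$. The edge cases $j = 0$, corresponding to the letters $e = d_0$ and $\bar e = c_0$, are handled via the standing conventions $c_{-1} = \bar d_1$ and $d_1 = d_1$, so that $\tw(\bar e) = \tw(c_0) = c_{-1} = \bar d_1$ and $\tw(e) = \tw(d_0) = d_1$, consistent with the computation.

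Finally, the case of $\Ts^{-1}$ is handled identically using the right-hand diagram of Figure \ref{fig:Dehn-twist-bimodules}; alternatively, once the statement is established for $\Ts$, invertibility of the bimodule $\Ts$ together with invertibility of the operation $\tw$ on cyclic words gives $\Ts^{-1} \boxtimes N_\lp \cong N_{\twi(\lp)}$ by applying $\Ts \boxtimes$ to both sides and cancelling.
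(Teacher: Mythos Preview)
Your approach is the same as the paper's: compute $\Ts \boxtimes N_\lp$ segment by segment in standard notation, apply edge reduction, and deduce the $\Ts^{-1}$ case from invertibility. However, your description of what happens to the mixed generators $r \otimes x$ is wrong, and this is not a detail---it is exactly where the content of the computation lies.

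You assert that $r \otimes x$ is always cancelled by a unique identity-labeled arrow. It is not. An identity arrow out of $r \otimes x$ comes from the operation taking $(r,\rho_3)$ to $1\otimes q$ in $\Ts$, so there is something to cancel only when the type~${\bf II}_\bullet$ edge at $x$ is an outgoing $\rho_3$. When that edge is instead an incoming $\rho_2$ or $\rho_{12}$ (as at the right end of an $a_k$ or $d_k$, or at either end of an $e$), the generator $r \otimes x$ survives, together with the arrow $r\otimes x \xrightarrow{\rho_2} p \otimes x$, and becomes a new $\iota_1$-vertex of the output loop. This surviving $r\otimes x$ is precisely the extra $\circ$-vertex that turns $d_j$ into $d_{j+1}$ and $e$ into $d_1$; your claim that all $r\otimes x$ cancel would force the generator count to be preserved, contradicting your own stated outcome. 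The paper handles this by first fixing, for each $\bullet$-vertex $x$, on which side of $p\otimes x$ the generator $r\otimes x$ sits (determined by the puzzle-piece shape of the adjacent segment), and then doing the case analysis of Figure~\ref{fig:effect_of_tau_m} one segment at a time. Two smaller gaps: not every loop is bounded (e.g.\ $(e)$ is not), though $\Ts$ has only $m_2$ and $m_3$ operations, which is what makes $\boxtimes$ well-defined here; and you do not treat the case where $\lp$ consists only of $e^*$ segments and so cannot be written in standard notation---the paper checks separately that such a loop is fixed by $\Ts$.
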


\begin{figure}
------------------------------------------------------------------------------------------------------
\begin{tikzpicture}[>=latex]

\node (a) at (0,2) {$\bullet$};
\node (b) at (2,2) {$\circ$};
\node (c) at (4,2) {$\circ$};
\node (d) at (6,2) {$\circ$};
\node (e) at (8,2) {$\bullet$};
\draw[->] (a) to node[above]{$\rho_3$} (b);
\draw[->] (b) to node[above]{$\rho_{23}$} (c);
\draw[->, dashed] (c) to node[above]{$\rho_{23}$} (d);
\draw[->] (d) to node[above]{$\rho_2$} (e);

\node at (4,1.3) {$\Downarrow$};

\node (ap) at (-.2,.5) {$\bullet$};
\node (ar) at (.2,-.5) {$\circ$};
\node (bq) at (2,0) {$\circ$};
\node (cq) at (4,0) {$\circ$};
\node (dq) at (6,0) {$\circ$};
\node (ep) at (8.2,.5) {$\bullet$};
\node (er) at (7.8,-.5) {$\circ$};

\draw[->] (ar) to node[left]{$\rho_2$} (ap);
\draw[->] (er) to node[right]{$\rho_2$} (ep);

\draw[->] (bq) to node[below]{$\rho_{23}$} (cq);
\draw[->, dashed] (cq) to node[below]{$\rho_{23}$} (dq);
\draw[->] (dq) to node[below]{$\rho_{23}$} (er);

\draw[->, bend left = 10] (ap) to node[above]{$\rho_{3}$} (cq);
\draw[->, ultra thick] (ar) to (bq);

\end{tikzpicture}

------------------------------------------------------------------------------------------------------
\begin{tikzpicture}[>=latex]
\node (a) at (0,2) {$\bullet$};
\node (b) at (2,2) {$\circ$};
\node (c) at (4,2) {$\bullet$};
\draw[->] (a) to node[above]{$\rho_3$} (b);
\draw[->] (b) to node[above]{$\rho_2$} (c);
\node at (2,1.3) {$\Downarrow$};

\node (ap) at (-.2,.5) {$\bullet$};
\node (ar) at (.2,-.5) {$\circ$};
\node (bq) at (2,0) {$\circ$};
\node (cp) at (4.2,.5) {$\bullet$};
\node (cr) at (3.8,-.5) {$\circ$};
\draw[->] (ar) to node[left]{$\rho_2$} (ap);
\draw[->] (cr) to node[right]{$\rho_2$} (cp);
\draw[->] (bq) to node[below]{$\rho_{23}$} (cr);
\draw[->, bend left = 20] (ap) to node[above]{$\rho_{3}$} (cr);
\draw[->, ultra thick] (ar) to (bq);

\draw (6,-.5) -- (6,2.5);
\end{tikzpicture} \qquad
\begin{tikzpicture}[>=latex]

\node (a) at (0,2) {$\bullet$};
\node (b) at (2,2) {$\circ$};
\node (c) at (4,2) {$\circ$};
\node (d) at (6,2) {$\bullet$};
\draw[->] (a) to node[above]{$\rho_{123}$} (b);
\draw[->, dashed] (b) to node[above]{$\rho_{23}$} (c);
\draw[->] (d) to node[above]{$\rho_1$} (c);

\node at (3,1.3) {$\Downarrow$};

\node (ap) at (.2,.5) {$\bullet$};
\node (ar) at (-.2,-.5) {$\circ$};
\node (bq) at (2,0) {$\circ$};
\node (cq) at (4,0) {$\circ$};
\node (dp) at (5.8,.5) {$\bullet$};
\node (dr) at (6.2,-.5) {$\circ$};

\draw[->] (ar) to node[left]{$\rho_2$} (ap);
\draw[->] (dr) to node[right]{$\rho_2$} (dp);

\draw[->] (ap) to node[above]{$\rho_{123}$} (bq);
\draw[->, dashed] (bq) to node[below]{$\rho_{23}$} (cq);
\draw[->] (dp) to node[above]{$\rho_1$} (cq);

\end{tikzpicture}

------------------------------------------------------------------------------------------------------
\begin{tikzpicture}[>=latex]

\node (a) at (0,2) {$\bullet$};
\node (b) at (2,2) {$\circ$};
\node (c) at (4,2) {$\circ$};
\node (d) at (6,2) {$\circ$};
\node (e) at (8,2) {$\bullet$};
\draw[->] (a) to node[above]{$\rho_3$} (b);
\draw[->] (b) to node[above]{$\rho_{23}$} (c);
\draw[->, dashed] (c) to node[above]{$\rho_{23}$} (d);
\draw[->] (e) to node[above]{$\rho_1$} (d);

\node at (4,1.3) {$\Downarrow$};

\node (ap) at (-.2,.5) {$\bullet$};
\node (ar) at (.2,-.5) {$\circ$};
\node (bq) at (2,0) {$\circ$};
\node (cq) at (4,0) {$\circ$};
\node (dq) at (6,0) {$\circ$};
\node (ep) at (7.8,.5) {$\bullet$};
\node (er) at (8.2,-.5) {$\circ$};

\draw[->] (ar) to node[left]{$\rho_2$} (ap);
\draw[->] (er) to node[right]{$\rho_2$} (ep);

\draw[->] (bq) to node[below]{$\rho_{23}$} (cq);
\draw[->, dashed] (cq) to node[below]{$\rho_{23}$} (dq);
\draw[->] (ep) to node[below]{$\rho_1$} (dq);

\draw[->, bend left = 10] (ap) to node[above]{$\rho_{3}$} (cq);
\draw[->, ultra thick] (ar) to (bq);

\end{tikzpicture}

------------------------------------------------------------------------------------------------------

\begin{tikzpicture}[>=latex]
\node (a) at (0,2) {$\bullet$};
\node (b) at (2,2) {$\circ$};
\node (c) at (4,2) {$\bullet$};
\draw[->] (a) to node[above]{$\rho_3$} (b);
\draw[->] (c) to node[above]{$\rho_1$} (b);
\node at (2,1.3) {$\Downarrow$};

\node (ap) at (-.2,.5) {$\bullet$};
\node (ar) at (.2,-.5) {$\circ$};
\node (bq) at (2,0) {$\circ$};
\node (cp) at (3.8,.5) {$\bullet$};
\node (cr) at (4.2,-.5) {$\circ$};
\draw[->] (ar) to node[left]{$\rho_2$} (ap);
\draw[->] (cr) to node[right]{$\rho_2$} (cp);
\draw[->] (cp) to node[below]{$\rho_1$} (bq);
\draw[->, ultra thick] (ar) to (bq);

\draw (6,-.5) -- (6,2.5);

\end{tikzpicture} \qquad
\begin{tikzpicture}[>=latex]

\node (a) at (0,2) {$\bullet$};
\node (b) at (2,2) {$\circ$};
\node (c) at (4,2) {$\circ$};
\node (d) at (6,2) {$\bullet$};
\draw[->] (a) to node[above]{$\rho_{123}$} (b);
\draw[->, dashed] (b) to node[above]{$\rho_{23}$} (c);
\draw[->] (c) to node[above]{$\rho_2$} (d);

\node at (3,1.3) {$\Downarrow$};

\node (ap) at (.2,.5) {$\bullet$};
\node (ar) at (-.2,-.5) {$\circ$};
\node (bq) at (2,0) {$\circ$};
\node (cq) at (4,0) {$\circ$};
\node (dp) at (6.2,.5) {$\bullet$};
\node (dr) at (5.8,-.5) {$\circ$};

\draw[->] (ar) to node[left]{$\rho_2$} (ap);
\draw[->] (dr) to node[right]{$\rho_2$} (dp);

\draw[->] (ap) to node[above]{$\rho_{123}$} (bq);
\draw[->, dashed] (bq) to node[below]{$\rho_{23}$} (cq);
\draw[->] (cq) to node[above]{$\rho_{23}$} (dr);

\end{tikzpicture}

------------------------------------------------------------------------------------------------------
\begin{tikzpicture}[>=latex]
\node (a) at (0,2) {$\bullet$};
\node (b) at (2,2) {$\bullet$};
\draw[->] (a) to node[above]{$\rho_{12}$} (b);
\node at (1,1.3) {$\Downarrow$};

\node (ap) at (.2,.5) {$\bullet$};
\node (ar) at (-.2,-.5) {$\circ$};
\node (bp) at (2.2,.5) {$\bullet$};
\node (br) at (1.8,-.5) {$\circ$};
\draw[->] (ar) to node[left]{$\rho_2$} (ap);
\draw[->] (br) to node[right]{$\rho_2$} (bp);
\draw[->] (ap) to node[below]{$\rho_{123}$} (br);

\end{tikzpicture}
------------------------------------------------------------------------------------------------------

\caption{Illustrating the proof of Proposition \ref{prop:effect_of_Tstd}: The effect of box tensoring with $\Ts$ on each of the possible segments occurring in a loop expressed in standard notation. Unmarked edges, which are eliminated using edge reduction described in Section \ref{sub:typeD}, are highlighted.}
\label{fig:effect_of_tau_m}

\end{figure}

\begin{figure}
------------------------------------------------------------------------------------------------------

\begin{tikzpicture}[>=latex]

\node (a) at (0,2) {$\circ$};
\node (b) at (2,2) {$\bullet$};
\node (c) at (4,2) {$\bullet$};
\node (d) at (6,2) {$\bullet$};
\node (e) at (8,2) {$\circ$};
\draw[->] (a) to node[above]{$\rho_2$} (b);
\draw[->] (b) to node[above]{$\rho_{12}$} (c);
\draw[->, dashed] (c) to node[above]{$\rho_{12}$} (d);
\draw[->] (d) to node[above]{$\rho_1$} (e);

\node at (4,1.3) {$\Downarrow$};

\node (ap) at (-.2,.5) {$\circ$};
\node (ar) at (.2,-.5) {$\bullet$};
\node (bq) at (2,0) {$\bullet$};
\node (cq) at (4,0) {$\bullet$};
\node (dq) at (6,0) {$\bullet$};
\node (ep) at (8.2,.5) {$\circ$};
\node (er) at (7.8,-.5) {$\bullet$};

\draw[->] (ar) to node[left]{$\rho_1$} (ap);
\draw[->] (er) to node[right]{$\rho_1$} (ep);

\draw[->] (bq) to node[below]{$\rho_{12}$} (cq);
\draw[->, dashed] (cq) to node[below]{$\rho_{12}$} (dq);
\draw[->] (dq) to node[below]{$\rho_{12}$} (er);

\draw[->, bend left = 10] (ap) to node[above]{$\rho_{2}$} (cq);
\draw[->, ultra thick] (ar) to (bq);

\end{tikzpicture}

------------------------------------------------------------------------------------------------------

\begin{tikzpicture}[>=latex]
\node (a) at (0,2) {$\circ$};
\node (b) at (2,2) {$\bullet$};
\node (c) at (4,2) {$\circ$};
\draw[->] (a) to node[above]{$\rho_2$} (b);
\draw[->] (b) to node[above]{$\rho_1$} (c);
\node at (2,1.3) {$\Downarrow$};

\node (ap) at (-.2,.5) {$\circ$};
\node (ar) at (.2,-.5) {$\bullet$};
\node (bq) at (2,0) {$\bullet$};
\node (cp) at (4.2,.5) {$\circ$};
\node (cr) at (3.8,-.5) {$\bullet$};
\draw[->] (ar) to node[left]{$\rho_1$} (ap);
\draw[->] (cr) to node[right]{$\rho_1$} (cp);
\draw[->] (bq) to node[below]{$\rho_{12}$} (cr);
\draw[->, bend left = 20] (ap) to node[above]{$\rho_{2}$} (cr);
\draw[->, ultra thick] (ar) to (bq);

\draw (6,-.5) -- (6,2.5);
\end{tikzpicture} \qquad
\begin{tikzpicture}[>=latex]

\node (a) at (0,2) {$\circ$};
\node (b) at (2,2) {$\bullet$};
\node (c) at (4,2) {$\bullet$};
\node (d) at (6,2) {$\circ$};
\draw[->] (b) to node[above]{$\rho_{3}$} (a);
\draw[->, dashed] (b) to node[above]{$\rho_{12}$} (c);
\draw[->] (d) to node[above]{$\rho_{123}$} (c);

\node at (3,1.3) {$\Downarrow$};

\node (ap) at (.2,.5) {$\circ$};
\node (ar) at (-.2,-.5) {$\bullet$};
\node (bq) at (2,0) {$\bullet$};
\node (cq) at (4,0) {$\bullet$};
\node (dp) at (5.8,.5) {$\circ$};
\node (dr) at (6.2,-.5) {$\bullet$};

\draw[->] (ar) to node[left]{$\rho_1$} (ap);
\draw[->] (dr) to node[right]{$\rho_1$} (dp);

\draw[->] (bq) to node[above]{$\rho_{3}$} (ap);
\draw[->, dashed] (bq) to node[below]{$\rho_{12}$} (cq);
\draw[->] (cq) to node[above]{$\rho_{123}$} (dp);

\end{tikzpicture}

------------------------------------------------------------------------------------------------------

\begin{tikzpicture}[>=latex]

\node (a) at (0,2) {$\circ$};
\node (b) at (2,2) {$\bullet$};
\node (c) at (4,2) {$\bullet$};
\node (d) at (6,2) {$\circ$};
\draw[->] (b) to node[above]{$\rho_{3}$} (a);
\draw[->, dashed] (b) to node[above]{$\rho_{12}$} (c);
\draw[->] (c) to node[above]{$\rho_1$} (d);

\node at (3,1.3) {$\Downarrow$};

\node (ap) at (.2,.5) {$\circ$};
\node (ar) at (-.2,-.5) {$\bullet$};
\node (bq) at (2,0) {$\bullet$};
\node (cq) at (4,0) {$\bullet$};
\node (dp) at (6.2,.5) {$\circ$};
\node (dr) at (5.8,-.5) {$\bullet$};

\draw[->] (ar) to node[left]{$\rho_1$} (ap);
\draw[->] (dr) to node[right]{$\rho_1$} (dp);

\draw[->] (bq) to node[above]{$\rho_{3}$} (ap);
\draw[->, dashed] (bq) to node[below]{$\rho_{12}$} (cq);
\draw[->] (cq) to node[above]{$\rho_{12}$} (dr);

\draw (8,-.5) -- (8,2.5);

\end{tikzpicture}\qquad
\begin{tikzpicture}[>=latex]
\node (a) at (0,2) {$\circ$};
\node (b) at (2,2) {$\bullet$};
\node (c) at (4,2) {$\circ$};
\draw[->] (a) to node[above]{$\rho_2$} (b);
\draw[->] (b) to node[above]{$\rho_{123}$} (c);
\node at (2,1.3) {$\Downarrow$};

\node (ap) at (-.2,.5) {$\circ$};
\node (ar) at (.2,-.5) {$\bullet$};
\node (bq) at (2,0) {$\bullet$};
\node (cp) at (3.8,.5) {$\circ$};
\node (cr) at (4.2,-.5) {$\bullet$};
\draw[->] (ar) to node[left]{$\rho_1$} (ap);
\draw[->] (cr) to node[right]{$\rho_1$} (cp);
\draw[->] (bq) to node[below]{$\rho_{123}$} (cp);
\draw[->, ultra thick] (ar) to (bq);
\draw[->, bend left = 10, pos = .3] (ap) to node[above]{$\rho_{23}$} (cp);

\end{tikzpicture}

------------------------------------------------------------------------------------------------------

\begin{tikzpicture}[>=latex]

\node (a) at (0,2) {$\circ$};
\node (b) at (2,2) {$\bullet$};
\node (c) at (4,2) {$\bullet$};
\node (d) at (6,2) {$\bullet$};
\node (e) at (8,2) {$\circ$};
\draw[->] (a) to node[above]{$\rho_2$} (b);
\draw[->] (b) to node[above]{$\rho_{12}$} (c);
\draw[->, dashed] (c) to node[above]{$\rho_{12}$} (d);
\draw[->] (d) to node[above]{$\rho_{123}$} (e);

\node at (4,1.3) {$\Downarrow$};

\node (ap) at (-.2,.5) {$\circ$};
\node (ar) at (.2,-.5) {$\bullet$};
\node (bq) at (2,0) {$\bullet$};
\node (cq) at (4,0) {$\bullet$};
\node (dq) at (6,0) {$\bullet$};
\node (ep) at (7.8,.5) {$\circ$};
\node (er) at (8.2,-.5) {$\bullet$};

\draw[->] (ar) to node[left]{$\rho_1$} (ap);
\draw[->] (er) to node[right]{$\rho_1$} (ep);

\draw[->] (bq) to node[below]{$\rho_{12}$} (cq);
\draw[->, dashed] (cq) to node[below]{$\rho_{12}$} (dq);
\draw[->] (dq) to node[below]{$\rho_{123}$} (ep);

\draw[->, bend left = 10] (ap) to node[above]{$\rho_{2}$} (cq);
\draw[->, ultra thick] (ar) to (bq);

\draw (10,-.5) -- (10,2.5);
\end{tikzpicture}\qquad
\begin{tikzpicture}[>=latex]
\node (a) at (0,2) {$\circ$};
\node (b) at (2,2) {$\circ$};
\draw[->] (a) to node[above]{$\rho_{23}$} (b);
\node at (1,1.3) {$\Downarrow$};

\node (ap) at (-.2,.5) {$\circ$};
\node (ar) at (.2,-.5) {$\bullet$};
\node (bp) at (1.8,.5) {$\circ$};
\node (br) at (2.2,-.5) {$\bullet$};
\draw[->] (ar) to node[left]{$\rho_1$} (ap);
\draw[->] (br) to node[right]{$\rho_1$} (bp);
\draw[->] (ar) to node[below]{$\rho_{3}$} (bp);

\end{tikzpicture}
------------------------------------------------------------------------------------------------------

\caption{Illustrating the proof of Proposition \ref{prop:effect_of_Tdul}: The effect of the Dehn twist $\Td$ on each of the possible segments occurring in a loop expressed in dual notation. Unmarked edges, which are eliminated using edge reduction described in Section \ref{sub:typeD}, are highlighted.}
\label{fig:effect_of_Tdual}

\end{figure}

\begin{proof}
We compute the box tensor product $\Ts\boxtimes N_{\lp}$ by considering the effect on one segment at a time in a standard representative for $\lp$.  

Recall from \cite[Section 10]{LOT-bimodules} (see Figure \ref{fig:Dehn-twist-bimodules}) that the type DA bimodule $\Ts$ has three generators $p,q$, and $r$ (denoted by $\bullet$, $\circ$ and $\ast$, respectively, in Figure \ref{fig:Dehn-twist-bimodules}) with idempotents  determined by
$p = \iota_0p\iota_0, q = \iota_1q\iota_1$, and $r = \iota_1 r \iota_0.$
Thus each generator $x\in\iota_1 N_{\lp}$ gives rise to a single generator $q\otimes x\in\Ts\boxtimes N_{\lp}$, and each generator $x\in\iota_0 N_{\lp}$ gives rise to two generators $p\otimes x$ and $r\otimes x$ in the box tensor product. There is always a $\rho_2$ arrow from $r\otimes x$ to $p\otimes x$, that is, $\partial(r\otimes x)$ has a summand $\rho_2\cdot(p\otimes x)$. 

If generators $x, y \in \iota_0 N_{\lp}$ are connected by a single segment $s$ (in the loop $\lp$), we will consider the portion of a loop representing $N^+_{\lp}$ between $p\otimes x$ and $p\otimes y$ and show that (up to homotopy equivalence) it is the segment $\tw(s)$. To talk about the portion \emph{between}, we need a (cyclic) ordering on the elements of $N^+_{\lp}$. This is inherited from a choice of cyclic ordering on the elements of $\lp$, together with a specified order of $p\otimes x$ and $r\otimes x$ for each $x\in\iota_0 N_{\lp}$. If there is a segment $s$ from $x$ to $y$, we say that $r\otimes x$ is between $p\otimes x$ and $p\otimes y$ if the puzzle piece shape of $s$ on the $x$ end agrees with the shape of $a_k$. That is, if $s$ is $a_k, \bar{a}_k, c_k, \bar{d}_k$, or $\bar{e}$. 

Consider first a segment $a_k$ from $x$ to $y$, where $x,y$ are generators of $\iota_0 N_{\lp}$. The cases of $k = 1$ and $k\ge2$ are slightly different; both are pictured in Figure \ref{fig:effect_of_tau_m}. In either case, the effect of tensoring the segment with $\Ts$ is pictured. There is a differential starting at $r\otimes x$; after cancelling this differential, the result is a segment of type  $a_k = \tw(a_k)$ from $p\otimes x$ to $p\otimes y$.

Consider next a segment $b_k$ from $x$ to $y$. Tensoring the segment with $\Ts$, we see that the portion between $p\otimes x$ and $p\otimes y$ is simply a segment of type $b_k= \tw(b_k)$. Note that the generators $r \otimes x$ and $r\otimes y$ are not included in this new segment; they must be included in the segments on either side.

If $x$ and $y$ are connected by a segment $c_k$, then there is a differential starting at $r\otimes x$. If $k>1$, then cancelling this differential leaves a segment from $p\otimes x$ to $p\otimes y$ of type $c_{k-1} = \tw(c_k)$. If $k =1$, then cancelling the differential produces a new $\rho_{12}$ arrow, and thus there is a segment $\bar{e} = \tw(c_1)$ from $p \otimes x$ to $p \otimes y$. If $x$ and $y$ are connected by segments $d_k$ or $e$, we see in Figure \ref{fig:effect_of_tau_m} that the portion of $N^+_{\lp}$ from $x\otimes p$ to $y\otimes p$ is a segment $d_{k+1} = \tw(d_k)$ or $d_1 = \tw(e)$, respectively.

Segments with the opposite orientation behave the same way. A segment of $\bar{s}$ from $x$ to $y$ is the same as a segment $s$ from $y$ to $x$. In the tensor product, this produces a segment $\tw(s)$ from $p\otimes y$ to $p \otimes x$, or a segment $\overline{\tw(s)} = \tw(\bar{s})$ from $p \otimes x$ to $p \otimes y$.

It remains to check that a loop consisting only of $e^*$ segments represents a type D structure $N$ that is fixed by the action of the standard Dehn twist. This is easy to see, since in this case the only relevant operation in $\Ts$ is
\[ m_2(q, \rho_{23}) = \rho_{23}\cdot q\]
Each generator of $\iota_1 N = N$ gives rise to one generator of $N^+$ and each $\rho_{23}$ arrow in $N$ gives a $\rho_{23}$ arrow in $N^+$.

The case of $\Ts^{-1}$ can be deduced from the case of $\Ts$. Let $N'$ be a type D module represented by the loop $\twi(\lp)$. We have just shown 
$$ \Ts\boxtimes N'\cong N $$
and it follows that
$$ \Ts^{-1} \boxtimes N \cong \Ts^{-1} \boxtimes \Ts \boxtimes N'\cong N' $$
since $\Ts^{-1} \boxtimes \Ts$ is homotopy equivalent to the identity bimodule.
\end{proof}

\begin{prop}\label{prop:effect_of_Tdul}
If $\lp$ is a loop with corresponding type D structure $N_{\lp}$ then $N^{\mp}_{\lp}= \Td^{\mp1} \boxtimes N_{\lp}$ is a loop-type module represented by the loop $\du^\pm(\lp)$.
\end{prop}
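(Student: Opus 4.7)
The plan is to mirror the proof of Proposition \ref{prop:effect_of_Tstd}, with the roles of standard and dual notation interchanged. Since $\Td$ is the bimodule for the dual Dehn twist (cf.\ the footnote identifying $\Td = \CFDA(\tau_\ell^{-1})$), and since the Heegaard diagram for $\tau_\ell$ is obtained from that for $\tau_m$ by a $180^\circ$ rotation (as was observed in the proof of Proposition \ref{prop:slope-trick}), the structure of $\Td$ is the ``swap'' of the structure of $\Ts$: it has three generators $p', q', r'$ with $p' = \iota_1 p' \iota_1$, $q' = \iota_0 q' \iota_0$, and $r' = \iota_0 r' \iota_1$. Consequently, for $N_{\lp}$ a loop-type module, each generator $x \in \iota_0 N_{\lp}$ contributes a single generator to $\Td \boxtimes N_{\lp}$, while each $x \in \iota_1 N_{\lp}$ contributes two generators connected by a $\rho_1$ arrow.

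First I would express $\lp$ in dual notation (assuming $\lp$ is not made up entirely of $e$-segments, which is the only case where dual notation is unavailable) and compute the effect of $\Td \boxtimes -$ one dual segment at a time. The analysis then proceeds case-by-case through the dual alphabet $\mathfrak{A}^* = \{a^*_i, b^*_i, c^*_j, d^*_j\}$, applying the edge reduction algorithm from Section \ref{sub:typeD} after tensoring. The computations are precisely those displayed in Figure \ref{fig:effect_of_Tdual}. The outcomes should verify the prescribed action: dual stable chains $a^*_k, b^*_k$ are unchanged (matching $\du(a^*_k) = a^*_k$, $\du(b^*_k) = b^*_k$), while the dual unstable chains satisfy $c^*_k \mapsto c^*_{k-1}$ and $d^*_k \mapsto d^*_{k+1}$, matching $\du$.

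Next I would treat the degenerate case: if $\lp$ cannot be written in dual notation, then $\lp$ consists entirely of $e$ segments, which as a graph means a loop of $\iota_0$ vertices joined by $\rho_{12}$ arrows. Here only the bimodule operation involving $q'$ contributes, and one checks directly that $\Td \boxtimes N_{\lp}$ is represented by the same graph, consistent with $\du(\lp) = \lp$ in this case. Finally, the statement for $\Td^{-1}$ is deduced from the established $\Td$ case exactly as in the proof of Proposition \ref{prop:effect_of_Tstd}: if $N'$ is the module represented by $\du(\lp)$, then $\Td \boxtimes N' \simeq N_{\lp}$ and so $\Td^{-1} \boxtimes N_{\lp} \simeq \Td^{-1} \boxtimes \Td \boxtimes N' \simeq N'$ using that $\Td^{-1} \boxtimes \Td$ is homotopy equivalent to the identity bimodule.

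The main obstacle is the segment-by-segment verification: there are several cases in Figure \ref{fig:effect_of_Tdual} to check, and each requires care in cancelling the $\rho_1$ arrows from $r' \otimes x$ generators and tracking which new composite arrows arise (analogous to how the $k = 1$ case of $c_k$ in the proof of Proposition \ref{prop:effect_of_Tstd} produced a new $\rho_{12}$ edge). However, once the bimodule $\Td$ is understood as the idempotent-swap of $\Ts$, the entire calculation is formally dual to the one already carried out, and no new conceptual difficulty arises.
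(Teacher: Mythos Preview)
Your overall strategy matches the paper's: compute $\Td\boxtimes N_{\lp}$ segment-by-segment in dual notation (using Figure~\ref{fig:effect_of_Tdual}), handle the all-$e$ degenerate case separately, and then deduce the $\Td^{-1}$ case by invertibility. So the architecture is right.

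However, you have the direction wrong. Tensoring with $\Td$ sends $c^*_k\mapsto c^*_{k+1}$ and $d^*_k\mapsto d^*_{k-1}$, which is $\dui$, not $\du$ as you claim. (This is exactly what the proposition asserts: $\Td^{\mp 1}\boxtimes N_{\lp}$ represents $\du^{\pm}(\lp)$, so $\Td^{+1}$ corresponds to $\du^{-1}$.) The paper's proof makes this explicit: after carrying out the Figure~\ref{fig:effect_of_Tdual} computations, one finds that $\Td$ realizes $\dui$ on dual segments, and then the $\Td^{-1}$ case follows since $\Td^{-1}\boxtimes\Td$ is the identity. Your own final paragraph actually betrays this slip: you write ``if $N'$ is the module represented by $\du(\lp)$, then $\Td\boxtimes N'\simeq N_{\lp}$,'' which is only valid if $\Td$ acts as $\dui$, contradicting your earlier claim. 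Once you correct the sign of the action on unstable chains, the argument goes through as written.
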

\begin{proof}
The proof is similar, with the relevant bimodules $\Td$ and $\Td^{-1}$ shown in Figure \ref{fig:Dehn-twist-bimodules-DUAL}. The result of tensoring $\Td$ with each type of dual segment is shown in Figure \ref{fig:effect_of_Tdual}. We see that $a^*$ and $b^*$ segments are fixed, $c^*_k$ segments become $c^*_{k+1}$ segments, and $d^*_k$ segments become $d^*_{k-1}$ segments. In other words, tensoring $\Td$ with a dual segment ${\bf s}^*$ gives $\dui({\bf s}^*)$. Thus for a loop $\lp$, $\Td\boxtimes \lp$ is the loop $\dui(\lp)$. Since $\Td^{-1}$ is the inverse of $\Td$, we conclude that $\Td^{-1}\boxtimes \lp$ is the loop $\du(\lp)$. \end{proof}

\begin{figure}[ht!]
\begin{tikzpicture}[>=latex] 
\node at (0,0) {$\ast$}; 
\node at (-2,3) {$\circ$}; 	
\node at (2,3) {$\bullet$};
\draw[thick, shorten <=0.1cm] (2,3) arc (180:360:0.5);\draw[thick, ->] (3,3) arc (0:173:0.5); 
\draw[thick, shorten <=3pt, shorten >= 3pt, <-] (-2,3) to[bend left, looseness=0.75] (2,3); 
\draw[thick, shorten <=3pt, shorten >= 3pt, <-] (0,0) to[bend right, looseness=1] (-2,3);
\draw[thick, shorten <=3pt, shorten >= 3pt, ->] (0,0) to[bend left, looseness=1] (-2,3); 
\draw[thick, shorten <=3pt, shorten >= 3pt, ->] (0,0) to[bend right, looseness=1] (2,3);
\draw[thick, shorten <=3pt, shorten >= 3pt, <-] (0,0) to[bend left, looseness=1] (2,3); 
\node at (2.5,3.7) {$\scriptstyle \rho_{12}\otimes\rho_{12}$};
\node at (-0.3,4.25) {$\scriptstyle \rho_3\otimes \rho_3$};
	\node at (0,3.95) {$\scriptstyle +\rho_{123}\otimes \rho_{123}$};
	\node at (0.12,3.65) {$\scriptstyle +\rho_1\otimes (\rho_{12},\rho_{1})$};	
\node at (-1.1,1.9) {$\scriptstyle \rho_{2}\otimes1$};
\node at (-2.1,0.8)  {$\scriptstyle \rho_{123}\otimes\rho_{23}$};\node at (-1.9,0.5)  {$\scriptstyle +\rho_{1}\otimes(\rho_{2},\rho_{1})$};
\node at (1.1,1.9) {$\scriptstyle 1\otimes\rho_{1}$};
\node at (1.9,0.8) {$\scriptstyle \rho_{12}\otimes\rho_{2}$};
\node at (7,0) {$\ast$}; 
\node at (5,3) {$\circ$}; 	
\node at (9,3) {$\bullet$};
\draw[thick, ->, shorten <=0.1cm] (5,3) arc (0:352:0.5);
\draw[thick, shorten <=0.1cm] (9,3) arc (180:360:0.5);\draw[thick, ->] (10,3) arc (0:173:0.5); 
\draw[thick, shorten <=3pt, shorten >= 3pt, <-] (5,3) to[bend left, looseness=0.75] (9,3); 
\draw[thick, shorten <=3pt, shorten >= 3pt, ->] (5,3) to (9,3);
\draw[thick, shorten <=3pt, shorten >= 3pt, <-] (7,0) to[bend right, looseness=1] (5,3);
\draw[thick, shorten <=3pt, shorten >= 3pt, ->] (7,0) to[bend left, looseness=1] (5,3); 
\draw[thick, shorten <=3pt, shorten >= 3pt, ->] (7,0) to[bend right, looseness=1] (9,3);
\draw[thick, shorten <=3pt, shorten >= 3pt, <-] (7,0) to[bend left, looseness=1] (9,3); 
\node at (4.5,3.7) {$\scriptstyle \rho_{23}\otimes(\rho_{2}\otimes\rho_{123})$};
\node at (9.5,3.7) {$\scriptstyle \rho_{12}\otimes\rho_{12}$};
\node at (6.7,3.95) {$\scriptstyle \rho_3\otimes \rho_3$};
	\node at (7,3.65) {$\scriptstyle +\rho_{123}\otimes \rho_{123}$};
	\node at (7,2.75) {$\scriptstyle \rho_2\otimes (\rho_{2},\rho_{12})$};	
\filldraw [fill=white,white] (6,1.7) rectangle (7,2);\node at (6.2,1.9) {$\scriptstyle \rho_{2}\otimes(\rho_2,\rho_1)$};
\node at (4.8,0.8)  {$\scriptstyle \rho_{1}\otimes1+\rho_3\otimes\rho_{23}$};
\node at (8.15,1.9) {$\scriptstyle \rho_{12}\otimes\rho_{1}$};
\node at (8.8,0.8) {$\scriptstyle 1\otimes\rho_{2}$};
\end{tikzpicture}		
\caption{Graphical representations of the Dehn twist bimodules $\Td^{-1}$ (left) and $\Td$ (right), following \cite[Section 10]{LOT-bimodules}.} 	\label{fig:Dehn-twist-bimodules-DUAL}
\end{figure}

We conclude this discussion by observing that the notion of a manifold $M$ (with torus boundary) being of loop-type is now a well defined. In particular, since any reparametrization of a loop gives rise to a loop it follows that the property of loop-type (or, having loop-type bordered invariants) is independent of the bordered structure and hence a property of the underlying (unbordered) manifold. Indeed:
\begin{definition}A compact, orientable, connected three-manifold  $M$ with torus boundary is loop-type if $\CFD(M,\alpha,\beta)$ is of loop-type, for any choice of basis slopes $\alpha$ and $\beta$. \end{definition}

\subsection{Solid tori} As a simple example of the loop operations described above, we now describe the computation of $\CFD$ of a solid torus with arbitrary framing. By a $\frac{p}{q}$-framed solid torus, we will mean a bordered solid torus $(D^2\times S^1, \alpha, \beta)$ such that the meridian is $p\alpha + q\beta$. Recall that for the standard (0-framed) and dual ($\infty$-framed) solid tori we have
\[ \CFD(D^2 \times S^1, l, m) = (e), \hspace {1cm} \CFD(D^2 \times S^1, m, l) = (e^*). \]
The bordered invariants for solid tori with other framings can be computed by applying Dehn twists as described in Section \ref{sec:change_of_framing}.

\begin{example}
We compute $\CFD$ of  the $\frac{7}{2}$- framed solid torus  using the continued fraction expansion $-\frac{2}{7} = [-1,2,-2,3]$:
$$\begin{array}{rccc}
\CFD(D^2\times S^1, l, m) :& (e)&& \\
&\qquad \Downarrow \tw^{3} && \\
\CFD(D^2\times S^1, l, m + 3l) :& (d_3) &\sim& (d^*_1 d^*_0 d^*_0)\\
& && \qquad \Downarrow \du^{-2} \\
\CFD(D^2\times S^1, -2m -5l, m + 3l) :& (c_1 c_1 c_0 c_1 c_0) &\sim& (d^*_{-1} d^*_{-2} d^*_{-2})\\
&\qquad \Downarrow \tw^{2} && \\
\CFD(D^2\times S^1, -2m - 5l, -3m - 7l) :& (c_{-1} c_{-1} c_{-2} c_{-1} c_{-2}) &\sim& (c^*_{-1} c^*_{-1} c^*_{-1} c_0^* c^*_{-1} c^*_{-1} c_0^*)\\
& &&\qquad \Downarrow \du^{-1} \\
\CFD(D^2\times S^1, m+2l, -3m -7l) :& (d_{-4} d_{-3}) &\sim& (c_0^* c_0^* c_0^* c_1^* c_0^* c_0^* c^*_1)
\end{array}$$
\end{example}
It is is easy to check: $m = 7(m+2l) + 2(-3m -7l)$, and so $(D^2\times S^1, m+2l, -3m -7l)$ is a $\frac{7}{2}$-framed solid torus. Note that given $\CFD$ of a solid torus we can also check the framing by using Lemma \ref{prop:rational_longitude}. If we choose the $(\Ztwo)$-grading so that endponts of $d_k$ chains in standard notation have grading $0$ and endpoints of $c_k$ chains have grading $1$, it is not difficult to see that the Euler characteristic $\chi_\bullet$ of a loop in standard notation is the number of $d_k$ segments minus the number of $c_k$ segments and $\chi_\circ$ is given by the sum of the subscripts. Thus $(D^2\times S^1, m+2l, -3m -7l)$ has rational longitude $-\frac{ \chi_\circ}{ \chi_\bullet } = - \frac{-7}{2}$.

$\CFD$ for arbirtrarily framed solid tori can be computed by a similar procedure. The result is always a loop of a particularly simple form.

\begin{lem}\label{lem:solid-torus-restrictions}
If $q \neq 0$, then $\CFD(D^2\times S^1, pm+ql, rm+sl)$ can be represented by a single loop $\lp = (d_{k_1} d_{k_2} \ldots d_{k_m})$. Moreover, the difference between $max\{k_i\}$ and $min\{k_i\}$ is at most 1.
\end{lem}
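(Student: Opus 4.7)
The idea is to start from the base case $\CFD(D^2\times S^1, l, m) = (e) = (d_0)$, which trivially satisfies the claim (a single $d$-type loop with $\max k_i = \min k_i = 0$), and reach the framing $(pm+ql, rm+sl)$ by the sequence of Dehn twists dictated by any continued fraction expansion, as described in Section \ref{sec:change_of_framing}. By Propositions \ref{prop:effect_of_Tstd} and \ref{prop:effect_of_Tdul}, these Dehn twists correspond on the loop side to a sequence of $\tw^{\pm 1}$ and $\du^{\pm 1}$ operations. The lemma therefore reduces to the combinatorial claim that the property ``$\lp = (d_{k_1}\ldots d_{k_n})$ with $\{k_i\} \subseteq \{K, K{+}1\}$ for some $K \in \Z$'' is preserved under $\tw^{\pm 1}$ and $\du^{\pm 1}$. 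Iterating from $(d_0)$ then gives the result, and the hypothesis $q \neq 0$ guarantees that the target framing is not $(D^2\times S^1, m, l)$, whose loop $(e^*)$ has no $\iota_0$ generators and thus no standard representation at all.

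The effect of $\tw^{\pm 1}$ is immediate: the formula $\tw(d_k) = d_{k\pm 1}$ shifts every subscript uniformly, preserving both the $d$-type form and the max-min gap. The substantive step is $\du^{\pm 1}$. I would proceed by case analysis on $K$, converting $\lp$ into dual notation via (N1)–(N2) and (D1)–(D2), applying $\du^{\pm 1}$ to the dual letters, and converting back. In the regime $K \geq 1$ every ${\bf u}_i = d_{k_i}$ and every ${\bf v}_i$ is empty, so (D1)–(D2) yield the dual word $(d^*_0)^{k_1-1} d^*_1 (d^*_0)^{k_2-1} d^*_1 \cdots (d^*_0)^{k_n-1} d^*_1$ whose subscripts lie in $\{0,1\}$; shifting uniformly by $\du^{\pm 1}$ keeps the subscripts tight, and converting back produces a single $d$-type loop with subscripts in $\{K', K'{+}1\}$ for some $K'$. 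The regime $K \leq -2$ is symmetric, using the identification $d_k = \bar c_{-k}$ for $k < 0$ and producing a dual word whose letters are $c^*$-type with subscripts in $\{0,1\}$.

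The two boundary cases $K = 0$ and $K = -1$ are where the normal form mixes ${\bf u}_i$ pieces of one sign with $d_0$ blocks in ${\bf v}_i$, and the dual word includes both $d_1^*$ and $d_0^*$ (respectively $c_0^*$ and $c_1^*$) letters. I would unwind (D1)–(D2) carefully in each of these subcases, checking that after applying $\du^{\pm 1}$ and converting back no $c$-type (resp. $\bar c$-type) letters appear and that the subscripts of the resulting $d$-letters again span at most two consecutive integers.

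The main obstacle is precisely this bookkeeping near the boundary $K \in \{-1, 0\}$: one must verify that no undesired letter types creep in after dualizing twice, and that the alternation patterns produced by (D2) are exactly what is needed to keep the resulting standard word in the class $(d_{k_1}\ldots d_{k_n})$ with $\{k_i\}$ contained in two consecutive integers. Once this case analysis is complete, the invariance of the property under both generating operations follows, and an induction on the number of twists in any chosen continued fraction expansion of the target framing finishes the proof.
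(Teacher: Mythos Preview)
Your reduction to the combinatorial claim that ``$\lp = (d_{k_1}\ldots d_{k_n})$ with $\{k_i\}\subseteq\{K,K+1\}$'' is preserved under $\tw^{\pm 1}$ and $\du^{\pm 1}$ is \emph{false}, and the failure occurs already in what you call the straightforward regime $K\ge 1$. Take $\lp = (d_2 d_2 d_1 d_1 d_2)$, so $K=1$. Dualizing gives $(d^*_0 d^*_1 d^*_0 d^*_1 d^*_1 d^*_1 d^*_0 d^*_1)$ with dual subscripts in $\{0,1\}$, exactly as you predict. But applying $\du^{-1}$ yields $(d^*_{-1} d^*_0 d^*_{-1} d^*_0 d^*_0 d^*_0 d^*_{-1} d^*_0)$, and converting back to standard notation gives $(c_2 c_4 c_2)$, equivalently $(d_{-2} d_{-4} d_{-2})$: the gap is now $2$. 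The point is that when the dual subscripts straddle zero, the lengths of the blocks of $d^*_0$'s (which become the new standard subscripts upon reconversion) are no longer controlled. Your sentence ``shifting uniformly by $\du^{\pm 1}$ keeps the subscripts tight, and converting back produces \ldots\ subscripts in $\{K',K'+1\}$'' is exactly where the argument breaks. The loop $(d_2 d_2 d_1 d_1 d_2)$ is not itself a solid torus (indeed it cannot be, by the very lemma under discussion), so one might hope to restrict the claim to solid-torus loops; but you have no independent characterization of those, so this rescue is circular.

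The paper sidesteps the issue entirely by \emph{choosing} the continued fraction rather than working with an arbitrary one. For $\frac{p}{q}>0$ it takes an even-length expansion $[a_1,\ldots,a_{2n}]$ with all $a_i>0$, so that one only ever applies $\tw^{a}$ and $\du^{a}$ with $a>0$. The inductive invariant is then the specific statement ``subscripts in $\{0,1\}$'': a positive $\tw$ pushes all subscripts to be strictly positive, so the dual has subscripts in $\{0,1\}$; a positive $\du$ then pushes the dual subscripts to be strictly positive, which converts back to standard subscripts in $\{0,1\}$. Only the final $\tw^{a_{2n+1}}$ is allowed to have arbitrary sign, and it merely shifts all subscripts uniformly. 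The restriction to same-sign twists is precisely what prevents the dual subscripts from ever straddling zero.
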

\begin{proof}
If $p= 0$ (this implies that $q = r = 1$) then
\[ \CFD(D^2\times S^1, l, m + sl) = \Ts^s \boxtimes \CFD(D^2\times S^1, l, m) \sim \tw^s( (e) ) = (d_s) .\]
Otherwise, let $[ a_1, \ldots, a_{2n} ]$ be an even length continued fraction for $\frac{q}{p}$ and choose $a_{2n+1}$ so that $[ a_1, \ldots, a_{2n+1} ]$ is a continued fraction for $\frac{s}{r}$. Then 
\begin{align*}
\CFD(D^2\times S^1,pm+ql,rm+sl) &\cong \Ts^{a_{2n+1}}\boxtimes\cdots\boxtimes\Td^{-a_2} \boxtimes\Ts^{a_1} \boxtimes \CFD(M,l,m) \\
&\sim \tw^{a_{2n+1}}\circ\du^{a_{2n}}\circ\cdots\circ\du^{a_2}\circ\tw^{a_1} ( (d_0) ).
\end{align*}
If $\frac{p}{q}$ is positive we may assume that $a_i>0$ for all $1 \le i \le 2n$. We will show by induction on $n$ that $\du^{a_{2n}}\circ\cdots\circ\du^{a_2}\circ\tw^{a_1} ( (d_0) )$ is a loop in standard notation consisting only of $d_0$ and $d_1$ chains. The base case of $n = 0$ is immediate. Assuming that $\du^{a_{2n-2}}\circ\cdots\circ\du^{a_2}\circ\tw^{a_1} ( (d_0) )$ is a loop in standard notation consisting only of $d_0$ and $d_1$ chains, applying the twist $\tw^{a_{2n-1}}$ produces a loop consisting of $d_k$ chains with only positive subscripts. In dual notation such a loop involves only $d^*_0$ and $d^*_1$ segments. Applying the twist $\du^{a_{2n}}$ produces a loop consisting of $d^*_k$ segments with positive subscripts. Switching to standard notation, this loop contains only $d_0$ and $d_1$ segments. Similarly, if $\frac{p}{q}$ is negative we assume that $a_i<0$ for $1\le i \le n$ and observe by induction that $\du^{a_{2n}}\circ\cdots\circ\du^{a_2}\circ\tw^{a_1} ( (d_0) )$ is a loop in standard notation consisting only of $d_0$ and $d_{-1}$ chains.

Finally, applying the twist $\tw^{a_{2n+1}}$ preserves the fact that the loop consists of type $d_k$ unstable chains. It also preserves the relative differences of the subscripts, so the difference between the maximum and minimum subscript remains at most one.
\end{proof}

\subsection{Abstract fillings and abstract slopes} Recall that a loop $\lp$ represents a type D structure, which by slight abuse of notation we denote by $\lp$. Since $\lp$ is reduced, there is an associated type A structure as described in Section \ref{sub:typeA}, which we denote by $\lp^A$. As a result, given loops $\lp_1$ and $\lp_2$ we can represent the chain complex produced by the box tensor product of the associated modules by $\lp_1^A\boxtimes\lp_2$. Note that  since loops are connected $\Alg$-decorated graphs, the type D and type A structures associated to a loop have a well defined relative $(\Ztwo)$-grading, as described in Section \ref{sub:grading}. The gradings on $\lp_1^A$ and $\lp_2$ induce a relative $(\Ztwo)$-grading on $\lp_1^A\boxtimes\lp_2$.

Consider the loops $\lp_\bullet=(e)$ and $\lp_\circ=(e^*)$. Given any loop $\lp$ we have a pair of chain complexes \[C_{\lp}(\textstyle\frac{1}{0}) = \lp_\bullet^A\boxtimes\lp\ \text{and}\ C_{\lp}(0) = \lp_\circ^A\boxtimes\lp \] which, noting that $\lp_\bullet$  and $\lp_\circ$ represent type $D$ structures of the standard and dual (bordered) solid torus, respectively, might be regarded as abstract standard and dual Dehn filings of $\lp$, respectively.  \begin{remark}Note that we do not need to assume $\lp$ is bounded, since if it is not it is homotopy equivalent to a modified loop which is bounded and has the same box tensor product with $\lp^A_\bullet$. Such a modified loop can be obtained by replacing either a $\rho_{12}$ arrow with the homotopy equivalent (but not reduced) sequence
\[
\begin{tikzpicture}[>=latex] 
		\node at (0,0) {$\bullet$}; 
		\node at (1,0) {$\circ$}; 
		\node at (2,0) {$\circ$};
		\node at (3,0) {$\bullet$}; 
		\draw[thick, ->,shorten >=0.1cm, shorten <=0.1cm] (0,0) -- (1,0); \node at (0.5,0.25) {$\scriptsize\rho_{1}$};
		\draw[thick, <-,shorten >=0.1cm, shorten <=0.1cm] (1,0) -- (2,0); 
		\draw[thick, ->,shorten >=0.1cm, shorten <=0.1cm] (2,0) -- (3,0); \node at (2.5,0.25) {$\scriptsize\rho_{2}$};
	\end{tikzpicture}
\]
or a $\rho_{123}$ arrow with
\[
\begin{tikzpicture}[>=latex] 
		\node at (0,0) {$\bullet$}; 
		\node at (1,0) {$\circ$}; 
		\node at (2,0) {$\circ$};
		\node at (3,0) {$\bullet$}; 
		\draw[thick, ->,shorten >=0.1cm, shorten <=0.1cm] (0,0) -- (1,0); \node at (0.5,0.25) {$\scriptsize\rho_{1}$};
		\draw[thick, <-,shorten >=0.1cm, shorten <=0.1cm] (1,0) -- (2,0); 
		\draw[thick, ->,shorten >=0.1cm, shorten <=0.1cm] (2,0) -- (3,0); \node at (2.5,0.25) {$\scriptsize\rho_{23}$};
	\end{tikzpicture}
\]
If $\lp$ can not be written in standard notation -- that is, it is a collection of $e^*$ segments -- then $H_*(\lp^A_\bullet\boxtimes\lp)$ must have two generators of opposite grading. To see this, replace one $e^*$ segment with
\[
\begin{tikzpicture}[>=latex] 
		\node at (0,0) {$\circ$}; 
		\node at (1,0) {$\bullet$}; 
		\node at (2,0) {$\bullet$};
		\node at (3,0) {$\circ$}; 
		\draw[thick, ->,shorten >=0.1cm, shorten <=0.1cm] (0,0) -- (1,0); \node at (0.5,0.25) {$\scriptsize\rho_{2}$};
		\draw[thick, <-,shorten >=0.1cm, shorten <=0.1cm] (1,0) -- (2,0); 
		\draw[thick, ->,shorten >=0.1cm, shorten <=0.1cm] (2,0) -- (3,0); \node at (2.5,0.25) {$\scriptsize\rho_{3}$};
	\end{tikzpicture}\]
to produce a bounded modified loop.\end{remark}

\parpic[r]{\begin{tikzpicture}[>=latex, thick] 
\node (a) at (2,0) {$\bullet$};
\node (c) at (0,0) {$\bullet$};
\node (e) at (0,2) {$\bullet$};
\draw[->] (a) to (c);
\node (b) at (5,0) {$\circ$};
\node (d) at (4,1) {$\circ$};
\node (f) at (5,2) {$\circ$};
\node (g) at (6,1) {$\circ$};
\draw[->] (f) to (d);
\end{tikzpicture}} 
The standard filling picks out the $\bullet$-idempotent, in practice, and adds a differential for each type $a_k$ chain. The dual filling picks out the $\circ$-idempotent and adds a differential for each $b^*_k$ chain. For instance, when $M$ is the exterior of the left-hand trefoil and $\CFD(M,\mu,\lambda)$ is represented by $\lp\sim (a_1b_1\bar{d}_2) \sim (a^*_1e^*b^*_1\bar{d}^*_1)$, the resulting complexes $\CFhat(S^3)\cong\CFhat(M(\mu))\cong\lp^A_\bullet\boxtimes\lp$ and $\CFhat(M(\lambda))\cong\lp^A_\circ\boxtimes\lp$ are shown on the right.

We regard the chain complexes $C_{\lp}(\frac{1}{0})$ and $C_{\lp}(0)$ as the result of abstract Dehn fillings along a pair of abstract slopes in $\lp$ which we identify with $\infty=\frac{1}{0}$ (corresponding to the standard filling) and $0$ (corresponding to the dual filling). In fact, a given loop $\lp$ gives rise to a natural $\hat\Q$-family of chain complexes: Choosing an even-length continued fraction $\frac{p}{q}=[a_1,a_2\ldots,a_n]$ let $\lp^{\frac{p}{q}}=\du^{a_n} \circ \cdots \circ \tw^{a_3} \circ \du^{a_2} \circ \tw^{a_1}(\lp)$ and define \[C_{\lp}(\textstyle\frac{p}{q}) = \lp_\bullet^A\boxtimes\lp^{\frac{p}{q}}\] We will regard the complex $C_{\lp}(\textstyle\frac{p}{q})$ as an abstract Dehn filling of the loop $\lp$ along the abstract slope $\frac{p}{q} \in \hat\Q$.

The reason for this definition is illustrated as follows:

\begin{prop}
If a given loop $\lp$ represents $\CFD(M, \alpha, \beta)$ for some bordered manifold $(M, \alpha, \beta)$, the chain complex $C_{\lp}(\frac{p}{q})$ is (homotopy equivalent to) the chain complex $\CFhat(M(p\alpha + q\beta ) )$. That is, abstract filling along abstract slopes correpsonds to Dehn filling along slopes whenever $\lp$ describes the type D structure of a bordered three-manifold. 
\end{prop}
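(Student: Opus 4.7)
The plan is to show first that $\lp^{p/q}$ is a loop representation of $\CFD(M, p\alpha + q\beta, \beta'')$ for some dual slope $\beta''$ with $\Delta(p\alpha + q\beta, \beta'') = 1$; the conclusion then follows immediately from the pairing theorem for the standard Dehn filling. The key tools are Propositions \ref{prop:effect_of_Tstd} and \ref{prop:effect_of_Tdul}, which translate the abstract loop operations $\tw$ and $\du$ into concrete Dehn twist bimodule actions.

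Specifically, $\tw^k$ corresponds to tensoring with $\Ts^k$, which reparametrizes the boundary by adding $k$ copies of the standard slope to the dual slope, while $\du^k$ corresponds to tensoring with $\Td^{-k}$, which adds $k$ copies of the dual slope to the standard slope. Applying these translations to the definition yields
\[\lp^{p/q} = \du^{a_n} \circ \cdots \circ \tw^{a_1}(\lp) \sim \Td^{-a_n} \boxtimes \cdots \boxtimes \Td^{-a_2} \boxtimes \Ts^{a_1} \boxtimes \CFD(M, \alpha, \beta).\]
By induction on $n$, tracking how each application of $\Ts$ or $\Td^{-1}$ modifies the pair of slopes, the resulting slope pair $(\alpha_n, \beta_n)$ obeys the standard continued fraction recurrence. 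Since $n$ is even, the final operation is a $\du$, which modifies the standard slope, and the new standard slope is precisely $\alpha_n = p\alpha + q\beta$ with $\frac{p}{q} = [a_1, \ldots, a_n]$.

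With the identification $\lp^{p/q} \sim \CFD(M, p\alpha + q\beta, \beta'')$ in hand, applying the pairing theorem for the standard filling gives
\[\CFhat(M(p\alpha + q\beta)) \cong \CFA(D^2 \times S^1, l, m) \boxtimes \CFD(M, p\alpha + q\beta, \beta'') = \lp_\bullet^A \boxtimes \lp^{p/q} = C_\lp(\textstyle\frac{p}{q}),\]
as claimed. The edge case $\frac{p}{q} = \frac{1}{0}$ corresponds to the empty continued fraction $(n = 0)$ and is immediate.

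The main obstacle is the careful bookkeeping in the induction — one must verify that the even-length continued fraction convention adopted here yields the new \emph{standard} slope (rather than the new dual slope, which is the output of the odd-length decomposition used in Section \ref{sec:change_of_framing}). A secondary check is compatibility with the special definition $C_\lp(0) = \lp_\circ^A \boxtimes \lp$: using an even-length continued fraction for $0$ such as $[1,-1]$, the general definition produces $\lp_\bullet^A \boxtimes \du^{-1}\circ\tw(\lp)$, which must be shown to coincide (up to homotopy) with $\lp_\circ^A \boxtimes \lp$ via a change-of-framing identity in the spirit of Proposition \ref{prop:slope-trick}.
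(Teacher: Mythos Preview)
Your proposal is correct and follows essentially the same approach as the paper: identify $\lp^{p/q}$ with $\CFD(M,p\alpha+q\beta,\beta'')$ via Propositions \ref{prop:effect_of_Tstd} and \ref{prop:effect_of_Tdul} and then apply the standard-filling pairing theorem. The paper's proof is more terse---it simply asserts that $\lp^{p/q}$ represents $\CFD(M,p\alpha+q\beta,r\alpha+s\beta)$ with $\frac{r}{s}=[a_1,\ldots,a_{n-1}]$---while you spell out the induction and the even-versus-odd bookkeeping. One minor remark: your ``secondary check'' that $\lp_\bullet^A\boxtimes\du^{-1}\tw(\lp)\simeq\lp_\circ^A\boxtimes\lp$ is a well-definedness concern for the abstract $C_\lp$, but it is not actually needed for the proposition at hand, since at $\frac{p}{q}=0$ the special definition $C_\lp(0)=\lp_\circ^A\boxtimes\lp$ is already $\CFhat(M(\beta))$ directly by the dual-filling case of the pairing theorem.
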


\begin{proof}Immediate from the definitions: Note that $\lp_{\frac{p}{q}}$ represents $\CFD(M, p\alpha+q\beta, r\alpha + s\beta)$, where $\frac{p}{q}=[a_1,a_2\ldots,a_n]$ is an even length continued fraction and $\frac{r}{s}=[a_1,a_2\ldots,a_{n-1}]$.\end{proof}

Fixing a relative $(\Ztwo)$-grading for a loop $\lp$, consider the idempotent Euler characteristics $\chi_\bullet(\lp)$ and $\chi_\circ(\lp)$. If $\chi_\bullet(\lp)$ and $\chi_\circ(\lp)$ are not both 0 then $\lp$ has a preferred slope $-\frac{\chi_\circ(\lp)}{\chi_\bullet(\lp)}$; we call this slope the \emph{abstract rational longitude}. Note that by Proposition \ref{prop:rational_longitude}, if $\lp$ represents $\CFD(M, \alpha, \beta)$ for a rational homology solid torus $(M, \alpha, \beta)$ then the abstract rational longitude for $\lp$ is the rational longitude for $(M, \alpha, \beta)$.

Recall that a slope $\frac{p}{q}$ is an L-space slope for a bordered manifold $(M, \alpha, \beta)$ if Dehn filling along the curve $p\alpha + q\beta$ yields an L-space. We end this section by defining a similar notion of L-space slopes for loops.

\begin{definition}\label{def:abstractL}
Given a loop $\lp$, we say an abstract slope $\frac{p}{q}$ in $\hat\Q$ is an L-space slope for $\lp$ if the relatively $(\Ztwo)$-graded chain complex $C_{\lp}(\frac{p}{q})$ is an L-space chain complex in the sense that $\dim H_*(C_{\lp}(\frac{p}{q})) = |\chi(C_{\lp}(\frac{p}{q}))|\ne0$.
\end{definition}

\begin{remark} With these notions in place, we will now drop the modifier {\em abstract} when treating loops despite the fact that a given loop may or may not describe the type D structure of a three-manifold. In particular, we will not make a distinction between slopes and abstract slopes in the sequel.
\end{remark}

By considering loops in the abstract, a particular class of loops is singled out.

\begin{definition}\label{def:solid torus-like}
A  loop $\lp$ is solid torus-like if it may be obtained from the loop $(ee\cdots e)$ via applications of $\tw^{\pm 1}$ and $\du^{\pm 1}$.  
\end{definition}

Note that $\chi_\bullet(ee\cdots e)$ counts the number of $e$ segments appearing and $\chi_\circ(ee\cdots e)=0$ identifies the rational longitude of a solid torus-like loop. In particular, $\lp_\bullet$ (representing $\CFD(D^2\times S^1,l,m)$) is solid torus-like. Justifying the chosen terminology, we have the following behaviour:

\begin{prop}\label{prp:torus like split}
If $\lp$ is solid torus like with $\chi_\circ(\lp)=0$ then
$\lp^A\boxtimes \lp'\cong \bigoplus_{\chi_\bullet(\lp)}\lp^A_\bullet\boxtimes \lp'$
for any loop $\lp'$.
\end{prop}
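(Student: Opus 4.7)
The plan is to split the proof into a reduction step, which uses the slope trick to handle all solid torus-like loops, and a base case where $\lp$ is the specific loop $(ee\cdots e)$.

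\textbf{Reduction.} Since $\lp$ is solid torus-like, write $\lp \simeq \mathcal{O}(\lp_0)$ where $\lp_0 = (ee\cdots e)$ consists of $n$ copies of $e$ and $\mathcal{O}$ is some composition of $\tw^{\pm 1}$ and $\du^{\pm 1}$. By Propositions \ref{prop:effect_of_Tstd} and \ref{prop:effect_of_Tdul}, applying $\mathcal{O}$ to $\lp_0$ corresponds at the level of type D structures to box-tensoring with a product $\mathcal{B}$ of Dehn twist bimodules $\Ts^{\pm 1}, \Td^{\pm 1}$. Using $\lp^A \simeq \CFAAid \boxtimes \lp$, we obtain
\[\lp^A \boxtimes \lp' \simeq \CFAAid \boxtimes \mathcal{B} \boxtimes \lp_0 \boxtimes \lp'.\]
Proposition \ref{prop:slope-trick} yields the commutation identity $\Ts^k \boxtimes \CFAAid \simeq \CFAAid \boxtimes \Td^k$ (and, by the same argument with $\Ts$ and $\Td$ interchanged, $\Td^k \boxtimes \CFAAid \simeq \CFAAid \boxtimes \Ts^k$). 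Applying these relations successively moves each factor of $\mathcal{B}$ across $\CFAAid$, absorbing a compensating twist bimodule on the other side, to produce
\[\lp^A \boxtimes \lp' \simeq \lp_0^A \boxtimes \lp''\]
for a new loop $\lp''$ obtained from $\lp'$ by the ``dual'' sequence of operations. To compute the number of summands, note that by Lemma \ref{lem:Euler_char_twists} the pair $(\chi_\bullet, \chi_\circ)$ transforms under Dehn twists by elements of $SL_2(\Z)$, which preserves the gcd of its entries. Since $(\chi_\bullet(\lp_0), \chi_\circ(\lp_0)) = (n, 0)$ has gcd $n$, the hypothesis $\chi_\circ(\lp) = 0$ forces $|\chi_\bullet(\lp)| = n$.

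\textbf{Base case.} It remains to show
\[\lp_0^A \boxtimes \lp'' \simeq \bigoplus_n \lp_\bullet^A \boxtimes \lp''\]
for every loop $\lp''$. The D-module $\lp_0$ has $n$ generators $x_1, \ldots, x_n$ in $\iota_0$ with $\delta^1(x_i) = \rho_{12} \otimes x_{i+1}$ (indices cyclic), while $\bigoplus_n \lp_\bullet$ has $n$ generators $y_1, \ldots, y_n$ with $\delta^1(y_i) = \rho_{12} \otimes y_i$. Replacing the $\rho_{12}$ arrows in each loop by the bounded homotopy-equivalent three-edge chains $\bullet \to \circ \leftarrow \circ \to \bullet$ with labels $\rho_1$, unmarked, $\rho_2$ (as in the remark preceding Definition \ref{def:abstractL}) makes both box tensor products into honest finite chain complexes on the same underlying vector space $\F^n \otimes (\iota_0 \lp'')$. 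A direct chain-level computation then exhibits the splitting: for each generator $z \in \iota_0 \lp''$, the $n$ contributions $x_i \otimes z$ (together with the auxiliary generators introduced by the bounded replacement) form a summand canonically isomorphic to $\lp_\bullet^A \boxtimes \lp''$, once one absorbs the cyclic shift into a relabeling.

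\textbf{Main obstacle.} The principal difficulty is the base case, since as type D (or indeed $A_\infty$) modules over $\Alg$, $\lp_0$ and $\bigoplus_n \lp_\bullet$ are \emph{not} homotopy equivalent---a direct calculation shows that any morphism between them has at most one-dimensional image. Consequently the splitting can be established only after the pairing with $\lp''$, where the cyclic shift in $\lp_0^A$ must be shown to be absorbed into the pairing. Working in characteristic 2 rules out cyclotomic diagonalization of the shift operator, so the argument must proceed by an explicit chain-level comparison, which is most transparent after passing to bounded replacements of both modules.
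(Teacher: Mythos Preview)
Your reduction step is both unnecessary and leaves a genuine gap. It is unnecessary because the hypotheses already force $\lp=(ee\cdots e)$: the argument of Lemma~\ref{lem:solid-torus-restrictions} carries over verbatim to show that any solid-torus-like loop has the form $(d_{k_1}\cdots d_{k_m})$ with $\max k_i-\min k_i\le 1$, and then $\chi_\circ(\lp)=\sum k_i=0$ forces every $k_i=0$. The paper uses this implicitly and simply writes down $\lp^A$ for $\lp=(e^n)$. The gap in your route is that after the reduction you obtain $\lp^A\boxtimes\lp'\simeq\lp_0^A\boxtimes\lp''$, and the base case then gives $\bigoplus_n\lp_\bullet^A\boxtimes\lp''$---but the target is $\bigoplus_n\lp_\bullet^A\boxtimes\lp'$, and you never explain why these agree. (They do, but checking $\lp_\bullet^A\boxtimes\lp''\simeq\lp_\bullet^A\boxtimes\lp'$ amounts to showing $\mathcal{O}(\lp_\bullet)\simeq\lp_\bullet$, which again comes down to the observation $\lp=\lp_0$ above.)

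Your base case is not established either, and the splitting you describe is indexed the wrong way. You assert that for each fixed $z\in\iota_0\lp''$ the generators $\{x_i\otimes z\}_i$ form a summand isomorphic to $\lp_\bullet^A\boxtimes\lp''$; this would produce $|\iota_0\lp''|$ summands rather than $n$, and in any case the span of $\{x_i\otimes z\}_i$ for fixed $z$ is not closed under $\partial^\boxtimes$. The bounded-replacement manoeuvre only introduces extra $\iota_1$ generators and does not help. The paper's argument is direct: since $\lp=(e^n)$, one has $m_{3+i}(x_j,\rho_3,\rho_{23}^i,\rho_2)=x_{j+i+1}$ (indices mod~$n$), so each arrow $x\otimes u\to x\otimes v$ in $\lp_\bullet^A\boxtimes\lp'$, which comes from an $a$-type chain of some fixed length in $\lp'$, lifts to $n$ disjoint arrows $x_j\otimes u\to x_{j+i+1}\otimes v$. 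Because $(\star)$ forbids consecutive $a$-chains, no generator of $\lp_\bullet^A\boxtimes\lp'$ is incident to more than one arrow, so these lifts never interact and the complex decomposes into $n$ copies. You correctly identify that $\lp_0\not\simeq\bigoplus_n\lp_\bullet$ as modules, but the resolution is this combinatorial fact about $a$-chains, not a change-of-model trick.
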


\begin{proof}
Recall that $\lp_\bullet^A$ has a single generator $x$ and operations
\[m_{3+i}(x, \rho_3, \underbrace{ \rho_{23}, \ldots, \rho_{23}}_{i \text{ times}}, \rho_2) = x \]
so that for generators $x\otimes u,x\otimes v \in \lp^A_\bullet\boxtimes \lp'$, with $x\otimes v$ a summand of $\partial(x\otimes u)$, there must be  \[\delta^{i+2}(u)=\rho_3\otimes\underbrace{ \rho_{23}\otimes \cdots\otimes \rho_{23}}_{i \text{ times}}\otimes \rho_2\otimes v\] in the type D structure for $\lp'$. Now consider the type A structure described by $\lp^A$. This has $n=\chi_\bullet(\lp)$ generators $x_0,\ldots x_{n-1}$ and operations 
\[m_{3+i}(x_j, \rho_3, \underbrace{ \rho_{23}, \ldots, \rho_{23}}_{i \text{ times}}, \rho_2) = x_{i+j+1} \]
where the subscripts are understood to be reduced modulo $n$. (Note that the cyclic ordering on the generators is determined by $m_{3}(x_j, \rho_3, \rho_2) = x_{j+1}$.) So given generators $u$ and $v$ in the type D structure for $\lp'$ as above,  $x_{i+j+1}\otimes v$ is in the image of $\partial(x_j\otimes u)$ for each $j\in\{0,\ldots,n-1\}$.  This achieves the desired splitting.  
\end{proof}

\begin{cor}\label{cor:STlike}Suppose $\lp$ is solid torus-like with $\chi_\circ(\lp)=0$. Then $\lp^A\boxtimes\lp'$ is an L-space complex if and only if $\infty$ is an L-space slope for the loop $\lp'$.\end{cor}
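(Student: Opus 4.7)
The proof is a direct application of Proposition \ref{prp:torus like split}. First, I would invoke the proposition to obtain an isomorphism
\[ \lp^A \boxtimes \lp' \;\cong\; \bigoplus_{|\chi_\bullet(\lp)|} \lp_\bullet^A \boxtimes \lp', \]
and identify the right-hand side as $|\chi_\bullet(\lp)|$ copies of $C_{\lp'}(\infty)$ via the definition $C_{\lp'}(\tfrac{1}{0}) = \lp_\bullet^A \boxtimes \lp'$. A quick check confirms that all of these copies sit in the same relative $(\Ztwo)$-grading: the generators $x_0, \ldots, x_{n-1}$ of $\lp^A$ constructed in the proof of Proposition \ref{prp:torus like split} are linked by the operation $m_3(x_j, \rho_3, \rho_2) = x_{j+1}$, and the grading conventions of Section \ref{sub:grading} yield
\[ \gr(x_{j+1}) \equiv \gr(x_j) + \gr(\rho_3) + \gr(\rho_2) + 3 \equiv \gr(x_j) \pmod{2}. \]
Consequently
\[ \dim H_*(\lp^A \boxtimes \lp') = |\chi_\bullet(\lp)| \cdot \dim H_*(C_{\lp'}(\infty)) \quad \text{and} \quad |\chi(\lp^A \boxtimes \lp')| = |\chi_\bullet(\lp)| \cdot |\chi(C_{\lp'}(\infty))|. \]

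Next, I would verify that $\chi_\bullet(\lp) \neq 0$. Since $\lp$ is solid torus-like, it arises from some loop $(ee \cdots e)$ with $n \geq 1$ copies of $e$, and by Lemma \ref{lem:Euler_char_twists} the pair $(\chi_\bullet, \chi_\circ)$ is transformed from $(n, 0)$ to $(\chi_\bullet(\lp), \chi_\circ(\lp))$ by an element of $\mathit{SL}_2(\Z)$ (up to sign). Because the $\mathit{SL}_2(\Z)$-orbit of a nonzero vector never contains $(0,0)$, the hypothesis $\chi_\circ(\lp) = 0$ forces $\chi_\bullet(\lp) \neq 0$.

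Combining the above, $\lp^A \boxtimes \lp'$ satisfies $\dim H_* = |\chi| \neq 0$ if and only if $C_{\lp'}(\infty)$ does, which by Definition \ref{def:abstractL} is precisely the statement that $\infty$ is an L-space slope for $\lp'$. I do not anticipate any substantive obstacle: the essential content is already encoded in Proposition \ref{prp:torus like split}, and what remains is only the grading bookkeeping above together with the non-vanishing of $\chi_\bullet(\lp)$.
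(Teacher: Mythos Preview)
Your proof is correct and follows exactly the approach the paper intends: the corollary is stated without proof in the paper, as an immediate consequence of Proposition~\ref{prp:torus like split}, and your argument supplies precisely the routine verification that makes this immediate. The two details you add beyond the bare invocation of the proposition---that all generators $x_j$ of $\lp^A$ share the same $(\Ztwo)$-grading, and that $\chi_\bullet(\lp)\neq 0$---are the right things to check and are handled correctly.
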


As a result, with respect to gluing, solid torus-like loops will need to be treated like solid tori. Consequently, manifolds with solid torus-like invariants (should they exist) will need to be singled out. 

\begin{definition}\label{def:solid torus-like manifold}
A loop-type manifold $M$ is solid torus like if it is a rational homology solid torus and every loop in the representation of $\CFD(M,\alpha,\beta)$ is solid torus-like. 
\end{definition}

\parpic[r]{\begin{tikzpicture}[>=latex, thick] 
\node (a) at (2,0) {$\bullet$};
\node (b) at (1,0) {$\circ$};
\node (c) at (0,0) {$\bullet$};
\node (d) at (0,1) {$\circ$};
\node (e) at (0,2) {$\bullet$};
\node (f) at (1,2) {$\circ$};
\node (g) at (2,1) {$\circ$};
\node (h) at (2,2) {$\bullet$};
\draw[->] (a) to node[below]{$\rho_3$} (b);
\draw[->] (b) to node[below]{$\rho_2$} (c);
\draw[->] (c) to node[left]{$\rho_{123}$} (d);
\draw[->] (e) to node[left]{$\rho_1$} (d);
\draw[->] (f) to node[above]{$\rho_2$} (e);
\draw[->] (h) to node[above]{$\rho_{3}$} (f);
\draw[->] (h) to node[right]{$\rho_{1}$} (g);
\draw[->] (a) to node[right]{$\rho_{123}$} (g);
\end{tikzpicture}} 
We conclude this section with an explicit example, demonstrating that not every abstract loops arises as the type D structure of a bordered three-manifold. Consider the loop $\lp$ described by the cyclic word $(a_1b_1\bar a_1\bar b_1)$ and illustrated on the right. Suppose that $\lp$ describes the invariant $\CFD(M,\alpha,\beta)$ for some orientable three-manifold with torus boundary $M$. Then the abstract Dehn filling $\lp^A_\bullet\boxtimes\lp$ yields the chain complex $\CFhat(M(\alpha))$. However, observe that $H_*(\lp^A_\bullet\boxtimes\lp)=0$ (in particular, this slope is not an L-space slope according to Definition \ref{def:abstractL}). This shows that no such $M$ exists: In general, for a closed orientable three-manifold $Y$, $\HFhat(Y)$ does not vanish \cite{OSz2004-b}. However, we remark that this particular loop arises as a component of the graph describing $\CFD(M,\mu,\lambda)$, where $M=S^3\smallsetminus \nu(K)$ and $K$ is any thin knot in $S^3$ that does not admit L-space surgeries; see \cite{Petkova-thin}.

\section{Characterizing slopes}\label{sec:char}

We now turn to an application of the loop calculus developed above. For a given loop $\lp$ we give an explicit description of the L-space and non-L-space slopes. As a consequence, we will prove the following:

\begin{thm}\label{thm:intervals}
Given a loop $\lp$, the set of non-L-space slopes is an interval in $\hat\Q$, that is, the restriction of a connected subset in $\hat\R$. As a result, if $M$ is a loop-type manifold then there is a decomposition $\hat\R=U\cup V$ into disjoint, connected subsets $U,V\in \hat\R$ such that $\mathcal{L}_M = U\cap \hat\Q$ and $\mathcal{L}^c_M=V\cap \hat\Q$. \end{thm}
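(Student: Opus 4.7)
The plan is to first establish the statement for a single loop $\lp$, and then extend it to a loop-type manifold $M$ by taking an appropriate union over spin$^c$-components. By the hypothesis on $M$ in Definition~\ref{def:loop-type}, the invariant $\CFD(M,\alpha,\beta)$ decomposes into loops $\lp_1,\ldots,\lp_n$, one per spin$^c$-structure, so the pairing theorem gives $\CFhat(M(p\alpha+q\beta))\cong\bigoplus_i C_{\lp_i}(\tfrac{p}{q})$ and hence $\mathcal{L}_M^c = \bigcup_i \mathcal{L}_{\lp_i}^c$ (since a direct sum is an L-space complex iff each summand is). All $\lp_i$ share the same rational longitude, namely the rational longitude of $M$ in $\partial M$ by Proposition~\ref{prop:rational_longitude}. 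Consequently, if each $\mathcal{L}_{\lp_i}^c$ is an interval in $\hat\R$ containing this common point, the union is also a connected subset of $\hat\R$, and taking $V$ to be this union and $U=\hat\R\setminus V$ yields the desired decomposition.

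For a single loop $\lp$, the first step is to leverage Propositions~\ref{prop:effect_of_Tstd} and~\ref{prop:effect_of_Tdul}: reparametrizing $\lp$ using an odd-length continued fraction expansion of $\frac{p}{q}$ converts the question of whether $\frac{p}{q}\in\mathcal{L}_\lp$ into the question of whether $\infty\in\mathcal{L}_{\lp^{p/q}}$, where $\lp^{p/q}$ is obtained from $\lp$ by a sequence of $\tw^{\pm 1}$'s and $\du^{\pm 1}$'s. Since loop operations correspond to change of framing, the set of L-space slopes transforms by the corresponding action of $\mathit{SL}_2(\Z)$ on $\hat\R$, and in particular connectedness of $\mathcal{L}_\lp^c$ in $\hat\R$ is preserved under reparametrization. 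This reduces matters to extracting a combinatorial L-space criterion for the standard filling $\lp_\bullet^A\boxtimes\lp'$ of an arbitrary loop $\lp'$: its generators correspond to the $\bullet$-vertices of $\lp'$, its differentials come from the $a$-type chains, and the L-space condition is that after cancelling these differentials the surviving generators all carry the same $(\Ztwo)$-grading, i.e.\ that $\dim H_*=|\chi_\bullet(\lp')|$.

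The main obstacle is proving convexity of $\mathcal{L}_\lp^c$ in $\hat\R$, and this is where the loop calculus is essential. The proposed strategy is induction on a complexity measure of $\lp$ (for instance, the number of non-stable chains, or the spread of subscripts occurring among its unstable chains), reducing to a short list of normal forms via applications of $\tw^{\pm1}$ and $\du^{\pm1}$. The solid torus-like case serves as a base case and is handled by Corollary~\ref{cor:STlike}, together with Lemma~\ref{lem:solid-torus-restrictions} which controls the shape of the resulting loops. The inductive step requires a monotonicity lemma: tracking how the L-space criterion from the second step behaves as one applies $\tw^{\pm 1}$ or $\du^{\pm 1}$, the criterion fails for at most an interval of consecutive slopes, and this interval always contains the rational longitude (which is necessarily a non-L-space slope, since the corresponding filling is not a rational homology sphere by Proposition~\ref{prop:rational_longitude} and Lemma~\ref{lem:Euler_char_twists}). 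The persistence of the criterion under iteration of a fixed operation, and the fact that the "boundary" slopes of the non-L-space set move predictably under $\tw^{\pm1}$ and $\du^{\pm1}$, is the hard technical content; most of the work in Section~\ref{sec:char} should be devoted to carrying out this monotonicity analysis chain-segment by chain-segment using the explicit descriptions in Figures~\ref{fig:effect_of_tau_m} and~\ref{fig:effect_of_Tdual}.
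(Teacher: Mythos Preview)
Your reduction from a loop-type manifold $M$ to its constituent loops is fine, and in fact your argument via the common rational longitude is slightly more explicit than what the paper writes: the paper simply says the set of L-space slopes for $M$ is an intersection of intervals and ``hence an interval,'' which on a circle is not automatic without exactly the observation you make, namely that all the complementary non-L-space intervals share the rational longitude.

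The single-loop argument, however, has a genuine gap. Your proposed induction is on a complexity measure of the loop itself, with solid torus-like loops as a base case. But the twist operations $\tw^{\pm1}$ and $\du^{\pm1}$ do not in general reduce any such complexity (the number of segments, the subscript spread, etc., can grow), and there is no evident way to simplify an arbitrary loop to a solid torus-like one by reparametrization. More seriously, your ``monotonicity lemma'' as stated --- that the L-space criterion fails for at most an interval of consecutive slopes --- is precisely the theorem you are trying to prove, so invoking it as a step is circular.

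What the paper actually does is quite different in structure. It first establishes two local results for a pair of slopes at distance one (after reparametrizing, the slopes $0$ and $\infty$): Corollary~\ref{cor:L-space-closed-interval} says that if both are L-space slopes then one of the two arcs between them consists entirely of L-space slopes, and Proposition~\ref{prp:non-L-space endpoints} says the analogous thing for non-L-space slopes. These are proved by a detailed case analysis tracking which of four explicit families of subwords $A_1,A_2,A_3,A_4$ occur in $\lp$ and showing these conditions are preserved under the relevant twist directions (Lemmas~\ref{lem:1-2-closure} through~\ref{lem:hard-case-4}). A separate combinatorial lemma (Lemma~\ref{lem:integral_slopes}) rules out alternation of L-space and non-L-space status among integer slopes. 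The main proof then takes two L-space slopes, normalizes one to $0$, and inducts on the \emph{length of the continued fraction} of the other, using the distance-one results to reduce the continued fraction by two terms at each step. The induction is on the slope data, not on the loop; the loop stays fixed throughout. This is the missing structural idea in your outline.
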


\begin{remark}The subset $U$ determining $\mathcal{L}_M$ in this statement may be empty.\end{remark}

\subsection{L-space slopes} A loop $\lp$ has two preferred slopes, 0 and $\infty$; we begin by giving explicit conditions on the loop $\lp$ (in terms of its representatives in standard or dual notation) under which these are L-space slopes.

\begin{prop}
\label{prop:Dehn_filling_Lspaces}Given a loop $\lp$, $\infty$ is an L-space slope if and only if $\lp$ can be written in standard notation with at least one $d_k$ letter and no $c_k$ letters (where $k$ can be any integer). $0$ is an L-space slope if and only if $\lp$ can be written in dual notation with at least one $d^*_k$ letter no $c^*_k$ letters.
\end{prop}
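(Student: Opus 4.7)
The plan is to analyze the chain complex $C_{\lp}(\infty) = \lp_{\bullet}^{A} \boxtimes \lp$ directly from the loop structure. Since $\lp_{\bullet} = (e)$ has a single $\iota_{0}$-generator $x$, the type A structure $\lp_{\bullet}^{A}$ has only the operations $m_{3+i}(x, \rho_{3}, \rho_{23}, \ldots, \rho_{23}, \rho_{2}) = x$ for $i \geq 0$. This identifies the generators of $C_{\lp}(\infty)$ with the $\bullet$-vertices of $\lp$, and differentials with directed paths $u \xrightarrow{\rho_{3}} \cdots \xrightarrow{\rho_{2}} v$ in $\lp$ whose intermediate steps are $\rho_{23}$-arrows; by inspection of the segment catalogue in Figure~\ref{fig:puzzle_pieces}, such paths are precisely the $a$-type segments. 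A short computation with the rule $\gr(\rho_{I} \cdot y) = \gr(\rho_{I}) + \gr(y)$ shows that $a$- and $b$-segments flip the $\Ztwo$-grading of their $\bullet$-endpoints while $c$- and $d$-segments preserve it.

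The constraint $(\star)$ forbids two $a$-letters from being adjacent in a standard representation, so each $\bullet$-vertex is incident to at most one $a$-segment, and the differentials of $C_{\lp}(\infty)$ form a partial matching. Writing $n$ for the length of the cyclic standard word and $A$ for the number of $a$-letters, this gives $\dim H_{*}(C_{\lp}(\infty)) = n - 2A$, with the surviving generators being exactly those $\bullet$-vertices not adjacent to any $a$-letter. Hence $C_{\lp}(\infty)$ is an L-space complex precisely when these $n - 2A$ survivors are nonempty and share a common $\Ztwo$-grading.

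To identify this condition combinatorially, I would next classify the $a$-segments of $\lp$: the maximal runs of non-$a$ letters between consecutive $a$'s in the cyclic word. The puzzle-piece shape compatibility forces, among non-$a$ letters, the transitions $b \to c$, $c \to c$, $d \to b$, and $d \to d$; together with the boundary conditions that the letter after an $a$ is $b$ or $d$ and the letter before an $a$ is $b$ or $c$, this pins every $a$-segment to the form $(d^{p}, b, c^{q})$ for some $p, q \geq 0$. Applying the grading computation, the unpaired vertices over the $d^{p}$-block share one grading while those over the $c^{q}$-block share the opposite one, so L-spaceness forces $p = 0$ or $q = 0$; equivalently, every non-trivial (length $\geq 2$) $a$-segment is pure of type $(b, c^{q})$ or $(d^{p}, b)$. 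A short global tracking of gradings across the cyclic word, using that each $a$-letter and each segment's unique $b$ contribute one flip apiece, shows that the base grading is cyclically consistent and that the survivor gradings from distinct non-trivial segments agree only when all such segments are pure of the \emph{same} type, with at least one such segment present. The case $A = 0$ is handled by the same transition analysis directly: absent any $a$'s, the cyclic word contains no $b$ either and reduces to a single block of $c$'s or of $d$'s.

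Finally, I would translate this criterion into the stated form. Every non-trivial $a$-segment being pure of type $(d^{p}, b)$ is exactly the condition that the standard word contains no $c_{k}$-letter and at least one $d_{k}$-letter, while the symmetric "all pure $(b, c^{q})$" case is absorbed by reversal of the cyclic direction via $\bar{c}_{k} = d_{-k}$ and $\bar{d}_{k} = c_{-k}$. The degenerate case where every $a$-segment is the minimal $(b)$ yields $\dim H_{*} = 0$ and is correctly ruled out by the "at least one $d$" clause; the case where $\lp$ has no $\bullet$-vertex at all (so admits no standard representation) is immediate since then $C_{\lp}(\infty) = 0$. The second claim, about the $0$-slope, follows by the identical argument in dual notation, with $\lp_{\circ} = (e^{*})$ in place of $\lp_{\bullet}$: the differentials in $C_{\lp}(0) = \lp_{\circ}^{A} \boxtimes \lp$ correspond to $b^{*}$-type dual segments (playing the role of $a$-type segments under the duality), and the parallel structural classification of $b^{*}$-segments in dual notation produces the statement in terms of $d^{*}_{k}$- and $c^{*}_{k}$-letters. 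The principal technical obstacle in both arguments is the structural classification of the relevant segments as $(d^{p}, b, c^{q})$ (and dually); once this classification is in hand, the grading-based L-space criterion and its combinatorial translation follow mechanically.
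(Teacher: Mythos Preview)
Your argument is correct and shares the same initial setup as the paper: generators of $C_{\lp}(\infty)$ are the $\bullet$-vertices, differentials come from $a$-segments (hence form a partial matching since no two $a$'s are adjacent), and the $\Ztwo$-grading is flipped by stable chains and preserved by unstable chains. Where you diverge is in the combinatorial analysis that follows. The paper argues directly that all $d_k$-endpoints share one grading and all $c_k$-endpoints the opposite one (via the parity observation that any two $d$-segments are separated by an even number of stable chains, while a $d$ and a $c$ are separated by an odd number), and then notes that every unstable chain contributes at least one survivor since it is adjacent to an $a$ on at most one side. You instead classify the maximal non-$a$ runs as $d^{p}\,b\,c^{q}$ and read off the gradings of the survivors block-by-block, together with a global check that the base gradings of consecutive runs agree. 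Your route is more structural and makes the shape of the loop very explicit; the paper's is shorter once the parity fact is stated. Both land on the same criterion, and your handling of the $A=0$ case (forcing a pure $d$- or $c$-word) and of the degenerate run $(b)$ is clean.

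Two small points. First, when $\lp$ has no $\bullet$-vertex (a collection of $e^*$ segments) neither $\lp_\bullet^A$ nor $\lp$ is bounded, so the box tensor product is not literally the zero complex; one must pass to a bounded homotopy-equivalent model, where $H_*$ turns out to be two-dimensional with $\chi=0$. Your conclusion (not an L-space slope) is unaffected, but the justification ``$C_{\lp}(\infty)=0$'' is imprecise. Second, in the dual case your identification of the differentials with $b^*$-segments is correct: the operations $m_{3+i}(x,\rho_2,\rho_{12},\ldots,\rho_{12},\rho_1)$ in $\lp_\circ^A$ pair with the directed path through a $b^*_k$ segment, not an $a^*_k$ segment. (The paper writes $a^*$ there, which appears to be a slip; it does not affect the conclusion, since in either case the survivors are governed by the unstable dual chains.)
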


\begin{proof}
The slope $\infty$ is an L-space slope if $C(\frac{1}{0}) = \lp_\bullet^A \boxtimes \lp$ is an L-space complex, that is, if $H_*(C(\frac{1}{0}))$ is nontrivial and each generator of $H_*(C(\frac{1}{0}))$ has the same ($\Ztwo$)-grading.

Recall that $\lp_\bullet^A$ has a single generator $x$ with idempotent $\iota_0$ and operations
\[m_{3+i}(x, \rho_3, \underbrace{ \rho_{23}, \ldots, \rho_{23}}_{i \text{ times}}, \rho_2) = x \]
for each $i \ge 0$. The box tensor product of this module with $\lp$ is easy to describe if $\lp$ is written in standard notation. There is one generator $x \otimes y$ for each $\bullet$-vertex $y$ in $\lp$ (by abuse $y$ is both the vertex in the loop $\lp$ and the corresponding generator in the type D structure corresponding to $\lp$). Given a cyclic word represting $\lp$ in standard notation, each letter represents a chain between adjacent $\bullet$-vertices; for each type $a_k$ chain from $y_1$ to $y_2$ there is a differential from $x\otimes y_1$ to $x\otimes y_2$. Since sequential occurences of type $a$ chains are impossible in any loop, the differentials on $C(\frac{1}{0})$ are isomorphisms mapping single generators to single generators. Therefore, the contribution to $H_*(\lp_\bullet^A\boxtimes\lp)$ (with relative ($\Ztwo$)-grading) is simply given by the $\bullet$-vertices of $\lp$ which do not lie at the end of a type $a$ segment.

The ($\Ztwo$)-grading on $C(\frac{1}{0})$ can be recovered as follows: the generators corresponding to two adjacent $\bullet$-vertices in $\lp$ have the same grading if the vertices are connected by an unstable chain and opposite gradings if the vertices are connected by a stable chain. It follows that endpoints of chains of type $d_k$ all have the same grading and endpoints of chains of type $c_k$ all have the opposite grading. This is because $d_k$ segments must be separated from each other by an even number of stable chains, and from $c_k$ segments by an odd number of stable chains.

Suppose the $\lp$ can be written in standard notation with at least one $d_k$ and no $c_k$. Every generator of $C(\frac{1}{0})$ either comes from an endpoint of a $d_k$ or from the common endpoint of two stable chains. The latter generators vanish in homology, since one of the two stable chains must be type $a$, and the former generators all have the same ($\Ztwo$)-grading. Thus in this case, $C(\frac{1}{0})$ is an L-space complex.

Suppose now that $\lp$ contains both $d_k$ and $c_k$ segments. For any unstable chain, at least one of the two endpoints must correspond to a generator of $C(\frac{1}{0})$ that survives in homology, since an unstable chain can be adjacent to a type $a$ chain on at most one side. Since endpoints of type $d_k$ and type $c_k$ unstable chains produce generators of opposite grading, it follows that $C(\frac{1}{0})$ has generators of both gradings that survive in homology, and thus $C(\frac{1}{0})$ is not an L-space complex.

If $\lp$ has no unstable chains when written in standard notation, then it has only stable chains, which alternate between type $a$ and $b$. In this case every $\bullet$-vertex is the endpoint of a type $a$ chain. Thus $H_*(C(\frac{1}{0}))$ is trivial, and $C(\frac{1}{0})$ is not an L-space complex. Finally, if $\lp$ cannot be written in standard notation, then it consists only of $e^*$ segments. $H_*(\lp^A_\bullet\boxtimes\lp)$ has two generators with opposite gradings; it follows that $\infty$ is not an L-space slope.

The proof for $0$-filling is almost identical. $\lp_\circ^A$ has a single generator $x$ with idempotent $\iota_1$ and operations
\[m_{3+i}(x, \rho_2, \underbrace{ \rho_{12}, \ldots, \rho_{12}}_{i \text{ times}}, \rho_1) = x \]
for each $i \ge 0$. The box tensor product of this module with $\lp$ has one generator for each generator $y$ of $\lp$ with idempotent $\iota_1$ (that is, each $\circ$-vertex) and a differential for each type $a^*$ segment. Expressing $\lp$ in dual notation, the contribution to $H_*(\lp^A_\circ\boxtimes\lp)$ (with relative ($\Ztwo$)-grading) is given by the $\iota_0$-generators of $\lp$ which do not lie at the end of a type $a^*$ segment. If $\CFD(Y, \spinc)$ is a loop that cannot be written in dual notation-- that is, it is a collection of $e$ segments -- then the contribution to $H_*(\lp^A_\circ\boxtimes\lp^D)$ is two generators of opposite grading.

Since $a^*$ and $b^*$ segments change the ($\Ztwo$)-grading while $c^*$ and $d^*$ segments do not, the rest of the proof is completely analogous to the proof for the $\infty$-filling.\end{proof}

Combining the two conditions in Proposition \ref{prop:Dehn_filling_Lspaces} gives a stronger condition on the loop.

\begin{prop}
\label{prop:Dehn_filling_Lspaces2}
Given a loop $\lp$, both $\infty$ and $0$ L-space slopes if and only if the following equivalent conditions hold:
\begin{itemize}
\item[$(i)$] $\lp$ can be written in standard notation with at least one $d_k$ letter and no $c_k$ letters (with $k \in \Z)$, and $\lp$ contains a subword from exactly one of the following sets
\begin{align*}
A_+ &= \{b_i a_j, a_i e^n b_j, a_i e^n d_j, d_i e^n b_j, d_i e^n d_j, d_\ell, a_\ell, b_\ell \quad | \quad i,j \ge 1, n\ge0, \ell\ge2 \}\\
A_- &= \{b_i a_j, a_i e^n b_j, a_i e^n d_j, d_i e^n b_j, d_i e^n d_j, d_\ell, a_\ell, b_\ell \quad | \quad i,j \le 1, n\ge0, \ell\le2 \}\end{align*}
We will say that $\lp$ satisfies condition $(i)_\pm$ if it satisfies condition $(i)$ with a subword in $A_\pm$.

\item[$(ii)$] $\lp$ can be written in dual notation with at least one $d^*_k$ letter and no $c^*_k$ letters (with $k \in \Z)$, and $\lp$ contains a subword from exactly one of the following sets
\begin{align*}
A^*_+ &= \{b^*_i a^*_j, a^*_i (e^*)^n b^*_j, a^*_i (e^*)^n d^*_j, d^*_i (e^*)^n b^*_j, d^*_i (e^*)^n d^*_j, d^*_\ell, a^*_\ell, b^*_\ell \quad | \quad i,j \ge 1, n\ge0, \ell\ge2 \}\\
A^*_- &= \{b^*_i a^*_j, a^*_i (e^*)^n b^*_j, a^*_i (e^*)^n d^*_j, d^*_i (e^*)^n b^*_j, d^*_i (e^*)^n d^*_j, d^*_\ell, a^*_\ell, b^*_\ell \quad | \quad i,j \le 1, n\ge0, \ell\le2 \}\end{align*}
We will say that $\lp$ satisfies condition $(i)_\pm$ if it satisfies condition $(ii)$ with a subword in $A^*_\pm$.

\end{itemize}
Moreover, condition $(i)_+$ is equivalent to condition $(ii)_+$ and condition $(i)_-$ is equivalent to condition $(ii)_-$.
\end{prop}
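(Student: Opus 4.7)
The plan is to reduce the statement to Proposition \ref{prop:Dehn_filling_Lspaces} combined with a case analysis of the standard-to-dual conversion (D1)--(D2). By that proposition, both $\infty$ and $0$ are L-space slopes if and only if $\lp$ can be written in standard notation with at least one $d_k$ and no $c_k$, and in dual notation with at least one $d^*_k$ and no $c^*_k$. The first half of condition $(i)$ encodes the standard-side statement verbatim; the task is to show that the subword restriction in $(i)$ encodes the dual-side statement.

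First I would put $\lp$ in standard normal form $(u_1 v_1 \cdots u_n v_n)$. Since $\lp$ has no $c_k$, each $v_i$ is a string of $e = d_0$ letters and each $u_i$ has type in $\{a, \bar a, b, \bar b, d, \bar c\}$. Inspecting (D1), the pair $u_i u_{i+1}$ produces a dual letter of type $c^*$ from $\{\bar a\bar b, \bar a\bar c, \bar c\bar b, \bar c\bar c\}$ or of type $\bar d^*$ from $\bar b\bar a$ (all $\bar d$-involving cases being excluded by the $c_k$-free hypothesis); by (D2), any $u_i$ of subscript $\leq -2$ contributes $c^*_0$ letters. Hence ``no $c^*_k$ in dual'' is equivalent to: every $u_i$ has subscript $\geq -1$ and no consecutive pair $u_i u_{i+1}$ is barred--barred. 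Reversing the orientation of the loop swaps barred and unbarred, yielding the mirror alternative (subscripts $\leq 1$, no unbarred--unbarred pair). These two alternatives correspond precisely to the presence of a subword in $A_+$ or in $A_-$, respectively; the ``exactly one'' clause is automatic because a single loop cannot simultaneously satisfy both sign conventions while remaining $c_k$-free.

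The equivalence $(i) \Leftrightarrow (ii)$ then follows by symmetry: dualization is involutive, so replaying the argument with standard and dual interchanged yields condition $(ii)$. For $(i)_\pm \Leftrightarrow (ii)_\pm$, I would verify directly from (D1) that subwords in $A_+$ map under the conversion to subwords in $A^*_+$ (and conversely), as the relevant entries of the table preserve the positive-sign type pattern; identical reasoning handles the negative case.

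The main obstacle is the sheer size of the case analysis in (D1): each type pair in $A_+$ must be traced through to a corresponding pair in $A^*_+$, with careful handling of edge cases where some $v_i$ is empty, where $|k_i| = 1$, or where the loop is sparse in unstable chains. One must also keep careful track of the distinction between the \emph{sign} of a subscript (nonnegative versus negative) and the \emph{type} (barred versus unbarred) of a letter, since these two notions, though related through the positive-subscript convention, are not interchangeable in the conversion table.
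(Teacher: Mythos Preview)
Your strategy—reducing to Proposition~\ref{prop:Dehn_filling_Lspaces} and reading off the dual-side condition via the conversion rules (D1)--(D2)—matches the paper's. The gap is in how you link your global conditions to the sets $A_\pm$.

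Your condition ``every $u_i$ has subscript $\geq -1$ and no consecutive pair is barred--barred'' is a correct characterization of the \emph{absence} of $c^*_k$ in the dual word, i.e.\ of $\lp$ containing \emph{no} subword from $A_-$. What the paper actually shows is that, under the no-$c_k$ hypothesis, $A_+$ is exactly the set of standard subwords that dualize to some $d^*_k$, while $A_-$ is the set dualizing to some $c^*_k$; condition $(i)_+$ therefore reads ``some subword in $A_+$ \emph{and} none in $A_-$.'' You instead assert that your Alternative~1 corresponds to the \emph{presence} of a subword in $A_+$, conflating ``no $A_-$ subword'' with ``some $A_+$ subword.'' These are genuinely different. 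Take $\lp = (a_1\, d_{-1}\, b_1\, a_{-1}\, d_1\, b_{-1})$: it has $d_k$'s and no $c_k$'s, all subscripts are $\pm 1$, and every consecutive pair $u_i u_{i+1}$ is mixed (one barred, one unbarred). Both of your Alternatives hold simultaneously, yet $\lp$ contains no subword from $A_+$ and none from $A_-$; indeed its dual representative consists entirely of stable chains $a^*_{\pm 1}, b^*_{\pm 1}$, so $0$ is not an L-space slope and condition $(i)$ correctly fails. The same example refutes your claim that the ``exactly one'' clause is automatic. The repair is to argue, as the paper does, that the subwords in $A_+$ are precisely those producing a $d^*_k$ under dualization (and $A_-$ those producing a $c^*_k$), so that ``has a $d^*_k$ and no $c^*_k$'' translates to ``subword in $A_+$, none in $A_-$,'' which is condition $(i)_+$.
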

\begin{proof}
By Proposition \ref{prop:Dehn_filling_Lspaces}, 0 and $\infty$ are both L-space slopes for $\lp$ if and only if $\lp$ contains either $d_k$ segments or $c_k$ segments in standard notation, but not both, and $\lp$ contains either $d^*_k$ segments or $c^*_k$ segments in dual notation, but not both. It will be convenient to choose standard notation and dual notation representatives for $\lp$ with respect to the same orientation on the loop. This means that by reversing the orientation we can either ensure that the standard unstable chains are of type $d_k$ or we can ensure that the dual unstable chains are of type $d^*_k$.

First assume that $\lp$ contains standard unstable chains of type $d_k$ but not $c_k$. Under this assumption, the set $A_+$ is precisely the subwords in standard notation that dualize give a $d^*_k$ chain. This follows from the discussion on dualizing in Section \ref{sec:notations_for_loops}. A $d^*_{n+1}$ segment with $n\ge0$ arises from a subword $a_i e^n b_j$, $a_i e^n d_j$, $d_i e^n b_j$ or $d_i e^n d_j$ with $i,j \ge 1$. A segment $d^*_{-n-1}$ arises from a subword $b_i \bar{e}^n a_j$, $b_i \bar{e}^n c_j$, $c_i \bar{e}^n a_j$ or $c_i \bar{e}^n c_j$ with $i,j \ge 1$; since we assume that $\lp$ contains no $c_k$ segments, including $\bar{e} = c_0$, the only relevant case is $b_i a_j$. Finally, a $d^*_0 = e^*$ segment arises from a standard letter of type $a_\ell$, $b_\ell$, $c_\ell$, or $d_\ell$ with subscript $\ell > 1$; we can ignore the case of $c_\ell$ by assumption. By similar reasoning, we can check that under the assumption that $\lp$ contains no $c_k$ letters, the set $A_-$ is precisely the subwords in standard notation that dualize to give $c^*_k$ chains.

If we instead assume that $\lp$ contains dual unstable chains of type $d^*_k$ but not $c^*_k$, the argument is simlar. Note that the sets $A^*_\pm$ are the same as the sets $A_\pm$ with stars added to each letter, and the process for switching from dual to standard notation is the same as the process for switching from standard to dual (up to adding/removing stars from letters). We have immediately that $A^*_+$ is the set of subwords in that dualize to give a $d_k$ letter, and $A^*_-$ is the set of subwords that dualize to give a $c_k$ letter.

We have shown that conditions (i) and (ii) are both equivalent to $\lp$ containing an unstable chain of exactly one of the two types in both standard notation and dual notation, and this is equivalent to both 0 and $\infty$ being L-space slopes. Finally, if $\lp$ satisfies condition $(i)_+$ then $\lp$ contains both a $d_k$ segment and a $d^*_k$ segment (with respect to the same orientation of the loop). It follows that $\lp$ satisfies condition $(ii)_+$. If $\lp$ satisfies condition $(i)_-$ then it contains both a $d_k$ segment and a $c^*_k$ segment. After reversing the orientation of the $\lp$, it contains a $c_k$ segment and a $d^*_k$ segments; thus $\lp$ satisfies condition $(ii)_-$.
\end{proof}

\begin{cor}\label{cor:L-space-closed-interval}
If $0$ and $\infty$ are L-space slopes for a loop $\lp$ then the set of L-sapce slopes for $\lp$ contains either all positive slopes (that is, the interval $[0,\infty]$) or all negative slopes (that is, the interval $[-\infty,0]$).
\end{cor}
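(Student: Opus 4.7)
By Proposition \ref{prop:Dehn_filling_Lspaces2}, the hypothesis that both $0$ and $\infty$ are L-space slopes implies that $\lp$ satisfies exactly one of $(i)_+$ or $(i)_-$. The plan is to treat $(i)_+$ (equivalently $(ii)_+$) and show that every slope in $[0,\infty]$ is an L-space slope; the case $(i)_-$ is symmetric, giving $[-\infty,0]$. Fix a positive slope $\frac{p}{q} \in \hat\Q$ and choose an even-length continued fraction expansion $\frac{p}{q} = [a_1, a_2, \ldots, a_n]$ with every $a_i > 0$, so that
\[\lp^{\frac{p}{q}} = \du^{a_n} \circ \tw^{a_{n-1}} \circ \cdots \circ \du^{a_2} \circ \tw^{a_1}(\lp).\]
By the discussion preceding Definition \ref{def:abstractL}, $\frac{p}{q}$ is an L-space slope for $\lp$ precisely when $\infty$ is an L-space slope for $\lp^{\frac{p}{q}}$; by Proposition \ref{prop:Dehn_filling_Lspaces} this holds if and only if $\lp^{\frac{p}{q}}$ admits a standard-notation representative containing at least one $d_k$ segment and no $c_k$ segment.

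The main step is the following claim: if $\lp'$ satisfies $(i)_+$, then both $\tw^a(\lp')$ and $\du^a(\lp')$ satisfy $(i)_+$ for every integer $a>0$. Granted this, induction on $n$ yields that $\lp^{\frac{p}{q}}$ satisfies $(i)_+$, hence by Proposition \ref{prop:Dehn_filling_Lspaces2} both $0$ and $\infty$ are L-space slopes for $\lp^{\frac{p}{q}}$; in particular $\infty$ is, which is what we need.

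The claim is verified directly from the descriptions of $\tw$ and $\du$ in Section \ref{subsec:operations}. In standard notation, $\tw^a$ acts by $d_k \mapsto d_{k+a}$ while fixing letters of type $a$ and $b$: no $c_k$ letter is introduced, and every $d$-type unstable chain remains of type $d$, so the first clause of (i) is preserved. The $A_+$-subword condition is preserved as well, because each $A_+$-subword of $\lp'$ either survives verbatim or gives rise to a new $A_+$-subword; for instance, a subword $a_i e^m b_j \in A_+$ becomes $a_i d_a^m b_j$ after $\tw^a$, and this contains the $A_+$-subword $a_i d_a$ since $a\ge 1$. The analogous statement for $\du^a$ is proved in dual notation by the same recipe, and then translated back via the equivalence $(i)_+ \Leftrightarrow (ii)_+$ of Proposition \ref{prop:Dehn_filling_Lspaces2}, which also allows us to alternate freely between the two operators in the course of the induction. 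The principal technical obstacle is checking that no $A_-$-subword is accidentally introduced under these positive shifts, which reduces to careful subscript bookkeeping through the dualization procedure (D1)--(D2) recalled in Section \ref{sec:notations_for_loops}.
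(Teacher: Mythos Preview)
Your proof is correct and follows essentially the same strategy as the paper's: show that the set of loops satisfying $(i)_+$ (equivalently $(ii)_+$) is closed under $\tw$ and $\du$, then use a positive even-length continued fraction to reparametrize so that the slope $\frac{p}{q}$ becomes $\infty$. The paper packages this as defining $L_+$ and $L_-$ and checking closure under the appropriate twists, but the content is identical to your claim and its verification.
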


\begin{proof}
Let $L_+$ be the set of loops satisfying condition $(i)_+$ and $(ii)_+$ in Proposition \ref{prop:Dehn_filling_Lspaces2}, and let $L_-$ be the set of loops satisfying conditions $(i)_-$ and $(ii)_-$. It is not difficult to see that condition $(i)_+$ is preserved by the operation $\tw$, since for any ${\bf w} \in A_+$, $\tw({\bf w})$ is in $A_+$ or contains an element of $A_+$ as a subword. Similarly, condition $(ii)_+$ is preserved by the operation $\du$. Therefore the set $L_+$ is preserved by $\tw$ and $\du$. In the same way, the set $L_-$ is preserved by $\twi$ and $\dui$.

If $\frac{p}{q} > 0$, let $[a_1, \ldots, a_n]$ be a continued fraction for $\frac{p}{q}$ of even length with all positive terms. To see if $\frac{p}{q}$ is an L-space slope, we reparametrize by
$ \du^{a_n} \circ \cdots \circ \tw^{a_3} \circ \du^{a_2} \circ \tw^{a_1}$, taking the slope $\frac{p}{q}$ to the slope $\infty$. It follows that if a loop is in $L_+$ then any $\frac{p}{q}>0$ is an L-space slope, since the reparametrized loop is also in $L_+$.

If $\frac{p}{q} < 0$, let $[-a_1, \ldots, -a_n]$ be a continued fraction for $\frac{p}{q}$ of even length with all negative terms. To see if $\frac{p}{q}$ is an L-space slope, we reparametrize by
$\du^{-a_n} \circ \cdots \circ \tw^{-a_3} \circ \du^{-a_2} \circ \tw^{-a_1}$, taking the slope $\frac{p}{q}$ to the slope $\infty$. It follows that if a loop is in $L_-$, then any $\frac{p}{q}<0$ is an L-space slope.\end{proof}

\subsection{Non-L-space slopes}
The goal of this section is to establish easily-checked conditions certifying that the standard and dual slope of a given loop are non-L-space slopes. Our focus will be on the following result. 

\begin{prop}\label{prp:non-L-space endpoints}
Suppose that $\lp$ is a loop for which both standard and dual fillings give rise to non-L-spaces. Then the standard and dual slopes bound an interval of non-L-space slopes in $\mathcal{L}_M^c$.  That is, one of  $[0,\infty]$ or $[-\infty,0]$ consists entirely of non-L-space slopes. \end{prop}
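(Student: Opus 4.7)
The plan is to mimic the structure of the proof of Corollary \ref{cor:L-space-closed-interval}. Specifically, I will define two classes of loops $NL_+$ and $NL_-$ with the following four properties:
\begin{itemize}
\item[(A)] Every loop in $NL_+\cup NL_-$ has $\infty$ as a non-L-space slope.
\item[(B)] $NL_+$ is closed under the operations $\tw$ and $\du$.
\item[(C)] $NL_-$ is closed under the operations $\twi$ and $\dui$.
\item[(D)] If both $0$ and $\infty$ are non-L-space slopes for $\lp$, then $\lp\in NL_+\cup NL_-$.
\end{itemize}
Once (A)--(D) are in place, the continued fraction argument already used in the proof of Corollary \ref{cor:L-space-closed-interval} carries over verbatim: for $\lp\in NL_+$ and any positive $\frac{p}{q}=[a_1,\ldots,a_{2n}]$ with all $a_i\ge 1$, the reparametrization $\du^{a_{2n}}\circ\cdots\circ\tw^{a_1}(\lp)$ remains in $NL_+$, so $\infty$ is non-L-space there, which by definition means $\frac{p}{q}$ is non-L-space for $\lp$; similarly, loops in $NL_-$ have every negative slope non-L-space.

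To define $NL_\pm$, I will start from the combinatorial characterization in Proposition \ref{prop:Dehn_filling_Lspaces}: $\infty$ fails to be an L-space slope exactly when the standard representative of $\lp$ (i) contains a $c_k$ letter, or (ii) has no unstable chain at all, or (iii) does not exist (the pure $e^*$ case). In each of these three regimes one can track the ``sign'' of the obstruction, exactly as the sets $A_+$ and $A_-$ in Proposition \ref{prop:Dehn_filling_Lspaces2} distinguish positive from negative L-space data. Concretely, I will declare $\lp\in NL_+$ when the obstruction to $\infty$ being L-space is witnessed by subwords whose subscripts have a consistent sign pattern preserved by both $\tw$ (which fixes $a_i,b_i$ and sends $c_j\mapsto c_{j-1}$, $d_j\mapsto d_{j+1}$) and by $\du$ (whose effect on standard letters can be decoded through the dualization rules (D1), (D2) of Section \ref{sec:notations_for_loops}); membership in $NL_-$ will be the mirror condition. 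Verifying (B) and (C) is then a bookkeeping exercise analogous to, but dual to, the preservation of $L_\pm$ in Corollary \ref{cor:L-space-closed-interval}: apply $\tw$ letter-by-letter, and for $\du$ use the dualization dictionary to pass to dual notation, shift $c^*,d^*$ subscripts, and dualize back.

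The genuinely new ingredient is property (D). Here the hypothesis that $0$ is \emph{also} a non-L-space slope provides, by the dual of Proposition \ref{prop:Dehn_filling_Lspaces}, a second obstruction visible in dual notation. The bulk of the work will be a case analysis: if $\lp$ has both standard $c_k$ and $d_\ell$ letters, the relative signs of their subscripts determine whether $\lp\in NL_+$ or $NL_-$; if $\lp$ has only stable chains in standard notation, then passing to dual notation via the table in (D1) forces unstable chains to appear, whose signs are controlled by the cyclic order of the signs of the $a$'s and $b$'s in the standard word, and the simultaneous failure of $0$ forces these to be consistently positive or consistently negative; the pure $e^*$ case and its mirror (pure $e$ in dual) are handled directly by noting that these loops are solid torus-like and fall into one of the $NL_\pm$ trivially. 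The main obstacle is precisely making this case analysis exhaustive and ensuring that the sign pattern chosen to place $\lp$ in $NL_+$ or $NL_-$ really is compatible with all the subwords forcing both $\infty$ and $0$ to be non-L-space; this is where one has to use both assumptions simultaneously rather than just one, so that incompatible ``$+$'' and ``$-$'' obstructions cannot coexist.
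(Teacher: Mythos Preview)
Your overall framework is exactly the one the paper uses: define ``stable'' classes of non-L-space loops closed under positive (resp.\ negative) twists and then run the continued-fraction argument from Corollary~\ref{cor:L-space-closed-interval}.  The paper's version of your $NL_-$ is the set of loops containing subwords from both $A_1$ and $A_2$ (Lemma~\ref{lem:1-2-closure} gives your property (C)), and $NL_+$ is the set containing subwords from both $A_3$ and $A_4$ (Lemma~\ref{lem:3-4-closure} gives (B)).

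The gap is your property (D).  The clean dichotomy you are hoping for does not hold: a loop with both $0$ and $\infty$ non-L-space need not contain subwords from a pair $A_i,A_j$ with $i,j$ of the same parity.  The paper's Corollary~\ref{cor:non-L-space-dual-filling} forces only that either there are subwords from some $A_i$ and $A_j$ with $i,j$ of \emph{opposite} parity, or from none of the $A_i$ at all.  This leaves a genuine ``mixed'' case---subwords in $A_1$ and $A_4$ but not $A_2$ or $A_3$ (Case~(3) in the paper's proof)---and a degenerate case with no $A_i$ subwords (Case~(4)).  Your sketch asserts that ``the simultaneous failure of $0$ forces these to be consistently positive or consistently negative,'' but this is exactly what fails.

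Handling Case~(3) takes three separate lemmas in the paper (Lemmas~\ref{lem:hard-case-1}--\ref{lem:hard-case-3}) and splits further according to whether $\lp$ contains a $\bar c_1$, a $d_1$, or neither.  The point is that such an $\lp$ is not in $NL_+\cup NL_-$, but after \emph{one} application of $\tw^{\pm 1}$ it lands in one of them (or stays in Case~(3) under $\du^{\pm 1}$, requiring an inductive argument).  In the ``neither $\bar c_1$ nor $d_1$'' subcase, every slope is in fact non-L-space, so the loop belongs to both intervals simultaneously---again incompatible with a strict dichotomy.  Case~(4) (Lemma~\ref{lem:hard-case-4}) is similar.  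So your plan is correct in outline, but the substance of the proof is precisely the case analysis you have deferred, and it is more delicate than ``a bookkeeping exercise'': you cannot enlarge $NL_\pm$ to absorb Case~(3) while keeping (B) and (C) without essentially reproducing Lemmas~\ref{lem:hard-case-1}--\ref{lem:hard-case-3}.
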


The proof of this result is similar to the proof of Corollary \ref{cor:L-space-closed-interval} but is more technical and will therefore be built up in a series of lemmas. 

To begin, note that Proposition \ref{prop:Dehn_filling_Lspaces} can be restated in terms of non-L-space slopes as follows:

\begin{prop}\label{prop:nonLspace_slopes}
$\infty$ is a non-L-space slope for a loop $\lp$ if and only if either (i) $\lp$ contains both $c_k$ and $d_k$ unstable chains in standard notation, (ii) $\lp$ contains no unstable chains in standard notation, or (iii) $\lp$ cannot be written in standard notation. $0$ is a non-L-space slope if and only if either (i) $\lp$ contains both $c^*_k$ and $d^*_k$ unstable chains in dual notation, (ii) $\lp$ contains no unstable chains in dual notation, or (iii) $\lp$ cannot be written in dual notation. 
\end{prop}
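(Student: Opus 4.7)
The plan is to obtain Proposition \ref{prop:nonLspace_slopes} by taking the contrapositive of Proposition \ref{prop:Dehn_filling_Lspaces}, with the one subtlety being the role of orientation reversal on the cyclic word representative.

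First I would record the following observation: a cyclic word representing $\lp$ in standard notation depends on a choice of direction around the loop, and reversing this direction replaces each standard letter $s$ with $\bar s$. Since $\bar c_k = d_{-k}$ and $\bar d_k = c_{-k}$, this operation interchanges the collection of $c$-type and $d$-type unstable chains (up to the sign on the subscript). In particular, the conditions "$\lp$ contains some unstable chain of $d$-type" and "$\lp$ contains some unstable chain of $c$-type" are swapped by reversal, while the property "$\lp$ contains unstable chains of both types" is preserved under reversal. Both representatives of course define the same underlying type D structure, so they give the same answer as to whether $\infty$ is an L-space slope.

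By Proposition \ref{prop:Dehn_filling_Lspaces}, $\infty$ is an L-space slope for $\lp$ if and only if $\lp$ admits a standard notation representation containing at least one $d_k$ letter and no $c_k$ letters. Using the reversal observation above, this is equivalent to the assertion that $\lp$ can be written in standard notation and every unstable chain present is of the same type, with at least one unstable chain present. Negating this characterization, $\infty$ is a non-L-space slope precisely when one of the following mutually exhaustive alternatives occurs: $\lp$ cannot be written in standard notation at all (case (iii)); $\lp$ can be written in standard notation but contains no unstable chains (case (ii)); or $\lp$ can be written in standard notation and contains unstable chains of both types in the same representative (case (i)). This yields the first half of the proposition.

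For the statement concerning the slope $0$, I would repeat the argument verbatim after replacing standard notation by dual notation, standard letters by dual letters, and $c_k,d_k$ by $c_k^*,d_k^*$; the same orientation-reversal identities hold in the dual alphabet, and the corresponding half of Proposition \ref{prop:Dehn_filling_Lspaces} supplies the L-space criterion to negate. I do not foresee a real obstacle here, since Proposition \ref{prop:Dehn_filling_Lspaces} does all of the work; the only point requiring care is ensuring that the missing fourth case, "$\lp$ has $c_k$ chains but no $d_k$ chains," is correctly identified as an L-space case rather than a non-L-space case, and this is precisely what the orientation-reversal remark accomplishes.
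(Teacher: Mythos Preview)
Your proposal is correct and matches the paper's approach: the paper simply states that Proposition~\ref{prop:nonLspace_slopes} is a restatement of Proposition~\ref{prop:Dehn_filling_Lspaces} in terms of non-L-space slopes, without writing out a separate proof. Your explicit handling of the orientation-reversal subtlety (which disposes of the case ``only $c_k$ chains, no $d_k$ chains'') is exactly the point one needs to make the contrapositive go through cleanly.
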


It will be helpful to give conditions in standard notation under which 0 is a non-L-space slope. Let $k>1$, $i,j>0$, and $n\ge0$  be integers and consider the sets
\begin{align*}
A_1 &= \{a_k, b_k, c_k,c_ic_0^nc_j, c_ic_0^na_j, b_ic_0^nc_j, b_ic_0^na_j\}\\[5pt]
A_2&=\{a_{-k}, b_{-k}, d_{-k} ,d_{-i} d_0^n d_{-j}, d_{-i} d_0^n b_{-j}, a_{-i} d_0^n d_{-j}, a_{-i} d_0^n b_{-j} \}\\[5pt]
A_3 &= \{a_k, b_k, d_k, d_id_0^nd_j, d_id_0^nb_j, a_id_0^nd_j,a_id_0^nb_j \}\\[5pt]
A_4 &= \{a_{-k}, b_{-k}, c_{-k}, c_{-i}c_0^n c_{-j}, c_{-i} c_0^n a_{-j}, b_{-i} c_0^n c_{-j}, b_{-i}c_0^n a_{-j}\}
\end{align*}
These four sets may be interpreted as follows:
\begin{lem}\label{lem:four-sets}
A loop $\lp$ written in standard notation contains a word in $A_1\cup A_3$ if and only if it contains $d^*_n$ in dual notation for some integer $n$. It contains a word in $A_2\cup A_4$ if and only if it contains $c^*_n$ in dual notation for some integer $n$.

\end{lem}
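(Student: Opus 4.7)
The plan is to prove the lemma by a direct analysis of the dualization procedure (D1), (D2) from Section \ref{sec:notations_for_loops}. Given a loop $\lp$ written in standard notation with normal form $({\bf u}_1 {\bf v}_1 \cdots {\bf u}_m {\bf v}_m)$, recall that rule (D1) replaces each ${\bf v}_i$ (a run of $n_i \ge 0$ copies of $d_0$ or $c_0$) by a single dual letter with subscript $n_i + 1$ whose type is dictated by the types of the ordered pair $({\bf u}_i, {\bf u}_{i+1})$ via the displayed table, while (D2) replaces each ${\bf u}_i$ by $|k_i|-1$ copies of $d^*_0$ (if $k_i>0$) or of $c^*_0$ (if $k_i<0$).

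First I would establish the $d^*_n$ equivalence. The key observation is that a letter of $d^*$-type (with any integer subscript, under the convention $d^*_{-k} = \bar c^*_k$) arises in the dual word in exactly one of three ways: via (D1) with pair type in $\{ab, ad, db, dd\}$, producing $d^*_{n+1}$ with $n \ge 0$ from the triple ${\bf u}_i\, d_0^n\, {\bf u}_{i+1}$, giving precisely the triples in $A_3$; via (D1) with pair type in $\{ba, bc, ca, cc\}$, producing $\bar c^*_{n+1} = d^*_{-(n+1)}$ from ${\bf u}_i\, c_0^n\, {\bf u}_{i+1}$, giving precisely the triples in $A_1$; or via (D2) from a ${\bf u}_i$ with $k_i \ge 2$, which produces at least one copy of $d^*_0$ from a letter $a_k$, $b_k$, $c_k$ or $d_k$ with $k > 1$, matching the single-letter entries of $A_1 \cup A_3$. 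The converse is then immediate by inspecting each element of $A_1 \cup A_3$ and checking that it triggers one of these three cases.

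Second, the $c^*_n$ equivalence follows by an entirely analogous argument, replacing the pair types above with their barred analogs: pair types in $\{\bar a\bar b, \bar a\bar c, \bar c\bar b, \bar c\bar c\}$ produce $c^*_{n+1}$ and correspond to the triples in $A_2$; pair types in $\{\bar b\bar a, \bar b\bar d, \bar d\bar a, \bar d\bar d\}$ produce $\bar d^*_{n+1} = c^*_{-(n+1)}$ and correspond to the triples in $A_4$; and single letters ${\bf u}_i$ with $k_i \le -2$ produce $c^*_0$ via (D2), matching the single-letter entries $a_{-k}, b_{-k}, c_{-k}, d_{-k}$ (with $k > 1$) of $A_2 \cup A_4$.

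The argument is essentially bookkeeping from the dualization table, with no conceptual obstacle. The only subtlety worth flagging is that $d^*_0 = e^*$ (respectively $c^*_0 = \bar e^*$) is produced exclusively by (D2), since any letter yielded by (D1) has subscript $n_i + 1 \ge 1$; hence the single-letter and triple contributions to $A_1 \cup A_3$ (respectively $A_2 \cup A_4$) correspond to disjoint mechanisms and the enumeration above is exhaustive.
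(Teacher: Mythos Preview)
Your proof is correct and follows essentially the same approach as the paper: both arguments identify exactly which standard subwords dualize to $d^*_n$ (respectively $c^*_n$) via the dualization procedure of Section~\ref{sec:notations_for_loops}. The paper argues via the explicit $\rho$-arrow pictures of the segments, while you invoke the abstract rules (D1)/(D2) and the type table directly; the content is the same, and your bookkeeping is accurate (including the observation that $d^*_0$ and $c^*_0$ arise only through (D2), which pins down the single-letter entries of the $A_i$).
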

\begin{proof} We prove the first statement and leave the second to the reader. 

First notice that if $k>1$ then any letter $a_k,b_k,c_k,d_k$ contains at least one instance of $\raisebox{-4.5pt}{\begin{tikzpicture}[>=latex] 
		\node at (0,0) {$\circ$}; 
		\node at (1,0) {$\circ$}; 
				\draw[thick, ->,shorten >=0.1cm, shorten <=0.1cm] (0,0) -- (1,0); \node at (0.5,0.25) {$\scriptsize\rho_{23}$};
	\end{tikzpicture}}$ which, in dual notation, gives an $e^*=d_0^*$. 
If $i,j>0$ then each word of the form 	$c_ic_0^nc_j$, $c_ic_0^na_j$, $b_ic_0^nc_j$, or $b_ic_0^na_j$ gives an instance of 
\[
\begin{tikzpicture}[>=latex] 
		\node at (0,0) {$\circ$}; 
		\node at (1,0) {$\bullet$}; 
		\node at (2,0) {$\bullet$};
		\node at (3,0) {$\circ$}; 
					\draw[thick, <-,shorten >=0.1cm, shorten <=0.1cm] (0,0) -- (1,0); \node at (0.5,0.25) {$\scriptsize\rho_{1}$};

		\draw[dashed, thick, ->,shorten >=0.1cm, shorten <=0.1cm] (1,0) -- (2,0); \node at (1.5,0.25) {$\scriptsize\rho_{12}$};
		\draw[thick, ->,shorten >=0.1cm, shorten <=0.1cm] (2,0) -- (3,0); \node at (2.5,0.25) {$\scriptsize\rho_{3}$};
	\end{tikzpicture}
	\]
	which, in dual notation, gives $\bar c^*_{n+1}=d^*_{-n-1}$. 
Similarly, each word of the form $d_i d_0^n d_j$, $d_i d_0^n b_j$, $a_i d_0^n d_j$, or $a_i d_0^n b_j$ gives an instance of \[
\begin{tikzpicture}[>=latex] 
		\node at (0,0) {$\circ$}; 
		\node at (1,0) {$\bullet$}; 
		\node at (2,0) {$\bullet$};
		\node at (3,0) {$\circ$}; 
					\draw[thick, ->,shorten >=0.1cm, shorten <=0.1cm] (0,0) -- (1,0); \node at (0.5,0.25) {$\scriptsize\rho_{2}$};

		\draw[dashed, thick, ->,shorten >=0.1cm, shorten <=0.1cm] (1,0) -- (2,0); \node at (1.5,0.25) {$\scriptsize\rho_{12}$};
		\draw[thick, ->,shorten >=0.1cm, shorten <=0.1cm] (2,0) -- (3,0); \node at (2.5,0.25) {$\scriptsize\rho_{123}$};
	\end{tikzpicture}
	\] which, in dual notation, gives $d^*_{n+1}$. 
For the converse, observe that the segments $d^*_0$, $d^*_{n+1}$ and $d^*_{-n-1}$ with $n \ge 0$ in dual notation can only arise from the words mentioned above in standard notation.	
\end{proof}


\begin{cor}\label{cor:non-L-space-dual-filling}
$0$ is not an L-space slope for $\lp$ if and only if either (i) $\lp$ contains a sub-word from each of $A_i$ and $A_j$, where $i$ and $j$ have opposite parity, (ii) $\lp$ does not contain a subword from any $A_i$, or (iii) $\lp$ cannot be written in standard notation.
\end{cor}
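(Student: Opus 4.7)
My plan is to deduce the corollary directly from Proposition \ref{prop:nonLspace_slopes} by using Lemma \ref{lem:four-sets} to translate each of its three dual-notation cases into conditions on standard-notation subwords. The organizing dictionary is as follows: for any loop $\lp$ admitting a standard-notation representation, Lemma \ref{lem:four-sets} says that containing a $d^*_n$ chain in dual notation (for some integer $n$) is equivalent to containing a subword from $A_1 \cup A_3$, while containing a $c^*_n$ chain in dual notation is equivalent to containing a subword from $A_2 \cup A_4$. Since the subscripts $1, 3$ are odd while $2, 4$ are even, the corollary's clause ``subwords from each of $A_i$ and $A_j$ with opposite parity'' is precisely ``subwords from both $A_1 \cup A_3$ and $A_2 \cup A_4$''.

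Next I would proceed by case analysis on whether $\lp$ admits a representation in standard notation. If it does, I would match the three cases of Proposition \ref{prop:nonLspace_slopes} against the corollary's conditions: Proposition \ref{prop:nonLspace_slopes}(i), asserting the presence of both $c^*_k$ and $d^*_k$ chains in dual notation, translates via the dictionary above to Corollary condition (i); Proposition \ref{prop:nonLspace_slopes}(ii), asserting the absence of any unstable chain in dual notation, translates to Corollary condition (ii); and Proposition \ref{prop:nonLspace_slopes}(iii), that $\lp$ cannot be written in dual notation, holds precisely when $\lp$ consists entirely of $e = d_0$ letters in standard notation. In that last situation all letters carry subscript zero, so direct inspection of the four defining subword families (each of which demands either a letter with $|{\rm subscript}| \ge 2$ or a configuration of adjacent letters with nonzero subscripts) shows $\lp$ contains no subword from any $A_i$, placing this sub-case under Corollary condition (ii) as well. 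Conversely, if $\lp$ cannot be written in standard notation at all, then we fall directly into Corollary condition (iii).

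The main obstacle in the argument is the bookkeeping required to verify that the four families $A_1, A_2, A_3, A_4$ exhaust, via Lemma \ref{lem:four-sets}, all ways in which $d^*_n$ and $c^*_n$ chains arise from standard-notation cyclic words; this has already been packaged into Lemma \ref{lem:four-sets}. Beyond that, the proof of the corollary amounts to confirming that the case analysis above is mutually exclusive and exhaustive, so that the three Corollary conditions precisely recover the three cases of Proposition \ref{prop:nonLspace_slopes}.
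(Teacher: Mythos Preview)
Your proposal is correct and follows exactly the paper's approach: the paper's own proof is the single sentence ``This follows immediately from Proposition \ref{prop:nonLspace_slopes} and Lemma \ref{lem:four-sets},'' and your case analysis is simply a careful unpacking of that claim via the same dictionary between dual unstable chains and the families $A_i$. Your explicit handling of the edge cases (Proposition \ref{prop:nonLspace_slopes}(iii) falling under Corollary condition (ii), and loops not writable in standard notation falling under condition (iii)) is more detailed than the paper but entirely in line with its intent.
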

\begin{proof}
This follows immediately from Proposition \ref{prop:nonLspace_slopes} and Lemma \ref{lem:four-sets}. 
\end{proof}
Based on this observation, the proof of Proposition \ref{prp:non-L-space endpoints} will reduce to several cases, which are treated in the following Lemmas. 

\begin{lem}\label{lem:1-2-closure} If  $\lp$ is a loop containing a sub-word from each of the sets $A_1$ and $A_2$ then  $\twi(\lp)$ and $\dui(\lp)$ are both loops containing a sub-word from each of the sets $A_1$ and $A_2$. \end{lem}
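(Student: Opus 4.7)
The proof splits into four assertions --- that each of $\twi$ and $\dui$ preserves the property of containing a subword from $A_1$, and likewise for $A_2$ --- which I will handle separately. The two $\twi$ assertions can be checked directly in standard notation, since the inverse of Proposition \ref{prop:effect_of_Tstd} gives the letter-by-letter action $\twi(a_k) = a_k$, $\twi(b_k) = b_k$, $\twi(c_k) = c_{k+1}$, $\twi(d_k) = d_{k-1}$. Given an $A_1$ subword $\w$ of $\lp$, I would run through the seven forms appearing in the definition of $A_1$ and exhibit an $A_1$ subword inside $\twi(\w)$. Most cases are painless: the single letters $a_k, b_k$ are unchanged, $c_k$ maps to $c_{k+1}$ (still in $A_1$ since $k + 1 \ge 3$), and the composites $c_i c_0^n c_j$, $c_i c_0^n a_j$, $b_i c_0^n c_j$ produce subwords containing $c_{i+1}$ or $c_{j+1}$ of subscript $\ge 2$. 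The slightly subtle case is $b_i c_0^n a_j$ with $n \ge 1$, whose image $b_i c_1^n a_j$ contains the sub-subword $c_1 a_j$, matching the template $c_{i'} c_0^{n'} a_{j'}$ with $i' = 1, n' = 0, j' = j$ and hence lying in $A_1$. The $A_2$ check under $\twi$ is the mirror image, using that $\twi$ shifts $d$-subscripts downward so that composites containing $d_0^n$ acquire $d_{-1}$ letters that reassemble into $A_2$ composite forms; the analogous edge case $a_{-i} d_0^n b_{-j}$ with $n \ge 1$ maps to $a_{-i} d_{-1}^n b_{-j}$, containing $a_{-i} d_{-1}$ which matches $a_{-i} d_0^0 d_{-j'}$ with $j' = 1$.

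For the two $\dui$ assertions I would pass to dual notation, where $\dui$ is natural: $\dui(c^*_j) = c^*_{j+1}$, $\dui(d^*_j) = d^*_{j-1}$, fixing $a^*$ and $b^*$. The key auxiliary step, extracted from the proof of Lemma \ref{lem:four-sets} together with the dualization table in Section \ref{sec:notations_for_loops}, is a one-sided sign refinement: an $A_1$ subword of $\lp$ forces $\lp^*$ to contain a segment $d^*_m$ with $m \le 0$, and conversely any $d^*_m$ with $m \le -1$ appearing in a dual word forces a composite $A_1$ subword in the standard representation. For the forward direction, single letters $a_k, b_k, c_k \in A_1$ with $k > 1$ contribute $e^* = d^*_0$ via middle $\rho_{23}$ arrows traversed forward, and composite forms contribute $\bar c^*_{n+1} = d^*_{-n-1}$ via the pair-table entries $cc, ca, bc, ba \mapsto \bar c^*$. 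The analogous statement for $A_2$ pairs $c^*_m$ with $m \ge 0$ on the forward side and $c^*_m$ with $m \ge 1$ on the backward side, coming from the negative-type pair entries $\bar a \bar b, \bar a \bar c, \bar c \bar b, \bar c \bar c \mapsto c^*$ applied to composites $a_{-i} d_0^n b_{-j}, \ldots, d_{-i} d_0^n d_{-j}$.

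Combining the two one-sided implications makes the $\dui$ claims immediate. If $\lp^*$ contains $d^*_m$ with $m \le 0$, then $\dui(\lp)^* = \dui(\lp^*)$ contains $d^*_{m-1}$ with $m - 1 \le -1$, which lands in the strictly negative range and hence forces a composite $A_1$ subword in $\dui(\lp)$. The parallel argument with $c^*_m$ shifting from $m \ge 0$ to $m + 1 \ge 1$ handles $A_2$. Note that the subscript shift by $\dui$ is exactly what is needed to sidestep the ambiguity at $m = 0$, where a $d^*_0$ segment could otherwise have come from an $A_3 \setminus A_1$ single letter $d_k$ rather than from $A_1$.

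The main obstacle will be the careful book-keeping in the sign refinement of Lemma \ref{lem:four-sets}: one must correctly pair each composite standard subword with its dual letter type (tracking whether the middle ${\bf v}_i$ is $c_0^n$ or $d_0^n$) and verify both the $A_1 \Rightarrow m \le 0$ direction and its strict-range converse, and similarly for $A_2$. Once this is in place, the $\dui$ step collapses to a one-line shift and the $\twi$ step reduces to the routine case analysis sketched above.
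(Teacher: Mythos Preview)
Your proposal is correct and follows essentially the same approach as the paper: both handle $\twi$ by a direct letter-by-letter case check in standard notation, and both handle $\dui$ by passing to dual notation and using the refined correspondence (your ``one-sided sign refinement'' of Lemma~\ref{lem:four-sets}) between $A_1$ subwords and dual letters $d^*_m$ with $m\le 0$, respectively $A_2$ subwords and $c^*_m$ with $m\ge 0$. Your explicit observation that the $\dui$ shift moves these into the strict ranges $m\le -1$ (resp.\ $m\ge 1$), thereby avoiding the $m=0$ ambiguity, is exactly the mechanism the paper uses, stated a bit more conceptually.
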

\begin{proof} 
First consider $\twi({\bf w})$ for any sub-word ${\bf w} \in A_1$. We have $\twi(a_k)=a_k$, $\twi(b_k)=b_k$, and $\twi(c_k)=c_{k+1}$, $\twi(c_i\bar{e}^nc_j)=c_{i+1}c_1^nc_{j+1}$, $\twi(c_i\bar{e}^na_j)=c_{i+1}c_1^na_{j}$, $\twi(b_i\bar{e}^nc_j)=b_ic_1^nc_{j+1}$, and $\twi(b_i\bar{e}^na_j)=b_i c_1^na_j$. Note that in each case,  $\twi({\bf w})$ is in $A_1$ or contains a sub-word in $A_1$. Thus the operation $\twi$ preserves the property that $\lp$ contains a word from $A_1$. On the other hand, revisiting the proof of Lemma \ref{lem:four-sets} we see that each of $a_k$, $b_k$ or $c_k$ produces at least one $e^*$  when dualized, while each of $c_i\bar{e}^nc_j$, $c_i\bar{e}^na_j$, $b_i\bar{e}^nc_j$, $b_i\bar{e}^na_j$ produces a $\bar c^*_{n+1}$. Since $\twi(e^*)=\bar c_1^*$ and $\twi(\bar c^*_{n+1})=\bar c^*_{n+2}$, it is enough to observe that 
\[\{c_i\bar{e}^nc_j, c_i\bar{e}^na_j, b_i\bar{e}^nc_j, b_i\bar{e}^na_j\}\subset A_1\]
is the set of sub-words that give rise to a $\bar c_{n+1}^*$ under dualizing (where $n\ge 0$ and $i,j>0$). It follows that  the operation $\dui$ preserves the property that $\lp$ contains a word from $A_1$. 

Next consider $\twi({\bf w})$ for any ${\bf w} \in A_2$. Proceeding as above, observe that $\twi({\bf w})$ is in the set
\[\{ \bar{a}_k, \bar{b}_k, \bar{c}_{k+1} ,\bar{c}_{i+1}\bar c^n_1\bar{c}_{j+1}, \bar{c}_{i+1}\bar c_1^n\bar{b}_j, \bar{a}_i\bar c_1^n\bar{c}_{j+1}, \bar{a}_i\bar c_1^n\bar{b}_j  \}. \] 
Each of these words is in $A_2$ or contains a sub-word in $A_2$, so the operation $\twi$ preserves the property that $\lp$ contains a word from $A_2$. Each of $\bar a_k$, $\bar b_k$ or $\bar c_k$ produces at least one $\bar e^*$  when dualized, while each of $\bar c_i{e}^n\bar c_j$, $\bar c_i{e}^n\bar b_j$, $\bar a_i{e}^n\bar c_j$, $\bar a_i{e}^n\bar b_j$ produces a $c^*_{n+1}$. Since $\twi(\bar e^*)= c_1^*$ and $\twi( c^*_{n+1})= c^*_{n+2}$, it is enough to observe that 
\[\{\bar c_i{e}^n\bar c_j, \bar c_i{e}^n\bar  b_j, \bar a_i{e}^n\bar c_j, \bar a_i{e}^n\bar  b_j\}\subset A_2\]
is the set of sub-words that give rise to a $c_{n+1}^*$ under dualizing (where $n\ge 0$ and $i,j>0$). It follows that  the operation $\dui$ preserves the property that $\lp$ contains a word from $A_2$. 
\end{proof}

\begin{lem}\label{lem:3-4-closure} If  $\lp$ is a loop containing a sub-word from each of the sets $A_3$ and $A_4$ then  $\tw(\lp)$ and $\du(\lp)$  are both loops containing a sub-word from each of the sets $A_3$ and $A_4$. \end{lem}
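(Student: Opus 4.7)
My plan is to follow the template of Lemma \ref{lem:1-2-closure} with signs reversed, verifying separately that $\tw$ and $\du$ each preserve both the property of containing a sub-word in $A_3$ and the property of containing a sub-word in $A_4$.

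For the action of $\tw$ on $A_3$, I will compute $\tw$ on each generator directly: the stable letters $a_k, b_k$ are fixed, $\tw(d_k) = d_{k+1}$, and the compound words $d_i e^n d_j$, $d_i e^n b_j$, $a_i e^n d_j$, $a_i e^n b_j$ are sent to $d_{i+1} d_1^n d_{j+1}$, $d_{i+1} d_1^n b_j$, $a_i d_1^n d_{j+1}$, $a_i d_1^n b_j$ respectively. A case-by-case check shows that each result either lies in $A_3$ (whenever a single letter with subscript at least $2$ appears, e.g.\ $d_{i+1}$ with $i \ge 1$) or contains a sub-word in $A_3$. The subtlest case is $\tw(a_i e^n b_j) = a_i d_1^n b_j$ with $n \ge 1$: here the sub-word $a_i d_1$ is itself in $A_3$ via the compound form $a_i d_0^0 d_j$ with $j = 1$. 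The analogous direct computation for $\tw$ on $A_4$ (using $\bar a_k, \bar b_k, \bar c_k$ in place of $a_k, b_k, d_k$ and $c_0 = \bar e$ in place of $e = d_0$) handles the $A_4$ case.

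For $\du$, I will pass to dual notation and apply the proof of Lemma \ref{lem:four-sets}: a loop contains a sub-word in $A_3$ (respectively $A_4$) precisely when its dual representation contains a letter $d^*_m$ (respectively $c^*_m$) with $m \ge 0$. Since $\du(d^*_m) = d^*_{m+1}$, the $A_3$-preservation is immediate. For $A_4$, since $\du(c^*_m) = c^*_{m-1}$, the case $m \ge 1$ yields $c^*_{m-1}$ with $m - 1 \ge 0$, still corresponding to an $A_4$ sub-word. The main obstacle will be the case $m = 0$, in which $\du(c^*_0) = c^*_{-1} = \bar d^*_1$ is no longer a $c^*$-family letter with non-negative subscript. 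To resolve this, I will invoke the dualization table (D1) and identify the standard sub-words producing a $\bar d^*$-type dual letter with subscript $1$: these are precisely the consecutive pairs $b_{-i} a_{-j}$, $b_{-i} c_{-j}$, $c_{-i} a_{-j}$, and $c_{-i} c_{-j}$ with $i, j \ge 1$, all of which are the $n = 0$ compound forms in $A_4$. Therefore $\du(\lp)$ contains a sub-word in $A_4$ in this case as well, completing the argument.
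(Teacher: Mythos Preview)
Your overall strategy is exactly the one the paper intends (it says ``analogous to the proof of Lemma~\ref{lem:1-2-closure}''), and your treatment of $\tw$ on $A_3$ and $A_4$, as well as $\du$ on $A_3$, is correct.

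However, your dual characterization of $A_4$ has the sign backwards. You write that a loop contains a sub-word in $A_4$ precisely when its dual representation contains $c^*_m$ with $m \ge 0$; in fact the correct range is $m \le 0$. To see this, note that the single letters $a_{-k}=\bar a_k$, $b_{-k}=\bar b_k$, $c_{-k}=\bar d_k$ (for $k>1$) all contain a backward $\rho_{23}$ arrow and hence give $\bar e^* = c^*_0$, while the compound words $c_{-i}c_0^n c_{-j} = \bar d_i \bar e^n \bar d_j$ etc.\ dualize via the pair $\bar d\bar d \to \bar d^*$ (and similarly $\bar b\bar d$, $\bar d\bar a$, $\bar b\bar a$), yielding $\bar d^*_{n+1} = c^*_{-(n+1)}$ for $n\ge 0$. (The range $c^*_m$ with $m\ge 0$ is what $A_2$ gives; compare the proof of Lemma~\ref{lem:1-2-closure}, where the $A_2$ compound words are shown to produce $c^*_{n+1}$.) Consequently your ``easy case $m\ge 1$'' never actually arises, and more importantly you have not addressed the genuine cases $m \le -1$.

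The fix is immediate and in fact simpler than what you wrote: since $A_4$ corresponds to $c^*_m$ with $m\le 0$, applying $\du$ gives $c^*_{m-1}$ with $m-1 \le -1$, i.e.\ $\bar d^*_{1-m}$ with $1-m \ge 1$. By the table (D1) these arise precisely from the sub-words $b_{-i}c_0^n a_{-j}$, $b_{-i}c_0^n c_{-j}$, $c_{-i}c_0^n a_{-j}$, $c_{-i}c_0^n c_{-j}$ with $n = -m \ge 0$, all of which lie in $A_4$. No special case at $m=0$ is needed.
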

\begin{proof}Analogous to the proof of Lemma \ref{lem:1-2-closure} and left to the reader.\end{proof}

We next consider the case that $\lp$ contains a sub-word from $A_1$ and a sub-word from $A_4$ but does not contain a sub-word from $A_2$ or $A_3$. While this third case is ultimately similar to the two cases we have already treated, it is more technical owing in part to three sub-cases. 

\begin{lem}\label{lem:hard-case-1} Suppose $\lp$ is a loop containing a sub-word from each of the sets $A_1$ and $A_4$ but that $\lp$ does not contain a sub-word from either of the sets $A_2$ or $A_3$. If $\lp$ contains a $\bar c_1$ then the loop $\twi(\lp)$ contains a sub-word from each of $A_1$ and $A_2$ while the loop $\dui(\lp)$ either \begin{itemize}
\item[(i)] contains a sub-word from each of $A_1$ and $A_2$; or
\item[(ii)] contains a $\bar c_1$ and a sub-word from each of the sets $A_1$ and $A_4$ but does not contain a sub-word from $A_2$ or from $A_3$.
\end{itemize}
\end{lem}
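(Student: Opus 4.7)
The plan is to treat $\twi(\lp)$ and $\dui(\lp)$ separately, exploiting the fact that $\twi$ has a simple letter-wise description in standard notation while $\dui$ has such a description in dual notation.

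For $\twi(\lp)$, the argument is direct. By the same reasoning used in Lemma \ref{lem:1-2-closure}, the property of containing an $A_1$-subword is preserved under $\twi$, so $\twi(\lp)$ contains an $A_1$-subword. For the $A_2$-subword, I use the hypothesis that $\lp$ contains the letter $\bar{c}_1 = d_{-1}$. Since $\twi$ acts letter-wise on standard-notation loops by $\twi(d_k) = d_{k-1}$, this letter is sent to $\twi(d_{-1}) = d_{-2} = \bar{c}_2 \in A_2$. Hence $\twi(\lp)$ contains both required subwords.

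For $\dui(\lp)$, I would first convert $\lp$ to dual notation. By Lemma \ref{lem:four-sets}, the $A_1$-subword of $\lp$ contributes a $d^*_n$ letter and the $A_4$-subword contributes a $c^*_m$ letter to the dual representation, while the absence of subwords from $A_2$ and $A_3$ restricts the signs of the subscripts appearing on the $c^*$- and $d^*$-chains. The hypothesis that $\lp$ contains $\bar{c}_1 = d_{-1}$ plays a critical role: by the dualization rule D2 of Section \ref{sec:notations_for_loops}, a single standard letter $d_{-1}$ with subscript $-1$ contributes a pair of $c^*_0 = \bar{e}^*$ letters inside the dual form. Applying $\dui$, which shifts each $c^*_k$ to $c^*_{k+1}$ and each $d^*_k$ to $d^*_{k-1}$ while fixing $a^*$- and $b^*$-letters, transforms this pair into two consecutive $c^*_1$ letters.

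The dichotomy in the conclusion then arises from converting $\dui(\lp)$ back to standard notation. I would enumerate the possible local configurations around the shifted $c^*$- and $d^*$-letters. For most such configurations, the resulting standard word contains an $A_2$-subword, putting us in case (i). The remaining configurations reproduce the original hypotheses: no subwords from $A_2$ or $A_3$ appear, subwords from both $A_1$ and $A_4$ are present, and the standard letter $\bar{c}_1$ resurfaces---coming from the pair $c^*_1 c^*_1$ together with appropriate flanking dual letters---giving case (ii). The main obstacle is precisely this case analysis: because $\dui$ is simple to describe in dual notation but the desired conclusion concerns subwords in standard notation, the analysis requires a careful examination of each possible neighbor configuration around the shifted dual letters, checking whether the resulting standard word is free of $A_2$- and $A_3$-subwords and whether $\bar{c}_1$ reappears. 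I anticipate that this reduces to a finite table similar in spirit to the dualization table in rule D1.
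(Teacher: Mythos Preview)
Your treatment of $\twi(\lp)$ is correct and matches the paper exactly: $A_1$ is closed under $\twi$, and $\twi(\bar c_1)=\bar c_2\in A_2$.

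Your treatment of $\dui(\lp)$, however, has a genuine gap and a computational error. First the error: a standard letter with subscript $k$ has $|k|$ interior $\circ$-vertices and hence $|k|-1$ internal $\rho_{23}$-edges, so under dualization it contributes $|k|-1$ copies of $d^*_0$ or $c^*_0$. In particular $d_{-1}=\bar c_1$ contributes \emph{zero} $\bar e^*$'s, not two; your claim that $\dui$ then produces a pair $c^*_1 c^*_1$ is therefore unfounded, and the mechanism you propose for the reappearance of $\bar c_1$ does not work.

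The key structural observation you are missing is this: the hypotheses ``no $A_2$-subword'' and ``no $A_3$-subword'' pin down the local environment of $\bar c_1$ in $\lp$. Any $\bar c_1$ must be preceded by $a_1 e^i$ or $d_1 e^i$ and followed by $e^j b_1$ or $e^j d_1$ (subscripts with $|k|>1$ are excluded by $A_3$, and barred neighbours are excluded by $A_2$). Reading this configuration in dual notation yields the subword $b^*_{i+1}a^*_{j+1}$ --- a pair of dual \emph{stable} chains. Since $\dui$ fixes $a^*$- and $b^*$-letters, the pair $b^*_{i+1}a^*_{j+1}$ persists in $\dui(\lp)$, and converting back (via the D1 table) immediately gives a $\bar c_1$ in standard notation. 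No case enumeration is needed.

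For the remaining structure of case (ii), the closure arguments already established do the work: $A_1$ is closed under $\dui$, and the presence of a $c^*$-chain (coming from the $A_4$-subword in $\lp$) survives $\dui$, so $\dui(\lp)$ contains an $A_1$-subword together with a subword from $A_2\cup A_4$. If an $A_2$-subword appears we are in case (i); otherwise it is $A_4$. Finally, $A_3$ is ruled out by the one-line argument you do not mention: if $\dui(\lp)$ contained an $A_3$-subword then, since $A_3$ is closed under $\du$ (Lemma~\ref{lem:3-4-closure}), so would $\du(\dui(\lp))=\lp$, contradicting the hypothesis.
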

\begin{proof} First notice that $\twi(\bar{c}_1)=\bar{c}_2\in A_2$. Since the set $A_1$ is closed under $\twi$ (compare Lemma \ref{lem:1-2-closure}), it follows that $\twi(\lp)$ contains a sub-word from both $A_1$ and $A_2$.

Turning now to the behaviour under $\dui$: Since $\lp$ does not contain a word  from $A_2$, any occurrence of $\bar{c}_1$ can be preceded only by $a_1e^i$ or $d_1e^i$ (for $i\ge0$), and can be followed only by a $e^jb_1$ or $e^jd_1$ (for $j\ge0$). (Note that intances of $a_k$ or $b_k$ for $k>1$ are ruled out since $\lp$ does not contain a word from $A_3$.) Regardless of which arises, this ensures that a $\bar{c}_1$ is part of a segment in $\lp$ of the form
\[\begin{tikzpicture}[>=latex] 
		\node at (0,0) {$\circ$}; 
		\node at (1,0) {$\bullet$}; 
		\node at (2,0) {$\bullet$};
		\node at (3,0) {$\circ$}; 
		\node at (4,0) {$\bullet$}; 
		\node at (5,0) {$\bullet$}; 
		\node at (6,0) {$\circ$}; 
		\draw[thick, ->,shorten >=0.1cm, shorten <=0.1cm] (0,0) -- (1,0); \node at (0.5,0.25) {$\scriptsize\rho_{2}$};
		\draw[dashed, thick, ->,shorten >=0.1cm, shorten <=0.1cm] (1,0) -- (2,0); \node at (1.5,0.25) {$\scriptsize\rho_{12}$};
		\draw[thick, ->,shorten >=0.1cm, shorten <=0.1cm] (2,0) -- (3,0); \node at (2.5,0.25) {$\scriptsize\rho_{1}$};
		\draw[thick, <-,shorten >=0.1cm, shorten <=0.1cm] (3,0) -- (4,0); \node at (3.5,0.25) {$\scriptsize\rho_{3}$};
		\draw[dashed, thick, ->,shorten >=0.1cm, shorten <=0.1cm] (4,0) -- (5,0); \node at (4.5,0.25) {$\scriptsize\rho_{12}$};
		\draw[thick, ->,shorten >=0.1cm, shorten <=0.1cm] (5,0) -- (6,0); \node at (5.5,0.25) {$\scriptsize\rho_{123}$};
	\end{tikzpicture}\]
and hence a subword $b^*_{i+1}a^*_{j+1}$ when $\lp$ is expressed in dual notation. It follows that the property that $\lp$ contains a $\bar{c}_1$ is closed under $\dui$, under the assumption that $\lp$ does not contain a sub-word from $A_2$ or $A_3$.  

Now observe that the set $A_1$ is closed under $\dui$, so it must be the case that $\dui(\lp)$ contains sub-words from $A_1$ and $A_2$ or, if this is not the case, from $A_1$ and $A_4$. In the latter case, suppose that $\dui(\lp)$ contains a word from $A_3$. Then $\du(\dui(\lp))=\lp$ contains a word from $A_3$ also, which is a contradiction.  
\end{proof}


\begin{lem}\label{lem:hard-case-2} Suppose $\lp$ is a loop containing a sub-word from each of the sets $A_1$ and $A_4$ but that $\lp$ does not contain a sub-word from either of the sets $A_2$ or $A_3$. If $\lp$ contains a $d_1$ then the loop $\tw(\lp)$ contains a sub-word from each of $A_3$ and $A_4$ while the loop $\du(\lp)$ either \begin{itemize}
\item[(i)] contains a sub-word from each of $A_3$ and $A_4$; or
\item[(ii)] contains a $d_1$ and a sub-word from each of the sets $A_1$ and $A_4$ but does not contain a sub-word from $A_2$ or from $A_3$.
\end{itemize}
\end{lem}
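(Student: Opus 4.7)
The plan is to mirror the proof of Lemma \ref{lem:hard-case-1} exactly, with the roles of $\tw/\twi$ and $\du/\dui$ swapped, with $d_1$ in the role of $\bar{c}_1$, and with the set pairs $(A_3,A_4)$ and $(A_1,A_2)$ interchanged. This correspondence is the symmetry obtained by negating all subscripts in the standard alphabet, which is visibly present in the definitions of $A_1,\ldots,A_4$.

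First I would handle $\tw(\lp)$. The direct computation $\tw(d_1)=d_2\in A_3$ produces a subword from $A_3$ in $\tw(\lp)$, so it suffices to verify that the property of containing a subword from $A_4$ is preserved by $\tw$. This is the mirror of the statement ``$A_1$ is closed under $\twi$'' established in Lemma \ref{lem:1-2-closure}, and would be verified letter-by-letter through the elements of $A_4$ in essentially the same way; I would simply repeat that direct enumeration.

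For $\du(\lp)$, I would first invoke the analogous closure statements (mirrors of Lemma \ref{lem:1-2-closure}), giving that $A_1$ and $A_4$ are each preserved by $\du$, so $\du(\lp)$ contains subwords from both $A_1$ and $A_4$. The heart of the argument is then showing that, under the standing hypotheses, the presence of a $d_1$ in $\lp$ is preserved by $\du$. The plan here is to enumerate the puzzle-piece-compatible standard letters that can appear immediately before or after a $d_1$; the absence of subwords from $A_2$ and from $A_3$ rules out all but a short list (involving only $\bar a_1,\bar b_1,\bar d_1$ and runs of $\bar e$), forcing each $d_1$ to sit inside a local block of the form $\bar b_1 \bar e^{\,i} d_1 \bar e^{\,j} \bar a_1$ (up to replacing $\bar a_1$ or $\bar b_1$ with $\bar d_1$). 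Translating this block to dual notation produces a subword composed only of dual stable chains $a^*_{\pm k},b^*_{\pm k}$, which is fixed pointwise by $\du$; reconverting to standard notation then recovers a $d_1$ in $\du(\lp)$.

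To separate the two cases, I would argue as follows: if $\du(\lp)$ contains a subword from $A_3$ then case $(i)$ holds directly together with the already-established $A_4$ content. Otherwise, the closure of $A_2$ under $\dui$ (Lemma \ref{lem:1-2-closure}) together with $\dui\circ\du=\id$ forces $\du(\lp)$ to contain no subword from $A_2$, since otherwise $\lp=\dui(\du(\lp))$ would, contradicting our hypothesis; combined with the preserved $d_1$ and the $A_1$, $A_4$ content of $\du(\lp)$, case $(ii)$ holds. The main obstacle I expect is the puzzle-piece enumeration certifying the restricted local neighborhood of $d_1$: this is precisely where the joint absence of $A_2$ and $A_3$ is used, and the case analysis, while elementary, is delicate enough that an explicit table of puzzle-piece ends would likely be needed to verify it cleanly.
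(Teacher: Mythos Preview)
Your overall plan---follow the structure of Lemma~\ref{lem:hard-case-1}'s proof, with the key step being that the $d_1$ in context dualizes to adjacent dual stable chains fixed by $\du$---matches the paper exactly. But two steps as written would fail.

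First, the claimed closure of $A_1$ under $\du$ is not a mirror of Lemma~\ref{lem:1-2-closure} and is false in general: that lemma closes $A_1$ under $\dui$, and its genuine analogue (Lemma~\ref{lem:3-4-closure}) closes $A_4$ under $\du$. (The symmetry you invoke, negating all standard subscripts, is loop reversal; it swaps $A_1\!\leftrightarrow\!A_2$ and $A_3\!\leftrightarrow\!A_4$, so it does not transport Lemma~\ref{lem:hard-case-1} to this one.) You only need the $A_1$-content in case~(ii), and there it holds for a different reason: $\du$ fixes slope $0$, so $\du(\lp)$ still has non-L-space dual filling, and Corollary~\ref{cor:non-L-space-dual-filling} then forces a subword from some odd-indexed $A_i$; since case~(ii) already excludes $A_3$, this yields $A_1$. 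Second, your local block around $d_1$ is not puzzle-piece compatible: $d_1$ has ends ``in--out'', so it is flanked by runs of $e=d_0$ (not $\bar e$), and the exclusion of $A_2,A_3$ forces the neighbours to lie in $\{\bar a_1,\bar c_1\}$ on the left and $\{\bar b_1,\bar c_1\}$ on the right (not $\bar b_1,\bar a_1,\bar d_1$). That corrected block dualizes to $a^*_{i+1} b^*_{j+1}$, exactly the form the paper records.
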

\begin{proof}Similar to the proof of Lemma \ref{lem:hard-case-1}. The key observation is that the instance of $d_1$ is part of a segment in $\lp$ of the form
\[\begin{tikzpicture}[>=latex] 
		\node at (0,0) {$\circ$}; 
		\node at (1,0) {$\bullet$}; 
		\node at (2,0) {$\bullet$};
		\node at (3,0) {$\circ$}; 
		\node at (4,0) {$\bullet$}; 
		\node at (5,0) {$\bullet$}; 
		\node at (6,0) {$\circ$}; 
		\draw[thick, ->,shorten >=0.1cm, shorten <=0.1cm] (0,0) -- (1,0); \node at (0.5,0.25) {$\scriptsize\rho_{3}$};
		\draw[dashed, thick, ->,shorten >=0.1cm, shorten <=0.1cm] (1,0) -- (2,0); \node at (1.5,0.25) {$\scriptsize\rho_{12}$};
		\draw[thick, ->,shorten >=0.1cm, shorten <=0.1cm] (2,0) -- (3,0); \node at (2.5,0.25) {$\scriptsize\rho_{123}$};
		\draw[thick, ->,shorten >=0.1cm, shorten <=0.1cm] (3,0) -- (4,0); \node at (3.5,0.25) {$\scriptsize\rho_{2}$};
		\draw[dashed, thick, ->,shorten >=0.1cm, shorten <=0.1cm] (4,0) -- (5,0); \node at (4.5,0.25) {$\scriptsize\rho_{12}$};
		\draw[thick, <-,shorten >=0.1cm, shorten <=0.1cm] (5,0) -- (6,0); \node at (5.5,0.25) {$\scriptsize\rho_{1}$};
	\end{tikzpicture}\]
which, in dual form, gives 	$a^*_{j+1}b^*_{i+1}$. Thus the property that $\lp$ contains a $d_1$ implies that $\du(\lp)$ contains a $d_1$.  
 \end{proof}

\begin{lem}\label{lem:hard-case-3} Suppose $\lp$ is a loop containing a sub-word from each of the sets $A_1$ and $A_4$ but that $\lp$ does not contain a sub-word from either of the sets $A_2$ or $A_3$, and suppose that $\infty$ is a non-L-space filling for $\lp$. If $\lp$ contains neither a $d_{-1}$ nor a $d_1$ then two cases arise: Either $\lp$ contains the sub-word $\w_1= a_1e^n\bar b_1$ or $\lp$ contains the sub-word $\w_2=\bar a_1e^n b_1$. Regardless of which occurs, 
\begin{itemize}
\item[(i)] $\tw(\lp)$ contains sub-words from both $A_3$ and $A_4$; 
\item[(ii)] $\twi(\lp)$ contains sub-words from both $A_1$ and $A_2$; 
\item[(iii)] $\du(\lp)$ contains sub-words either from $A_3$ and $A_4$ or from $A_1$ and $A_4$ and contains a $\w_i$; and
\item[(iv)] $\dui(\lp)$ contains sub-words either from $A_1$ and $A_2$ or from $A_1$ and $A_4$ and contains a $\w_i$.
\end{itemize}\end{lem}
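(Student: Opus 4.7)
The plan has three parts. First, I would establish the claimed dichotomy that $\lp$ contains $\w_1$ or $\w_2$. Writing $\lp$ in dual notation, the hypothesis that $\lp$ contains sub-words from both $A_1$ and $A_4$ means $\lp$'s dual word has both $d^*$- and $c^*$-type unstable chains (by Lemma \ref{lem:four-sets}). These chains cannot be cyclically adjacent since their puzzle-piece shapes do not mate, so at least one dual stable chain of type $a^*$ or $b^*$ must provide a transition between some $d^*$-block and a neighbouring $c^*$-block. Under our hypotheses the available standard letters are restricted to $a_{\pm 1}, b_{\pm 1}, c_k$ for $|k|\ne 1$, and $d_0=e$; examining (D1) then shows that a positive-subscript $b^*$-chain must arise from a pattern of the form $a_1 e^n \bar b_1=\w_1$ or $a_1 e^n c_{-j}$ with $j\ge 2$, and analogous statements hold for $a^*$, $\bar a^*$, and $\bar b^*$ chains. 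I would then rule out the non-$\w_i$ patterns by tracing the cyclic structure, showing that any such pattern, if it occurs without an accompanying $\w_i$, forces an $A_2$- or $A_3$-sub-word and contradicts the hypothesis.

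Second, assuming without loss of generality that $\w_1$ is present (the case of $\w_2$ being handled by identical arguments, using the obvious symmetry), I would derive assertions (i) and (ii) by direct computation. The proofs of Lemmas \ref{lem:1-2-closure} and \ref{lem:3-4-closure} give that $A_4$ is preserved by $\tw$ and $A_1$ by $\twi$, so $\tw(\lp)$ retains an $A_4$-sub-word and $\twi(\lp)$ retains an $A_1$-sub-word. Computing $\tw(\w_1)=a_1 d_1^n \bar b_1$, for $n\ge 1$ this contains $a_1 d_1\in A_3$ (the case $i=j=1$, $m=0$ of the template $a_i d_0^m d_j$), and $\twi(\w_1)=a_1 \bar c_1^n \bar b_1$ contains $\bar c_1 \bar b_1 = d_{-1} b_{-1}\in A_2$ for $n\ge 1$. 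The $n=0$ case, where $\w_1=a_1 \bar b_1$ has no internal $e$, requires an additional argument: I would argue that the $b^*_1$ dual stable chain coming from such a $\w_1$ sits as a $d^* b^*_1 c^*$ transition in dual, and the surrounding $d^*$- and $c^*$-blocks force the presence of other letters in $\lp$ that, after applying $\tw^{\pm 1}$, yield the required $A_3$- or $A_2$-sub-words.

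Third, for (iii) and (iv), the operations $\du^{\pm 1}$ fix dual stable chains and preserve the $A_1$- and $A_4$-sub-word properties by Lemmas \ref{lem:1-2-closure} and \ref{lem:3-4-closure}. Since $\w_i$ dualizes to a dual stable chain ($b^*_{n+1}$ or $a^*_{n+1}$), the standard pattern $\w_i$ persists in $\du^{\pm 1}(\lp)$, ensuring the ``contains a $\w_i$'' alternative in the claimed conclusion remains available. The other alternative (``$A_3$ and $A_4$'' for (iii), ``$A_1$ and $A_2$'' for (iv)) arises precisely when $\du^{\pm 1}$, acting on $c$- or $d$-type letters elsewhere in $\lp$ via the standard-to-dual conversion, introduces a new $A_3$- or $A_2$-sub-word. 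The main obstacle throughout will be the handling of the $n=0$ edge case and the careful cyclic tracing required to rule out non-$\w_i$ transition patterns in the first step; these demand a patient case analysis exploiting the restrictions imposed by the puzzle-piece constraint $(\star)$ together with the limited alphabet forced by our hypotheses.
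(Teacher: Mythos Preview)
The main gap is in your treatment of (iii) and (iv). Your assertion that ``the standard pattern $\w_i$ persists in $\du^{\pm 1}(\lp)$'' because $\w_i$ dualizes to a dual stable chain is not justified: $\w_1 = a_1 e^n \bar b_1$ corresponds in dual notation not merely to a $b^*_{n+1}$ but to a $b^*_{n+1}$ flanked by specific neighbour types---preceded by $\bar a^*_i$ or $\bar c^*_i$ and followed by $\bar a^*_j$ or $\bar d^*_j$ (with $i,j>0$). While $\du$ fixes $b^*_{n+1}$, it can destroy this flanking condition; for instance a preceding $\bar c^*_1 = d^*_{-1}$ becomes $d^*_0 = e^*$ under $\du$, after which $\du(\lp)$ need not contain any $\w_1$. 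The paper handles exactly this failure mode: when $\du(\lp)$ contains $e^* b^*_{n+1}$ it has a $\rho_{23}$ arrow followed by a $\rho_2$ arrow, forcing an $a_k$ or $d_k$ with $k>1$ in standard notation---an $A_3$ sub-word. This is precisely what produces the genuine disjunction in (iii); your sketch collapses it. Relatedly, your appeal to Lemmas~\ref{lem:1-2-closure} and~\ref{lem:3-4-closure} for preservation of both $A_1$ and $A_4$ under $\du^{\pm 1}$ is a misreading: $\du$ preserves $A_4$ but not $A_1$, and $\dui$ preserves $A_1$ but not $A_4$. The paper instead uses the weaker fact (via Lemma~\ref{lem:four-sets}) that having a sub-word in $A_1\cup A_3$---equivalently, having a $d^*$-chain---is preserved by $\du$.

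For the dichotomy the paper stays in standard notation: the constraints (no $A_2$ or $A_3$ sub-words, no $d_{\pm 1}$) force every maximal $e$-string to be bracketed as $a_{\pm 1} e^n b_{\mp 1}$, which is already $\w_1$ or $\w_2$; this is more direct than your dual-notation route and sidesteps the pattern-elimination you anticipate (note that your illustrative ``non-$\w_i$'' pattern $a_1 e^n c_{-j}$ is not even a valid sub-word, since $e$ ends in shape II while $c_{-j}=\bar d_j$ begins in shape II). Your argument for (i)--(ii) agrees with the paper's for $n\ge 1$. You are right that $n=0$ deserves care and the paper is terse there, but your proposed fix cannot work when $\lp$ consists only of stable chains, since then $\tw^{\pm 1}(\lp)=\lp$ and no new sub-words appear.
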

\begin{proof}
Suppose $\lp$ contains sub-words from $A_1$ and $A_4$ but not from $A_2$ or $A_3$, and $\lp$ contains neither $d_1$ nor $d_{-1}$. If $\lp$ has a non-L-space standard filling, then either $\lp$ contains no standard unstable chains or $\lp$ contains both $c_k$ and $d_k$ unstable chains. In the former case $\lp$ consists of alternating $a_k$ and $b_k$ segments; in the latter case, $\lp$ must contain a sequence of $d_0$ segments preceded by an $a_k$ segment and followed by a $b_k$ segment. In either case, it is routine to check that the subscripts on the type $a$ and $b$ stable chains must have oppostie signs, and thus that $\lp$ contains $\w_1= a_1e^n\bar b_1$ or $\w_2=\bar a_1e^n b_1$.

Note then that $\tw(\w_1) = a_1d_1^n\bar b_1$ which gives a word in $A_3$, $\twi(\w_1) = a_1\bar c_1^n\bar b_1$ which gives a word in $A_2$, $\tw(\w_2) = \bar a_1d_1^nb_1$ which gives a word in $A_3$, and  $\twi(\w_2) = \bar a_1\bar c_1^nb_1$ which gives a word in $A_2$. Since $\tw$ preserves the property that there is a sub-word from $A_4$, and $\twi$ preserves the property that there is a sub-word from $A_1$, we have shown that conclusions (i) and (ii) hold.


Consider the sub-word $\w_1 = a_1e^n\bar b_1$. If  a loop $\lp$ contains $\w_1$, then the graph for $\lp$ contains the segment
 \[\begin{tikzpicture}[>=latex] 
		\node at (1,0) {$\bullet$}; 
		\node at (2,0) {$\circ$};
		\node at (3,0) {$\bullet$}; 
		\node at (4,0) {$\bullet$}; 
		\node at (5,0) {$\circ$}; 
		\node at (6,0) {$\bullet$}; 
		\draw[thick, ->,shorten >=0.1cm, shorten <=0.1cm] (1,0) -- (2,0); \node at (1.5,0.25) {$\scriptsize\rho_{3}$};
		\draw[thick, ->,shorten >=0.1cm, shorten <=0.1cm] (2,0) -- (3,0); \node at (2.5,0.25) {$\scriptsize\rho_{2}$};
		\draw[dashed, thick, ->,shorten >=0.1cm, shorten <=0.1cm] (3,0) -- (4,0); \node at (3.5,0.25) {$\scriptsize\rho_{12}$};
		\draw[thick, ->,shorten >=0.1cm, shorten <=0.1cm] (4,0) -- (5,0); \node at (4.5,0.25) {$\scriptsize\rho_{1}$};
		\draw[thick, <-,shorten >=0.1cm, shorten <=0.1cm] (5,0) -- (6,0); \node at (5.5,0.25) {$\scriptsize\rho_{123}$};
	\end{tikzpicture}\]
It follows that in dual notation $\lp$ contains a $b^*_{n+1}$. Moreover, this $b^*_{n+1}$ is preceded by a segment ending with a $\rho_3$ arrow (that is, a segment of type $\bar a^*$ or $\bar c^*$) and followed by a segment beginning with a backwards $\rho_{123}$ arrow (that is, a segment of type $\bar a^*$ or $\bar d^*$). Thus $\lp$ contains $\w_1$ in standard notation if and only if it contains one of the following in dual notation: $\bar a^*_i b^*_{n+1} \bar a^*_j$, $\bar a^*_i b^*_{n+1} \bar d^*_j$, $\bar c^*_i b^*_{n+1} \bar a^*_j$, or $\bar c^*_i b^*_{n+1} \bar d^*_j$, where $i, j > 0$.

If $\lp$ contains $\w_1$, then in dual notation it contains one of $\bar a^*_i b^*_{n+1} \bar a^*_j$, $\bar a^*_i b^*_{n+1} \bar d^*_j$, $\bar c^*_i b^*_{n+1} \bar a^*_j$, or $\bar c^*_i b^*_{n+1} \bar d^*_j$. It is easy to see then that $\du(\lp)$ contains one of these subwords (and thus the subword $\w_1$ in standard notation) or it contains $e^* b^*_{n+1}$. In the latter case, $\lp$ contains a $\rho_{23}$ arrow followed by a $\rho_2$ arrow. This means that $\lp$ contains an $a_k$ or $d_k$ segment with $k>1$, each of which are in $A_3$. Similarly, $\dui(\lp)$ either contains $\w_1$ in standard notation or it contains $b^*_{n+1} \bar e^*$, which implies that $\lp$ contains a sub-word in $A_2$.

The details of a similar argument for the subword $\w_2 = \bar a_1 e^n \bar b_1$ are left to the reader. We similarly find that if $\lp$ contains $\w_2$, then $\du(\lp)$ contains either $\w_2$ or a sub-word in $A_3$ and $\dui(\lp)$ contains either $\w_2$ or a sub-word in $A_2$.

Since by assumption $\lp$ contains sub-words from $A_1$ and $A_4$, $\du(\lp)$ contains a sub-word from $A_4$ and from either $A_1$ or $A_3$. Since $\lp$ also contains a sub-word $\w_i$, the discussion above implies that $\du(\lp)$ contains the sub-word $\w_i$ or a subword in $A_3$; that is, $(iii)$ is satisfied. Similarly, $(iv)$ is satisfied since $\dui(\lp)$ contains a sub-word in $A_2$, a sub-word in either $A_1$ or $A_3$, and a subword $\w_i$ if there is no sub-word from $A_1$.\end{proof}

\begin{lem}\label{lem:hard-case-4}
Suppose $\lp$ contains no subwords in any $A_i$ and that $\lp$ contains both a $d_k$ segment and a $c_k$ segment. Then $\twi(\lp)$ contains a subword from each of $A_1$ and $A_2$ and $\tw(\lp)$ contains a subword from each of $A_3$ and $A_4$.
\end{lem}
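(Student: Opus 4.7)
First, I would observe that the no-$A_i$-sub-word hypothesis restricts the letters of $\lp$ to the set $\{a_{\pm 1}, b_{\pm 1}, c_0, c_{\pm 1}, d_0, d_{\pm 1}\}$, since each $a_k, b_k, c_k, d_k$ with $|k|>1$ lies in some $A_i$. The $(I,II)$-compatibility between adjacent letters in a loop in turn forces that a $c$-letter is never adjacent to a $d$-letter: the $c$'s group into $c$-runs and the $d$'s into $d$-runs, each run bracketed on either side by a stable letter---a $d$-run by $a_\epsilon$ on the left and $b_\delta$ on the right, a $c$-run by $b_\beta$ on the left and $a_\alpha$ on the right (with $\epsilon,\delta,\alpha,\beta\in\{\pm1\}$). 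By hypothesis $\lp$ contains at least one $d$-run and one $c$-run.

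The four conclusions are established by mutually parallel arguments; I focus on $\tw(\lp)\supseteq A_3$. If $\lp$ contains $d_1$, then $\tw(d_1)=d_2\in A_3$ and we are done. Otherwise every $d$-run of $\lp$ consists only of $d_0$ and $d_{-1}$, and the forbidden sub-words
\[ d_{-1}d_0^nd_{-1},\ a_{-1}d_0^nd_{-1},\ d_{-1}d_0^nb_{-1},\ a_{-1}d_0^nb_{-1}\in A_2,\qquad a_1d_0^nb_1\in A_3 \]
restrict each such run to one of two normal forms: (a) the run contains exactly one $d_{-1}$, and is necessarily bracketed by $a_1$ and $b_1$, giving the shape $a_1 d_0^a d_{-1} d_0^b b_1$ with $a,b\ge 0$; or (b) the run is pure $d_0^m$ with $m\ge 1$ and the brackets $(a_\epsilon,b_\delta)$ have opposite signs $\epsilon\ne\delta$. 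Applying $\tw$ (which fixes $a_{\pm 1}, b_{\pm 1}$ and sends $d_0\mapsto d_1$, $d_{-1}\mapsto d_0$) to each shape, a direct inspection shows $\tw(\lp)$ contains one of $a_1 d_1$, $d_1 b_1$, $a_1 d_0 d_1$, or $a_1 d_0 b_1$, each of which lies in $A_3$.

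The companion claim $\tw(\lp)\supseteq A_4$ is proven by the analogous analysis applied to a $c$-run, using the forbidden sub-words from $A_1\cup A_4$ in place of those from $A_2\cup A_3$. The two statements about $\twi(\lp)$ follow by completely parallel arguments with $\tw$ replaced by $\twi$ (which sends $d_0\mapsto d_{-1}$ and $c_0\mapsto c_1$) and the target sets $(A_3,A_4)$ replaced by $(A_2,A_1)$, respectively. The main obstacle is simply the bookkeeping of cases, but the rigid nature of the forbidden-sub-word constraints pins each run down to only a handful of configurations, each verified by direct inspection.
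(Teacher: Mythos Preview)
Your proof is correct and follows essentially the same approach as the paper. Both arguments begin by restricting the alphabet to $\{a_{\pm1},b_{\pm1},c_0,c_{\pm1},d_0,d_{\pm1}\}$ and then perform a short case analysis on a $d$-block (and symmetrically a $c$-block). The only difference is organizational: the paper cases on which of $d_1$, $d_{-1}$, $d_0$ is present and extracts the $\tw$ and $\twi$ conclusions simultaneously, whereas you fix one target conclusion (e.g.\ $\tw(\lp)\supseteq A_3$) and split on whether $d_1$ occurs, making the bracketed ``$d$-run'' structure a bit more explicit. The case check in your shapes (a) and (b) is accurate.
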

\begin{proof}
In standard notation $\lp$ consists of the segments $a_{\pm 1}$, $b_{\pm 1}$, $c_{\pm 1}$, $d_{\pm 1}$, $c_0$, and $d_0$, and the nonzero subscripts alternate between $+1$ and $-1$. By assumption, $\lp$ must contain at least one $d_1$, $d_0$, or $d_{-1}$ segement. If $\lp$ contains $d_1$ then $\tw(\lp)$ contains $d_2 \in A_3$. Moreover, $d_1$ must be preceded by $d_0$, $d_{-1}$, or $a_{-1}$ and followed by $d_0$, $d_{-1}$, or $b_{-1}$. It follows that $\twi(\lp)$ contains a $d_0$ preceded by a $d_{-1}, d_{-2}$ or $a_{-1}$ and followed by a  $d_{-1}, d_{-2}$ or $b_{-1}$; thus $\twi(\lp)$ contains a subword in $A_2$. Similarly if $\lp$ contains $d_{-1}$ then $\twi(\lp)$ contains $d_{-2} \in A_2$, and since $d_{-1}$ must be preceded by $d_0$, $d_1$, or $a_1$ and followed by $d_0$, $d_1$, or $b_1$, $\tw(\lp)$ contains a subword in$A_3$. Finally, suppose $\lp$ contains a $d_0$ segment but no $d_1$ or $d_{-1}$. If the $d_0$ segment is adjacent to another $d_0$ segment, then $\tw(\lp)$ contains $d_1 d_1 \in A_3$ and $\twi(\lp)$ contains $d_{-1} d_{-1} \in A_2$. Otherwise, $\lp$ must contain either $a_{-1} d_0 b_1$ or $a_1 d_0 b_{-1}$. It is easy to check that $\tw(\lp)$ contains either $d_1 b_1$ or $a_1 d_1$, both elements of $A_3$, and that $\twi(\lp)$ contains either $a_{-1} d_{-1}$ or $d_{-1} b_{-1}$ in $A_2$.

A similar argument shows that if $\lp$ contains $c_1$, $c_0$, or $c_{-1}$ then $\tw(\lp)$ contains a subword in $A_4$ and $\twi(\lp)$ contains a subword in $A_1$. This can also be deduced by reversing the orienation on $\lp$, which takes $d_k$ segments to $c_{-k}$ segments and takes the sets $A_2$ and $A_3$ to $A_1$ and $A_4$.
\end{proof}

\begin{proof}[Proof of Proposition \ref{prp:non-L-space endpoints}]
Let $\lp$ be a loop for which both standard and dual filling gives an L-space. By Corollary \ref{cor:non-L-space-dual-filling} we have the following cases to consider:
\begin{itemize}
\item[(1)] $\lp$ contains a sub-word from $A_1$ and from $A_2$;
\item[(2)] $\lp$ contains a sub-word from $A_3$ and from $A_4$;
\item[(3)] $\lp$ contains a sub-word from $A_1$ and from $A_4$, but not $A_2$ or $A_3$;
\item[(4)] $\lp$ in does not contain a sub-word from any $A_i$;
\end{itemize}
Note that, \emph{a priori}, there is another case similar to (3) in which $\lp$ contains subwords in $A_2$ and $A_3$; however, this is equivalent to (3) after reversing orientation on $\lp$.

In case $(1)$, Lemma \ref{lem:1-2-closure} implies that applying any combination of $\twi$ and $\dui$ to $\lp$ gives rise to a loop $\lp'$ which contains a subword in each of $A_1$ and $A_2$, and thus has non L-space dual filling. $\frac{p}{q}$ filling on $\lp$ is given by dual filling $\tw^{a_n} \circ \cdots \circ \du^{a_2} \circ \tw^{a_1} (\lp)$, where $[a_1, \ldots, a_n]$ is an odd length continued fraction for $\frac{p}{q}$. For any $-\infty < \frac{p}{q} < 0$, we can choose a continued fraction with each $a_i \le 0$, and so $\frac{p}{q}$ is not an L-space slope for $\lp$. Similarly, in case (2) we choose a continued fraction with postive terms and use Lemma \ref{lem:3-4-closure} to see that any $0 < \frac{p}{q} < \infty$ is a non-L-space.

Case $(3)$ has three subcases, depending on whether $\lp$ contains a $\bar{c}_1$, a $d_1$, or neither. If $\lp$ contains a $\bar{c}_1$ then by Lemmas \ref{lem:hard-case-1} and \ref{lem:1-2-closure} applying any combination of $\twi$ and $\dui$ to $\lp$ with at least one application of $\twi$ produces a loop $\lp'$ with a subword in $A_1$ and $A_2$. Given any $-\infty < \frac{p}{q} < 0$ we can choose an odd length continued fraction $[a_1, \ldots, a_n]$ for which each $a_i$ is strictly negative except that $a_1$ may be 0, and if $a_1 = 0$ then $n \ge 3$. It follows that $\frac{p}{q}$ is a non-L-space slope for $\lp$. Similarly, if $\lp$ contains a $d_1$ then Lemmas \ref{lem:hard-case-2} and \ref{lem:3-4-closure} imply that any $0< \frac{p}{q}< \infty$ is a non-L-space slope. If $\lp$ contains neither a $d_1$ nor a $c_1$, then any $\frac{p}{q}$ is a non-L-space slope by Lemma \ref{lem:hard-case-3}

In Case (4), $\lp$ has no dual unstable chains. We have immediately that $\du(\lp) = \dui(\lp) = \lp$. Since $\infty$ is a non-L-space slope, $\lp$ must have either no standard unstable chains or unstable chains of both type $c_k$ and type $d_k$. If $\lp$ has no standard unstable chains then $\tw(\lp) = \twi(\lp) = \lp$; it follows that any combination of Dehn twists preserves $\lp$, and so any slope $\frac{p}{q}$ is a non-L-space slope (in fact in this case any filling of $\lp$ has trivial homology). If $\lp$ has standard unstable chains of both types, then Lemma \ref{lem:hard-case-4} implies that any slope $\frac{p}{q}$ is a non-L-space slope.
\end{proof}

\subsection{Proof of Theorem \ref{thm:intervals}}

We are now ready to prove that the set of L-space slopes for a loop $\lp$ is a (possibly empty) interval in $\hat\Q$. We will use the fact that applying $\tw$ and $\du$ to a loop $\lp$ changes the set of L-space slopes in a controlled way: the slope $\frac{p}{q}$ for $\lp$ is equivalent to the slope $\frac{p-nq}{q}$ for $\tw^n(\lp)$ and the slope $\frac{p}{q-np}$ for $\du^n(\lp)$. These transformations preserve the cyclic ordering on $\hat\Q$, and thus preserve the connectedness of the set of L-space slopes. In particular, while Corollary \ref{cor:L-space-closed-interval} and Proposition \ref{prp:non-L-space endpoints} are stated in terms of the 0 and $\infty$ slopes, the same statement holds for any two slopes of distance 1. In addition to these results, we will need the following Lemma:

\begin{lem}\label{lem:integral_slopes}
For any loop $\lp$, there do not exist integers $n_1 < n_2 < n_3 < n_4$ such that $n_1$ and $n_3$ are L-space slopes and $n_2$ and $n_4$ are not L-space slopes, or vice versa 
\end{lem}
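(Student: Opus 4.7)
The plan is to reduce to the previously established dichotomies for the pair of slopes $\{0,\infty\}$. The key observation is that $\tw$ fixes the slope $\infty$, so the L-space status at $\infty$ is the same for $\lp$ and for each reparametrization $\tw^n(\lp)$. Combined with the identity that slope $\tfrac{p}{q}$ on $\lp$ corresponds to slope $\tfrac{p-nq}{q}$ on $\tw^n(\lp)$, this identifies the integer slope $n$ for $\lp$ with the slope $0$ for $\tw^n(\lp)$, and the two arcs $[n,\infty]$ and $[-\infty,n]$ in $\hat\Q$ for $\lp$ with $[0,\infty]$ and $[-\infty,0]$ for $\tw^n(\lp)$. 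Under this correspondence, Corollary \ref{cor:L-space-closed-interval} applied to $\tw^n(\lp)$ says: whenever $n$ is an L-space slope and $\infty$ is an L-space slope for $\lp$, one of $[n,\infty]$ or $[-\infty,n]$ consists entirely of L-space slopes for $\lp$. Similarly, Proposition \ref{prp:non-L-space endpoints} says: whenever $n$ is a non-L-space slope and $\infty$ is a non-L-space slope for $\lp$, one of $[n,\infty]$ or $[-\infty,n]$ consists entirely of non-L-space slopes for $\lp$.

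Suppose for contradiction that integers $n_1<n_2<n_3<n_4$ satisfy the first pattern, with $n_1,n_3$ L-space slopes and $n_2,n_4$ non-L-space slopes. If $\infty$ is an L-space slope for $\lp$, apply the consequence above to the L-space slope $n_3$: one of $[n_3,\infty]$ or $[-\infty,n_3]$ is entirely L-space for $\lp$. The first arc contains $n_4$ and the second contains $n_2$, so either alternative contradicts the non-L-space status of $n_2,n_4$. If instead $\infty$ is a non-L-space slope, apply the second consequence to the non-L-space slope $n_2$: one of $[n_2,\infty]$ or $[-\infty,n_2]$ is entirely non-L-space. The first contains the L-space slope $n_3$ and the second contains the L-space slope $n_1$, again a contradiction.

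The reverse pattern ($n_1,n_3$ non-L-space and $n_2,n_4$ L-space) is handled by the same strategy with dual choices: when $\infty$ is an L-space slope, apply Corollary \ref{cor:L-space-closed-interval} at the L-space slope $n_2$, so that $n_1<n_2$ and $n_3>n_2$ supply non-L-space slopes in each of the two arcs; when $\infty$ is a non-L-space slope, apply Proposition \ref{prp:non-L-space endpoints} at the non-L-space slope $n_3$, so that $n_2<n_3$ and $n_4>n_3$ supply L-space slopes in each of the two arcs. In every case both possible arcs produce a contradiction. The only real subtlety is the bookkeeping choice of which $n_i$ to apply the established result to -- in each configuration it is the unique slope among $\{n_1,n_2,n_3,n_4\}$ of the appropriate L-space status that has a slope of the opposite status strictly on each side among the remaining three -- so no serious obstacle arises beyond verifying the reparametrization dictionary between integer slopes on $\lp$ and the slope $0$ on $\tw^n(\lp)$.
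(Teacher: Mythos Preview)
Your argument is correct and is genuinely different from the paper's proof.

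The paper does not split on the L-space status of $\infty$ at all. Instead it works directly with the combinatorial description from Lemma~\ref{lem:four-sets}: an integer $n$ is an L-space slope for $\lp$ if and only if $\tw^n(\lp)$ contains a subword from $A_1\cup A_3$ or from $A_2\cup A_4$, but not both. The paper then uses the closure of the individual sets $A_i$ under $\tw$ or $\twi$ (established in Lemmas~\ref{lem:1-2-closure} and~\ref{lem:3-4-closure}) together with a careful case analysis tracking which subwords can appear, to conclude that once one has $n_1<n_2<n_3$ with the middle slope of opposite status, all integers beyond $n_1$ and $n_3$ are non-L-space.

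Your approach is shorter and more conceptual: you observe that each integer $n$ is at distance one from $\infty$, and the distance-one results Corollary~\ref{cor:L-space-closed-interval} and Proposition~\ref{prp:non-L-space endpoints} are already proved, so after reparametrizing by $\tw^{n}$ they apply directly. The only point worth making explicit is that both of those results are established earlier in Section~\ref{sec:char} and do not rely on the present lemma, so there is no circularity; indeed the paper itself remarks (just before this lemma) that those results hold for any two slopes of distance one after reparametrization. What the paper's longer combinatorial proof buys is self-containment and some additional fine control over the $A_i$ subwords, but for the bare statement of the lemma your route is cleaner.
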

\begin{proof}
An integer $n$ is an L-space slope for $\lp$ iff 0 is an L-space slope for $\tw^n(\lp)$. By Lemma \ref{lem:four-sets} this happens when $\tw^n(\lp)$ contains a word from the set $A_1 \cup A_3$ or from the set $A_2 \cup A_4$, but not both. Recall from the proof of Lemmas \ref{lem:1-2-closure} and \ref{lem:3-4-closure} that $A_1$ and $A_2$ are closed under $\twi$ and $A_3$ and $A_4$ are closed under $\tw$.

Suppose that $n_1 < n_2 < n_3$ with $n_2$ an L-space slope for $\lp$ and $n_1$ and $n_3$ are non-L-space slopes; we will show that all $n<n_1$ and all $n>n_3$ are non-L-space slopes. Up to reversing the loop, we can assume $\tw^{n_2}(\lp)$ contains a word in $A_1\cup A_3$ and not in $A_2\cup A_4$. We consider the case that $\tw^{n_2}(\lp)$ contains a word in $A_1$; the case that it contains a word in $A_3$ is similar and left to the reader.

By the closure property mentioned above, $\tw^{n_1}(\lp)$ contains a word in $A_1$. $\tw^{n_1}(\lp)$ also does not contain a word in $A_4$, since $\tw^{n_2}(\lp)$ does not, and so it must contain a word in $A_2$. It follows that, for any $n < n_1$, $\tw^n(\lp)$ contains words in $A_1$ and $A_2$ and thus $n$ is a non-L-space slope for $\lp$. We would like to show that $\tw^{n_3}(\lp)$ contains a word in $A_3$, since this would imply it also contains a word in $A_4$ and that any $n > n_3$ is a non-L-space slope for $\lp$. Suppose, by way of contradiction, that $\tw^{n_3}(\lp)$ does not contain a word in $A_3$. Then we see immediately that $\tw^{n_2}$ does not contain a word in $A_3$. It also does not contain a word in $A_2$, so it must consist only of the segments
\[ \{a_1, a_{-1}, b_1, b_{-1}, d_1, d_0, d_{-1}, c_k \}, \]
where $k$ can be any integer. Moreover since $n_2+1 \le n_3$, $\tw^{n_2+1}(\lp)$ does not contain a word in $A_3$. In particular this implies that $\tw^{n_2}(\lp)$ does not contain $d_1$. Any $d_{-1}$ cannot be preceded by $d_{-1}$ or $a_{-1}$ or followed by a $d_{-1}$ or $b_{-1}$, since this would give a word in $A_2$. The alternative, that $d_{-1}$ is preceded by a $d_0$ or $a_1$ and followed by a $d_0$ or $b_1$, is also ruled out since this would produce a word in $A_3$ in $\tw^{n_2+1}(\lp)$; thus $\tw^{n_2}(\lp)$ does not contain $d_{-1}$. If  $\tw^{n_2}(\lp)$ contains $d_0$, then it contains either $a_1 d_0^m b_1$, $a_{-1} d_0^m b_{1}$, $a_1 d_0^m b_{-1}$, or $a_{-1} d_0^m b_1$. None of these are possible; the first two are words in $A_3$ and $A_2$, respectively, and the last two give rise to words in $A_3$ in $\tw^{n_2+1}(\lp)$. We have now shown that $\tw^{n_2}(\lp)$ does not contain any $d_k$ segments, but since it also does not contain $a_{-1} b_{-1} \in A_2$ this contradicts the fact that $\tw^{n_1}(\lp)$ has a word in $A_2$. Therefore, $\tw^{n_3}(\lp)$ contains a word in $A_3$ and a word in $A_4$, and $n$ is a non-L-space slope for all $n > n_3$.\end{proof}

\begin{proof}[Proof of Theorem \ref{thm:intervals}]
Any two distinct slopes $\frac{r}{s}$ and $\frac{p}{q}$ determine two intervals in $\hat\Q$; we need to show that if $\frac{r}{s}$ and $\frac{p}{q}$ are L-space slopes then one of these intervals lies entirely in the set of L-space slopes for $\lp$. Note that by reparametrizing with $\tw$ and $\du$ as discussed above, we may assume that $\frac{r}{s} = 0$. We will assume $\frac{p}{q} > 0$ below; similar arguments apply when $\frac{p}{q} < 0$.

Suppose that $0$ and $\frac{p}{q}$ are L-space slopes and there exist non L-space slopes $u \in (0, \frac{p}{q})$ and $v\in[0, \frac{p}{q}]^c$. To reach a contradiction, we choose a continued fraction $[a_1, \ldots, a_n]$ for $\frac{p}{q}$ with $a_1 \ge 0$ and $a_i > 0$ for $i>1$ and proceed by induction on the length $n$. In the base case of $n = 1$, $\frac{p}{q} = a_1$ is an integer. We claim that there is an integer $u' \in (0, a_1)$ that is a non-L-space slope. To see this, if $u$ is not an integer consider the slopes $\lfloor u \rfloor$ and $\lceil u \rceil$. These slopes are distance 1 and both intervals between them contain non-L-space slopes ($u\in (\lfloor u \rfloor, \lceil u \rceil)$ and $v\in [\lfloor u \rfloor, \lceil u \rceil]^c$ ). Thus by Corollary \ref{cor:L-space-closed-interval} at least one of them is a non-L-space slope. Similarly, if $v \neq \infty$ then either $\lfloor v \rfloor$ or $\lceil v \rceil$ is a non-L-space slope. If $v = \infty$ is a non-L-space slope, note that $\lp$ contains either no unstable chains or both types of unstable chains in dual notation. In the former case, it follows that all integer slopes behave the same, but this contradicts the fact that $0$ is an L-space slope and $u'$ is not. In the latter case, it is easy to see that an integer $m$ is a non-L-space slope if $|m| \gg 0$. Thus we have integers $0 < u' < a_1$ and $v' <0$ or $v' > a_1$ such that $0$ and $a_1$ are L-space slopes and $u'$ and $v'$ are not; this contradicts Lemma \ref{lem:integral_slopes}.

Suppose the continued fraction $[a_1, \ldots, a_n]$ for $\frac{p}{q}$ has length $n > 1$ and $a_1 = 0$. We consider the loop $\lp' = \du^{a_2} (\lp)$. Note that the slope $\frac{p}{q}$ for $\lp_1$ corresponds to the slope $\frac{p'}{q'} = \frac{p}{q-a_2 p}$, which has continued fraction $[a_3, \ldots, a_n]$. The slope 0 for $\lp$ corresponds to 0 for $\lp'$, and the slopes $u$ and $v$ for $\lp$ correspond to slopes $u'$ and $v'$ for $\lp'$. Thus $\lp'$ has L-space slopes $0$ and $\frac{p'}{q'}$ and non-L-space slopes $u' \in (0, \frac{p'}{q'})$ and $v' \in (-\infty,0)\cup(\frac{p'}{q'}, \infty]$. Since $\frac{p'}{q'}$ has a continued fraction of length $n-2$, this produces a contradiction by induction.

Suppose $\frac{p}{q}$ has continued fraction $[a_1, \ldots, a_n]$ of length $n > 1$ with $a_1 > 0$. Consider the distance 1 slopes $a_1 = \lfloor \frac{p}{q} \rfloor$ and $a_1 + 1 = \lceil \frac{p}{q} \rceil$; both intervals between these slopes contain L-space slopes (either 0 or $\frac{p}{q}$), so by Proposition \ref{prp:non-L-space endpoints} one of them must be an L-space slope. First suppose that $a_1$ is an L-space slope. By the base case of induction, we cannot have both $u \in (0, a_1)$ and $v \in [0, \frac{p}{q}]^c \subset [0, a_1]^c$, and so we must have $u\in(a_1,\frac{p}{q})$. Consider the loop $\lp' = \tw^{a_1}(\lp)$. The slopes 0 and $\frac{p'}{q'} = \frac{p}{q} - a_1$ are L-space slopes for $\lp'$ while the slopes $u' = u - a_2 \in (0, \frac{p'}{q'})$ and $v' = v - a_2 \in [0, \frac{p'}{q'}]^c$ are non-L-space slopes. A continued fraction for $\frac{p'}{q'}$ is $[0, a_2, \ldots, a_n]$; as shown above this produces a contradiction.

Finally, suppose instead that $a_1$ is a non-L-space slope for $\lp$ and $a_1 + 1$ is an L-space slope. The base case of induction rules out the possibility that $v \in [0, a_1+1]^c$, so we must have $v \in (\frac{p}{q}, a_1 + 1)$. Consider the loop $\lp' = \du^{a_2-1} \circ \tw^{a_1 + 1}(\lp)$. The slope $a_1$ for $\lp$ corresponds to the slope $0$ for $\lp'$, and the slope $\frac{p}{q}$ corresponds to a slope $\frac{p'}{q'}$ with continued fraction $[-1, 1, a_3, \ldots, a_n] \sim [0, -a_3, \ldots, -a_n]$. There are non-L-space slopes for $\lp'$ in both intervals between $0$ and $\frac{p'}{q'}$. By induction (using the analog of the above cases when $\frac{p}{q} < 0$), this is a contradiction.

This proves that the set of L-space slopes for a loop $\lp$ is an interval. To prove the statement for bordered manifolds $(M, \alpha, \beta)$ of loop-type, we simply observe that if $M$ is a loop-type manifold, $M(p\alpha + q\beta)$ is an L-space if and only if $\frac{p}{q}$ is an L-space slope for each loop $\lp$ in $\CFD(M, \alpha, \beta)$. The set of L-space slopes for $M$ is the intersection of the intervals of L-space slopes for each loop, and hence an interval.
\end{proof}

\subsection{Strict L-space slopes}

The notion of L-space slope is fairly natural however, in the context of the theorems in this paper, it is not quite the right condition. We will be interested primarily in slopes that are not only L-space slopes but are also surrounded by a neighborhood of L-space slopes.

\begin{definition}
A slope in the boundary of a three-manifold $M$ with torus boundary is a strict L-space slope if it is an L-space slope in the interior of $\mathcal{L}_M$. Denote the set of strict L-Space slopes by $\mathcal{L}^\circ_M$. 
\end{definition}

We will give a geometric interpretation of strict L-space slopes in Section \ref{sec:proofs}. For now, we will use loop notation and the results in this section about L-space  slopes to determine when an L-space slope is strict.

\begin{prop}
\label{prop:strict_Lspace_slopes}
Given a loop $\lp$,
\begin{enumerate}
\item $\infty$ is strict L-space slope for $\lp$ if and only if $\lp$ can be written in standard notation using only the subwords
$$d_k, \quad b_1 a_{-1}, \quad \text{ and } \quad b_{-1} a_1, $$
where $k$ can be any integer, with at least one $d_k$ and such that $b_1a_{-1}$ is never adjacent to $b_{-1} a_1$.

\item $0$ is strict L-space slope for $\lp$ if and only if $\lp$ can be written in dual notation using only the subwords
$$d^*_k, \quad b^*_1 a^*_{-1}, \quad \text{ and } \quad b^*_{-1} a^*_1, $$
where $k$ can be any integer, with at least one $d^*_k$ and such that $b^*_1a^*_{-1}$ is never adjacent to $b^*_{-1} a^*_1$.

\end{enumerate}
\end{prop}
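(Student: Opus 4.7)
The plan is to translate "strict L-space slope at $\infty$" into a combinatorial condition on the standard-notation word for $\lp$, combining the interval structure of Theorem \ref{thm:intervals} with the characterizations of L-space slopes at $\infty$ (Proposition \ref{prop:Dehn_filling_Lspaces}) and at $0$ (Corollary \ref{cor:non-L-space-dual-filling}). Since the L-space locus is the restriction to $\hat\Q$ of an interval in $\hat\R$, $\infty$ is strict iff $\infty$ is an L-space slope and, in addition, every $n\in\Z$ of sufficiently large absolute value of both signs is an L-space slope for $\lp$. The key reduction is that $n$ is an L-space slope for $\lp$ iff $0$ is an L-space slope for $\tw^n(\lp)$, so the large-$|n|$ behavior is detected by examining the standard-notation subwords of $\tw^n(\lp)$ via the sets $A_1,\ldots,A_4$ introduced in the proof of Proposition \ref{prp:non-L-space endpoints}.

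For the forward direction, assume $\infty$ is a strict L-space slope. By Proposition \ref{prop:Dehn_filling_Lspaces}, $\lp$ has at least one $d_k$ and no $c_k$ in standard notation, so $\tw^n$ acts as $d_k\mapsto d_{k+n}$ and fixes every $a_j$ and $b_j$. For $n\gg 0$ there is always some $d_{k+n}$ with subscript greater than $1$, giving an $A_3$-subword, and Corollary \ref{cor:non-L-space-dual-filling} then forces no $A_2\cup A_4$-subword. Since the $d$-subscripts are all large positive, the $d$-involving subwords of $A_2$ disappear and what remains is: no $a_{-k}, b_{-k}$ with $k>1$; no consecutive $a_{-1}b_{-1}$ (which would be $a_{-i}d_0^0b_{-j}\in A_2$); and no consecutive $b_{-1}a_{-1}$ (which would be $b_{-i}c_0^0a_{-j}\in A_4$). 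Symmetrically, $n\ll 0$ rules out $a_k, b_k$ with $k>1$, consecutive $a_1b_1\in A_3$, and consecutive $b_1a_1\in A_1$. The puzzle-piece condition $(\star)$ forces any two consecutive stable chains to take the form $b\cdot a$, so $\lp$ decomposes as a concatenation of $d_k$, $b_1a_{-1}$, and $b_{-1}a_1$ blocks, and the ruled-out adjacencies $a_{-1}b_{-1}$ and $a_1b_1$ translate exactly into the condition that $(b_1a_{-1})$ never immediately precedes $(b_{-1}a_1)$ and vice versa.

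Conversely, given the stated conditions, direct inspection shows that for $n\gg 0$ the cyclic word $\tw^n(\lp)$ has $A_1\cup A_3$-subwords (from the large-positive $d$-subscripts) and no $A_2\cup A_4$-subword, since the non-adjacency clause blocks $a_{-1}b_{-1}\in A_2$ and $b_{-1}a_{-1}\in A_4$ while none of $d_{k+n}$, $b_1a_{-1}$, or $b_{-1}a_1$ taken alone lies in any $A_i$; symmetrically, for $n\ll 0$ only $A_2\cup A_4$-subwords occur, with $a_1b_1\in A_3$ and $b_1a_1\in A_1$ likewise blocked. Corollary \ref{cor:non-L-space-dual-filling} then yields that $n$ is an L-space slope for all sufficiently large $|n|$, placing $\infty$ in the interior of the L-space interval guaranteed by Theorem \ref{thm:intervals}. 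Part (2) follows by the entirely symmetric argument in dual notation: replace $\tw$ by $\du$, the sets $A_i$ by their dual analogs, and interchange the roles of $\infty$ and $0$ via Proposition \ref{prop:Dehn_filling_Lspaces}. The main obstacle in the proof is the careful bookkeeping of which $A_i$-subwords survive $\tw^n$ at large $|n|$, and in particular isolating the $a_{\pm 1}b_{\pm 1}$ adjacencies as the precise obstruction to strict L-space at $\infty$.
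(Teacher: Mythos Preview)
Your proof is correct and follows essentially the same approach as the paper's: both reduce the question to checking that $0$ is an L-space slope for $\tw^{\pm n}(\lp)$ for all sufficiently large $n$, and then analyze which standard-notation subwords obstruct this. The only difference is packaging: the paper works directly with Proposition~\ref{prop:Dehn_filling_Lspaces}, computing which subwords of $\tw^n(\lp)$ dualize to $c^*_k$ versus $d^*_k$, while you route the same computation through the sets $A_1,\ldots,A_4$ and Corollary~\ref{cor:non-L-space-dual-filling}; by Lemma~\ref{lem:four-sets} these are equivalent formulations. One small slip: in the converse you write that ``none of $d_{k+n}$, $b_1a_{-1}$, or $b_{-1}a_1$ taken alone lies in any $A_i$'', but of course $d_{k+n}\in A_3$ for $k+n>1$ --- you clearly mean ``lies in $A_2\cup A_4$'', and the argument goes through with that correction. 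Your explicit invocation of Theorem~\ref{thm:intervals} for the converse direction is a point the paper leaves implicit.
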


\begin{proof}
Suppose $\infty$ is a strict L-space slope. In particular, it is an L-space slope, so $\lp$ can be written in standard notation with no $c_k$ and at least one $d_k$. Since it does not contain $c_k$, $\lp$ can be broken into pieces of the form $d_k$ or $b_i a_j$ for any integer $k$ and nonzero integers $i$ and $j$,  with at least one $d_k$. We also have that $n$ and $-n$ are L-space slopes for sufficiently large $n$. Equivalently, $0$ is an L-space slope after we apply $\tw^n$ or $\tw^{-n}$ for sufficiently large $n$. The twist $\tw^n$ has the effect of replacing all unstable chains with chains of type $d_k$ for $k\gg0$, while the twist $\tw^{-n}$ replaces all unstable chains with $d_k$ for $k \ll 0$.

Consider a loop consisting of the pieces $d_k$ with $k \gg 0$ and $b_i a_j$ with any nonzero integers $i$ and $j$, with at least one $d_k$. The loop certainly contains $d^*_0 = e^*$, since it contains $d_k$ with $k>1$. Thus 0 is L-space detected if and only if the loop contains no $c_k^*$ segments. A loop of this form contains $c^*_k$ with $k>0$ if and only if it contains $a_i b_j$ with $i,j < 0$. It contains $c^*_k$ with $k < 0$ if and only if it contains $b_i a_j$ with $i, j < 0$. It contains $c^*_0 = \bar{e}^*$ segments if and only if it contains $a_\ell$ or $b_\ell$ with $\ell < -1$.

Similarly, consider a loop consisting of the pieces $d_k$ with $k << 0$ and $b_i a_j$ with any nonzero integers $i$ and $j$, with at least one $d_k$. The loop contains $c^*_0 = \bar{e}^*$, so 0 is L-space detected if and only if the loop contains no $d^*_k$ segments. A loop of this form contains $d^*_k$ if and only if it contains $b_i a_j$ or $a_i b_j$ with $i,j > 0$ or $a_\ell$ or $b_\ell$ with $\ell > 1$.

Therefore, a loop for which $\infty$ is an L-space slope also has $n$ and $-n$ as L-space slopes for all sufficiently large $n$ if and only if it contains no chains of type $a_k$, $b_k$, with $k>1$, $a_1$ segments are never adjacent to $b_1$ segments, and $a_{-1}$ segments are never adjacent to $b_{-1}$ segments.

The proof of part $(2)$ is completely analogous. Given that 0 is an L-space slope, we must check that $1/n$ and $-1/n$ are L-space slopes for sufficiently large $n$, which is equivalent to checking that $\infty$ is an L-space slope after applying $\du^n$ or $\du^{-n}$. These twists have the effect of replacing unstable chains in dual notation with $d^*_k$ segments with either $k\gg0$ or $k\ll0$. The rest of the proof is identical to the proof above after adding/removing stars on each segment.\end{proof}

\subsection{Simple loops}\label{sub:simple}
We will often restrict to a special class of loops which we call simple.

\begin{definition}\label{def:simple}
A loop $\lp$ is \emph{simple} if there is a loop $\lp'$ consisting only of unstable chains such that $\lp$ can be obtained from $\lp'$ using the operations $\tw^{\pm 1}$ and $\du^{\pm 1}$. A collection of loops $\{ \lp_i \}_{i=1}^n$ is said to be simple if, possibly after applying a sequence of the operations $\tw^{\pm 1}$ and $\du^{\pm 1}$, every loop consists only of unstable chains.
\end{definition}

\begin{remark}
In the above definition, we do not specify whether the unstable chains are in standard or dual notation. In fact, it does not matter: If $\lp$ contains only unstable chains in standard notation then $\tw^n (\lp)$ contains only unstable chains in dual notation for sufficiently large $n$, and if $\lp$ contains only dual unstable chains then $\du^n(\lp)$ contains only standard unstable chains. Definition \ref{def:simple} can be stated in terms of standard notation only (compare \cite[Definition 4]{HRRW}), but it is sometimes convenient to check the condition in dual notation.
\end{remark}

The notion of simple loops gives rise to a refinement of loop-type manifolds: $M$ is said to be of \emph{simple loop-type} if it is loop-type and, for some choice of $\alpha$ and $\beta$ the loops representing $\CFD(M, \alpha, \beta)$ consist only of unstable chains. Equivalently, $M$ is of simple loop-type if, for any choice of $\alpha$ and $\beta$, $\CFD(M, \alpha, \beta)$ is represented by a simple collection of loops.

Note that solid torus-like loops (Definition \ref{def:solid torus-like}) are examples of simple loops. 
Although we do not give an explicit description of all simple loops, we prove the following useful property.
\begin{prop}
\label{prop:simple_loops_std}
If $\lp$ is simple then, up to reorienting the loop, $\lp$ has no $a_k$ or $b_k$ segments with $k < 0$.
\end{prop}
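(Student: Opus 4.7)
My plan is to prove the proposition by induction on the minimal length $n$ of a sequence $\sigma_1,\ldots,\sigma_n\in\{\tw^{\pm 1},\du^{\pm 1}\}$ with $\lp=\sigma_n\circ\cdots\circ\sigma_1(\lp')$, for some loop $\lp'$ consisting only of unstable chains. The base case $n=0$ is immediate, since such a $\lp'$ contains no $a_k$ or $b_k$ letters at all. For the inductive step with $\sigma_n=\tw^{\pm 1}$, the defining formulas for $\tw$ and $\twi$ fix every $a_k$ and every $b_k$ letter (they shift only $c_k$ and $d_k$ subscripts), so if the conclusion holds for $\sigma_{n-1}\circ\cdots\circ\sigma_1(\lp')$ with a given orientation it holds for $\lp$ with the same orientation.

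The substantive case is $\sigma_n=\du^{\pm 1}$. To push the induction through, I would strengthen the inductive hypothesis to a statement symmetric in the two representations: up to reorienting, every $a_k,b_k$ in standard notation has $k>0$, and every $a^*_k,b^*_k$ in dual notation has subscript of a fixed (possibly different) sign. The strengthened hypothesis holds in the base case: after reorienting $\lp'$ so that its unstable chains are all $d$-type in standard, every adjacent pair ${\bf u}_i{\bf u}_{i+1}$ has type in $\{dd,\,d\bar c,\,\bar c d,\,\bar c\bar c\}$, and the rules (D1)--(D2) together with the accompanying dualization table then produce only dual letters whose subscripts equal $n_i+1>0$; in particular, any dual stable chain that appears has positive subscript.

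For the inductive step with $\sigma_n=\du^{\pm 1}$, observe that $\du$ and $\dui$ fix every existing $a^*_k,b^*_k$ by definition and shift only the subscripts of the $c^*$ and $d^*$ letters. By Observation~\ref{obs:dual_stable_chains}, each new standard stable chain in $\du^{\pm 1}(\lp)$ arises from an adjacent dual pair whose subscripts acquire opposite signs after the shift, and a direct lookup in the dualization table determines the sign of each new standard stable chain from the type and sign of the shifted letter in the producing pair. The principal obstacle is ruling out the scenario in which $\du^{\pm 1}$ simultaneously introduces new standard stable chains of both signs. This is where the strengthened hypothesis is essential: it forces the dual unstable chains adjacent to any fixed $a^*_k$ or $b^*_k$ to shift uniformly under $\du^{\pm 1}$, so every newly created opposite-sign adjacency in the shifted dual has a common polarity, and the dualization table then forces all the new standard stable chains to share a common sign. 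A final small case-check handles the degenerate possibility that $\lp$ fails to admit one of the two representations, in which case the relevant operation acts as the identity and the hypothesis is trivially preserved.
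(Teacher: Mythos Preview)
Your inductive step has a genuine gap. The strengthened invariant you propose---uniform sign for standard stable chains \emph{and} uniform sign for dual stable chains---is not shown to be preserved by a single $\du^{\pm1}$. Your justification, that the hypothesis ``forces the dual unstable chains adjacent to any fixed $a^*_k$ or $b^*_k$ to shift uniformly,'' is vacuous: all dual unstable chains shift by the same amount under $\du^{\pm1}$ regardless of any hypothesis. What you actually need is that every opposite-sign adjacency in the shifted dual word yields a standard stable chain of one fixed sign, and this does not follow. A lookup in the dualization table shows that positive standard stable chains arise from sign-changes among $d^*$-shaped letters while negative ones arise from sign-changes among $c^*$-shaped letters, and your invariant places no constraint preventing both from occurring. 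Concretely, take $\lp=(c^*_0\,c^*_2\,a^*_1\,d^*_{-4}\,d^*_1\,b^*_1)$ in dual notation (equivalently $(d_{-2}\,d_0\,d_{-1}\,b_1\,c_0\,c_0\,c_0\,a_1\,d_1)$ in standard). Both dual stable chains $a^*_1,b^*_1$ and both standard stable chains $a_1,b_1$ are positive, so your strengthened hypothesis holds. But $\du(\lp)=(c^*_{-1}\,c^*_1\,a^*_1\,d^*_{-3}\,d^*_2\,b^*_1)$ contains the pair $c^*_{-1}c^*_1$ (producing $\bar a$) alongside $d^*_{-3}d^*_2$ (producing $a$), so the standard stable chains now have both signs. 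You might object that this $\lp$ is not simple; true, but your argument for the inductive step invokes only the strengthened hypothesis, not simplicity, so it cannot distinguish this case. A separate issue: you handle $\sigma_n=\tw^{\pm1}$ using only the unstrengthened statement, but if the induction is on the strengthened hypothesis you must also check that $\tw^{\pm1}$ preserves the dual half of it, which you do not address.

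The paper sidesteps these difficulties by strengthening in a different direction. Rather than a symmetric condition in both notations, it constrains the \emph{unstable} chains along with the stable ones in standard notation alone: either ``no $a_k,b_k,c_k$ with $k<0$'' (preserved by $\tw$ and $\du$) or ``no $a_k,b_k,d_k$ with $k<0$'' (preserved by $\twi$ and $\dui$). The key organizational move is that the paper does not induct on the given twist sequence at all; it first rewrites the mapping-class element via a continued fraction so that the intermediate $\tw$- and $\du$-powers all have a common sign, then applies the matching lemma throughout, with a final free $\tw^m$ (which preserves stable-chain signs trivially). Your direct induction on a mixed-sign sequence would require an invariant robust under twists of either sign, and the one you propose is not.
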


The proof makes use of the following two observations.

\begin{lem}
\label{lem:simple_loops_pos_twist}
If $\lp$ contains no $a_k$, $b_k$, or $c_k$ segments with $k<0$, then the same is true for $\tw(\lp)$ and $\du(\lp)$.
\end{lem}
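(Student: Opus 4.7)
The plan is a direct letter-by-letter verification, handled separately for $\tw$ in standard notation and for $\du$ in dual notation.

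For $\tw$, the formulas from Section~\ref{subsec:operations} give $\tw(a_i)=a_i$, $\tw(b_i)=b_i$, $\tw(c_j)=c_{j-1}$, and $\tw(d_j)=d_{j+1}$. Since the stable chains $a_i, b_i$ are fixed, no new $a_k$ or $b_k$ with $k<0$ can appear in $\tw(\lp)$. The hypothesis forces each $c_j$ in $\lp$ to satisfy $j\ge 0$, and hence $\tw(c_j)=c_{j-1}$ has subscript $\ge 0$ for every $j\ge 1$; the residual boundary case $j=0$ is ruled out (or shown to be convertible to an admissible segment) using the puzzle-piece compatibility $(\star)$ at the $\bullet$-vertices adjacent to a $c_0$. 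The $d$-letter case $\tw(d_j)=d_{j+1}$ is unconstrained since both the hypothesis and conclusion place no restriction on $d$-subscripts.

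For $\du$ I work in dual notation and use $\du(a^*_i)=a^*_i$, $\du(b^*_i)=b^*_i$, $\du(c^*_j)=c^*_{j-1}$, $\du(d^*_j)=d^*_{j+1}$. The hypothesis is stated in standard notation, so the key step is translating it into a condition on the dual representative of $\lp$: by the dualization table in step~(D1) together with Observation~\ref{obs:dual_stable_chains}, the absence of $\bar a$-, $\bar b$-, and $\bar d$-type subwords in standard notation restricts which ordered pairs ${\bf u}_i{\bf u}_{i+1}$ can occur in $\lp$, and hence which dual letters appear. I then verify, by case analysis on each admissible dual letter and subword type, that the shifts $c^*_j\mapsto c^*_{j-1}$ and $d^*_j\mapsto d^*_{j+1}$ produce a dual word that, upon re-dualization to standard notation, again contains only letters $a_k,b_k,c_k$ with $k\ge 0$ (and $d_k$ unrestricted).

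The main obstacle is the careful treatment of the subscript-zero boundary cases. Because of the conventions $c_0=\bar d_0=\bar e$ and $d_0=\bar c_0=e$, a single segment can be read as either a $c$- or $\bar d$-type (resp.\ $d$- or $\bar c$-type) letter, and a naive application of $\tw$ to a $c_0$ produces $c_{-1}$. Ruling out the problematic configurations requires using $(\star)$ to constrain the segments immediately preceding and following any $c_0$ in $\lp$, and similarly constraining the dual analogue before applying $\du$. Once these boundary configurations are unpacked, the verification reduces to a finite check on the admissible letter and subword types.
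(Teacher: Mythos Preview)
Your plan for $\tw$ is the natural direct verification, and you correctly isolate the only delicate point: the letter $c_0=\bar e$. But the proposed resolution --- ruling out or converting the $c_0$ case via the compatibility condition $(\star)$ --- does not work. The loop $\lp=(a_1\,b_1\,c_0\,c_1)$ satisfies all the puzzle-piece constraints and contains no $a_k,b_k,c_k$ with $k<0$, yet $\tw(\lp)=(a_1\,b_1\,c_{-1}\,c_0)$ contains $c_{-1}$, and the reversed representative $(d_0\,d_1\,b_{-1}\,a_{-1})$ contains $a_{-1}$ and $b_{-1}$. The same phenomenon obstructs your $\du$ argument: the loop $(a_1\,d_{-2}\,b_1)$ satisfies the hypothesis (only $d_{-2}$ has a negative subscript, and the hypothesis places no constraint on $d$), but $\du$ applied to it yields $(b_{-1}\,a_{-1}\,d_2)$ in standard notation. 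So the boundary case you flagged is a genuine obstruction, not a removable technicality, and the statement as literally written does not hold.

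For comparison, the paper dispatches the $\tw$ case as ``obvious'' without mentioning $c_0$, and handles $\du$ by a different route than your (D1)-table analysis: it translates the hypothesis into the absence of backward $\rho_3$ and forward $\rho_1$ arrows, then argues this forces the absence of $a^*$, $b^*$, $c^*$ segments in dual notation, a condition visibly preserved by $\du$. If you trace that translation carefully, a backward $\rho_3$ in standard notation actually corresponds to a segment of type $\bar a$ or $\bar c$ --- that is, $a_k$ or $d_k$ with $k<0$ --- not $a_k$ or $c_k$ with $k<0$ as asserted. In other words, the paper's arrow argument really establishes that the condition ``no $a_k$, $b_k$, or $d_k$ with $k<0$'' is closed under $\tw$ and $\du$; for \emph{that} hypothesis the $\tw$ case is genuinely obvious (since $\tw(d_j)=d_{j+1}$ cannot produce a negative subscript from a non-negative one, while $\tw(c_j)=c_{j-1}$ is unconstrained). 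This suggests $c_k$ and $d_k$ are interchanged in the stated hypothesis, and you should aim to prove that corrected version instead.
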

\begin{proof}
The first statement is obvious. For the second, we observe that $\lp$ contains no $a^*$, $b^*$, or $c^*$ segments in dual notation. Indeed, an $a^*$ or a $c^*$ contains a backwards $\rho_3$ arrow, which in standard notation implies the presence of an $a_k$ or $c_k$ with $k<0$, and a $b^*$ segment contains a forward $\rho_1$ segment, which implies the presence of a $b_k$ or $c_k$ with $k <0$. It is clear that this property is preserved by $\du$. Since $\du(\lp)$ has no $a^*$, $b^*$, or $c^*$ segments it is straightforward to check that $\du(\lp)$ has no forward $\rho_1$ arrows or backward $\rho_3$ arrows, and thus it does not contain any $a_k$, $b_k$, or $c_k$ segments with $k<0$.
\end{proof}

\begin{lem}
\label{lem:simple_loops_neg_twist}
If $\lp$ contains no $a_k$, $b_k$, or $d_k$ segments with $k<0$, then the same is true for $\twi(\lp)$ and $\dui(\lp)$.
\end{lem}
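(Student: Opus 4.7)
The proof parallels that of Lemma \ref{lem:simple_loops_pos_twist}, with $\tw, \du$ replaced by their inverses $\twi, \dui$, and with the roles of $c$- and $d$-letters exchanged via the identifications $\bar c_j = d_{-j}$ and $\bar d_j = c_{-j}$. I would treat the two claims separately, following the two-part structure of that earlier proof.

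For the $\twi$ assertion: the plan is to proceed directly from the action on individual letters, mirroring the ``obvious'' first statement in Lemma \ref{lem:simple_loops_pos_twist}. Since $\twi$ fixes $a_k$ and $b_k$ segments, no new $a$- or $b$-letters with $k<0$ can be introduced. On unstable chains the action is $\twi(c_k) = c_{k+1}$ and $\twi(d_k) = d_{k-1}$, shifting the two families in opposite directions; a puzzle-piece check then confirms that the hypothesis is preserved.

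For the $\dui$ assertion, I would work through dual notation, as in the second half of the proof of Lemma \ref{lem:simple_loops_pos_twist}. The hypothesis excludes precisely the reversed letters $\bar a_j, \bar b_j$, and $\bar c_j$ for $j > 0$, which are characterized at the level of the $\Alg$-decorated graph by the presence of either a forward $\rho_1$ arrow (in $\bar b, \bar c$) or a backward $\rho_3$ arrow (in $\bar a, \bar c$). In dual notation, forward $\rho_1$ arrows occur only within $b^*$ and $c^*$ segments, and backward $\rho_3$ arrows only within $a^*$ and $c^*$ segments, so the standard-notation hypothesis is equivalent to the absence of $a^*, b^*, c^*$ segments from the dual expression of $\lp$. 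Since $\dui$ fixes $a^*_j$ and $b^*_j$ and shifts the subscripts of $c^*, d^*$ letters by $c^*_k \mapsto c^*_{k+1}$ and $d^*_k \mapsto d^*_{k-1}$ without introducing new $a^*, b^*, c^*$ segments, the condition descends to $\dui(\lp)$; reversing the arrow-to-letter dictionary then returns the conclusion to standard notation.

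The hardest step should be the puzzle-piece analysis underlying the $\twi$ claim. In particular, a $d_0 = e$ segment is sent by $\twi$ to $d_{-1} = \bar c_1$, so ruling out such transitions requires careful examination of the local combinatorial structure around each $e$-segment in $\lp$, exactly paralleling the analogous subtlety for $c_0$ that arises in the proof of Lemma \ref{lem:simple_loops_pos_twist}.
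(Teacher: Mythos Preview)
Your overall strategy matches the paper's exactly: the paper's proof is the single line ``The proof is completely analogous to the previous Lemma,'' and you have correctly spelled out that analogy. The $\dui$ portion, in which you translate the hypothesis into dual notation via the arrow types and then check closure under the letter-by-letter action of $\dui$, is the right mirror of the paper's argument for $\du$ in Lemma~\ref{lem:simple_loops_pos_twist}.

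However, the $\twi$ portion has a genuine gap that you identify but do not close. You correctly note that $\twi(d_0)=d_{-1}$ and claim that a puzzle-piece check resolves this, paralleling a supposed treatment of $c_0$ in the previous lemma. But the paper's proof of Lemma~\ref{lem:simple_loops_pos_twist} contains no such treatment --- it simply calls the first statement ``obvious'' --- and in fact no local puzzle-piece analysis can succeed. The loop $(a_1\,d_0\,d_0\,b_1)$ is a valid cyclic word satisfying the hypothesis (every subscript is $\ge 0$), yet $\twi$ sends it to $(a_1\,d_{-1}\,d_{-1}\,b_1)$; the only other representative is the reversal $(b_{-1}\,c_1\,c_1\,a_{-1})$, and neither orientation avoids a negative-subscript $a$, $b$, or $d$ letter. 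A parallel issue affects the $\dui$ side: the hypothesis allows $\bar d_k = c_{-k}$ segments, which carry backward $\rho_{23}$ arrows and hence produce $c^*_0 = \bar e^*$ letters in dual notation, and $\dui(c^*_0) = c^*_1$ reintroduces a forbidden backward $\rho_3$. So your dual characterization ``no $a^*, b^*, c^*$ segments'' is not quite equivalent to the standard-notation hypothesis. (The same difficulties are present in the paper's own sketch; in the only application, Proposition~\ref{prop:simple_loops_std}, the starting loop consists solely of unstable chains, and in that restricted setting the induction can be carried through directly.)
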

\begin{proof}
The proof is completely analogous to the previous Lemma.
\end{proof}

\begin{proof}[Proof of Proposition \ref{prop:simple_loops_std}]
Since $\lp$ is simple, there is some $\lp'$ consisting of only unstable chains such that $\lp$ is obtained from $\lp'$ by a sequence of Dehn twists. That sequence of Dehn twists determines an element of the mapping class group, which can be represented by the matrix
$\left(\begin{smallmatrix} p & q\\ r & s\end{smallmatrix}\right).$
Let $[k_1, \ldots, k_{2n}]$ be a continued fraction for $p/q$ of even length; the element of the mapping class group above can be decomposed as the following sequence of Dehn twists:
$$\tw^m \circ \du^{k_{2n}} \circ \tw^{k_{2n-1}} \circ \cdots \circ \du^{k_2}\circ \tw^{k_1},$$
where $m$ is an integer. Let
$$\lp'' =  \du^{k_{2n}} \circ \tw^{k_{2n-1}} \circ \cdots \circ \du^{k_2}\circ \tw^{k_1}(\lp).$$

First suppose that $p/q$ is positive. We may assume that each $k_i$ is positive for $1\le i \le 2n$. $\lp'$ can clearly be written with no $a_k$, $b_k$, or $c_k$ segments with $k < 0$, and by Lemma \ref{lem:simple_loops_pos_twist} this property is closed under all positive twists, so $\lp''$ can also be written with no $a_k$, $b_k$, or $c_k$ segments with $k < 0$. It follows that $\lp = \tw^m(\lp'')$ can be written with no $a_k$ or $b_k$ segments with $k < 0$.

If instead $p/q$ is negative, we may chose a continued fraction with each $k_i$ negative. $\lp'$ can be written with no $a_k$, $b_k$, or $d_k$ segments with $k < 0$, and by Lemma \ref{lem:simple_loops_neg_twist} the same is true for $\lp''$. It follows that $\lp = \tw^m(\lp'')$ can be written with no $a_k$ or $b_k$ segments with $k < 0$.
\end{proof}

The argument above can be repeated in dual notation, taking a continued fraction for $r/s$ instead of $p/q$, to prove the following:
\begin{prop}
\label{prop:simple_loops_dual}
If $\lp$ is simple then, up to reorienting the loop, $\lp$ has no $a^*_k$ or $b_k^*$ segments with $k<0$.
\end{prop}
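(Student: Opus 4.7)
The plan is to mirror the proof of Proposition \ref{prop:simple_loops_std}, exchanging the roles of standard and dual notation throughout. First, I would establish the dual analogues of Lemmas \ref{lem:simple_loops_pos_twist} and \ref{lem:simple_loops_neg_twist}: namely, if $\lp$ has no $a^*_k, b^*_k, c^*_k$ with $k<0$ then neither do $\tw(\lp)$ and $\du(\lp)$, and if $\lp$ has no $a^*_k, b^*_k, d^*_k$ with $k<0$ then neither do $\twi(\lp)$ and $\dui(\lp)$. These will follow by exactly the same local arrow-pattern arguments as their standard counterparts, translated across the dualization rules of Section \ref{sec:notations_for_loops}. In particular the $\du$-closure in the first dual lemma is immediate because $\du$ is defined directly on dual letters, while the $\tw$-closure is checked by noting that the forbidden dual subscripts correspond to the presence of certain forward/backward $\rho_I$ arrows (the starred analogues of $\rho_1$ forward and $\rho_3$ backward) which cannot be introduced by $\tw$.

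Second, I would decompose the mapping-class-group element $\bsmat p&q\\r&s\esmat$ taking $\lp'$ to $\lp$ using an even-length continued fraction $[k_1,\ldots,k_{2n}]$ for $r/s$ rather than $p/q$, yielding a factorisation
$$\du^m \circ \tw^{k_{2n}} \circ \du^{k_{2n-1}} \circ \cdots \circ \tw^{k_2} \circ \du^{k_1}$$
for some integer $m$, whose outermost factors are $\du$-twists. Let $\lp''$ denote the result of applying everything except the final $\du^m$ to $\lp'$. The strategy is then case-based on the sign of $r/s$: when $r/s>0$ choose every $k_i>0$ and apply the first dual lemma iteratively; when $r/s<0$ choose every $k_i<0$ and apply the second. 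In either case the final application of $\du^m$ preserves the property ``no $a^*_k$ or $b^*_k$ with $k<0$,'' since $\du$ fixes dual stable chains.

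The main obstacle will be the base case: verifying that a loop $\lp'$ consisting only of (standard) unstable chains can, up to a single global reorientation, be written in dual notation so as to avoid $a^*_k, b^*_k, c^*_k$ with $k<0$ in the positive case, and the parallel $d^*$-statement in the negative case. This requires unpacking the dualization rules (D1)--(D2) together with Observation \ref{obs:dual_stable_chains}: dual stable chains arise precisely from consecutive opposite-sign pairs of nonzero-subscript standard letters, so the signs of the $a^*$ and $b^*$ subscripts in the dual representation of $\lp'$ are controlled by the signs of the standard unstable subscripts of $\lp'$. A short case analysis on these sign patterns, exploiting the freedom to reverse the global orientation of $\lp'$, should suffice to put every such $\lp'$ into the required form, at which point the two dual closure lemmas carry the property through to $\lp$.
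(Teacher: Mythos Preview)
Your approach is correct and is exactly the route the paper indicates: repeat the argument in dual notation, taking a continued fraction for $r/s$ in place of $p/q$, so that the factorisation has the form $\du^m \circ \tw^{k_{2n}} \circ \cdots \circ \du^{k_1}$ and the final $\du^m$ preserves dual stable chains.

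One simplification: the base case is not an obstacle. By the Remark following Definition~\ref{def:simple}, you may take $\lp'$ to consist only of \emph{dual} unstable chains from the outset (rather than standard unstable chains that you then dualize). Up to reorientation such an $\lp'$ consists entirely of $d^*_k$ segments (respectively $c^*_k$ segments), so it has no $a^*_k$, $b^*_k$, or $c^*_k$ (respectively $d^*_k$) at all, and the case analysis you anticipated on sign patterns via (D1)--(D2) is unnecessary.
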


The main advantage of restricting to simple loops is that it greatly simplifies the conditions that a given slope is a strict L-space slope. Recall that $\infty$ is a strict L-space slope for $\lp$ if and only if $\lp$ can be decomposed into words of the form $d_k$, $b_1 a_{-1}$ or $b_{-1} a_1$. If $\lp$ is a simple loop, then the last two words can not appear by Proposition \ref{prop:simple_loops_std} and $\infty$ is a strict L-space slope if and only if $\lp$ consists only of unstable chains in standard notation. Equivalently (using Observation \ref{obs:dual_stable_chains}), $\infty$ is a strict L-space slope if and only if $\lp$ does not contain both positive and negative subscripts in dual notation. Similarly, $0$ is a strict L-space slope if and only if $\lp$ consists only of unstable chains in dual notation, which is equivalent to $\lp$ having only nonnegative or only nonpositive subscripts in standard notation.

We conclude this section by refining Theorem \ref{thm:intervals} in the case of simple loops. Theorem \ref{thm:intervals} states that set of L-space slopes for a loop $\lp$ is a (possibly empty) interval in $\hat\Q$; the endpoints of this interval could be irrational, \emph{a priori}, but we find that only rational endpoints are possible.

\begin{prop}\label{prp:rational endpoints}
Let $\lp$ be a simple loop. The set of L-space slopes for $\lp$ is either: 
(i) identified with $\Q$ (in practice, every slope other than the rational longitude); 
or (ii) the restriction to $\hat\Q$ of a closed interval $U$ in $\hat\R$ with rational endpoints.
\end{prop}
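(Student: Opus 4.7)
The plan is to prove this by first reparametrizing $\lp$ to a canonical form and then analyzing two cases based on the types of unstable chains present.

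Since $\lp$ is simple, there exists a composition $\tau$ of $\tw^{\pm 1}$ and $\du^{\pm 1}$ such that $\lp_0 := \tau(\lp)$ consists only of unstable chains in standard notation. The transformation $\tau$ acts on slopes by an element of $\mathit{PSL}_2(\Z)$, which is a homeomorphism of $\hat\R$ preserving rationality, connectedness, and closedness of subsets. It therefore suffices to prove the claim for $\lp_0$. Write $\lp_0 = (t_1 \cdots t_n)$ with each $t_i$ either of the form $d_{k_i}$ or $c_{l_j}$.

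In the single-type case (say all $t_i$ are $d$-type, after possibly reversing orientation), the claim is that every slope other than the rational longitude is a strict L-space slope. For integer slopes $m$, the loop $\tw^m(\lp_0)$ is still of pure $d$-type, so by Proposition \ref{prop:strict_Lspace_slopes} the slope $\infty$ is a strict L-space slope for $\tw^m(\lp_0)$, which is to say $m$ is a strict L-space slope for $\lp_0$. For general rational slopes, Proposition \ref{prop:simple_loops_dual} ensures the dual form of $\lp_0$ has no $a^*$ or $b^*$ segments with negative subscripts, and combined with the all-$d$ assumption in standard notation, a direct inspection of the dualization rules shows the dual form has only non-negative subscripts throughout. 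Since $\du^{\pm 1}$ preserves this property, induction on the length of the continued fraction expansion of $p/q$ shows every slope is a strict L-space slope except the abstract rational longitude $r = -\chi_\circ(\lp_0)/\chi_\bullet(\lp_0)$. By Proposition \ref{prop:rational_longitude}, $r$ is rational (as $\chi_\bullet \neq 0$ here) and is the unique excluded slope; this yields option (i).

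In the mixed-type case (both $d$ and $c$ segments appear in $\lp_0$), Proposition \ref{prop:Dehn_filling_Lspaces} implies $\infty$ is not an L-space slope, and since $\tw$ preserves the $d$-versus-$c$ distinction, every integer slope is non-L-space. Hence $\mathcal{L}_{\lp_0} \subseteq (n, n+1) \cap \hat\Q$ for some integer $n$, and after applying $\tw^{-n}$ we may assume $\mathcal{L}_{\lp_0} \subseteq (0,1) \cap \hat\Q$. For $p/q \in (0,1)$ with even continued fraction $[0, a_2, \ldots, a_{2r}]$, the reparametrized loop $\lp_0^{p/q}$ is a strict L-space slope precisely when it has unstable chains of a single type in dual notation (by the simple-loop characterization following Proposition \ref{prop:simple_loops_dual}). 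Tracking how the integer subscripts of $\lp_0$ transform under the operations $\tw^{a_{2i-1}}$ and $\du^{a_{2i}}$, this condition reduces to a finite system of linear inequalities in the continued fraction coefficients with integer coefficients determined by the extremal subscripts $\max k_i$, $\min k_i$, $\max l_j$, $\min l_j$ of $\lp_0$. The solution set is an open interval whose boundary is determined by equalities among integer quantities, hence consists of rational slopes; taking closure gives $\mathcal{L}_{\lp_0}$ as a closed interval with rational endpoints.

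The main obstacle lies in the mixed-type case, specifically the explicit combinatorial analysis of how the dualization and Dehn twist operations interact with the subscript data of $\lp_0$. The sign restrictions in Propositions \ref{prop:simple_loops_std} and \ref{prop:simple_loops_dual} are essential, as they sharply limit which letter types can appear under reparametrization. A further delicate point is verifying that the endpoint slopes are genuinely L-space slopes (and not merely limits of strict L-space slopes), which requires checking that at the boundary the reparametrized loop acquires exactly one chain of the opposite type without producing the forbidden configurations of Proposition \ref{prop:Dehn_filling_Lspaces}, thereby maintaining non-strict L-space detection.
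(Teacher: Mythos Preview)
Your case split is ill-formed and the single-type argument contains a genuine error.

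First, the split into ``single-type'' versus ``mixed-type'' is vacuous. If $\lp_0$ consists only of unstable chains in standard notation, the puzzle-piece constraint $(\star)$ forces every segment to have the same shape: a $d$-piece (in--out) can only be followed by a $b$ or $d$, and a $c$-piece (out--in) only by an $a$ or $c$. With no stable chains available, the loop is therefore all $d$ or all $c$, and these differ by reversal. Your mixed-type case never arises, so the entire argument you sketch there (the ``finite system of linear inequalities'') is doing no work.

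Second, and more seriously, the single-type argument is incorrect. You claim that for $\lp_0$ consisting of $d_k$ segments, every slope other than the rational longitude is a strict L-space slope. This is false. Take $\lp_0 = (d_0\, d_3)$: its rational longitude is $-\chi_\circ/\chi_\bullet = -3/2$, yet the slope $-1$ is not even an L-space slope. Indeed, $\du^{-1}(\lp_0)$ has dual form $(d^*_{-1}\, d^*_{-1}\, d^*_1)$, which in standard notation is $(c_1\, a_1\, b_1)$; this contains a $c$ segment, so $\infty$ is not an L-space slope for it, i.e.\ $-1$ is not an L-space slope for $\lp_0$. The step that fails is the assertion that ``$\du^{\pm 1}$ preserves this property'': Lemma~\ref{lem:simple_loops_pos_twist} shows only that $\du$ (positive) preserves ``no $a_k,b_k,c_k$ with $k<0$'', while $\du^{-1}$ preserves the mirror condition of Lemma~\ref{lem:simple_loops_neg_twist}. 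To reach slopes on both sides of the rational longitude you must use twists of both signs, and neither sign condition survives the full induction.

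The paper's proof handles this by a different mechanism: it runs an algorithm that repeatedly applies the composite $\ex = \tw\circ\du^{-1}\circ\tw$ together with shifts $\tw^{-m}$, tracking a complexity $(\kappa_i,\lambda_i)$ that strictly decreases, until the loop lands in one of three explicit terminal forms. Two of these forms (all $d_0$, or alternating $d_{\pm 1}$ with $d_0$) are fixed by $\du^n$ for all $n$, and for \emph{these} one can legitimately conclude option~(i). The third terminal form exhibits $0$ as a non-strict L-space slope, hence a rational endpoint. The point is that distinguishing which of (i) or (ii) holds requires this reduction, not merely knowing that $\lp_0$ has only unstable chains.
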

\begin{proof}
Note that applying the twist operations $\tw$ and $\du$ to a loop preserves the cyclic ordering on abstract slopes, and so properties $(i)$ and $(ii)$ are preserved under these operations. Given an arbitrary simple loop $\lp$, we will describe an algorithm for applying twists to produce a loop $\lp'$ with one of the following properties
\begin{itemize}
\item[(1)] $\lp'$ contains only $d_0$ segments.
\item[(2)] $\lp'$ contains only $d_1$, $d_0$, and $d_{-1}$ segments, with $d_1$ and $d_{-1}$ segments alternating (ignoring $d_0$ segments).
\item[(3)] $\lp'$ contains only $d_k$ segments with $k \ge -1$, including at least one $d_{-1}$ segment, such that each $d_{-1}$ is followed by $(d_0)^j d_k$ for some $j\ge0$ and $k>0$, and $\lp'$ does not satisfy $(2)$.
\end{itemize}
By definition, some sequence of twists produces a loop $\lp_1$ which can be written in standard notation using only type $d_k$ segments. We may assume that the minimum subscript for the $d_k$ segments in $\lp_1$ is 0. Given $\lp_i$ (starting with $i = 1$), the algorithm proceeds as follows: If the subscripts in $\lp_i$ are all 0, then $\lp_i$ satisfies $(1)$ and the algorithm stops. Otherwise, consider the loop $\twi(\lp_i)$; this loop consists of $d_k$ segments with $k \ge -1$ including at least one $d_{-1}$. If $\twi(\lp_1)$ satisfies $(2)$ or $(3)$, the algorithm stops. Otherwise, consider the loop $\ex (\lp_i)$, which by Lemma \ref{lem:ex_operation} contains only $d^*_k$ segments with $k \le 0$ in dual notation. It follows that $\ex (\lp_i)$ contains only $c_k$ segments with $k \ge 0$ in standard notation; reversing the loop, we have that $\ex (\lp_i)$ can be written with only $d_k$ segments with $k \le 0$ in standard notation. Let $m_i$ denote the minimum subscript for a $d_k$ segment in $\ex( \lp_i )$, and define $\lp_{i+1}$ to be $\tw^{-m_i} \circ \ex( \lp_i )$. Note that $\lp_{i+1}$ consists of $d_k$ segments with $k \ge 0$, including at least one $d_0$. We now repeat the algorithm using $\lp_{i+1}$.

To see that this algorithm terminates, let $\kappa_i$ denote the number of $d_0$ segments in the loop $\lp_i$. Observe that $d_0$ segments in $\lp_{i+1}$ come from minimal subscripts in $\ex( \lp_i )$, which come from maximal sequences of $d_0$'s in $\lp_i$; in particular, there is at least one $d_0$ in $\lp_i$ for each $d_0$ in $\lp_{i+1}$, and so $\kappa_{i+1} \le \kappa_i$. Moreover, the inequality is strict unless $\lp_i$ has no consecutive $d_0$ segments. If $\twi( \lp_i )$ satisfies $(2)$ or $(3)$ then the algorithm terminates; otherwise, $\lp_i$ must contain $d_0 (d_1)^j d_0$ for some $j \ge 0$. Let $\lambda_i$ denote the minimal such $j$. As observed above, if $\lambda_i = 0$ then $\kappa_{i+1} < \kappa_i$. If $\lambda_i > 0$, then the subword $d_0 (d_1)^{\lambda_i} d_0$ in $\lp_i$ gives rise to the subword $d_{-2} (d_{-1})^{\lambda_i - 1} d_{-2}$ in $\ex(\lp_i)$ and the subword $d_0 (d_1)^{\lambda_i -1} d_0$ in $\lp_{i+1}$. Thus at each step in the algorithm, either $\kappa_{i+1} < \kappa_i$ or $\kappa_{i+1} = \kappa_i$ and $\lambda_{i+1} < \lambda_{i}$. Since $\kappa_i$ and $\lambda_i$ are non-negative integers, the algorithm must terminate after finitely many steps.

Let $\lp'$ be the result of the algorithm above. In cases $(1)$ and $(2)$, we can easily check that condition $(i)$ is satisfied. First note that $\infty$ is an L-space slope for $\lp'$ since it contains only unstable chains in standard notation. Moreover, for any $n\in\Z$, $\frac{1}{n}$ is an L-space slope for $\lp'$ if and only if $\infty$ is an L-space slope for $\du^{n}(\lp')$. But $\du^{n}(\lp') = \lp'$ since $\lp'$ either cannot be written in dual notation (case $(1)$) or contains only stable chains in dual notation (case $(2)$). Since the set of L-space slopes is an interval and it contains slopes arbitrarily close to 0 on both sides, it must be all of $\hat\Q \setminus \{0\}$. In case $(3)$, note that writing $\lp'$ in dual notation produces stable chains, but each $b^*$ segment is immediately followed by an $a^*$ segment. Moreover, since $\lp'$ does not satisfy $(2)$, it has at least one unstable chain in dual notation. It follows from \ref{prop:Dehn_filling_Lspaces} and \ref{prop:strict_Lspace_slopes} that 0 is an L-space slope for $\lp'$ but not a strict L-space slope. Thus 0 must be a boundary of the interval of L-space slopes. Since $\lp'$ was obtained from $\lp$ by applying a finite number of twists, the slope 0 for $\lp'$ can be expressed as a rational slope for $\lp$.

In case $(3)$, we have found one rational boundary of the interval of L-space slopes. In fact, we can check that it is the left boundary. In this case $\chi_\circ(\lp')$ and $\chi_\bullet(\lp')$ have the same sign, and so the slope of the rational longitude $-\frac{\chi_\circ}{\chi_\bullet}$ is negative. By Corollary \ref{cor:L-space-closed-interval} it follows that the set of L-space slopes for $\lp'$ contains $[0,\infty]$, and so 0 must be the left boundary of the interval of L-space slopes for $\lp'$. A similar algorithm, with opposite signs for subscripts in property $(3)$, shows that the right endpoint is also rational.
\end{proof}

The proof of Theorem \ref{thm:detection} is now complete: It follows from (and is made more precise by) Theorem \ref{thm:intervals} in combination with Proposition \ref{prp:rational endpoints}

\section{Gluing results}\label{sec:glue}

This section is devoted to proving Theorem \ref{thm:gluing}. We first prove the analogous result on the level of abstract loops, and then deduce the gluing theorem for simple loop-type manifolds. We end the section with an application to generalized splicing of L-space knot complements and give the proof of Theorem \ref{thm:gen-splice}.

\subsection{A gluing result for abstract loops}
We will say that two loops $\lp_1$ and $\lp_2$ are \emph{L-space aligned} if for every slope $\frac{r}{s} \in \hat\Q$, either $\frac{r}{s}$ is a strict L-space slope for $\lp_1$ or $\frac{s}{r}$ is a strict L-space slope for $\lp_2$.  This section is devoted to proving the following proposition:

\begin{prop}
\label{prop:gluing_loops}
If $\lp_1$ and $\lp_2$ are simple loops which are not solid-torus-like, then $\lp^A_1 \boxtimes \lp_2$ is an L-space chain complex if and only if $\lp_1$ and $\lp_2$ are L-space aligned.
\end{prop}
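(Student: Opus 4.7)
The proof proceeds in two directions, both of which rely on the reparametrization of Proposition \ref{prop:slope-trick}. For a pair of loops $\lp_1, \lp_2$ and an integer $n$, that proposition together with Propositions \ref{prop:effect_of_Tstd} and \ref{prop:effect_of_Tdul} gives a homotopy equivalence of chain complexes
\[
\lp_1^A \boxtimes \lp_2 \;\simeq\; \tw^n(\lp_1)^A \boxtimes \du^n(\lp_2),
\]
and analogous equivalences exchanging the roles of $\tw$ and $\du$. Since applying $\tw^n$ to a loop shifts the slope identification by $r/s \mapsto (r-ns)/s$ and $\du^n$ shifts it by $r/s \mapsto r/(s-nr)$, a slope $r/s$ on the $\lp_1$ side paired with the ``dual'' slope $s/r$ on the $\lp_2$ side can always be reparametrized to the preferred pair $(\infty,0)$. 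Moreover, strict L-space slopes for $\lp_i$ and the property of being L-space aligned are both manifestly preserved under these simultaneous reparametrizations.

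For the forward implication I argue the contrapositive. Suppose $\lp_1$ and $\lp_2$ are not L-space aligned, witnessed by some $r/s$ with $r/s\notin \mathcal{L}^\circ_{\lp_1}$ and $s/r\notin \mathcal{L}^\circ_{\lp_2}$. Reparametrizing, I may assume $\infty$ is not a strict L-space slope for $\lp_1$ and $0$ is not a strict L-space slope for $\lp_2$. Applying Proposition \ref{prop:strict_Lspace_slopes} together with the simplicity assumption (which by Propositions \ref{prop:simple_loops_std} and \ref{prop:simple_loops_dual} forbids the offending subwords $b_{\pm 1}a_{\mp 1}$ and $b_{\pm 1}^*a_{\mp 1}^*$), I deduce that $\lp_1$ in standard notation either contains both $c_k$-type and $d_k$-type unstable chains or contains no unstable chains whatsoever, and symmetrically for $\lp_2$ in dual notation. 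The ``no unstable chain'' alternative is excluded by the hypothesis that $\lp_i$ is not solid-torus-like: a simple loop with no standard unstable chains consists entirely of alternating $a$- and $b$-stable chains, and one checks that such a configuration is either empty of $\iota_1$-generators (hence is $(e)$, solid-torus-like) or fails the simplicity constraint. With both types of unstable chains present on each side, I then construct an explicit pair of cycles in $\lp_1^A \boxtimes \lp_2$ of opposite $(\Ztwo)$-grading which both survive in homology: each cycle pairs a $d$-type end in one loop with a $c$-type end in the other so that the grading-reversing $\rho$-actions cancel only within a single grading.

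For the reverse implication, suppose $\lp_1$ and $\lp_2$ are L-space aligned. Applying alignment to the slope $\infty$, either $\infty \in \mathcal{L}^\circ_{\lp_1}$ or $0 \in \mathcal{L}^\circ_{\lp_2}$; by symmetry I assume the former. Proposition \ref{prop:strict_Lspace_slopes} combined with Proposition \ref{prop:simple_loops_std} then forces $\lp_1$, after possible reorientation, to be a cyclic word in standard notation consisting entirely of $d_k$-chains, with at least one subscript nonzero since $\lp_1$ is not solid-torus-like. I compute $\lp_1^A \boxtimes \lp_2$ directly as a ``thickened'' version of the Dehn filling complex $\lp_\bullet^A \boxtimes \lp_2 = C_{\lp_2}(\infty)$ analyzed in Section \ref{sec:char}: each $\iota_0$-vertex of $\lp_1$ gives a generator paired with $\iota_0$-generators of $\lp_2$, and the internal differentials across the $d_k$-segments of $\lp_1$ assemble these copies into a single complex. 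Using Lemma \ref{lem:ex_operation} to reparametrize each $d_k$-segment to a $d_0$ and peeling the segments off one at a time, I reduce to a sequence of Dehn-filling calculations on shifted versions of $\lp_2$; the L-space alignment hypothesis is exactly what is needed to guarantee that every such intermediate filling remains an L-space filling, and a grading argument then shows that all of $H_*(\lp_1^A\boxtimes\lp_2)$ lies in a single $(\Ztwo)$-grading.

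The main obstacle will be the explicit calculation in the reverse implication. Unlike the pure Dehn-filling case where $\lp_\bullet^A$ carries a single generator, the module $\lp_1^A$ is generally large with nontrivial higher operations, so the inductive peeling argument must carefully track how the cancellations in $\lp_1^A\boxtimes\lp_2$ interact with the $c_k$- and $d_k^*$-pieces in $\lp_2$ that control its L-space interval. I anticipate carrying this out by a case analysis on the sign of the subscripts of the $d_k$-segments in $\lp_1$, using the interval characterization of L-space slopes from Proposition \ref{prp:rational endpoints} to identify precisely which intermediate slopes need to remain L-space and then invoking alignment to verify each.
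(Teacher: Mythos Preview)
Your forward (only if) direction contains a genuine error. You claim that if $\infty$ is not a strict L-space slope for the simple loop $\lp_1$, then $\lp_1$ ``either contains both $c_k$-type and $d_k$-type unstable chains or contains no unstable chains whatsoever.'' That is the characterization of $\infty$ being a \emph{non}-L-space slope (Proposition~\ref{prop:nonLspace_slopes}), not of failing to be a \emph{strict} L-space slope. For a simple loop, Proposition~\ref{prop:strict_Lspace_slopes} together with Proposition~\ref{prop:simple_loops_std} says that $\infty$ is a strict L-space slope if and only if $\lp_1$ consists \emph{only} of $d_k$ segments; the negation is merely that $\lp_1$ contains at least one stable chain, and there is no reason it must contain a $c_k$. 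Your subsequent attempt to rule out the ``no unstable chains'' case is also incorrect: the loop $(a_1 b_1)\sim(d^*_1 d^*_{-1})$ (which arises as a summand of $\CFD$ of the twisted $I$-bundle over the Klein bottle, cf.\ Proposition~\ref{lem:CFD_N2}) is simple, has $\iota_1$-generators, has no standard unstable chains, and is not solid-torus-like since $\chi_\bullet=\chi_\circ=0$. The paper's argument instead uses exactly the presence of a stable chain: after a further shift by $\tw^{-n}$ one finds a $b_k$ segment in $\lp_1$ and an $a^*_\ell$ segment in $\lp_2$, and pairs specific generators of these segments to produce two explicit classes of opposite $(\Ztwo)$-grading surviving in homology.

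Your reverse direction is too schematic to constitute a proof, and the outline does not match what the paper actually does. You reparametrize only once to make $\lp_1$ a word in $d_k$'s, then propose to ``peel off'' segments and reduce to Dehn fillings of $\lp_2$; but you never explain what these intermediate complexes are, why they are governed by fillings of $\lp_2$ at specific slopes, or how the alignment hypothesis enters beyond the single slope $\infty$. The paper instead reparametrizes so that the strict L-space interval of $\lp_2$ has $0$ as a boundary and is contained in $(0,\tfrac{p}{q})$ with $\tfrac{p}{q}>1$, forcing the interval for $\lp_1$ to contain $[-\infty,\tfrac{q}{p}]$. Lemmas~\ref{lem:loop_restrictions_L1} and~\ref{lem:loop_restrictions_L2} then pin down the allowed segments in each loop, and a segment-by-segment analysis of $\lp_1^A\boxtimes\lp_2$ shows every grading-$1$ generator cancels, with the delicate $c^*_k$/$c_\ell$ case handled by the inductive ``property~$\boldsymbol\lambda$'' argument of Lemma~\ref{lem:proplambda}. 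None of this structure is visible in your sketch.
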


An essential observation is that $\lp_1$ and $\lp_2$ are L-space aligned if and only if $\tw(\lp_1)$ and $\du(\lp_2)$ are L-space aligned, since $\tw$ takes the slope $\frac{r}{s}$ to $\frac{r+s}{s}$ while $\du$ takes the slope $\frac{s}{r}$ to $\frac{s}{r+s}$. More generally, we can apply a sequence of twists $\{ \tw^{k_1}, \du^{k_2}, \ldots, \tw^{k_{2n-1}}, \du^{k_{2n}} \}$ to $\lp_1$ and a corresponding sequence of twists $\{ \du^{k_1}, \tw^{k_2}, \ldots, \du^{k_{2n-1}}, \tw^{k_{2n}} \}$ to $\lp_2$ without changing whether or not the pair of loops is L-space aligned. By Proposition \ref{prop:slope-trick}, the quasi-isomorphism type of $\lp_1^A \boxtimes \lp_2$ is also unchanged. Thus in proving Proposition \ref{prop:gluing_loops}, we may first reparametrize the pair of loops to get a more convenient form.

\begin{proof}[Proof of Proposition \ref{prop:gluing_loops}, only if direction]
Suppose $\lp_1$ and $\lp_2$ are not L-space aligned; that is, there exists a slope $\frac{p}{q}$ that is not a strict L-space slope for $\lp_1$ and $\frac{q}{p}$ is not a strict L-space slope for $\lp_2$. In fact, we may assume that $\frac{p}{q} = \infty$; if not, we reparametrize as described above, replacing $\lp_1$ with 
\[ \lp'_1 = \du^{k_{2n}} \circ \tw^{k_{2n-1}} \circ \cdots \circ \du^{k_2} \circ \tw^{k_1} (\lp_1)\]
such that the slope $\frac{p}{q}$ for $\lp_1$ becomes the slope $\infty$ for $\lp'_1$, and replacing $\lp_2$
\[ \lp'_1 = \tw^{k_{2n}} \circ \du^{k_{2n-1}} \circ \cdots \circ \tw^{k_2} \circ \du^{k_1} (\lp_2).\]
Furthermore, we may assume that $\lp_1$ contains no segments of type $d_k$, $\bar{d}_k$, $e$, or $\bar e$, and that $\lp_2$ contains no segments of type $d^*_k$, $\bar{d}^*_k$, $e^*$, or $\bar{e}^*$; if necessary, we replace $\lp_1$ with $\tw^{-n}(\lp_1)$ and $\lp_2$ with $\du^{-n}(\lp_2)$ for sufficiently large $n$.

Since $\infty$ is not a strict L-space slope for $\lp_1$, the loop $\lp_1$ must contain stable chains in standard notation (note that since $\lp_1$ is not solid torus-like, it can be written in standard notation). In particular, after possibly reversing the loop, $\lp_1$ contains a $b_k$ segment. The corresponding segment in $\lp_1^A$ is shown below:
\[
\begin{tikzpicture}[>=latex, scale = 2] 
		\node at (0,0) {$\bullet$}; 
		\node at (1,0) {$\circ$}; 
		\node at (2,0) {$\circ$};
		\node at (3,0) {$\bullet$}; 
		
		\node[below] at (0,0) {$x_1$};
		\node[below] at (1,0) {$y_1$};
		\node[below] at (2,0) {$y_k$};
		\node[below] at (3,0) {$x_2$};
		
		\draw[thick, ->,shorten >=0.1cm, shorten <=0.1cm] (0,0) -- (1,0); \node at (0.5,0.125) {$\scriptsize\rho_3, \rho_2, \rho_1$};
		\draw[dashed, thick, ->,shorten >=0.1cm, shorten <=0.1cm] (1,0) -- (2,0); \node at (1.5,0.125) {$\scriptsize\rho_2, \rho_1$};
		\draw[thick, <-,shorten >=0.1cm, shorten <=0.1cm] (2,0) -- (3,0); \node at (2.5,0.125) {$\scriptsize\rho_{3}$};
		\end{tikzpicture}
	\]
Since $0$ is not a strict L-space slope for $\lp_2$, this loop must contain stable chains in dual notation. In particular it contains an $a^*_\ell$ segment; we label the corresponding generators as follows:
\[
\begin{tikzpicture}[>=latex, scale = 2] 
		\node at (0,0) {$\circ$}; 
		\node at (1,0) {$\bullet$}; 
		\node at (2,0) {$\bullet$};
		\node at (3,0) {$\circ$};
		
		\node[below] at (0,0) {$w_1$};
		\node[below] at (1,0) {$z_1$};
		\node[below] at (2,0) {$z_\ell$};
		\node[below] at (3,0) {$w_2$};
		
		\draw[thick, <-,shorten >=0.1cm, shorten <=0.1cm] (0,0) -- (1,0); \node at (0.5,0.125) {$\scriptsize\rho_{3}$};
		\draw[dashed, thick, ->,shorten >=0.1cm, shorten <=0.1cm] (1,0) -- (2,0); \node at (1.5,0.125) {$\scriptsize\rho_{12}$};
		\draw[thick, ->,shorten >=0.1cm, shorten <=0.1cm] (2,0) -- (3,0); \node at (2.5,0.125) {$\scriptsize\rho_{123}$};
	\end{tikzpicture}	\]

Consider the generator $y_k$ in $\lp_1^A$, which has no outgoing $\Ainfty$ operations. To determine the possible incoming operations, note that the segment $b_k$ must be followed by either a type $c$ segment or a type $a$ segment. This is because we assumed that $\lp_1$ contains no $\bar{e}$ or $\bar{d}_j$ segments, and $\lp_1$ can not contain both $b_k$ and $\bar{a}_j$ segments since it is simple. In either case, $x_2$ has an outgoing $\rho_3$ labeled arrow in $\lp_1^A$ and no incoming arrows. It follows that $y_k$ has only the following incoming $\Ainfty$ operations:
\begin{eqnarray*}
m_2(x_2, \rho_3) &=& y_k, \\
m_{2+i}(y_{k-i}, \rho_2, \rho_{12}, \ldots, \rho_{12}, \rho_2) &=& y_k \quad \text{ for } 1\le i < k, \\
m_{3+k}(x_1, \rho_3, \rho_2, \rho_{12}, \ldots, \rho_{12}, \rho_2) &=& y_k,
\end{eqnarray*}
and possibly more operations whose inputs end with $\rho_2, \rho_{12}, \ldots, \rho_{12}, \rho_1$.

Consider the generator $w_2$ in $\lp_2$. Note that $a^*_\ell$ must be followed by either a $b^*$ segment or a $\bar{c}^*$ segment. It follows that the only incoming sequences of arrows consist of a $\rho_{123}$ or $\rho_1$ arrow preceeded by some number of $\rho_{12}$ arrows. Comparing this with the $\Ainfty$ operations terminating at $y_k$ described above, it is clear that the generator $y_k\otimes w_2$ in $\lp_1^A\boxtimes\lp_2$ has no incoming differentials. It also has no outgoing differentials, since $y_k$ has no outgoing $\Ainfty$ operations. Thus $y_k\otimes w_2$ survives in homology.

Similarly, consider the generator $x_1$ in $\lp_1^A$ and $z_1$ in $\lp_2$. The generator $z_1$ has no incoming sequences of arrows, and the only outgoing seqeuences consist of a single $\rho_3$ arrow or begin with some number of $\rho_{12}$ arrows followed by a $\rho_{123}$. Here we use the fact that the segment $a^*_\ell$ in the simple loop $\lp_2$ can only be preceded by a $b^*$ segment or a $c^*$ segment, so the outgoing $\rho_3$ arrow can not be followed by another outgoing arrow. In $\lp_1^A$, the segment $b_k$ must be preceded by an $a$ segment or a $\bar{c}$ segment. It follows that for any nontrivial operation $m_{n+1}(x_1, \rho_{I_1}, \ldots, \rho_{I_n})$, we have 
\begin{itemize}
\item $\rho_{I_1} \neq \rho_{123}$;
\item if $\rho_{I_1} = \rho_{12}$, then $\rho_{I_i} = \rho_{12}$ for $1 \le i \le n-1$ and $\rho_{I_n} = \rho_1$; 
\item if $\rho_{I_1} = \rho_3$, then $n>1$.
\end{itemize}
We see that no $\Ainfty$ operations starting at $x_1$ match with the $\delta^n$ maps starting at $z_1$. Thus the generator $x_1\otimes z_1$ in $\lp_1^A\boxtimes\lp_2$ has no incoming or outgoing differentials and survives in homology.

Finally, we observe that $gr(z_1) = gr(w_2)$ since $z_1$ and $w_2$ are connected by only $\rho_{12}$ and $\rho_{123}$ arrows, but $gr(y_k) = gr(y_1) = -gr(x_1)$, since arrows labelled $(\rho_2, \rho_1)$ preserve grading but arrows labelled $(\rho_3, \rho_2, \rho_1)$ flip grading. It follows that $y_k\otimes w_2$ and $x_1\otimes z_1$ have opposite $\Z/2\Z$ grading. Since both survive in homology, $\lp_1^A\boxtimes\lp_2$ is not an L-space complex.
\end{proof}

To prove the converse we will use the fact that $\lp_1$ and $\lp_2$ are L-space aligned to put strong restrictions on the segments that may appear in the loops $\lp_1$ and $\lp_2$. Once again, we can apply twists to $\lp_1$ and $\lp_2$ to obtain $\lp'_1$ and $\lp'_2$ with convenient parametrizations, such that $\lp'_1$ and $\lp'_2$ are still L-space aligned and $\lp_1^A\boxtimes\lp_2$ is homotopy equivalent to $(\lp'_1)^{A}\boxtimes\lp'_2$. The set of strict L-space slopes for $\lp_2$ is some nonempty open interval in $\hat\Q$. This interval is either all of $\hat\Q$ except the rational longitude or it has distinct rational endpoints; see Proposition \ref{prp:rational endpoints}. In the latter case, we can reparametrize so that for $\lp'_2$ these boundaries have slope $0$ and $\frac{p}{q}$ for some $1 < \frac{p}{q} \le \infty$. To see this, take $p>0$ and choose $n$ so that $0\le q+np<p$ and $1<\frac{p}{q+np}\le\infty$; we can replace $\frac{p}{q}$ with $\frac{p}{q+np}$ by applying dual twists, in particular, leaving the slope 0 fixed. Now the set of strict L-space  slopes for $\lp'_2$ is exactly $(0, \frac{p}{q})$. The fact that $\lp'_1$ and $\lp'_2$ are L-space aligned then implies that the set of strict L-space slopes for $\lp'_1$ contains $[-\infty, \frac{q}{p}]$. If the set of strict L-space slopes for $\lp_2$ is all of $\hat\Q$ except the rational longitude, then we can chose a parametrization such that the rational longitude of $\lp'_2$ is 0 and such that the set of strict L-space slopes for $\lp'_1$ contains $[-\infty, 0]$.

\begin{lem}
\label{lem:loop_restrictions_L1}
If $\frac{q}{p} \in [0, 1)$ and $\lp$ is a simple loop for which the interval of strict L-space slopes contains $[-\infty, \frac{q}{p}]$ then $\lp$ consists only of  segments $c^*_k$ with $0\le k \le  \lceil \frac{p}{q} \rceil$.
\end{lem}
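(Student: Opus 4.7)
The plan is to prove the lemma in two steps: first, restrict $\lp$ to the form $(c^*_{k_1}\,\cdots\,c^*_{k_n})$ with all $k_i \geq 0$ in dual notation; then bound the $k_i$ by $\lceil p/q \rceil$ using the rational longitude.

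For the first step, note that $\infty, 0 \in [-\infty, q/p] \subseteq \mathcal{L}^\circ_\lp$. Proposition \ref{prop:strict_Lspace_slopes}(1) together with simplicity (Propositions \ref{prop:simple_loops_std} and \ref{prop:simple_loops_dual}) forces $\lp$ to consist only of unstable chains in standard notation. The puzzle-piece constraint $(\star)$ then implies that $\lp$ is a cyclic word in a single type of unstable chain, and up to reversing the loop's orientation we may take this type to be $c_k$. Proposition \ref{prop:strict_Lspace_slopes}(2) applied at the slope $0$ similarly forces only unstable chains in dual notation. Inspecting the standard-to-dual conversion rules (D1)/(D2): if the subscripts of the $c$-letters have mixed signs, the pair types $c\bar{d}$ and $\bar{d}c$ yield the stable letters $\bar{b}^*$ and $\bar{a}^*$ in dual notation, contradicting the absence of dual stable chains. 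Hence all $k_i$ have a common sign, and a final orientation choice makes the dual form $(c^*_{k_1}\,\cdots\,c^*_{k_n})$ with $k_i \geq 0$.

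For the second step, set $m := \lceil p/q \rceil$ and suppose for contradiction that some $k_i > m$. Since $\tfrac{1}{m} \leq \tfrac{q}{p}$, the slope $\tfrac{1}{m}$ lies in the strict L-space interval, and the continued fraction identity $\tfrac{1}{m} = [0, m]$ shows that $\tfrac{1}{m}$-filling of $\lp$ corresponds to $\infty$-filling of $\du^m(\lp) = (c^*_{k_1 - m}\,\cdots\,c^*_{k_n - m})$, so $\infty$ is a strict L-space slope for $\du^m(\lp)$. If some other $k_j < m$, the subscripts of $\du^m(\lp)$ take both signs, and the dual-to-standard conversions $c^* \bar{d}^* \to \bar{b}$ and $\bar{d}^* c^* \to \bar{a}$ introduce stable chains in its standard form, contradicting the strict L-space condition at $\infty$. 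If instead all $k_i \geq m$, then the standard form of $\lp$ obtained in the first step yields (via Proposition \ref{prop:rational_longitude}) $\chi_\bullet = \sum k_i$ and $\chi_\circ = -n$, so the rational longitude equals $\tfrac{n}{\sum k_i} \leq \tfrac{1}{m} \leq \tfrac{q}{p}$, placing it inside $[-\infty, q/p]$; but the rational longitude has vanishing Euler characteristic and so cannot be an L-space slope, a contradiction. Therefore $k_i \leq m$ for all $i$.

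The main obstacle is bookkeeping in the first step: the convention $c_{-j} = \bar{d}_j$ and the resulting definition of ``type'' via positive-subscript representatives must be handled with care when invoking the conversion rules, and the orientation choices must be tracked to arrive at the dual form $(c^*_{k_1}\,\cdots\,c^*_{k_n})$ rather than $d^*$-letters or with negative subscripts. Edge cases, particularly $q = 0$ (where $\lceil p/q \rceil$ is interpreted as $\infty$, making the bound vacuous) and the situation where all $k_i = m$ (so that $\du^m(\lp) = (c^*_0)^n$ is sporadic and $\infty$-filling yields the zero complex), must also be checked to fit the framework.
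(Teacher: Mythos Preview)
Your overall strategy matches the paper's closely, and your step~2 is correct (the paper organizes the same two ingredients in the opposite order: it first uses the rational longitude to guarantee some $k_j<m$, then uses the strict L-space condition at $\tfrac{1}{m}$ to force all $k_i\le m$; your case split achieves the same thing). The issue is in step~1.

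After establishing that $\lp$ consists only of unstable chains with a common sign on the subscripts, you claim that ``a final orientation choice makes the dual form $(c^*_{k_1}\cdots c^*_{k_n})$ with $k_i\ge 0$.'' This is not just bookkeeping. Up to orientation there are two genuinely distinct possibilities, and reversal does \emph{not} exchange them: either the dual form is $(c^*_{k_1}\cdots c^*_{k_n})$ with all $k_i\ge 0$, or it is $(d^*_{k_1}\cdots d^*_{k_n})$ with all $k_i\ge 0$. For instance, the loop with standard form $(c_2c_1)$ has dual form $(c^*_1c^*_1c^*_0)$ after one reversal, whereas $(d_1d_2)$ has dual form $(d^*_1d^*_0d^*_1)$; reversing the latter yields $(c^*_{-1}c^*_0c^*_{-1})$, not $c^*$'s with nonnegative subscripts. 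These are different loops, and both satisfy your hypotheses up through the common-sign step.

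What distinguishes them is the rational longitude. In the $d^*$ case one checks that $\chi_\circ$ and $\chi_\bullet$ have the same sign, so the rational longitude $-\chi_\circ/\chi_\bullet$ is nonpositive and hence lies in $[-\infty,q/p]$, contradicting that this interval consists of strict L-space slopes. This is precisely the argument in the third sentence of the paper's proof, and it is the same tool you already invoke in step~2; you need to apply it once more in step~1 to rule out the $d^*$ case. With that addition, your proof is complete.
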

\begin{proof}
Since $0$ is a strict L-space slope, $\lp$ can be written with only dual unstable chains. Up to reading the loop in reverse order, we can assume the unstable chains are $c^*_k$ segments. Moreover, the fact that $\infty$ is a strict L-space slope implies that $\lp$ can not contain $c_k^*$ segments with both positive and negative subscripts. Since the rational longitude is given by $-\chi_\circ(\lp)/\chi_\bullet(\lp)$ and falls in the interval $(\frac{q}{p}, \infty)$, we must have that $\chi_\circ(\lp)$ and $\chi_\bullet(\lp)$ have opposite signs. This only happens if $\lp$ is composed of $c^*_k$ segments with $k\ge0$. Let $n=\lceil \frac{p}{q} \rceil$. Observe that $\lp$ must contain at least one $c^*_k$ with $0\le k < n$ (recall that $c_0=\bar{e}^*$), since otherwise the rational longitude is less than $\frac{1}{n}$. Finally, the fact that $\frac{1}{n}$ is a strict L-space slope implies that $\infty$ is a strict L-space slope for the loop $\du^{n}( \lp )$. Since $\lp$ contains $\bar{e}^*$ or $c^*_k$ with $k < n$, $\du^{n}( \lp )$ contains at least one $c^*_k$ with $k<0$ and therefore does not contain any $c^*_k$ with $k>0$. Therefore $\lp$ does not contain $c^*_k$ with $k > n$.
\end{proof}

\begin{lem}
\label{lem:loop_restrictions_L2}
If $\frac{p}{q} \in (1, \infty]$ and $\lp$ is a simple loop that is not solid-torus like for which the interval of strict L-space slopes contains $(0, \frac{p}{q})$ then  $\lp$ consists only of $a_k$, $b_k$, $c_k$ and $d_k$ segments (for $k>0$)  and $e$ and $\bar{c}_1$ segments. Moreover 
\begin{itemize}
\item $\lp$ contains no two $\bar{c}_1=d_{-1}$ segments separated only by $e=d_0$ segments;
\item $\lp$ contains no $c_k$ segments with $k < \lceil \frac{p}{q} \rceil-1$; and 
\item if $0$ is not a strict L-space slope for $\lp$ then there is at least one $\bar{c}_1$ segment.
\end{itemize}
\end{lem}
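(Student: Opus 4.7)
The starting point is simplicity: by Proposition \ref{prop:simple_loops_std}, after possibly reorienting $\lp$, every $a$- or $b$-type segment has positive subscript. So the only candidate segments in $\lp$ are $a_k,b_k$ ($k>0$), $c_k,d_k$ ($k\in\Z$), and $e=d_0,\bar e=c_0$. The remaining task is to rule out $\bar e$, all $\bar d_k$ ($k\ge 0$), all $\bar c_k$ ($k\ge 2$), and $c_k$ with $k<\lceil p/q\rceil-1$, and to establish the two adjacency and existence conditions.

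For the first (structural) assertion I will exploit that for every $N\ge 1$ the slope $1/N$ lies in $(0,p/q)$ and is therefore a strict L-space slope. By Proposition \ref{prop:effect_of_Tdul} this is equivalent to $\infty$ being a strict L-space slope for the simple loop $\du^N(\lp)$, and then by Proposition \ref{prop:strict_Lspace_slopes}(1) combined with Proposition \ref{prop:simple_loops_std} (which kills the $b_{\pm 1}a_{\mp 1}$ subwords), $\du^N(\lp)$ consists entirely of $d_k$ segments in standard notation. I will then read this condition back through the dualization rules (D1)--(D2) and the action of $\du^N$ on dual subscripts ($d^*_k\mapsto d^*_{k+N}$, $c^*_k\mapsto c^*_{k-N}$): if $\lp$ contained any $\bar e$, $\bar d_k$ with $k\ge 0$, or $\bar c_k$ with $k\ge 2$, then for sufficiently large $N$ the corresponding subword would contribute a $c^*$-type letter or a stable dual chain to $\du^N(\lp)$, contradicting the $d_k$-only form.

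For the second bullet (lower bound $k\ge\lceil p/q\rceil-1$ on allowed $c_k$'s), I use the integer slopes $n$ with $1\le n\le\lceil p/q\rceil-1$, each of which lies in $(0,p/q)$ and is hence a strict L-space slope. By Proposition \ref{prop:effect_of_Tstd} this is equivalent to $0$ being a strict L-space slope for $\tw^n(\lp)$, so by Proposition \ref{prop:strict_Lspace_slopes}(2) and Proposition \ref{prop:simple_loops_dual} the dual form of $\tw^n(\lp)$ consists only of $d^*_k$ segments. Since $\tw^n$ acts by $c_k\mapsto c_{k-n}$ and fixes $a$- and $b$-type letters, a $c_k$ in $\lp$ with $k<\lceil p/q\rceil-1$ would, when $n=\lceil p/q\rceil-1$, map to $c_j$ with $j\le -1$, producing a forbidden $c^*$-letter in the dual of $\tw^n(\lp)$. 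The first bullet (no two $\bar c_1$'s separated only by $e$'s) follows by essentially the same reparametrization argument: a subword $\bar c_1\, e^j\, \bar c_1$ dualizes, after the appropriate $\du^N$, to a $c^*$-chain incompatible with the $d^*_k$-only form. Finally, for the existence assertion, suppose $0$ is not a strict L-space slope. Then by Proposition \ref{prop:strict_Lspace_slopes}(2) together with simplicity, the dual form of $\lp$ must contain a stable $a^*$- or $b^*$-chain; by rule (D1) such a dual letter arises only from a pair of consecutive standard $\mathbf{u}$-letters with mixed-sign subscripts, and since we have already shown every permitted $\mathbf{u}$-letter has positive subscript except $\bar c_1=d_{-1}$, the mixed-sign pair must involve a $\bar c_1$.

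The main obstacle will be the bookkeeping in the second and third paragraphs: for each candidate forbidden subword of $\lp$ one must carefully track its image under $\du^N$ or $\tw^n$, convert back and forth between standard and dual form using (D1)--(D2), and identify the precise $c^*$ or stable-chain obstruction in the reparametrized loop. This is in the same spirit as the casework in Lemmas \ref{lem:1-2-closure}--\ref{lem:hard-case-4}, but localized by the simplicity hypothesis (which eliminates the many $\bar a, \bar b$ subcases) and adapted to the strict rather than merely L-space condition.
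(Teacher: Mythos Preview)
Your overall strategy---using that various specific slopes in $(0,\tfrac{p}{q})$ are strict L-space slopes to constrain the standard/dual form of $\lp$---is the same as the paper's. However, there is a genuine gap: you never invoke the rational longitude, and without it your argument for the second bullet (and, less visibly, the first) does not close. For the second bullet you argue that a $c_k$ with $k<\lceil \tfrac{p}{q}\rceil-1$ would, after applying $\tw^n$ with $n=\lceil \tfrac{p}{q}\rceil-1$, yield a negative-subscript letter, contradicting that $\tw^n(\lp)$ has ``only $d^*_k$'s'' in dual. But Proposition~\ref{prop:strict_Lspace_slopes}(2) together with simplicity only says $\tw^n(\lp)$ has all standard subscripts of \emph{one} sign; it does not tell you which. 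If $\lp$ consisted entirely of $c_k$ segments with $1\le k\le \lceil \tfrac{p}{q}\rceil-1$, then $\tw^n(\lp)$ would have all subscripts $\le 0$, perfectly consistent with $0$ being a strict L-space slope. The paper rules this configuration out by observing that its rational longitude lies in $(0,\lceil \tfrac{p}{q}\rceil-1]\subset(0,\tfrac{p}{q})$, which is impossible since the rational longitude is never an L-space slope. You have no replacement for this step.

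The same issue recurs for the first bullet: a subword $\bar c_1 e^j \bar c_1$ does give a $c^*_{j+1}$ in the dual of $\lp$, but to get a contradiction from $\du^N(\lp)$ you also need some dual letter whose subscript moves the \emph{other} way under $\du^N$ (a $d^*_k$ or $e^*$). If $\lp$ were built only from $\bar c_1$'s and $e$'s this fails, and again it is a rational-longitude computation that excludes that case. (As a secondary remark, your derivation of the structural form via all slopes $1/N$ is workable but more laborious than necessary: the paper uses just the single slope $1$, observing that $\tw(\lp)$ must have all standard subscripts $\ge 0$ and reading off the allowed segments of $\lp$ directly.)
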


\begin{proof}
Since 1 is a strict L-space slope, $\tw(\lp)$ can be written with no $\bar{a}, \bar{b}, \bar{c},$ or $\bar{d}$ segments. It follows that $\lp$ can be written with no $\bar{a}, \bar{b}, \bar{d}$, or $\bar{e}$ segments and no $\bar{c}_k$ segments with $k > 1$.

Let $n=\lceil \frac{p}{q} \rceil$. Since $n-1 < \frac{p}{q}$ is a strict L-space slope, $\tw^{n-1}(\lp)$ does not contain both barred and unbarred segments (ignoring $e$'s). Suppose $\lp$ contains $c_k$ with $k < n-1$. Then  
$\tw^{n-1}(\lp)$ contains at least one $\bar{d}$ segment and can not contain any unbarred segments. Any $a$, $b$, or $c_k$ segments with $k > n-1$ in $\lp$ produces an unbarred segment in $\tw^{n-1}(\lp)$, so we must have that $\lp$ consists only of $c_k$ segments with $k \le n-1$. However, in this case it is easy to see that $\chi_\bullet(\lp)$ and $\chi_\circ(\lp)$ have opposite signs and $|\chi_\circ(\lp)| \le (n-1) |\chi_\bullet(\lp)|$, which contradicts the fact that the rational longitude $-\chi_\circ/\chi_\bullet$ does not fall in the interval $(0, p/q)$. Thus $\lp$ does not contain $c_k$ with $k < n-1$.

$\lp$ must contain an $a_k, b_k, c_k$, or $d_k$ segment with $k>1$ or two segments of type $a_1, b_1, c_1,$ or $d_1$ separated only by $e$'s. Otherwise, $\lp$ would consist of only $\bar{c}_1$, $e$, and $d_1$ segments with at least one $\bar{c}_1$ for each $d_1$; in this case, $\chi_\bullet(\lp)$ and $\chi_\circ(\lp)$ have opposite signs and $|\chi_\circ(\lp)| \le |\chi_\bullet(\lp)|$, so the rational longitude falls in $(0, 1]$. It follows that in dual notation $\lp$ contains $\bar{c}^*_k, e^*$, or $d^*_k$, and thus $\du^m(\lp)$ contains a $d^*$ segment for sufficiently large $m$. Since $1/m$ is a strict L-space slope for sufficiently large $m$ we must have that $\du^m(\lp)$ does not contain a $\bar{d}^*$ segment. Therefore $\lp$ does not contain any $c^*_k$ segments, and thus in standard notation it does not have two $\bar{c}_1$ segments separated only by $e$'s.

Finally, if $0$ is not a strict L-space slope for $\lp$ then $\lp$ must contain both barred and unbarred segments in standard notation; it follows that $\lp$ must contain at least one $\bar{c}_1$.
\end{proof}

The two previous Lemmas only depend on $\lceil \frac{p}{q} \rceil$. If $\frac{p}{q}$ is not an integer, it is possible to give further restrictions on subwords that can appear in the loop. We will prove one such restriction using two properties for a pair of loops. For an integer $r \ge 0$, we will say that two loops $\lp_1$ and $\lp_2$ satisfy \proplambda\ (or, \proplambda\ for $r \ge 0$) if:
\begin{itemize}
\item[$(\boldsymbol\lambda\boldsymbol 1)$] 
$\lp_1$ consists only of $c^*_k$ with $0\le k \le n$, for some $n$, with at least one $c^*_n$;
\item[$(\boldsymbol\lambda\boldsymbol 2)$] $\lp_2$ consists only of $a_k$, $b_k$ ($k>0$), $d_k$ ($k\ge-1$), and $c_l$ ($\l \ge m$) segments for some $m>0$, with at least one $c_m$ and at least one $\bar{c}_1=d_{-1}$;
\item[$(\boldsymbol\lambda\boldsymbol 3)$] There is an integer $N>0$ and subscripts $k_i \in \{N, N-1\}$ for $1\le i\le r$ such that $\lp_1$ contains the subword $c^*_N c^*_{k_1} \ldots c^*_{k_r} c^*_N$ and $\lp_2$ contains the subword $c_{N-1} c_{k_1} \ldots c_{k_r} c_{N-1}$.
\end{itemize}
The integer $r\ge0$ appearing in condition $(\boldsymbol\lambda\boldsymbol 3)$ is the complexity of the pair $(\lp_1,\lp_2)$ satisfying \proplambda; when we appeal to pairs satisfying \proplambda\ the aim will be to decrease this complexity. Similarly, two loops $\lp_1$ and $\lp_2$ satisfy property \proplambdaDUAL\ (or, \proplambdaDUAL\ for $r \ge 0$) if:
\begin{itemize}
\item[$(\boldsymbol\lambda^{\!*}\boldsymbol 1)$]  
$\lp_1$ consists only of $\bar{d}_k$ with $0\le k \le n$, for some $n$, with at least one $\bar{d}_n$;
\item[$(\boldsymbol\lambda^{\!*}\boldsymbol 2)$]  
  $\lp_2$ consists only of $\bar{a}^*_k$, $\bar{b}^*_k$ (for $k>0$), $\bar{c}^*_k$ (for $k>-1$), and $\bar{d}^*_l$ (for $l\ge m$) segments for some $m$, with at least one $\bar{d}^*_m$ and at least one $d^*_1=\bar{c}^*_{-1}$;
\item[$(\boldsymbol\lambda^{\!*}\boldsymbol 3)$]  
 There is an integer $N>0$ and subscripts $k_i\in\{N, N-1\}$ for $1\le k\le r$ such that $\lp_1$ contains the subword $\bar{d}_N \bar{d}_{k_1} \ldots \bar{d}_{k_r} \bar{d}_N$ and $\lp_2$ contains the subword $\bar{d}^*_{N-1} \bar{d}^*_{k_1} \ldots \bar{d}^*_{k_r} \bar{d}^*_{N-1}$.
 \end{itemize}

\begin{lem}
\label{lem:proplambda}
If two simple loops $\lp_1$ and $\lp_2$ satisfy either \proplambda\ or \proplambdaDUAL\ then $\lp_1$ and $\lp_2$ are not L-space aligned.
\end{lem}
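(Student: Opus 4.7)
The plan is to prove the lemma by induction on the complexity parameter $r \ge 0$ appearing in condition $(\boldsymbol\lambda 3)$, treating \proplambda\ and \proplambdaDUAL\ in parallel (the two are exchanged by the standard/dual dualizing operation of Section~\ref{sec:notations_for_loops}, and L-space alignment of a pair is symmetric under that exchange of roles). Throughout, the key tool is that L-space alignment of $(\lp_1,\lp_2)$ is preserved under the joint reparametrizations $(\lp_1,\lp_2) \mapsto (\tw^{\pm 1}(\lp_1), \du^{\pm 1}(\lp_2))$ and $(\lp_1,\lp_2) \mapsto (\du^{\pm 1}(\lp_1), \tw^{\pm 1}(\lp_2))$, a fact already exploited in the setup for Proposition~\ref{prop:gluing_loops}; these let me freely normalize the pair before checking non-alignment.

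For the base case $r = 0$, after applying $\du^{N}$ to $\lp_1$ paired with $\tw^{-N}$ to $\lp_2$ (so that the matched subword in $\lp_1$ becomes $c^*_0 c^*_0 = \bar e^* \bar e^*$ and the matched subword in $\lp_2$ shifts accordingly), I would exhibit an explicit slope at which both strictness conditions fail. Using Proposition~\ref{prop:strict_Lspace_slopes}, the adjacent $\bar e^* \bar e^*$ in $\lp_1'$ forces $\infty$ to fail the strict L-space criterion, while in $\lp_2'$ the shifted block, combined with the globally enforced $c_m$ and $\bar c_1$ segments required by $(\boldsymbol\lambda 2)$, is designed to produce an obstructing configuration (adjacent $a_1 b_{-1}$ or $a_{-1} b_1$, or a stable chain of absolute subscript larger than one) that prevents $0$ from being a strict L-space slope for $\lp_2'$.

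For the inductive step, with $r \ge 1$, I would apply $\du$ to $\lp_1$ together with $\tw$ to $\lp_2$ (or the mirror pair, chosen by the sign of the matched subscripts) and show the resulting pair still satisfies \proplambda\ (or has become a \proplambdaDUAL\ instance) with one of two favourable outcomes: either the matched subword shortens by at least one letter, strictly decreasing $r$, or the pattern rearranges into a \proplambdaDUAL\ pair of complexity at most $r-1$. In either case the induction hypothesis closes the argument; the parallel statement for \proplambdaDUAL\ is handled by the same operations written in the dual alphabet.

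The main obstacle is the bookkeeping in the inductive step: one must preserve the \emph{global} conditions $(\boldsymbol\lambda 1)$ and $(\boldsymbol\lambda 2)$, which constrain every segment of each loop, while simultaneously shrinking the \emph{local} matched subword of $(\boldsymbol\lambda 3)$. I expect the most delicate configuration to be when the subscripts $k_i$ alternate between $N$ and $N-1$, since a single twist can naively reproduce the same matched pattern up to a global index shift; in that event, I anticipate needing to pass to dual notation to convert the given \proplambda\ pair to a \proplambdaDUAL\ pair with strictly smaller complexity, invoking the segment conversion rules (D1)--(D2) of Section~\ref{sec:notations_for_loops} to verify that the intermediate subscripts really do collapse as required.
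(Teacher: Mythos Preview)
Your high-level strategy matches the paper's: induct on $r$, use the joint reparametrizations to normalize, and pass between \proplambda\ and \proplambdaDUAL. However, the concrete mechanisms you propose do not work as stated.

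First, a small but consequential slip: the alignment-preserving move is $(\du^{k}(\lp_1),\tw^{k}(\lp_2))$ with the \emph{same} exponent, not $(\du^{N}(\lp_1),\tw^{-N}(\lp_2))$. With the correct sign, the matched block $c_{N-1}\cdots c_{N-1}$ in $\lp_2$ becomes $\bar d_1\cdots\bar d_1$, which is what the subsequent dualizing step needs.

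More substantively, your base-case obstruction is wrong. After $\du^{N}$, the loop $\lp_1'$ still consists entirely of $c^*_k$ segments; having adjacent $c^*_0c^*_0=\bar e^*\bar e^*$ does not by itself prevent $\infty$ from being a strict L-space slope. By the criterion for simple loops (Observation~\ref{obs:dual_stable_chains} and the discussion after Proposition~\ref{prop:simple_loops_dual}), $\infty$ fails to be strict for $\lp_1'$ only when $\lp_1'$ has dual subscripts of both signs. If $n=N$ (which is exactly what happens when $n=m+1$, the hardest case), every subscript in $\lp_1'$ is $\le 0$ and $\infty$ \emph{is} strict for $\lp_1'$, so your argument collapses there.

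What you are missing is the dichotomy the paper actually exploits. One does not reparametrize by the matched index $N$ but by $m+1$, the global minimum $c$-subscript in $\lp_2$. After this shift, $\lp_2'$ acquires both a $d_m$ and a $\bar d_1$ segment, so $\lp_2'$ has no strict L-space slopes in $[0,\infty]$. If $n>m+1$, then either $\lp_1'$ already has dual subscripts of both signs (and one finds a bad slope directly), or all its subscripts are $\ge 0$, in which case the \emph{rational longitude} of $\lp_1'$ is positive and hence lies in the forbidden interval for $\lp_2'$; either way alignment fails without induction. Only when $n=m+1$ does one need to pass to \proplambdaDUAL, and the point is that dualizing the matched blocks $\bar e^*\cdots\bar e^*$ and $\bar d_1\cdots\bar d_1$ produces a $\lambda^*$-matched pair whose complexity $r'=s-2$ is strictly less than $r$ (here $s$ counts maximal runs of $0$'s among the $k'_i$). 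Your inductive step gestures at this reduction but neither identifies the correct shift amount nor explains why the complexity drops; a single application of $\du/\tw$ will not do it.
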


\begin{figure}[ht!]
\begin{tikzpicture}
\draw [fill=lightgray,lightgray] (1.85,-0.25) rectangle (3.25,1.25);
\draw [fill=lightgray,lightgray] (6.85,-0.25) rectangle (8.25,1.25);
\node at (0,1) {$\lp_1$}; \node at (2,1) {$\phantom{a_1}\bar e^* c_3^* c_2^*c_2^*c_1^*c_2^*$};
\node at (0,0) {$\lp_2$}; \node at (2,0) {$a_1\bar c_1b_1 c_1c_2c_1c_1$};
\node at (9,1) {$\lp_1'$}; \node at (7,1) {$\phantom{a_1} \bar d_2^* c_1^* \bar e^*\bar e^*\bar d_1^*\bar e^*$};
\node at (9,0) {$\lp_2'$}; \node at (7,0) {$a_1d_1b_1 \bar d_1\bar e\bar d_1\bar d_1$};
\draw [thick, -> ,>=latex] (3.75,0) to (5.25,0);\draw [thick, -> ,>=latex] (3.75,1) to (5.25,1);
\node at (4.5, 0.35) {$\tw^2$};\node at (4.5, 1.35) {$\du^2$};
\end{tikzpicture}
\caption{Loops $\lp_1$ and $\lp_2$ satisfying \proplambda\ for $n=3$ and $m=1$. The relevant subwords, with  $N=r=2$, have been highlighted. Note that this illustrates the key step in the proof of Lemma \ref{lem:proplambda} when $n>m+1$: One checks that $0$ is not a strict L-space slope for $\lp_1'$ (this loop contains a $\bar d^*$ segment) while the set of strict L-space slopes for $\lp_2'$ does not intersect $[0,\infty]$ thus $\lp_1'$ and $\lp_2'$ (and hence $\lp_1$ and $\lp_2$) are not L-space aligned.} 
\label{fig:property lambda example}
\end{figure}

\begin{proof}
Suppose first that \proplambda\ is satisfied, where $n$ is the maximum subscript for $c^*_k$ segments in $\lp_1$ and $m$ is the minimum subscript of $c_k$ segments in $\lp_2$. Since $\lp_1$ contains $c^*_N$ and $\lp_2$ contains $c_{N-1}$ for some $N$, we have that $n \ge m+1$. Consider the reparametrized loops $\lp'_1 = \du^{m+1}(\lp_1)$ and $\lp'_2 = \tw^{m+1}(\lp_2)$; $\lp'_1$ and $\lp'_2$ are L-space aligned if and only if $\lp_1$ and $\lp_2$ are. We  now have:
\begin{itemize}
\item 
$\lp'_1$ consists only of $c^*_k$ with $-m\le k \le n-m-1$ with at least one $c_{n-m-1}^*$ (where, as usual, $c_0^*=\bar{e}^*$);
\item 
$\lp'_2$ consists only of $a_k$, $b_k$ (for $k>0$), $c_k$ (for $k\ge-1$), and $d_l$ segments with $l \ge m$, with at least one $c_{-1}=\bar{d}_1$ and at least one $d_m$.
\end{itemize}
Note that for $\lp'_2$, $0$ is not a strict L-space slope because the loop contains at least one barred segment, $\bar{d}_1$, and at least one unbarred segment, $d_m$. Note also that $\infty$ is not a strict L-space slope, since $\lp'_2$ contains unstable chains with both orientations. The slope $-1$ is a strict L-space slope, since $\tw^{-1}(\lp_2')$ has no barred segments (ignoring $e$'s). It follows that $\lp'_2$ has no strict L-space slopes in $[0, \infty]$.

First consider the case that $n > m+1$. In this case, $\lp'_1$ contains at least one $c_k^*$ segment with $k>0$. If $\lp'_1$ also contains a $c^*_k$ segment with $k<0$, then $0$ is not a strict L-space slope for $\lp'_1$. Since $\infty$ is not a strict L-space slope for $\lp'_2$,  $\lp'_1$ and $\lp'_2$ are not L-space aligned. If $\lp'_1$ does not contain a $c_k^*$ segment with $k<0$, then it consists only of $c^*_k$ segments with $k\ge0$. It follows that the rational longitude $-\chi_\circ(\lp'_1) / \chi_\bullet(\lp'_1)$ is positive. Since the rational longitude is not a strict L-space slope, and all positive slopes for $\lp'_2$ are not strict L-space slopes, $\lp'_1$ and $\lp'_2$ are not L-space aligned.

In the case that $n = m+1$, we have $N = n$ in the statement of \proplambda\ (with complexity $r$) for $\lp_1$ and $\lp_2$. The shifted loops $\lp'_1$ and $\lp'_2$ satisfy an additional condition:
\begin{itemize}
\item 
There are subscripts $k'_i \in \{0, 1\}$ for $1\le i \le r$ such that $\lp'_1$ contains the subword $\bar{e}^* \bar{d}^*_{k'_1} \ldots \bar{d}^*_{k'_r} \bar{e}^*$ and $\lp'_2$ contains the subword $\bar{d}_1 \bar{d}_{k'_1} \ldots \bar{d}_{k'_r} \bar{d}_1$, following (as usual) the convention that $\bar{d}_0 = \bar{e}$ and $\bar{d}^*_0 = \bar{e}^*$.
\end{itemize}
The next step is to write $\lp'_1$ in standard notation and $\lp'_2$ in dual notation. $\lp'_1$ consits only of $\bar{e}^*$ and $\bar{d}^*$ segments, so in standard notation it consits only of $\bar{e}$ and $\bar{d}$ segmetents. There is some maximum subscript on the $\bar{d}$ segments; call it $n'$. Note that this says that $\lp'_1$ and $\lp'_2$ satisfy condition $(\boldsymbol\lambda^{\! *}\boldsymbol 1)$ for \proplambdaDUAL. Dualizing $\lp'_2$ is slightly harder. Types of segments in dual notation are determined by adjacent pairs of standard segments (ignoring $e$'s and $\bar{e}$'s); see Section \ref{sec:notations_for_loops}. The possible pairs of segments in $\lp'_2$ are $ab, ad, dd, db, ba, bc, b\bar{d}, ca, cc, c\bar{d}, \bar{d}a, \bar{d}c$, and $\bar{d}\bar{d}$. These correspond to dual segments $d^*, d^*, d^*, d^*, \bar{c}^*, \bar{c}^*, \bar{b}^*, \bar{c}^*, \bar{c}^*, \bar{b}^*, \bar{a}^*, \bar{a}^*$, and $\bar{d}^*$, respectively. Since $\lp'_2$ has no $e$ segments, the subscripts on the $d^*$ segments are at most 1. Since the only bar segments in $\lp'_2$ in standard notation have subscript 1, there are no $\bar{e}^*$ segments in $\lp'_2$. Thus $\lp'_2$ consists only of $\bar{a}^*_k, \bar{b}^*_k, \bar{c}^*_k, \bar{d}^*_k$ (for $k>0$), $e^*$, and $d^*_1$ segments. There is some minimum subscript on the $\bar{d}$ segments; call it $m'$. Moreover, since $\lp'_2$ contains at least one $d_k$ segment, it also contains at least one $d^*_1$ segment. Note that this says that $\lp'_1$ and $\lp'_2$ satisfy condition $(\boldsymbol\lambda^{\! *}\boldsymbol 2)$ of \proplambdaDUAL.

If $n' > m'+1$, then we proceed in a similar fashion to the $n>m+1$ case for \proplambda. We replace $\lp'_1$ with $\lp''_1 = \tw^{-m'-1}(\lp'_1)$ and $\lp'_2$ with $\lp''_2 = \du^{-m'-1}(\lp'_2)$. We can observe that $\lp''_2$ contains at least one $c^*_1$ segment and at least one $\bar{c}_{m'}$ segment; thus $0$ and $\infty$ are not strict L-space slopes for $\lp''_2$, and in fact no slope in $[-\infty, 0]$ is a strict L-space slope. We can also observe that $\lp''_1$ consists of $c$, $\bar{e}$, and $\bar{d}$ segements with at least one $\bar{d}$. Thus either $0$ is not a strict L-space slope or the rational longitude is negative. In either case, $\lp''_1$ and $\lp''_2$ are not L-space aligned.

Now assume that $n' = m'+1$. Consider the sequence $k'_1, \ldots, k'_r$. This sequence consists of some number of (possibly empty) strings of 0's each separated by a single 1; let $l_1, \ldots, l_s$ be the sequence of the lengths of these strings of 0's. Note that $s$ is at most $r+1$. Since $\lp'_2$ contains the word $\bar{d}_1 \bar{d}_{k'_1} \ldots \bar{d}_{k'_r} \bar{d}_1$, it follows that it contains the dual word $\bar{d}^*_{l_1} \bar{d}^*_{l_2} \ldots \bar{d}^*_{l_{s-1}} \bar{d}^*_{l_s}$. Similarly, $\lp'_1$ contains the dual word $\bar{e}^* \bar{d}^*_{k'_1} \ldots \bar{d}^*_{k'_r} \bar{e}^*$, which must be followed and proceeded by more $\bar{d}^*$ segments, possibly with additional $\bar{e}^*$ segments in between. It follows that $\lp'_1$ contains the the word $\bar{d}_{l'_1} \bar{d}_{l_2} \ldots \bar{d}_{l_{s-1}} \bar{d}_{l'_s}$, where $l'_1 > l_1$ and $l'_s > l_s$. Since we have assumed that $n' = m'+1$, we must have that $l'_1 = l_1 + 1$ and $l'_s = l_s + 1$, and moreover that $l_1 = l_s$ and $l_i\in\{l_1,l_1 + 1\}$ for every other $i$. Let $N' = l_1 + 1$ so that $\lp'_1$ contains the subword $\bar{d}_{N'} \bar{d}_{l_2} \ldots \bar{d}_{l_{s-1}} \bar{d}_{N'}$ and $\lp'_2$ contains the subword $\bar{d}^*_{N'-1} \bar{d}^*_{l_1} \ldots \bar{d}^*_{l_{s-1}} \bar{d}^*_{N'-1}$. In other words, the property $(\boldsymbol\lambda^{\! *}\boldsymbol 3)$ of \proplambdaDUAL\ is satisfied with complexity $r' = s-2$. Note that $r' < r$, since $s \le r+1$. We also have that $r' \ge 0$, since if $s=1$ then each $k'_i$ is 0 for each $1 \le i \le r$; it would follow that $\lp'_2$ contains $\bar{d}^*_{r+1}$ and $\lp'_1$ contains a $\bar{d}$ segment with subscript at least $r+3$, and so $n' > m' + 1$.

We have shown that if $\lp_1$ and $\lp_2$ satisfy \proplambda\ (for some integer $r\ge 0$), then either they are not L-space aligned or they can be modified to $\lp'_1$ and $\lp'_2$ which satisfy \proplambdaDUAL\ (for some integer $r'\ge 0$) where $0 \le r' < r$. A similar proof shows that  if $\lp_1$ and $\lp_2$ satisfy \proplambdaDUAL\, then either they are not L-space aligned or they can be modified to $\lp'_1$ and $\lp'_2$ which satisfy \proplambda\ with $0 \le r' < r$. In both cases, the complexity is reduced, so by induction on $r$, we have that $\lp_1$ and $\lp_2$ are not L-space aligned if they satisfy either property.
\end{proof}

\parpic[r]{\begin{tikzpicture}[>=latex, thick] 
\draw [fill=lightgray,lightgray] (-0.5,-0.5) rectangle (0.2,0.2);
\draw [fill=lightgray,lightgray] (3.8,-0.5) rectangle (5.2,0.2);
\node (a) at (2,0) {$\bullet$};
\node (b) at (1,0) {$\circ$};
\node (c) at (0,0) {$\bullet$};
\node (d) at (0,1) {$\circ$};
\node (e) at (0,2) {$\bullet$};
\node (f) at (1,2) {$\circ$};
\node (g) at (2,1) {$\circ$};
\draw[->] (a) to node[below]{$\rho_3$} (b);
\draw[->] (b) to node[below]{$\rho_2$} (c);
\draw[->] (c) to node[left]{$\rho_{123}$} (d);
\draw[->] (e) to node[left]{$\rho_1$} (d);
\draw[->] (f) to node[above]{$\rho_2$} (e);
\draw[->] (g) to node[above right]{$\rho_{23}$} (f);
\draw[->] (a) to node[right]{$\rho_{123}$} (g);
\node (a) at (7,0) {$\bullet$};
\node (c) at (5,0) {$\bullet$};
\node (e) at (5,2) {$\bullet$};
\draw[->] (a) to (c);
\draw[->,very thick] (2.75,1) to (4.4,1); \node at (3.5,1.35) {$\lp^A_\bullet\boxtimes\cdot$};
\node at (-0.2,-0.2) {$y$};\node at (4.45,-0.2) {$x\otimes y$};
\end{tikzpicture}} 
Now that we have placed restrictions on loops $\lp_1$ and $\lp_2$ which are L-space aligned, we can complete the proof of Proposition \ref{prop:gluing_loops} by analyzing the box tensor product segment by segment. In the proof, we will determine when certain generators in $\lp_1^A\boxtimes \lp_2$ cancel in homology. To aid in this, we introduce the following terminology: We refer to differentials starting or ending at $x\otimes y$ in $\lp_1^A\boxtimes \lp_2$ as being ``on the right" or ``on the left" depending on whether the type D operations in $\lp_2$ that give rise to the differential in the tensor product are to the right or left of $y$, relative to the cyclic ordering on $\lp_2$. This is motivated by picturing the tensor product on a grid with rows indexed by generators of $\lp_1^A$ and columns indexed by generators of $\lp_2$, as in Figure \ref{fig:c2_tensor_c3}. In the example shown (having fixed the loop $(d_2\bar b_1\bar a_1)$, that is, reading the loop counter-clockwise) the generator $x\otimes y$ cancels on the right when paired with the standard solid torus (recall that this example may be identified with the trivial surgery on the right hand trefoil). With this terminology in place, we observe:

\begin{lem}
For any loops $\lp_1$ and $\lp_2$ and any generator $x\otimes y \in \lp_1^A\boxtimes \lp_2$, there is at most one differential into or out of $x\otimes y$ on the right, and at most one on the left.
\end{lem}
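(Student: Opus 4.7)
The plan is to exploit the valence-two structure of both loops. A differential from $x \otimes y$ on the right corresponds to a right-going term of some $\delta^k(y)$ paired with a matching $A_\infty$ operation $m_{k+1}(x, \rho_{I_1},\ldots,\rho_{I_k}) = x'$, yielding an arrow $x \otimes y \to x' \otimes y^{(k)}$, where $y^{(k)}$ is the $k$-th step along the rightward path from $y$ in $\lp_2$. It suffices to treat outgoing right-going differentials; the incoming case is symmetric (swap source and target), and the left-going cases follow by reversing the cyclic orientations of both loops.

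First, since $\lp_2$ is a valence-two graph, the vertex $y$ has a unique neighbor on the right under the cyclic ordering. If the incident right-side edge is not outgoing from $y$ in the type D sense, then $\delta^k(y)$ has no right-going summand for any $k$ and the claim is vacuous. Otherwise, the right-going path from $y$ is uniquely determined (continuing through each successor vertex so long as its right-incident edge remains outgoing), so for each $k$ there is at most one right-going term in $\delta^k(y)$, producing a single uniquely determined sequence of Reeb chord inputs $\rho_{I_1}, \rho_{I_2}, \ldots$ that must be matched on the $\lp_1^A$ side.

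Next, using the restriction $(\star)$ together with the Hedden--Levine swap $\rho_1 \leftrightarrow \rho_3$ described in Section~\ref{sub:typeA}, I would verify that the initial Reeb chord $\rho_{I_1}$ picks out a unique edge of $\lp_1$ incident to $x$. The two edges at $x$ carry labels from the disjoint type $\mathbf{I}$ and type $\mathbf{II}$ classes, and a direct check shows that after the Hedden--Levine translation the resulting $A_\infty$ operations from $x$ always have distinct initial Reeb chord inputs (for a $\bullet$-vertex, $\rho_3$ versus $\rho_1$; for a $\circ$-vertex, $\rho_1$ versus $\rho_3$). Combined with the valence-two property, this forces any matching operation to continue along a uniquely determined path in $\lp_1$.

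The main step is then to check that along this path exactly one value of $k$ produces a nonzero operation whose output generator is compatible with the idempotent of $y^{(k)}$. This amounts to a local, segment-by-segment analysis on the ten shapes of Figure~\ref{fig:puzzle_pieces}: for each type of segment emanating from $x$ in $\lp_1$ and from $y$ in $\lp_2$, the constraint $(\star)$ and idempotent matching together single out a unique value of $k$ at which the $A_\infty$ operation terminates at a valid generator. The main obstacle is bookkeeping; once these local checks are in hand, at most one right-going outgoing differential out of $x \otimes y$ follows immediately, and the remaining cases follow by symmetry as described above.
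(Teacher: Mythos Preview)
Your outline is correct and follows essentially the same approach as the paper: both use the valence-two structure of $\lp_2$ to obtain a unique right-going sequence of Reeb chords from $y$, and then use the $\mathbf{I}/\mathbf{II}$ arrow type dichotomy at $x$ to pin down a unique starting edge in $\lp_1^A$.

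The one place where the paper is more efficient is your ``main step''. Rather than a segment-by-segment case analysis on the shapes of Figure~\ref{fig:puzzle_pieces} to isolate a unique stopping index $k$, the paper makes a single structural observation: along the unique directed path in $\lp_1^A$ issuing from $x$ on the chosen side, the resulting $A_\infty$ operations have the property that no operation's input sequence is an initial segment of another's. Given the fixed input sequence coming from $\lp_2$, this prefix-freeness immediately forces at most one match, with no casework. Your idempotent-matching criterion is a consequence of this, but the prefix-free formulation is what makes the argument uniform and short.
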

\begin{proof}
This follows from examining the arrows of type ${\bf I}_\bullet$, ${\bf II}_\bullet$, ${\bf I}_\circ$, and ${\bf II}_\circ$ introduced in Section \ref{sub:loop-type} and the corresponding type A arrows. For instance, suppose $y$ has a type ${\bf I}_\bullet$ arrow on the right.  A differential on the right of $x\otimes y$ must arise from and $\Ainfty$ operation starting at $x$ with inputs starting with $\rho_1$, $\rho_{12}$, or $\rho_{123}$. Such an $\Ainfty$ operation only exists if the directed graph representing $\lp_1^A$ has an arrow from $x$ labeled by $\rho_1$ (that is, if $m_2(x, \rho_1)$ is nontrivial); there is at most one such arrow from $x$ in $\lp_1^A$, since the corresponding vertex in $\lp_1$ is adjacent to only one arrow of type ${\bf II}_\bullet$. Any sequence of arrows in $\lp_1^A$ starting at $x$ with the first labelled by $\rho_1$ gives rise to a sequence of outgoing $\Ainfty$ operations at $x$, each with the first input $\rho_1$, $\rho_{12}$, or $\rho_{123}$. However, none of these operations have inputs which are the first $n$ inputs of another operation; thus at most one operation can pair with a sequence of arrows on the right of $y$ in $\lp_2$. It follows that $x\otimes y$ has at most one differential on the right.

Similar arguments show that there is at most one differential on the right if the arrow on the right of $y$ is type ${\bf II}_\bullet$, ${\bf I}_\circ$, or ${\bf II}_\circ$, and the same is true on the left.
\end{proof}

Thus, all differentials in $\lp_1^A\boxtimes \lp_2$ appear in linear chains and $x\otimes y$ will cancel in homology if it has an odd length chain of differentials on either side; we say that $x\otimes y$ cancels on the right (resp. left) if there is an odd length chain of differentials on the right (resp. left). If $x\otimes y$ does not cancel on the right or the left, we say $x\otimes y$ does not cancel in homology, since it, or potentially a linear combination of it with other generators of the same $(\Ztwo)$-grading, survives in homology.

\begin{proof}[Proof of Proposition \ref{prop:gluing_loops}, if direction]
Suppose $\lp_1$ and $\lp_2$ are L-space aligned. Up to changing parametrization, we can assume that the interval of strict L-space slopes for $\lp_2$ contains $(0, \frac{p}{q})$ for some $1 < \frac{p}{q} \le \infty$ and does not contain 0 and the interval of strict L-space slopes for $\lp_1$ contains $[-\infty, \frac{q}{p}]$. By Lemmas \ref{lem:loop_restrictions_L1} and \ref{lem:loop_restrictions_L2}, $\lp_1$ can be written with only $e^*$ and $c^*_k$ segments and $\lp_2$ consists of $a_k$, $b_k$, $c_k$, $d_k$, $e$, and $\bar{c}_1$ segments. We will fix the $(\Ztwo)$-grading on each loop so that every generator of $\lp_1^A$ has grading 0 and all $\iota_1$ generators in $\lp_2$ have grading 0 except those coming from $\bar{c}_1$ segments. 

Consider a generator $x \otimes y$ in $\lp_1^A\boxtimes \lp_2$. $x$ belongs to a segment $s_1$ in $\lp_1^A$ and $y$ belongs to a segment $s_2$ in $\lp_2$. We will consider cases depending on the type of the segments $s_1$ and $s_2$ and in each case show that either $x\otimes y$ has grading 0 or it cancels in homology. Therefore $\lp_1^A\boxtimes \lp_2$ is an L-space complex.

First note that $\gr(x)$ is always 0 by assumption. If $s_2$ is and $e$, $d_k$, or $b_k$ segment then $y$ also has grading 0 and the grading of $x\otimes y$ is 0. If $s_1$ is $\bar{e}^*$ then $x$ is in idempotent $\iota_1$ and so $y$ must also have idempotent $\iota_1$. All $\iota_1$ generators of of $\lp_2$ have grading 0 except those in $\bar{c}_1$ segments, so $x\otimes y$ has grading 0 if $s_1$ is $\bar{e}^*$ and $s_2$ is not $\bar{c}_1$.

Suppose that $s_1$ is $c^*_k$ and $s_2$ is $a_\ell$ with generators labeled as follows:
\begin{center}
\begin{tikzpicture}[>=latex]
\node at (-1,0) {$s_1 =$};

\node (x0) at (0,0) {}; 
\node (x1) at (2,0) {$\bullet$};
\node (x2) at (4,0) {$\bullet$};
\node (x3) at (6,0) {$\bullet$};
\node (x4) at (8,0) {$\circ$};

\node[below] at (x1) {$x_1$};
\node[below] at (x2) {$x_2$};
\node[below] at (x3) {$x_k$};
\node[below] at (x4) {$x_{k+1}$};

\node[above right = -2pt] at (x1) {\tiny $+$};
\node[above right = -2pt] at (x2) {\tiny $+$};
\node[above right = -2pt] at (x3) {\tiny $+$};
\node[above right = -2pt] at (x4) {\tiny $+$};

\draw[->] (x1) to node[above]{$\rho_1$} (x0);
\draw[->] (x1) to node[above]{$\rho_3, \rho_2$} (x2);
\draw[->, dashed] (x2) to node[above]{$\rho_3, \rho_2$} (x3);
\draw[->] (x3) to node[above]{$\rho_{3}$} (x4);
\end{tikzpicture}
\begin{tikzpicture}[>=latex]
\node at (-1,0) {$s_2 =$};

\node (x0) at (0,0) {$\bullet$};
\node (x1) at (2,0) {$\circ$};
\node (x2) at (4,0) {$\circ$};
\node (x3) at (6,0) {$\circ$};
\node (x4) at (8,0) {$\bullet$};

\node[below] at (x0) {$y_0$};
\node[below] at (x1) {$y_1$};
\node[below] at (x2) {$y_2$};
\node[below] at (x3) {$y_\ell$};
\node[below] at (x4) {$y_{\ell+1}$};

\node[above right = -2pt] at (x0) {\tiny $-$};
\node[above right = -2pt] at (x1) {\tiny $+$};
\node[above right = -2pt] at (x2) {\tiny $+$};
\node[above right = -2pt] at (x3) {\tiny $+$};
\node[above right = -2pt] at (x4) {\tiny $+$};

\draw[->] (x0) to node[above]{$\rho_3$} (x1);
\draw[->] (x1) to node[above]{$\rho_{23}$} (x2);
\draw[->, dashed] (x2) to node[above]{$\rho_{23}$} (x3);
\draw[->] (x3) to node[above]{$\rho_{2}$} (x4);
\end{tikzpicture}
\end{center}
Every generator in each segment has grading 0 except for $y_0$, so $x\otimes y$ has grading 0 unless $y = y_0$ and $x = x_i$ for $i \in \{1, \ldots, k\}$. For each $i$, $\lp_1^A$ has the operation
$$m_{2+k-i}(x_i, \rho_3, \rho_{23}, \ldots, \rho_{23}) = x_{k+1}.$$
If $k-i < \ell$ it follows that in the tensor product there is a differential from $x_i \otimes y_0$ to $x_{k+1} \otimes y_{k-i+1}$. It is not difficult to check that $x_{k+1} \otimes y_{k-i+1}$ does not cancel from the right; the arrow in $\lp_2$ to the right of $y_{k-i+1}$ is outgoing and $s_1$ is followed by a $c^*$ or $\bar{e}^*$ segment so $x_{k+1}$ has only incoming $\Ainfty$ operations. Therefore in this case $x_i \otimes y_0$ cancels in homology. If instead $k-i \ge \ell$ then $\lp_1^A$ has the operation
$$m_{2+\ell}(x_i, \rho_3, \rho_{23}, \ldots, \rho_{23}, \rho_2) = x_{i + \ell}$$
and there is a differential in the box tensor product from $x_i \otimes y_0$ to $x_{i+\ell} \otimes y_{\ell+1}$. Again it is not difficult to see that $x_{i+\ell} \otimes y_{\ell+1}$ does not cancel from the right, since the arrow in $\lp_2$ to the right of $y_{\ell+1}$ must be an outgoing $\rho_1, \rho_{12}$ or $\rho_{123}$ arrow and $x_{i+\ell}$ has no outgoing $\Ainfty$ operations with first input $\rho_1, \rho_{12}$ or $\rho_{123}$. Thus $x_i \otimes y_0$ cancels in homology.

Suppose that $s_1$ is $c^*_k$, with generators labeled as above, and $s_2$ is $\bar{c}_1$ with generators labeled as follows (note that the generator $y_0$ is not actually part of the segment $s_2$):
\begin{center}
\begin{tikzpicture}[>=latex]
\node at (-1,0) {$s_2 =$};
\node (x0) at (0,0) {$\bullet$};
\node (x1) at (2,0) {$\circ$};
\node (x2) at (4,0) {$\bullet$};
\node[above right = -2pt] at (x0) {\tiny $+$};
\node[above right = -2pt] at (x1) {\tiny $-$};
\node[above right = -2pt] at (x2) {\tiny $+$};
\node[below] at (x0) {$y_0$};
\node[below] at (x1) {$y_1$};
\node[below] at (x2) {$y_2$};
\draw[->] (x0) to node[above]{$\rho_1$} (x1);
\draw[->] (x2) to node[above]{$\rho_3$} (x1);
\end{tikzpicture}
\end{center}
The generator $y_2$ has grading 0 and the generator $y_1$ has grading 1, so $x\otimes y$ only has grading 1 if $x = x_{k+1}$ and $y = y_1$. In this setting, $x_{k+1} \otimes y_1$ has an incoming differential on the right which starts from $x_k \otimes y_2$. To ensure that $x_{k+1} \otimes y_1$ cancels in homology we need to check that $x_k \otimes y_2$ does not cancel on the right. To the right of $y_2$ in $\lp_2$ is an outgoing sequence of arrows that starts with some number of $\rho_{12}$ arrows followed by a $\rho_{123}$ arrow (here we use that a $\bar{c}_1$ segment in $\lp_2$ is not followed by another $\bar{c}_1$ segment with only $e$'s in between). If $k > 1$ then $x_k$ has no outgoing $\Ainfty$ operations except $m_2(x_k, \rho_3) = x_{k+1}$. If $k = 1$, then $x_k$ has additional operations, but since $s_1$ is preceded by some number of $\bar{e}^*$ segments and then a $c^*$ segment, the inputs for these operations can only be some number of $\rho_{12}$'s followed by a $\rho_1$. In either case, it is clear that $x_k \otimes y_2$ does not cancel on the right.

Suppose that $s_1$ is $\bar{e}^*$ and $s_2$ is $\bar{c}_1$. In this case, $x$ is the only generator of $\bar{e}^*$, $y$ is the only generator of $\bar{c}_1$ with idempotent $\iota_1$, and $x\otimes y$ has grading 1. In $\lp_2$, $s_2$ is preceded by some number $i$ of $e$ segments, which are preceded by a $b_k$ or $d_k$ segment. Thus to the left of $y$ in $\lp_2$ there is an incoming sequence of arrows that ends with $\rho_2$, $i$ $\rho_{12}$'s, and $\rho_1$. In $\lp_1^A$, $s_1$ is followed by some number $j$ of $\bar{e}^*$ segments followed by a $c^*_k$. If $j > i$ then $x$ has an incoming operation with inputs $(\rho_2, \rho_{12}, \ldots, \rho_{12}, \rho_1)$ with $i$ $\rho_{12}$'s, and if $j \le i$ then $x$ has an incoming operation with inputs $(\rho_{12}, \ldots, \rho_{12}, \rho_1)$ with $j$ $\rho_{12}$'s. In either case, these operations give rise to a differential in the box tensor product ending at $x\otimes y$. In both cases it is also easy to check that the initial generator of this differential has no other differentials, and so $x\otimes y$ cancels in homology.

The only case remaining is the case that $s_1$ is $c^*_k$ and $s_2$ is $c_\ell$. This case is depicted in Figure \ref{fig:c2_tensor_c3} for $k = 3$ and $\ell = 2$. We label the generators of $s_1$ sequentially as $x_1, \ldots, x_{k+1}$, and we label the generators of $s_2$ as $y_0, \ldots, y_\ell$. For the remainder of this proof, assume that $s_1$ is $c^*_k$, $s_2$ is $c_\ell$, $x\otimes y$ has grading 1, and $x\otimes y$ does not cancel in homology; we will produce a contradiction, proving the proposition.

Since $x\otimes y$ has grading 1, $y$ is $y_0$ and $x$ is $x_i$ with $i \in \{1, \ldots, k\}$.  For each $i > k-\ell$, $x_i \otimes y_0$ has an outgoing differential which ends at $x_{k+1} \otimes y_{k-i+1}$. If $i > k-\ell + 1$, then $x_{k+1} \otimes y_{k-i+1}$ does not cancel on the right, since the arrow to the right of $y_{k-i+1}$ is an outgoing $\rho_{23}$ arrow but $x_{k+1}$ has only incoming $\Ainfty$ operations. Thus if $i > k-\ell+1$, $x_i \otimes y_0$ cancels on the right. By Lemmas \ref{lem:loop_restrictions_L1} and \ref{lem:loop_restrictions_L2}, $k \le n$ and $\ell \ge n-1$; it follows that $k - \ell$ is at most 1 and $x_i \otimes y_0$ cancels on the right for any $i > 2$. Thus $x$ must be either $x_1$ or $x_2$.

If $k\le \ell$ then $x_2 \otimes y_0$ cancels on the right.  If $k=n$ and $\ell=n-1$, $x_2 \otimes y_0$ potentially cancels from the right. It has an outgoing differential on the right ending at $x_{k+1}\otimes y_\ell$, so it cancels from the right if and only if $x_{k+1}\otimes y_\ell$ does not cancel on the right. $x_{k+1}\otimes y_\ell$ has a differential on the right only if $s_1 = c^*_k$ is followed by a segment $c^*_{k'}$, in which case the differential starts with the generator $x'_1\otimes y'_0$, where $x'_1$ is the first generator of the $c^*_{k'}$ segment following $s_1$ and $y'_0$ is the first generator of the segment following $s_2$. If $x'_1\otimes y'_0$ cancels on the right then so does $x_2\otimes y_0$, and $x'_1\otimes y'_0$ automatically cancels on the right if $s_2$ is followed by a type $a$ segment. Thus if $x = x_2$ we must have that $s_1$ is followed by $c^*_{k'}$, $s_2$ is followed by $c_\ell'$, and $x'_1\otimes y'_0$ does not cancel from the right.

If $k<\ell$ then $x_1 \otimes y_0$ cancels on the right, and if $k>\ell$ then $x_1\otimes y_0$ does not cancel on the right. If $k=\ell=n$ or $k=\ell=n-1$ then $x_1\otimes y_0$ potentially cancels on the right. By the same reasoning as above, it does not cancel on the right if and only if $s_1$ is followed by $c^*_{k'}$, $s_2$ is followed by $c_\ell'$, and $x'_1\otimes y'_0$ does not cancel from the right. $x_1\otimes y_0$ may also cancel from the left. It has an outgoing differential on the left ending at $x_0\otimes y'_{\ell'}$, where $x_0$ is the last generator in the segment immediately preceding $s_1$ and $y'_{\ell'}$ is the last generator in the segment preceding $s_2$. If $s_2$ is preceded by $b_{\ell'}$ then it is easy to see that $x_0\otimes y'_{\ell'}$ has no differentials on the left, so $x_1\otimes y_0$ cancels from the left. If instead $s_2$ is preceded by $c_{\ell'}$, $x_0\otimes y'_{\ell'}$ cancels from the left unless $s_1$ is preceded by $c^*_{k'}$ and $k' < \ell'$ or $k' = \ell'$ and $x'_1\otimes y'_0$ cancels from the left.

Suppose that $x\otimes y = x_2\otimes y_0$ does not cancel in homology (in particular it does not cancel from the right). Then we have shown that $k = n$, $\ell = n-1$, $s_1 = c^*_k$ is followed by $c^*_{k'}$, $s_2 = c_\ell$ is followed by $c_{\ell'}$, and the generator $x'_1 \otimes y'_0$ does not cancel from the right. Furthermore, this last fact implies that either $k' = n$ and $\ell'= n-1$ or that $k' = \ell'$, $c^*_{k'}$ is followed by $c^*_{k''}$, $c_{\ell'}$ is followed by $c_{\ell''}$, and $x_1'' \otimes y_0''$ does not cancel from the right, where $x_1''$ and $y_0''$ are the appropriate generators of $c^*_{k''}$ and $c_{\ell'}$. Repeating this argument, we see that $s_1 = c^*_n$ is followed by a sequence of $c^*_n$ and $c^*_{n-1}$ segments ending with a $c^*_n$ and $s_2 = c_{n-1}$ is followed by a sequence of $c_n$ and $c_{n-1}$ segments ending with $c_{n-1}$ but with indices otherwise the same as the indices in the sequence of $c^*$ segments following $s_1$.

Suppose that $x\otimes y = x_1\otimes y_0$ does not cancel in homology. The fact that it does not cancel from the right implies that $k = \ell = n$ or $k=\ell=n-1$, $s_1 = c^*_k$ is followed by $c^*_{k'}$, $s_2 = c_\ell$ is followed by $c_{\ell'}$, and the generator $x'_1 \otimes y'_0$ does not cancel from the right. As in the preceding paragraph, this implies that $s_1 = c^*_n$ is followed by a sequence of $c^*_n$ and $c^*_{n-1}$ segments ending with a $c^*_n$ and $s_2 = c_{n-1}$ is followed by a corresponding sequence of $c_n$ and $c_{n-1}$ segments ending with $c_{n-1}$. The fact that $x\otimes y = x_1\otimes y_0$ does not cancel from the left implies that $s_1 = c^*_k$ is preceded by $c^*_{k'}$, $s_2 = c_\ell$ is preceded by $c_{\ell'}$, and either $k' = n$ and $\ell'  = n-1$ or $k' = \ell'$ and the generator $x'_1 \otimes y'_0$ does not cancel from the left. Repeating the argument, we see that $s_1$ is preceded by a sequence of $c^*_n$ and $c^*_{n-1}$ segments starting with $c^*n$ and $s_2$ is preceded by a sequence of $c_n$ and $c_{n-1}$ segments with matching sequence of indeces except that the initial segment is $c_{n-1}$.

\begin{figure}
\begin{tikzpicture}[>=latex]
\node (y0) at (2,5) {$\bullet$};
\node (y1) at (3.5,5) {$\circ$};
\node (y2) at (5,5) {$\circ$};
\node (y3) at (6.5,5) {};
\node[above right = -2pt] at (y0) {\tiny $-$};
\node[above right = -2pt] at (y1) {\tiny $+$};
\node[above right = -2pt] at (y2) {\tiny $+$};

\node (x0) at (0,4) {};
\node (x1) at (0,3) {$\bullet$};
\node (x2) at (0,2) {$\bullet$};
\node (x3) at (0,1) {$\bullet$};
\node (x4) at (0,0) {$\circ$};
\node[above right = -2pt] at (x1) {\tiny $+$};
\node[above right = -2pt] at (x2) {\tiny $+$};
\node[above right = -2pt] at (x3) {\tiny $+$};
\node[above right = -2pt] at (x4) {\tiny $+$};

\node[below] at (y0) {$y_0$};
\node[below] at (y1) {$y_1$};
\node[below] at (y2) {$y_2$};

\node[right = 5pt] at (x1) {$x_1$};
\node[right = 5pt] at (x2) {$x_2$};
\node[right = 5pt] at (x3) {$x_3$};
\node[right = 5pt] at (x4) {$x_4$};

\draw[->] (y0) to node[above]{$\rho_3$} (y1);
\draw[->] (y1) to node[above]{$\rho_{23}$} (y2);
\draw[->] (y3) to node[above]{$\rho_1$} (y2);

\draw[->] (x1) to node[left]{$\rho_1$} (x0);
\draw[->] (x1) to node[left]{$\rho_3, \rho_2$} (x2);
\draw[->] (x2) to node[left]{$\rho_3, \rho_2$} (x3);
\draw[->] (x3) to node[left]{$\rho_3$} (x4);

\draw (1,-.5) -- (1,4) -- (7, 4);

\node (x1y0) at (2, 3) {$-$};
\node (x2y0) at (2, 2) {$-$};
\node (x3y0) at (2, 1) {$-$};
\node (x4y1) at (3.5, 0) {$+$};
\node (x4y2) at (5, 0) {$+$};

\draw[->] (x2y0) to (x4y2);
\draw[->] (x3y0) to (x4y1);
\end{tikzpicture}

\caption{A portion of the chain complex $\lp_1^A \boxtimes \lp_2$ coming from a segment $c^*_3$ in $\lp_1^A$ (left edge) and a segment $c_2$ in $\lp_2$ (top edge). Signs $+$ and $-$ indicate generators in the box tensor product with grading 0 and 1, respectively.}
\label{fig:c2_tensor_c3}
\end{figure}
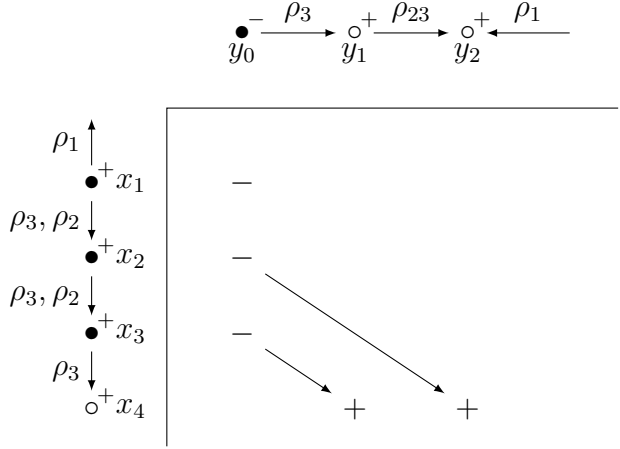

Regardless of whether $x = x_1$ or $x = x_2$, we find that there is a sequence of indices $k_1, k_2, \ldots, k_r$ with $k_i \in \{n, n-1\}$ such that $\lp_1^A$ contains  $c^*_n c^*_{k_1} c^*_{k_2} \ldots c^*_{k_r} c^*_n$ and $\lp_2$ contains $c_{n-1} c_{k_1} c_{k_2} \ldots c_{k_r} c_{n-1}$. It follows that $\lp_1$ and $\lp_2$ satisfy \proplambda. By Lemma \ref{lem:proplambda}, this implies that $\lp_1$ and $\lp_2$ are not L-space aligned, a contradiction.
\end{proof}

As noted previously, the gluing statement in Propostion \ref{prop:gluing_loops} needs to be modified if either loop is solid torus-like. Note that for a solid torus-like loop, all slopes are L-space slopes except the rational longitude. If $\lp_1$ is solid torus-like with rational longitude $\frac{r}{s}$ and $\lp_2$ is simple, then $\lp_1$ and $\lp_2$ are L-space aligned if and only if $\frac{s}{r}$ is a strict L-space slope for $\lp_2$. This is a sufficient, but not a necessary, condition for $\lp^A_1 \boxtimes \lp_2$ to be an L-space complex.

\begin{prop}
\label{prop:gluing_loops_solid_torus_like}
If $\lp_1$ and $\lp_2$ are simple loops and $\lp_1$ is solid torus-like with rational longitude $\frac{r}{s}$, then $\lp^A_1 \boxtimes \lp_2$ is an L-space chain complex if and only if $\frac{s}{r}$ is an L-space slope for $\lp_2$.
\end{prop}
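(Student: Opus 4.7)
The plan is to reduce to Corollary \ref{cor:STlike} by reparametrizing both loops so that the rational longitude of $\lp_1$ is carried to the slope $0$. Proposition \ref{prop:slope-trick} shows that applying $\tw^n$ to $\lp_1$ may be absorbed by applying $\du^n$ to $\lp_2$ without altering the homotopy class of $\lp_1^A \boxtimes \lp_2$. An entirely analogous argument---using the equivalence $\Td \cong \CFAAid \boxtimes \Ts \boxtimes \CFDDid$, which follows from the same $180^\circ$ Heegaard diagram rotation invoked in the proof of Proposition \ref{prop:slope-trick} but in the opposite direction---shows that $\du^n$ on $\lp_1$ may be absorbed by $\tw^n$ on $\lp_2$. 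Consequently, any word in $\tw^{\pm 1}$ and $\du^{\pm 1}$ applied to $\lp_1$ can be matched by a corresponding word on $\lp_2$ so as to preserve $\lp_1^A \boxtimes \lp_2$ up to homotopy; moreover, by the gluing convention of Section \ref{sec:change_of_framing}, this matching is precisely the one for which a slope $\frac{p}{q}$ on $\lp_1$ and its glued slope $\frac{q}{p}$ on $\lp_2$ transform in lock-step.

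Choose an element of $\mathit{SL}_2(\Z)$ sending $(r,s)^T$ to $(0,1)^T$ (this exists since $(r,s)$ is primitive) and decompose it into standard and dual Dehn twists via a continued fraction expansion for $\frac{r}{s}$. Applying the induced sequence of operations to both loops yields reparametrizations $\lp_1'$ and $\lp_2'$ with $\lp_1^A \boxtimes \lp_2 \simeq (\lp_1')^A \boxtimes \lp_2'$. The loop $\lp_1'$ is still solid torus-like (this class is closed under $\tw^{\pm 1}$ and $\du^{\pm 1}$ by Definition \ref{def:solid torus-like}) and has rational longitude $0$, so $\chi_\circ(\lp_1')=0$ by Proposition \ref{prop:rational_longitude}. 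By construction, the slope $\frac{s}{r}$ on $\lp_2$ is sent to $\infty$ on $\lp_2'$.

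Corollary \ref{cor:STlike} now applies to $\lp_1'$ and $\lp_2'$ to show that $(\lp_1')^A \boxtimes \lp_2'$ is an L-space chain complex if and only if $\infty$ is an L-space slope for $\lp_2'$; unwinding the reparametrization, this is equivalent to $\frac{s}{r}$ being an L-space slope for $\lp_2$, completing the argument. The main obstacle amounts to verifying the symmetric form of Proposition \ref{prop:slope-trick} (essentially identical to the version stated) and carefully tracking how abstract slopes on each loop transform under the simultaneous reparametrizations---both of which reduce to routine bookkeeping given the machinery already developed.
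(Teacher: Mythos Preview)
Your proof is correct and follows essentially the same approach as the paper: reparametrize so that the rational longitude of $\lp_1$ becomes $0$ (equivalently, $\lp_1$ becomes a loop of $e$ segments with $\chi_\circ = 0$), with the corresponding slope $\frac{s}{r}$ on $\lp_2$ becoming $\infty$, and then apply Corollary~\ref{cor:STlike}. The paper's proof is two sentences and takes the simultaneous-reparametrization trick for granted (having already invoked it just before Lemma~\ref{lem:loop_restrictions_L1}); you spell out the justification via Proposition~\ref{prop:slope-trick} and its symmetric analogue, which is a reasonable elaboration of the same argument.
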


\begin{proof}
We may choose a framing so that $\lp_1$ is a collection of $e$ segments, that is, so that the rational longitude is represented by $0$. Correspondingly, the slope $\frac{s}{r}$ for $\lp_2$ is represented by $\infty$. The result now follows from Corollary \ref{cor:STlike}.
\end{proof}

\subsection{A gluing result for loop-type manifolds} Returning to loop-type manifolds, we are now in a position to collect the material proved in this section and, in particular, apply Proposition \ref{prop:gluing_loops} to establish a gluing theorem. 

\begin{thm}
\label{thm:gluing_manifolds}
Let $(M_1, \alpha_1, \beta_1)$ and $(M_2, \alpha_2, \beta_2)$ be simple loop-type bordered manifolds with torus boundary which are not solid torus-like, and let $Y$ be the closed manifold $(M_1, \alpha_1, \beta_1) \cup (M_2, \alpha_2, \beta_2)$ (with the gluing map $\alpha_1\mapsto\beta_2, \beta_1\mapsto\alpha_2$, as in Section \ref{sub:pairing}). Then $Y$ is an L-space if and only if every essential simple closed curve on $\partial M_1 = \partial M_2 \subset Y$ determines a strict L-space slope for either $M_1$ or $M_2$.
\end{thm}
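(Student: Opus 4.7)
The plan is to reduce the theorem to Proposition \ref{prop:gluing_loops} by way of the pairing theorem. By Lipshitz--Ozsv\'ath--Thurston's pairing theorem,
\[ \CFhat(Y) \cong \CFA(M_1,\alpha_1,\beta_1) \boxtimes \CFD(M_2,\alpha_2,\beta_2). \]
Since each $M_i$ is simple loop-type, Definition \ref{def:loop-type} gives a splitting of $\CFD(M_i,\alpha_i,\beta_i)$ into simple loops $\{\lp_i^{(k)}\}_k$, one per spin$^c$ structure, and the box tensor product distributes over this splitting. Hence $\CFhat(Y)$ decomposes as a direct sum of complexes of the form $(\lp_1^{(k)})^A \boxtimes \lp_2^{(\ell)}$ indexed by pairs of spin$^c$ structures, and $Y$ is an L-space if and only if every such summand is an L-space chain complex.

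I would then invoke Proposition \ref{prop:gluing_loops} on each pair $(\lp_1^{(k)}, \lp_2^{(\ell)})$, which identifies the L-space summands with pairs that are L-space aligned. Mixed pairs containing a solid-torus-like loop (which can appear as a summand even when $M_i$ itself is not solid-torus-like) instead require Proposition \ref{prop:gluing_loops_solid_torus_like}; the hypothesis that $M_i$ is not solid-torus-like guarantees a non-solid-torus-like companion loop in $\CFD(M_i,\alpha_i,\beta_i)$, which can be used to upgrade the L-space slope condition coming from that proposition to a strict one.

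It remains to translate ``every pair is L-space aligned'' into the manifold-level condition. By Proposition \ref{prp:rational endpoints}, each $\mathcal{L}_{\lp_i^{(k)}}$ is an interval in $\hat\Q$ (with rational endpoints, or else all of $\hat\Q$ minus the rational longitude), and a standard observation about intersections of intervals gives $\mathcal{L}_{M_i}^\circ = \bigcap_k \mathcal{L}_{\lp_i^{(k)}}^\circ$. Thus a slope is strict L-space for $M_i$ exactly when it is strict L-space for every loop component of $\CFD(M_i,\alpha_i,\beta_i)$. A direct quantifier interchange then shows that ``for every $\gamma$ and every $(k,\ell)$, $\gamma \in \mathcal{L}_{\lp_1^{(k)}}^\circ$ or $h(\gamma) \in \mathcal{L}_{\lp_2^{(\ell)}}^\circ$'' is equivalent to ``for every $\gamma$, $\gamma \in \mathcal{L}_{M_1}^\circ$ or $h(\gamma) \in \mathcal{L}_{M_2}^\circ$,'' which is the claim. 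The main obstacle is the treatment of solid-torus-like summands that may sit inside non-solid-torus-like manifolds, where the only available loop-level result, Proposition \ref{prop:gluing_loops_solid_torus_like}, gives the weaker L-space slope condition rather than the strict one; reconciling this using the non-solid-torus-like companion loops is the technical heart of the argument.
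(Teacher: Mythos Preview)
Your proposal is correct and follows essentially the same route as the paper: split $\CFD(M_i)$ into its constituent simple loops, reduce the question to pairwise box tensor products, and apply Proposition~\ref{prop:gluing_loops}. The paper's proof and yours agree on all the main steps, including the identification $\mathcal{L}_{M_i}^\circ = \bigcap_k \mathcal{L}_{\lp_i^{(k)}}^\circ$ and the quantifier interchange at the end.

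The one place worth comparing is the treatment of solid-torus-like loop summands. You propose to invoke Proposition~\ref{prop:gluing_loops_solid_torus_like} on mixed pairs and then ``upgrade'' the resulting L-space slope condition to a strict one using a non-solid-torus-like companion. The paper instead sidesteps Proposition~\ref{prop:gluing_loops_solid_torus_like} entirely in the ``only if'' direction: if the offending loop $\lp_1^{(k)}$ is solid-torus-like, then the only slope that fails to be strict L-space for it is the rational longitude of $M_1$, which is not a strict L-space slope for \emph{any} loop in $\CFD(M_1)$; since $M_1$ is not solid-torus-like, one of those loops is not solid-torus-like, and one simply swaps it in for $\lp_1^{(k)}$ before applying Proposition~\ref{prop:gluing_loops}. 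This swap trick is slightly cleaner than your upgrade argument, but both rest on the same observation (the rational longitude is common to all loops), so the difference is cosmetic.
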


\begin{remark} There is an alternate statement of the conclusion on Theorem \ref{thm:gluing_manifolds} using the notation laid out in  this paper: The closed manifold $M_1\cup M_2$ is an L-space if and only if for each rational $\frac{p}{q}$ either $\frac{p}{q}\in\mathcal{L}^\circ(M_1, \alpha_1, \beta_1)$ or $\frac{q}{p}\in\mathcal{L}^\circ(M_2, \alpha_2, \beta_2)$. Recall that, following the conventions in Section \ref{sub:pairing}, $\frac{p}{q}\in\mathcal{L}^\circ(M, \alpha, \beta)$ if and only if $\pm(p\alpha+q\beta)\in\mathcal{L}_M^\circ$. \end{remark}

Note that Theorem \ref{thm:gluing_manifolds} implies Theorem \ref{thm:gluing}.

\begin{remark}If either bordered manifold in the statement of Theorem \ref{thm:gluing_manifolds} is solid torus-like, then $Y$ is an L-space if and only if every essential simple closed curve on $\partial M_1 = \partial M_2 \subset Y$ determines an L-space slope for either $M_1$ or $M_2$. This is an immediate consequence of Proposition \ref{prop:gluing_loops_solid_torus_like}, and amounts to replacing $\mathcal{L}^\circ(M_i, \alpha, \beta)$ by $\mathcal{L}(M_i, \alpha, \beta)$. Notice that Dehn surgery -- by definition of an L-space slope -- is a special case of this version of the gluing result, since the solid torus is solid torus-like, in the sense of Definition \ref{def:solid torus-like manifold}. \end{remark}

\begin{proof}[Proof of Theorem \ref{thm:gluing_manifolds}]
Let $\CFD(M_1, \alpha_1, \beta_1)$ be represented by simple loops $\lp_1^1, \dots, \lp_1^n$ and let $\CFD(M_2, \alpha_2, \beta_2)$ be represented by simple loops $\lp_2^1, \dots, \lp_2^m$. A given slope is a (strict) L-space slope for $(M_i, \alpha_i, \beta_i)$ if and only if it is a (strict) L-space slope (abstractly) for each loop $\lp_i^k$. $Y$ is an L-space if and only if $\lp_1^k \boxtimes \lp_2^j$ is and L-space complex (again, abstractly) for each $1 \le k \le n$, $1 \le j \le m$.

Suppose there is a slope $\frac{p}{q}$ such that $\frac{p}{q} \notin \mathcal{L}^\circ(M_1, \alpha_1, \beta_1)$ and $\frac{q}{p} \notin \mathcal{L}^\circ(M_2, \alpha_2, \beta_2)$. Then $\frac{p}{q}$ is not a strict L-space slope for $\lp_1^k$ for some $k$ and $\frac{q}{p}$ is not a strict L-space slope for $\lp_2^j$ for some $j$. We may assume that $\lp_1^k$ is not solid torus-like; if it is solid torus-like, then $\frac{p}{q}$ must be the rational longitude for $(M_1, \alpha_1, \beta_1)$ and thus not a strict L-space slope for any of the loops $\lp_1^1, \ldots, \lp_1^n$. Similarly, we may assume that $\lp_2^j$ is solid torus-like. Since $\lp_1^k$ and $\lp_2^j$ are not L-space aligned, $\lp_1^k \boxtimes \lp_2^j$ is not an L-space complex, and $Y$ is not an L-space.

If every slope is a strict L-space slope for either $M_1$ or $M_2$, then every slope is a strict L-space slope either for each $\lp_1^k$ or for each $\lp_2^j$. It follows that every pair $(k,j)$, $\lp_1^k$ and $\lp_2^j$ are L-space aligned, and thus $Y$ is an L-space.
\end{proof}

\subsection{L-space knot complements}
In light of Theorem \ref{thm:gluing_manifolds}, it is natural to ask which 3-manifolds $M$ with torus boundary have simple loop-type bordered invariants. We first observe that complements of L-space knots in $S^3$, that is, those knots admitting an L-space surgery, have this property.

\begin{prop}\label{prp:L-space-knots-loop}
If $K$ is an L-space knot in $S^3$ and $M = S^3 \smallsetminus\nu(K)$, then for any choice of parametrizing curves $\alpha$ and $\beta$, $\CFD(M, \alpha, \beta)$ can be represented by a single simple loop.
\end{prop}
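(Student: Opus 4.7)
The plan is to first describe $\CFD(M, \mu, \lambda)$ explicitly in the meridional framing and verify directly that this describes a single simple loop; the general case then follows from the change-of-framing formalism of Section \ref{sec:change_of_framing}.

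First, I would recall the description of $\CFD(M, \mu, \lambda)$ for a knot complement $M = S^3 \smallsetminus \nu(K)$ developed in \cite[Chapter 11]{LOT-bordered}, which computes the invariant from the knot Floer complex $\mathit{CFK}^-(K)$. For an L-space knot of Seifert genus $g$, $\widehat{\mathit{HFK}}(K)$ has the familiar staircase structure: it is supported with rank one in Alexander gradings $s_0 < s_1 < \cdots < s_{2g}$, where the consecutive differences $n_i = s_i - s_{i-1}$ are strictly positive. Applying the Lipshitz--Ozsv\'ath--Thurston algorithm to this staircase produces an $\Alg$-decorated graph with generators $\xi_0, \ldots, \xi_{2g}$ in idempotent $\iota_0$ joined sequentially by stable chains of positive subscript $n_i$ alternating between types $a$ and $b$, together with a single unstable chain of type $d_{2g-1}$ from $\xi_{2g}$ back to $\xi_0$. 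This is a connected valence-two $\Alg$-decorated graph satisfying $(\star)$, so $\CFD(M, \mu, \lambda)$ is a single loop of the schematic form
\[
\lp \;=\; \bigl(\, b_{n_1}\, a_{n_2}\, b_{n_3}\, a_{n_4}\, \cdots\, a_{n_{2g}}\, d_{2g-1}\,\bigr).
\]

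Next, I would verify simplicity by converting $\lp$ to dual notation. Every standard letter appearing in $\lp$ has strictly positive subscript, so rule (D2) of Section \ref{sec:notations_for_loops} produces only copies of the unstable dual letter $e^* = d^*_0$. The consecutive ordered pairs of standard letters that occur cyclically in $\lp$ are among $ba$, $ab$, $ad$, and $db$, which by rule (D1) dualize respectively to $\bar{c}^*$, $d^*$, $d^*$, and $d^*$ -- all unstable. Thus the cyclic word for $\lp$ in dual notation consists entirely of unstable chains, and $\lp$ is simple by Definition \ref{def:simple}.

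Finally, for an arbitrary choice of parametrizing slopes $(\alpha, \beta)$, the bordered structure $(M, \alpha, \beta)$ is related to $(M, \mu, \lambda)$ by a mapping-class-group element of the boundary torus, which decomposes as a finite word in $T_\stand^{\pm 1}$ and $T_\dual^{\pm 1}$. By Propositions \ref{prop:effect_of_Tstd} and \ref{prop:effect_of_Tdul}, the corresponding sequence of operations $\tw^{\pm 1}$ and $\du^{\pm 1}$ carries one loop to another, so $\CFD(M, \alpha, \beta)$ is again a single loop; and by definition this sequence preserves the class of simple loops. The main obstacle is therefore the first step: extracting the explicit normal form for $\lp$ from \cite[Chapter 11]{LOT-bordered}, in particular verifying (up to orientation of the loop) that all the stable-chain subscripts are positive and that the unstable chain is of type $d_k$ rather than $c_k$ or $\bar{d}_k$. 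Once that is in hand, the simplicity check is a direct application of the dualization rules.
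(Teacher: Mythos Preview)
Your overall strategy coincides with the paper's: exhibit a framing at which the loop has no bar segments in standard notation, then observe that dualizing yields only unstable chains. The difference is in the choice of framing, and this is where your argument has a genuine gap.

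At the Seifert framing $(M,\mu,\lambda)$ the unstable chain is \emph{not} $d_{2g-1}$. Take the right-hand trefoil ($g=1$): since the rational longitude of $M$ is $\lambda$, Proposition~\ref{prop:rational_longitude} forces $\chi_\circ(\lp)=0$, i.e.\ the sum of the standard subscripts vanishes. With stable chains $b_1,a_1$ this forces the unstable chain to be $d_{-2}$, not $d_{2g-1}=d_1$; so $\lp=(b_1 a_1 d_{-2})$. Now $d_{-2}=\bar c_2$ is a bar segment, and your dualization step produces the pairs $a\bar c\to b^*$ and $\bar c b\to a^*$ from the table in Section~\ref{sec:notations_for_loops}. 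Thus stable dual chains appear and the direct simplicity check fails at this framing. (This is exactly the obstacle you flagged in your final paragraph, and it is real rather than merely a bookkeeping nuisance.) Reversing the loop does not help: $(c_2\,a_{-1}\,b_{-1})$ still has bar segments.

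The paper sidesteps this by working instead at $(M,\mu,n\mu+\lambda)$ with $|n|$ large enough that $n$-surgery is an L-space. In that range the LOT algorithm gives an unstable chain $d_j$ with $j>0$ (when $n>0$) or $c_j$ with $j>0$ (when $n<0$), so there are genuinely no bar segments and the dualization yields only unstable chains. Your argument becomes correct once you make this same shift of framing; everything from ``dualize and read off unstable chains'' onward then goes through exactly as you wrote it.
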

\begin{proof}
We only need to show that for some choice of parametrizing curves, $\CFD(M, \alpha, \beta)$ is a loop which consists of only unstable chains in either standard notation or dual notation. Consider then $\CFD(M,\mu, n\mu+\lambda)$ where $\mu$ is the meridian of the knot $K$ and $\lambda$ is the Seifert longitude of $K$. Suppose that $|n|$ is chosen sufficiently large so that the result of Dehn surgery $S^3_n(K)=M(n\mu+\lambda)$ is an L-space. 

The knot Floer homology $\mathit{CFK}^{-}(K)$ has a staircase shape of alternating horizontal and vertical arrows (see \cite{Petkova-thin}, for example). By the construction in \cite[Section 11.5]{LOT-bordered}, $\CFD(M,\mu, n\mu+\lambda)$ consists of alternating horizontal and vertical chains, with the ends connected by a single unstable chain. Thus $\CFD(M, \mu, n\mu+\lambda)$ is a loop. In standard loop notation, the horizontal chains are type $a$ segments, the vertical chains are type $b$ segments, and the unstable chain is a type $d$ segment if $n > 0$ and a type $c$ segment if $n<0$. More precisely, 
\[\CFD(M,\mu, n\mu+\lambda)=\begin{cases}
b_{k_1} a_{k_2} b_{k_3} a_{k_4} \cdots b_{k_{2r-1}} a_{k_{2r}} d_j & n >0 \\
a_{k_1} b_{k_2} a_{k_3} b_{k_4} \cdots a_{k_{2r-1}} b_{k_{2r}} c_j& n<0
\end{cases} \]
In either case, the loop has no bar segments in standard notation, so when we switch to dual notation it has no stable chains. Thus, $\CFD(M,\mu, n\mu+\lambda)$ is a simple loop.
\end{proof}

Note that the behaviour established in this application of Theorem \ref{thm:gluing_manifolds} is expected in general and, in particular, should not require the hypothesis that the knot complement be a loop-type manifold; see \cite[Conjecture 1]{Hanselman2014} and \cite[Conjecture 4.3]{CLW2013}.

\begin{conj}\label{conj:splice} For knots $K_1$ and $K_2$ in $S^3$, with  $M_i=S^3\smallsetminus\nu(K)$ for $i=1,2$, the generalized splicing $M_1\cup_hM_2$ is an L-space if and only if either $\gamma\in\mathcal{L}^\circ_{M_1}$ or $h(\gamma)\in\mathcal{L}^\circ_{M_2}$ for every slope $\gamma$.\end{conj}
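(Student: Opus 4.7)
This is an open conjecture (cited as Conjecture 1 of \cite{Hanselman2014} and Conjecture 4.3 of \cite{CLW2013}), extending Theorem \ref{thm:gen-splice} from L-space knots to arbitrary knots in $S^3$. The plan is to split into three cases according to which $K_i$ admit L-space surgeries, handle the (easy) extremal cases by existing results, and then attack the difficult mixed and doubly non-L-space cases through an extension of the loop calculus developed in this paper.

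First I would observe that if both $K_1$ and $K_2$ are L-space knots, then the statement reduces to Theorem \ref{thm:gen-splice} (since $\mathcal{L}_{M_i}^\circ$ is nonempty and explicit in terms of the Seifert genus). The remaining content is therefore to prove that if at least one $K_i$ is \emph{not} an L-space knot, then no $\mathcal{L}_{M_i}^\circ$ can contain the relevant slope (both sides are empty or absent), and hence that the splice $M_1\cup_h M_2$ is never an L-space. The backward direction of the conjecture in the non-L-space case is vacuously the statement ``no such gluing is an L-space''; the forward direction is the substantive claim.

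To prove this vanishing statement, the first step is to show that $\CFD(S^3\smallsetminus\nu(K),\mu,\lambda)$ is of loop-type for every nontrivial knot $K$ in $S^3$, extending Proposition \ref{prp:L-space-knots-loop} beyond the L-space case. This should follow from the Lipshitz–Ozsv\'ath–Thurston translation of $\mathit{CFK}^{-}(K)$ into a type D structure: the horizontal and vertical arrows in the knot Floer complex already match the $a$/$b$ pattern appearing in loops, and the unstable chain at each end produces type $c$ or $d$ pieces. The essential new feature for non-L-space knots is that the collection of loops is no longer simple and may contain ``bad'' loops of the form $(a_1 b_1 \bar{a}_1 \bar{b}_1)$ and its close relatives (compare the example at the end of Section \ref{sec:loop} and the discussion of thin knots in \cite{Petkova-thin}). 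I would then reprove the characterization of L-space slopes (Theorem \ref{thm:intervals} and Proposition \ref{prop:strict_Lspace_slopes}) for such loops and verify that any loop in the representation of a non-L-space knot complement has empty $\mathcal{L}^\circ$.

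The main obstacle, and the step I expect to be hardest, is extending the gluing theorem (Theorem \ref{thm:gluing_manifolds}) to cover these non-simple loops. The proofs of Proposition \ref{prop:gluing_loops} use the simplicity condition via Lemmas \ref{lem:loop_restrictions_L1}, \ref{lem:loop_restrictions_L2}, and Proposition \ref{prop:simple_loops_std} in an essential way to restrict the segment types that can appear. In the non-simple setting one would instead need a direct surviving-homology argument: for each ``bad'' loop $\lp_1$ in $\CFD(M_1)$, exhibit explicit generators of opposite $(\Ztwo)$-grading in $\lp_1^A\boxtimes\lp_2$ that persist in homology for any loop $\lp_2$ appearing in $\CFD(M_2)$, regardless of reparametrization. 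A plausible route is to first use Proposition \ref{prop:slope-trick} to reduce to a convenient framing, then, exactly as in the ``only if'' direction of Proposition \ref{prop:gluing_loops}, track the fate of the endpoints of stable chains through the box tensor product. Because the paper itself points out (see the Related work discussion) that Rasmussen–Rasmussen handle the analogous question for general graph manifolds using knot Floer homology rather than bordered invariants \cite{RR,HRRW}, a realistic alternative would be to avoid bordered machinery on the non-simple side altogether and instead invoke the Ozsv\'ath–Szabo integer and rational surgery formulas for splices, as in Hedden–Levine \cite{HL}, to force one of the $K_i$ to be an L-space knot whenever $M_1\cup_h M_2$ is an L-space, thereby reducing the conjecture to Theorem \ref{thm:gen-splice}.
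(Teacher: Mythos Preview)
You correctly recognize that this is stated as an open conjecture in the paper, not a theorem; the paper does not attempt a proof and only establishes the special case in which both $K_i$ are L-space knots (Corollary~\ref{cor:splice}, equivalently Theorem~\ref{thm:gen-splice}). So there is no ``paper's proof'' to compare against, and your write-up is best read as a research outline rather than a proof.

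As an outline it is reasonable, but two of your steps contain genuine gaps that you should be aware of. First, the assertion that $\CFD(S^3\smallsetminus\nu(K),\mu,\lambda)$ is loop-type for every knot $K$ does not follow from the Lipshitz--Ozsv\'ath--Thurston construction in the way you suggest. For non--L-space knots, $\mathit{CFK}^{-}(K)$ is not a staircase; a single generator may support several horizontal and vertical arrows, and after translating via \cite[Section 11.5]{LOT-bordered} the resulting type~D structure can have vertices of valence greater than two. Reducing such a structure to a disjoint union of valence-two cycles is not automatic and is not provided by anything in this paper. Second, as you yourself flag, the gluing theorem (Proposition~\ref{prop:gluing_loops} and Theorem~\ref{thm:gluing_manifolds}) genuinely uses simplicity through Propositions~\ref{prop:simple_loops_std}--\ref{prop:simple_loops_dual} and Lemmas~\ref{lem:loop_restrictions_L1}--\ref{lem:loop_restrictions_L2}; without those structural restrictions the segment-by-segment analysis of $\lp_1^A\boxtimes\lp_2$ does not go through, and producing the required surviving generators of opposite grading for an \emph{arbitrary} second loop $\lp_2$ is not a small modification.

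Your fallback via Hedden--Levine is also not quite enough as stated: their result in \cite{HL} treats the standard splice (meridian glued to Seifert longitude), not an arbitrary homeomorphism $h$, so additional argument would be needed to cover the general gluing. In short, your case split and your identification of the substantive content are correct, but neither of your two proposed routes currently closes the gap; this is consistent with the paper's decision to leave the statement as a conjecture.
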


On the other hand, it is clear that the case where two L-space knot complements are identified in the generalized splice is the interesting case, and therefore the loop restriction in this setting is quite natural. In this case, we remark that the question is now settled.

\begin{cor}\label{cor:splice} Conjecture \ref{conj:splice} holds when the $K_i$ are L-space knots.  \end{cor}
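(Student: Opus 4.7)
\textbf{Proof plan for Corollary \ref{cor:splice}.} The plan is to reduce directly to the gluing theorem established earlier in this section. First, by Proposition \ref{prp:L-space-knots-loop}, if $K_i$ is an L-space knot then $M_i=S^3\smallsetminus\nu(K_i)$ is a simple loop-type manifold whose type D structure (with respect to any bordered framing) is represented by a single simple loop. Thus both halves of the generalized splice sit in the setting where Theorem \ref{thm:gluing_manifolds} is applicable, modulo the solid torus-like issue. So the plan is: identify when $M_i$ is solid torus-like, apply Theorem \ref{thm:gluing_manifolds} in the generic case, and deal with the degenerate case by hand.

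Assume first that neither $K_i$ is the unknot. I claim that in this case neither $M_i$ is solid torus-like. Indeed, if $M_i$ were solid torus-like then every slope except the rational longitude would be an L-space slope, forcing $\mathcal{L}_{M_i}$ to be the complement of a single point in $\hat{\mathbb{Q}}$. However, for a nontrivial L-space knot in $S^3$ the set $\mathcal{L}_{M_i}$ is a genuine half-line $[2g_i-1,\infty]$ (or $[-\infty,1-2g_i]$) with respect to the meridian/Seifert-longitude basis, where $g_i$ is the Seifert genus, and $g_i\ge 1$, so this forces $M_i$ to not be solid torus-like. With this in hand, Theorem \ref{thm:gluing_manifolds} directly asserts that $M_1\cup_h M_2$ is an L-space if and only if for every slope $\gamma$ on $\partial M_1$, either $\gamma\in\mathcal{L}_{M_1}^\circ$ or $h(\gamma)\in\mathcal{L}_{M_2}^\circ$, which is exactly the statement of Conjecture \ref{conj:splice}.

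It remains to treat the case in which at least one $K_i$ is the unknot; this is the step that requires care. When $K_i$ is the unknot, $M_i\cong D^2\times S^1$ is (literally) a solid torus and hence solid torus-like in the sense of Definition \ref{def:solid torus-like manifold}. In this situation one invokes the solid torus-like analogue of the gluing theorem coming from Proposition \ref{prop:gluing_loops_solid_torus_like} (compare the remark following Theorem \ref{thm:gluing_manifolds}): the closed manifold $M_1\cup_h M_2$ is an L-space if and only if for every slope $\gamma$ on $\partial M_1$, either $\gamma\in\mathcal{L}_{M_1}$ or $h(\gamma)\in\mathcal{L}_{M_2}$. To convert this statement, which uses $\mathcal{L}$ rather than $\mathcal{L}^\circ$, into the $\mathcal{L}^\circ$ version demanded by Conjecture \ref{conj:splice}, observe that for the solid torus one has $\mathcal{L}_{M}=\mathcal{L}_M^\circ=\hat{\mathbb{Q}}\smallsetminus\{0\}$ with respect to the standard framing, so the two versions agree on the unknot side; and on the non-unknot side (if the other knot is non-trivial), the only way $\gamma\in\mathcal{L}_{M_i}$ can fail to imply $\gamma\in\mathcal{L}_{M_i}^\circ$ is at the two endpoints of the L-space interval, and one can verify directly using Proposition \ref{prop:strict_Lspace_slopes} that the alignment condition with $\mathcal{L}^\circ$ still gives the correct characterization. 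The only potentially tricky subcase is unknot-glued-to-unknot, producing lens spaces, $S^3$, or $S^1\times S^2$; here the condition degenerates to ``$h$ does not send the longitude of $M_1$ to the longitude of $M_2$,'' which correctly identifies $S^1\times S^2$ as the unique non-L-space outcome.

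The main obstacle is really bookkeeping in this last paragraph: verifying that the replacement of $\mathcal{L}$ by $\mathcal{L}^\circ$ is harmless at the endpoints in the mixed case (L-space knot glued to unknot). Everything else is a direct appeal to the machinery already developed — Proposition \ref{prp:L-space-knots-loop} to put the invariants into simple loop form, and Theorem \ref{thm:gluing_manifolds} (together with its solid torus-like refinement) to read off the L-space condition.
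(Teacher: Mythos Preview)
Your core argument (the first two paragraphs) is exactly the paper's proof: the paper's entire argument is the single sentence ``Immediate on combining Theorem~\ref{thm:gluing_manifolds} and Proposition~\ref{prp:L-space-knots-loop}.'' You add the explicit check that a nontrivial L-space knot complement is not solid torus-like, which the paper leaves implicit, but otherwise the approaches coincide.

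Where you go beyond the paper is in the unknot analysis, and here there is a genuine error. The conjecture, as stated with $\mathcal{L}^\circ$, is actually \emph{false} when one of the $K_i$ is the unknot, so your claimed verification cannot succeed. Concretely: take $K_1$ the unknot, so $M_1\cong D^2\times S^1$ with $\mathcal{L}_{M_1}^\circ=\hat\Q\smallsetminus\{\lambda_1\}$, and take $K_2$ a nontrivial positive L-space knot, so $\mathcal{L}_{M_2}=[2g-1,\infty]$ and $\mathcal{L}_{M_2}^\circ=(2g-1,\infty)$. Choose $h$ with $h(\lambda_1)=\mu_2$. Then $M_1\cup_h M_2=M_2(\mu_2)=S^3$ is an L-space, but $\mu_2\notin\mathcal{L}_{M_2}^\circ$, so the $\mathcal{L}^\circ$-alignment condition fails at the single slope $\lambda_1$. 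The same happens at $h(\lambda_1)=2g-1$. Your assertion that ``one can verify directly \ldots\ that the alignment condition with $\mathcal{L}^\circ$ still gives the correct characterization'' is therefore incorrect.

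The resolution is simply that the corollary (and Theorem~\ref{thm:gen-splice}) is to be read for nontrivial L-space knots; this is exactly what the hypotheses of Theorem~\ref{thm:gluing_manifolds} force, and the paper does not attempt to say more. Drop your third paragraph and your proof is complete and matches the paper's.
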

\begin{proof} Immediate on combining Theorem \ref{thm:gluing_manifolds} and Proposition \ref{prp:L-space-knots-loop}.\end{proof}

Note that Corollary \ref{cor:splice} implies Theorem \ref{thm:gen-splice}.

Some authors define L-space knots to be the class of knots for which some {\em positive} surgery yields an L-space.  We will not follow this convention because, while this is a natural definition for knots in $S^3$ in that certain statements become simpler, our interest is in the more general setting of manifolds with torus boundary admitting L-space fillings wherein the distinction seems to be less meaningful. In particular, the next section is concerned with this more general setting.

\section{Graph manifolds}\label{sec:manifolds}

Note that knots in the three-sphere admitting L-space surgeries contain torus knots -- those knots admitting a Seifert structure on their complement -- as a strict subset. Our goal now is to establish another class of loop-type manifolds: Seifert fibered rational homology solid tori. This should be viewed as the natural geometric enlargement of the class of torus knot exteriors. To do so, we study these as a subset of graph manifolds, and establish sufficient conditions for a rational homology solid torus admitting a the structure of a graph manifold to be of loop-type. 

\subsection{Preliminaries on graph manifolds}\label{sub:prelim} We will represent a graph manifold rational homology sphere $Y$ by a plumbing tree, following the notation developed in \cite{Neumann}. A plumbing tree is an acyclic graph with integer weights associated to each vertex. Such a graph specifies a graph manifold as follows: To each vertex $v_i$ of weight $e_i$ and valence $d_i$ we assign the Euler number $e_i$ circle bundle over the sphere minus $d_i$ disks; and for each edge connecting vertices $v_i$ and $v_j$ glue the corresponding bundles along a torus boundary component by a gluing map that takes the fiber of one bundle to a curve in the base surface of the other bundle, and vice versa.

To allow for graph manifolds with boundary, we associate an additional integer $b_i \ge 0$ to each vertex. To construct the corresponding manifold, we associate to each vertex a bundle over $S^2$ minus $(d_i + b_i)$; $d_i$ boundary components of this bundle will glue to bundles corresponding to other vertices, but the remaining $b_i$ boundary components remain unglued. The resulting graph manifold has $\sum_i b_i$ toroidal boundary components. In diagrams of plumbing trees, we will indicate the presence of boundary tori by drawing $b_i$ half-edges (dotted lines which do not connect to another vertex) at each vertex $v_i$.

Given a plumbing tree $\Gamma$ with a single boundary half-edge, let $M_\Gamma$ denote the corresponding graph manifold. The torus $\partial M_\Gamma$ has a natural choice of parametrizing curves: One corresponds to a fiber in the $S^1$ bundle containing the boundary, and one corresponds to a curve in the base surface of that bundle (note that these are precisely the curves used above to specify the gluing maps in the construction based on a given graph). Call these two slopes $\alpha$ and $\beta$, respectively. These slopes do not have a preferred orientation, but reversing the orientation on the bundle associated to every vertex in the construction of $M_\Gamma$ gives a diffeomorphism between $(M_\Gamma, \alpha, \beta)$ and $(M_\Gamma, -\alpha, -\beta)$ as bordered manifolds. 

Thus $(M_\Gamma, \alpha, \beta)$ is a canonical bordered manifold associated to $\Gamma$. Since we will be interested in the bordered invariants of such graph manifolds, for ease of notation we will refer to $\CFD(M_\Gamma, \alpha, \beta)$ simply as $\CFD(\Gamma)$.

We will make use of three important operations on single boundary plumbing trees. These are summarized in Figure \ref{fig:tree_operations}, and described as follows:
\begin{itemize}
\item[\emph{twist}] The operation $\twist^\pm$ adds $\pm 1$ to the weight of the vertex containing the boundary edge.
\item[\emph{extend}] The operation $\extend$ inserts a new 0-framed vertex between the boundary vertex and the boundary edge.
\item[\emph{merge}] The operation $\merge$ takes two single boundary plumbing trees and identifies their boundary vertices, removing one of the two boundary edges to produce a new single boundary plumbing tree; the weight of the new boundary vertex is the sum of the weights of the original boundary vertices. 
\end{itemize}
With these operations we can, in particular, construct any single-boundary plumbing tree.

\begin{figure}
\begin{center}
\begin{tikzpicture}
\tikzstyle{every node}=[draw,circle,fill=white,minimum size=4pt,
                            inner sep=0pt]
\node at (0,0) [circle, fill=black] {};
\draw (-1,-1) -- (0,0) -- (1,-1);
\draw (0,0) -- (-.5, -1);
\put(-2, -20){$\cdots$}

\draw [dashed, thick] (0,0) -- (0,1);
\put(5, 2){$n$}

\draw[->] (1,0) to (2,0);
\put(40, 4){$\twist^\pm$}
\end{tikzpicture}
\begin{tikzpicture}
\tikzstyle{every node}=[draw,circle,fill=white,minimum size=4pt,
                            inner sep=0pt]
\node at (0,0) [circle, fill=black] {};
\draw (-1,-1) -- (0,0) -- (1,-1);
\draw (0,0) -- (-.5, -1);
\put(-2, -20){$\cdots$}

\draw [dashed, thick] (0,0) -- (0,1);
\put(5, 2){$n\pm 1$}
\end{tikzpicture}
\hspace{1 in}
\begin{tikzpicture}
\tikzstyle{every node}=[draw,circle,fill=white,minimum size=4pt,
                            inner sep=0pt]
\node at (0,0) [circle, fill=black] {};
\draw (-1,-1) -- (0,0) -- (1,-1);
\draw (0,0) -- (-.5, -1);
\put(-2, -20){$\cdots$}

\draw [dashed, thick] (0,0) -- (0,1);
\put(5, 2){$n$}

\draw[->] (1,0) to (2,0);
\put(40, 4){$\extend$}
\end{tikzpicture}
\begin{tikzpicture}
\tikzstyle{every node}=[draw,circle,fill=black,minimum size=4pt,
                            inner sep=0pt]
\node at (0,0) [circle] {};
\node at (0,1) [circle] {};

\draw (-1,-1) -- (0,0) -- (1,-1);
\draw (0,1) -- (0,0) -- (-.5, -1);
\put(-2, -20){$\cdots$}

\draw [dashed, thick] (0,1) -- (0,2);
\put(5, 2){$n$}
\put(5, 22){$0$}

\end{tikzpicture}


\begin{tikzpicture}
\tikzstyle{every node}=[draw,circle,fill=white,minimum size=4pt,
                            inner sep=0pt]
\node at (0,0) [circle, fill=black] {};
\draw (-1,-1) -- (0,0) -- (1,-1);
\draw (0,0) -- (-.5, -1);
\put(-2, -20){$\cdots$}

\draw [dashed, thick] (0,0) -- (0,1);
\put(5, 2){$n_1$}

\end{tikzpicture}
\quad,\quad
\begin{tikzpicture}
\tikzstyle{every node}=[draw,circle,fill=white,minimum size=4pt,
                            inner sep=0pt]
\node at (0,0) [circle, fill=black] {};
\draw (-1,-1) -- (0,0) -- (1,-1);
\draw (0,0) -- (-.5, -1);
\put(-2, -20){$\cdots$}

\draw [dashed, thick] (0,0) -- (0,1);
\put(5, 2){$n_2$}

\draw[->] (1,0) to (2,0);
\put(40, 4){$\merge$}
\end{tikzpicture}
\begin{tikzpicture}
\tikzstyle{every node}=[draw,circle,fill=white,minimum size=4pt,
                            inner sep=0pt]
\node at (0,0) [circle, fill=black] {};
\draw (-2,-1.5) -- (0,0) -- (-.5,-1.5);
\draw (0,0) -- (-1.5, -1.5);
\put(-28, -35){$\cdots$}

\draw (2,-1.5) -- (0,0) -- (.5,-1.5);
\draw (0,0) -- (1, -1.5);
\put(25, -35){$\cdots$}

\draw [dashed, thick] (0,0) -- (0,1);
\put(5, 2){$n_1 + n_2$}
\end{tikzpicture}

\end{center}

\caption{Three operations on graphs for constructing and graph manifold with torus boundary.}
\label{fig:tree_operations}
\end{figure}
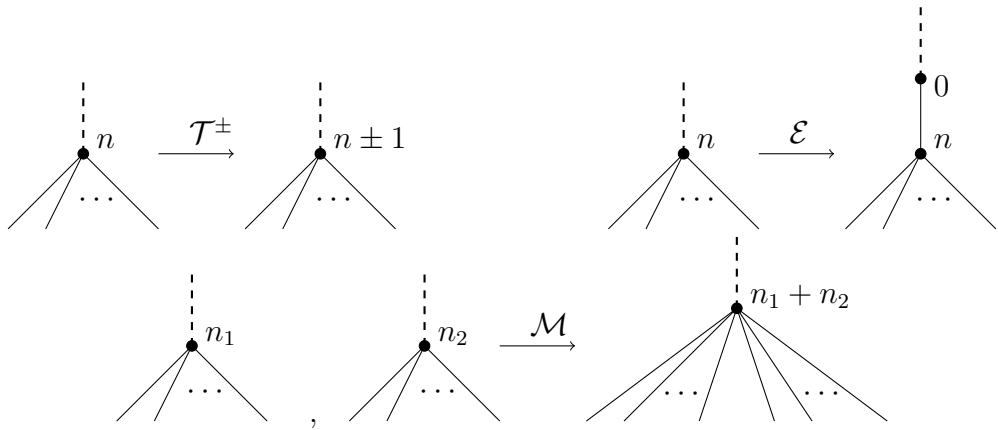

\subsection{Operations on bordered manifolds} The graph operations described above may be thought of as operations on the corresponding (bordered) graph manifolds. In particular, we will abuse notation and write $M_{\twist(\Gamma)} = \twist(M_\Gamma)$. $\twist(M_\Gamma)$ is obtained from $M_\Gamma$ by attaching the mapping cylinder of a positive (standard) Dehn twist to the boundary of $M_\Gamma$. More precisely, by considering the gluing conventions prescribed by the plumbing tree, the effect on the bordered manifold is $\twist(M_\Gamma,\alpha,\beta) = (M_\Gamma,\alpha,\beta+\alpha)$. 

Similarly, $\extend(M_\Gamma) = M_{\extend(\Gamma)}$ is obtained from $M_\Gamma$ by attaching the bimodule corresponding to the two boundary plumbing tree
\begin{center}
\begin{tikzpicture}
\node (a) at (0,0) {$\bullet$};
\node[above = 1pt] at (a) {$0$};
\draw[dashed] (-1,0)--(0,0)--(1,0);
\end{tikzpicture}
\end{center}
The graph manifold assigned to this plumbing tree is the trivial $S^1$-bundle over the annulus, or $T^2 \times [0, 1]$.  Recall that the bordered structure on each boundary component of this manifold is given by  the convention that $\alpha$ is an $S^1$ fiber and $\beta$ lies in the base surface. One checks that the resulting bordered manifold can be realised as the mapping cylinder for the diffeomorphism represented by the matrix 
$\left( \begin{smallmatrix} 0 & 1\\ -1 & 0\end{smallmatrix} \right) \in \mathit{SL}_2(\Z). $ 
The effect on the level of bordered manifolds is  $\extend(M_\Gamma,\alpha,\beta)=(M_\Gamma,-\beta,\alpha)$.

Finally, $\merge(M_{\Gamma_1},M_{\Gamma_2})$ is the result of attaching the bundle $S^1\times \pants$ where $\pants$ is a pair of pants. The bordered structure on $S^1\times\pants$ (which we suppress from the notation) is determined as follows: the two ``input" boundary components which glue to $M_{\Gamma_1}$ and $M_{\Gamma_2}$ are parametrized by pairs $(\alpha, \beta)$ with $\beta$ a fiber and $\alpha$ a curve in the base surface $\pants$, while the third ``output" boundary component is parametrized by $(\alpha, \beta)$ with $\alpha$ a fiber and $\beta$ a curve in $\pants$. As usual, the bordered structure on $S^1\times\pants$ determines the gluing as well as the bordered structure on the resulting three-manifold.

To see that $\merge(M_{\Gamma_1},M_{\Gamma_2})$ is indeed the manifold corresponding to $\merge(\Gamma_1, \Gamma_2)$, note that $S^1\times\pants$ with the specified bordered structure is the manifold associated with the three boundary plumbing tree $\Gamma_\merge$ below. \begin{center}
\begin{tikzpicture}
\tikzstyle{every node}=[draw,circle,fill=white,minimum size=4pt,
                            inner sep=0pt]
\node at (0,0) [circle, fill=black] {};
\node at (-.8,-.8) [circle, fill=black] {};
\node at (.8,-.8) [circle, fill=black] {};
\draw (-.8,-.8) -- (0,0) -- (.8,-.8);
\draw [dashed, thick] (0,0) -- (0,1);
\draw [dashed, thick] (1.6,-1.6) -- (.8,-.8);
\draw [dashed, thick] (-1.6,-1.6) -- (-.8,-.8);

\put(-31,-18){$0$}
\put(26,-18){$0$}
\put(5, 2){$0$}

\put(-80, -15){$\Gamma_\merge = $}

\end{tikzpicture}
\end{center}
On the other hand, attaching $\Gamma_1$ and $\Gamma_2$ to the two lower boundary edges of $\Gamma_\merge$ produces a single boundary plumbing tree that is equivalent to $\merge(\Gamma_1, \Gamma_2)$ (by equivalent, we mean that the corresponding manifolds are diffeomorphic; to see this equivalence, use rule R3 of \cite{Neumann} to contract the 0-framed valence two vertices).

From this discussion, it should be clear that the operations $\twist$, $\extend$, and $\merge$ may be extended to natural operations on arbitrary bordered manifolds. We are interested in the effect of these operations on bordered invariants. Following the conventions laid out, we have that $\twist^\pm(M,\alpha,\beta)= (M,\alpha,\beta\pm\alpha)$ so that, at the level of bordered invariants, 
\begin{equation}\label{eqn:twist-it}
\CFD(\twist^\pm(M,\alpha,\beta)) \cong \Ts^{\pm1} \boxtimes \CFD(M,\alpha,\beta)
\end{equation}

Similarly,  the action of $\left(\begin{smallmatrix} 0&1\\ -1&0\end{smallmatrix}\right)\in\mathit{SL}_2(\Z)$ giving $\extend(M,\alpha,\beta)= (M,-\beta,\alpha)$ is realized by
 \begin{equation}\label{eqn:extend-it}
 \CFD(\extend(M,\alpha,\beta)) \cong \Ts\boxtimes\Td\boxtimes\Ts \boxtimes \CFD(M,\alpha,\beta)
 \end{equation}
 on type D structures. 

The most involved is the merge operation: Given a pair of bordered manifolds $(M_i,\alpha_i,\beta_i)$, for $i=1,2$, this produces a bordered manifold  \[\merge_{1,2} = \merge\left((M_1,\alpha_1\beta_1),(M_2,\alpha_2,\beta_2)\right)\] using  $S^1\times\pants$. As a bordered 3-manifold with three boundary components, $S^1\times\pants$ gives rise to a trimodule in the bordered theory (the object of study in work of the first author \cite{Hanselman2013}). More precisely, we consider the type DAA trimodule  $\CFDAA(S^1\times\pants)$.

We will extract $\CFDAA(S^1\times\pants )$ from the trimodule $\widehat{\mathit{CFDDD}}( \mathcal{Y}_\pants )$ computed in \cite{Hanselman2013}. Note that $S^1\times\pants$ and $\mathcal{Y}_\pants$ agree as manifolds, but have different bordered structure. The trimodule $\widehat{\mathit{CFDDD}}(\mathcal{Y}_\pants)$ has five generators, $v$, $w$, $x$, $y$, and $z$, and the differential is given by the following (c.f. \cite[Figure 10]{Hanselman2013}):
\begin{align*}
\partial(v) &= \rho_3 \otimes x + \rho_1 \sigma_3 \tau_{123} \otimes y + \rho_{123}\sigma_{123}\tau_{123} \otimes y + \tau_3 \otimes z \\
\partial(w) &= \rho_3 \sigma_{12} \otimes x + \rho_1 \sigma_3 \tau_1 \otimes y + \rho_{123}\sigma_{123}\tau_1 \otimes y\\
\partial(x) &= \rho_2 \sigma_{12} \otimes v + \rho_2 \otimes w + \sigma_1\tau_3 \otimes y\\
\partial(y) &= \sigma_2\tau_2 \otimes x + \rho_2 \sigma_2 \otimes z\\
\partial(z) &= \rho_3 \sigma_1 \otimes y + \tau_2 \otimes w
\end{align*}
Since the trimodule has three commuting actions by three copies of the torus algebra (one corresponding to each boundary component), we use $\rho$, $\sigma$, and $\tau$ to distinguish between algebra elements in each copy of $\Alg$. The $\sigma$ boundary of $\mathcal{Y}_\pants$ is parametrized by a pair $(\alpha, \beta)$ such that $\beta$ is a fiber of $\pants \times S^1$ and $\alpha$ lies in the base $\pants$, while the $\rho$ and $\tau$ boundaries have the opposite parametrization. Thus the bordered manifold $S^1\times\pants$ can be obtained from $\mathcal{Y}_\pants$ by switching the role of $\alpha$ and $\beta$ on (say) the $\rho$ boundary. 

\begin{figure}\labellist
\small
\pinlabel {$\widehat{\mathit{CFDDD}}(\mathcal{Y}_\mathcal{P})$} at 148 142 
\pinlabel {$\tau$} at 154 215

\pinlabel {\rotatebox{45}{$\widehat{\mathit{CFAA}}(\mathbb{I})$}} at 44 44
\pinlabel {\rotatebox{45}{$\Ts$}} at 78 78
\pinlabel {\rotatebox{45}{$\Td$}} at 95 95
\pinlabel {\rotatebox{45}{$\Ts$}} at 113 113
\pinlabel {$\rho$} at 3 10

\pinlabel {\rotatebox{-45}{$\widehat{\mathit{CFAA}}(\mathbb{I})$}} at 201 99.5 
\pinlabel {$\sigma$} at 242 67
\endlabellist
\includegraphics[scale=1]{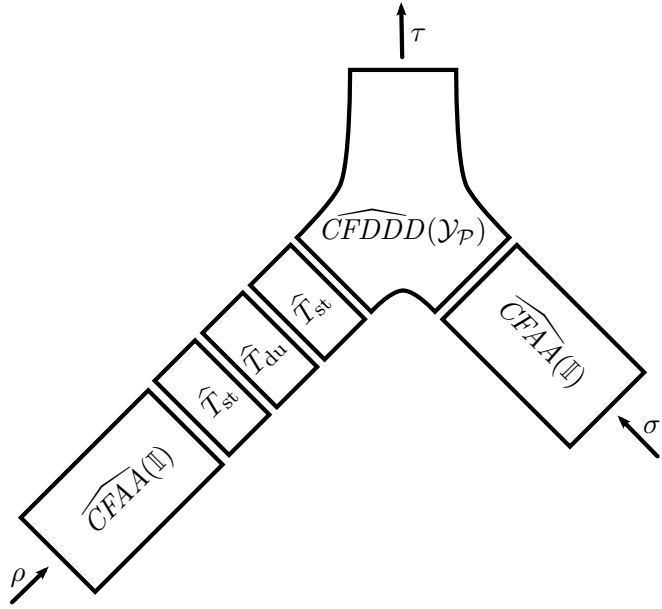}
\caption{A schematic description of the trimodule $\CFDAA(S^1\times\pants)$ extracted from the calculation and conventions in \cite{Hanselman2013}.}
\label{fig:merge_schematic}
\end{figure}

It is now clear how to compute $\widehat{\mathit{CFDAA}}( S^1\times\pants )$ from $\widehat{\mathit{CFDDD}}( \mathcal{Y}_\pants )$: we change the parametrization of the $\rho$-boundary by applying the bimodule $\Ts \boxtimes \Td \boxtimes \Ts$ and then change the $\rho$- and $\sigma$-boundaries to type A by tensoring with $\CFAA(\mathbb{I})$ (see Figure \ref{fig:merge_schematic} for a schematic description). These are both straightforward computations; the resulting trimodule has five generators and the operations are listed in Table \ref{table:trimodule}.

With this trimodule in hand, we have \[\CFD(\merge_{1,2})\cong \CFDAA(S^1\times \pants)\boxtimes\left((M_1,\alpha_1,\beta_1),(M_2,\alpha_2,\beta_2)\right)\] where $\CFDAA(S^1\times\pants)\boxtimes\left(\cdot,\cdot\right)$, by convention, tensors against the $\rho$-boundary in the first factor and against the $\sigma$-boundary in the second factor; compare Figure \ref{fig:merge_schematic}. 

\begin{table}
\centering
  \begin{tabular}{|l|l|}
    \hline
    $m_2(x, \sigma_3)= z $ & $m_3(v, \rho_1, \sigma_1) = \tau_1\otimes y$\\
$m_2(y, \sigma_2)= x $ & $m_3(v, \rho_1, \sigma_{12})= \tau_1\otimes x$\\
$m_2(y, \sigma_{23})= z$ & $m_3(v, \rho_1, \sigma_{123})= \tau_1\otimes z$\\
       $m_2(w, \rho_3)= z $ & $m_3(v, \rho_1, \sigma_{123})= \tau_{123}\otimes y$\\
$m_2(y, \rho_2)= w$ & $m_3(v, \rho_{12}, \sigma_1)= \tau_1\otimes w$ \\
$m_2(y, \rho_{23})= z$& $m_3(v, \rho_{12}, \sigma_{12})= \tau_{12}\otimes  v$\\
$m_2(w, \sigma_{23})= \tau_{23}\otimes w $ & $m_3(v, \rho_{12}, \sigma_{123})= \tau_{123}\otimes w$\\
$m_2(x, \sigma_3)= \tau_{23}\otimes y $ & $m_3(v, \rho_{123}, \sigma_1)= \tau_1\otimes z$\\
$m_2(x, \rho_2)= \tau_2\otimes v$ & $m_3(v, \rho_{123}, \sigma_{12})= \tau_{123}\otimes x$\\
$m_2(x, \rho_{23})= \tau_{23}\otimes x $ & $ m_3(v, \rho_{123}, \sigma_{123})= \tau_{123}\otimes z$\\
  \cline{2-2}
 $m_2(v, \sigma_3) = \tau_3\otimes w$& $m_5(v, \rho_3, \rho_2, \rho_1, \sigma_1)= \tau_1\otimes z$ \\
$m_2(y, \sigma_{23})= \tau_{23}\otimes y$ & $m_5(v, \rho_3, \rho_2, \rho_1, \sigma_1)= \tau_{123}\otimes y$ \\
$m_2(v, \rho_3)= \tau_3\otimes x$ & $m_5(v, \rho_3, \rho_2, \rho_{12}, \sigma_1)= \tau_{123}\otimes w$ \\
$m_2(w, \sigma_2)= \tau_2\otimes v $ & $m_5(v, \rho_3, \rho_2, \rho_{123}, \sigma_1)= \tau_{123}\otimes z$ \\
  \cline{2-2}
$m_2(z, \sigma_2)= \tau_{23}\otimes x$ & $m_7(v, \rho_3, \rho_2, \rho_3, \rho_2, \rho_1, \sigma_1)= \tau_{123}\otimes z$\\
$m_2(z, \sigma_{23})= \tau_{23}\otimes z$ & \\
    \hline
  \end{tabular}
  \caption{Operations for $\widehat{\mathit{CFDAA}}( \Gamma_\merge )$}  
\label{table:trimodule}
\end{table}

\subsection{The effect of twist, extend, and merge on loops} Restricting to the case of loop-type bordered invariants, the effect of the operations $\twist^{\pm 1}, \extend$, and $\merge$ can be given simpler descriptions. Recall that $\ex=\tw \circ \du^{-1} \circ \tw$, and that $\ex$ can be easily calculated using Lemma \ref{lem:ex_operation}.

\begin{prop}\label{prp:tw-and-ex-on-loops}
If $\CFD(M,\alpha,\beta)$ is of loop-type and represented by the collection $\{\lp_i\}_{i=1}^n$ then $\CFD(\twist^\pm(M,\alpha,\beta))$ is represented by $\{ \tw^{\pm 1}( \lp_i ) \}_{i=1}^n$ and $\CFD( \extend( M,\alpha,\beta))$ is represented by $\{\ex(\lp_i) \}_{i=1}^n$.
\end{prop}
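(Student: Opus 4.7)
The proof is essentially a bookkeeping exercise assembling results already established. The key facts I plan to invoke are: (a) the identifications of $\CFD(\twist^{\pm}(M,\alpha,\beta))$ and $\CFD(\extend(M,\alpha,\beta))$ as box tensor products with the reparametrization bimodules $\Ts^{\pm 1}$ and $\Ts\boxtimes\Td\boxtimes\Ts$ (equations (\ref{eqn:twist-it}) and (\ref{eqn:extend-it})); (b) the effect of these Dehn twist bimodules on individual loops, computed in Propositions \ref{prop:effect_of_Tstd} and \ref{prop:effect_of_Tdul}; and (c) the general fact that $\boxtimes$ distributes over direct sums of type D structures, so that for any type DA bimodule $B$,
\[ B\boxtimes\bigl(\textstyle\bigoplus_{i=1}^n \lp_i\bigr)\;\simeq\;\bigoplus_{i=1}^n (B\boxtimes \lp_i). \]

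For the twist statement: by (\ref{eqn:twist-it}) I have $\CFD(\twist^{\pm}(M,\alpha,\beta))\simeq \Ts^{\pm 1}\boxtimes \CFD(M,\alpha,\beta)$. Applying (c) and then Proposition \ref{prop:effect_of_Tstd} component-by-component gives a homotopy equivalence with $\bigoplus_{i=1}^n \tw^{\pm}(\lp_i)$, as required.

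For the extend statement: by (\ref{eqn:extend-it}) I have $\CFD(\extend(M,\alpha,\beta))\simeq \Ts\boxtimes\Td\boxtimes\Ts\boxtimes\CFD(M,\alpha,\beta)$. Applying Proposition \ref{prop:effect_of_Tstd} to the rightmost $\Ts$ factor replaces each $\lp_i$ with $\tw(\lp_i)$; since loop-type is preserved, Proposition \ref{prop:effect_of_Tdul} (in the $\Td^{+1}$ case, for which the proposition records $\du^{-1}$) then replaces $\tw(\lp_i)$ with $\du^{-1}\circ\tw(\lp_i)$; and a final application of Proposition \ref{prop:effect_of_Tstd} produces $\tw\circ\du^{-1}\circ\tw(\lp_i)$, which equals $\ex(\lp_i)$ by the definition of $\ex$ stated just before Lemma \ref{lem:ex_operation}.

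The only additional point worth verifying is that the collection $\{\tw^{\pm}(\lp_i)\}$ (resp.\ $\{\ex(\lp_i)\}$) still has exactly $|\operatorname{Spin}^c(\cdot)|$ connected components, as required by Definition \ref{def:loop-type}. This is immediate: the operations $\twist^{\pm}$ and $\extend$ act only on the bordered parametrization, so the underlying three-manifold, and hence its spin$^c$ set, is unchanged. I do not anticipate any genuine obstacle here; the real content of the proposition lies in Propositions \ref{prop:effect_of_Tstd} and \ref{prop:effect_of_Tdul}, with the present proposition being their formal transport from the setting of abstract loops to that of bordered three-manifolds.
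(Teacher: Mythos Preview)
Your proof is correct and follows essentially the same approach as the paper, which simply records that the result is immediate from Proposition \ref{prop:effect_of_Tstd} with Equation (\ref{eqn:twist-it}) and Proposition \ref{prop:effect_of_Tdul} with Equation (\ref{eqn:extend-it}). Your version is a more detailed unpacking of this, including the distributivity of $\boxtimes$ over direct sums and the check on the number of $\operatorname{spin}^c$ components, both of which the paper leaves implicit.
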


\begin{proof}
The proof is immediate from Proposition \ref{prop:effect_of_Tstd} with Equation \ref{eqn:twist-it} and Proposition \ref{prop:effect_of_Tdul} with Equation \ref{eqn:extend-it}.
\end{proof}

It follows easily that the operations $\twist^\pm$ and $\extend$ preserve the simple loop-type property.

\begin{lem}\label{lem:twist_extend_loop_type}
Given a (simple) loop-type bordered three-manifold, the operations $\twist^\pm$ and $\extend$ produce (simple) loop-type manifolds. 
\end{lem}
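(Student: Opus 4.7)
The plan is to invoke Proposition \ref{prp:tw-and-ex-on-loops} and then unpack the definition of simple. Suppose $(M,\alpha,\beta)$ is loop-type with $\CFD(M,\alpha,\beta)$ represented, up to homotopy, by a collection of loops $\{\lp_i\}_{i=1}^n$ (with $n = |\operatorname{Spin}^c(M)|$). Proposition \ref{prp:tw-and-ex-on-loops} tells us that the bordered invariants of $\twist^\pm(M,\alpha,\beta)$ and $\extend(M,\alpha,\beta)$ are represented by $\{\tw^{\pm 1}(\lp_i)\}$ and $\{\ex(\lp_i)\}$ respectively. So the first step is simply to observe that each of $\tw^{\pm 1}(\lp_i)$ and $\ex(\lp_i)$ is again a loop, which is immediate since $\tw$ and $\du$ (and hence $\ex=\tw\circ\du^{-1}\circ\tw$) are defined as operations on the set of loops and carry loops to loops (as explicitly verified in the proofs of Propositions \ref{prop:effect_of_Tstd} and \ref{prop:effect_of_Tdul}). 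Since the number of connected components is unchanged by these operations, the count $|\operatorname{Spin}^c|$ is unaffected and the condition of Definition \ref{def:loop-type} continues to hold. This establishes the loop-type case.

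For the simple loop-type case, I would appeal directly to Definition \ref{def:simple}: a collection $\{\lp_i\}$ is simple precisely when there exists a composition $\phi$ of operations $\tw^{\pm 1}$ and $\du^{\pm 1}$ such that $\phi(\lp_i)$ consists only of unstable chains for every $i$. Given such a $\phi$ certifying simplicity of $\{\lp_i\}$, the collection $\{\tw^{\pm 1}(\lp_i)\}$ is witnessed to be simple by the composition $\phi \circ \tw^{\mp 1}$, and the collection $\{\ex(\lp_i)\} = \{\tw\circ\du^{-1}\circ\tw(\lp_i)\}$ is witnessed to be simple by $\phi\circ\twi\circ\du\circ\twi$. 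In each case we have exhibited a sequence of $\tw^{\pm 1}$ and $\du^{\pm 1}$ operations taking the new loops to a collection of loops consisting only of unstable chains, which is exactly what simplicity demands.

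There is no real obstacle here: the lemma is essentially a bookkeeping consequence of the fact that, at the level of loops, both $\twist^\pm$ and $\extend$ are implemented by compositions of the generating operations $\tw^{\pm 1}$ and $\du^{\pm 1}$ (the former directly, the latter via the identity $\ex = \tw\circ\du^{-1}\circ\tw$), and the simple loops form a subclass closed under the group generated by $\tw$ and $\du$ by definition. The only thing to be careful about is confirming that Proposition \ref{prp:tw-and-ex-on-loops} applies componentwise to each loop in the representing collection, but this is built into the componentwise extension of $\tw$ and $\du$ to collections of loops given in Section \ref{subsec:operations}.
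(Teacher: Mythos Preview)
Your proof is correct and follows essentially the same approach as the paper: invoke Proposition \ref{prp:tw-and-ex-on-loops} to reduce to the loop-level operations, observe that $\tw^{\pm1}$ and $\ex$ preserve loops and the number of components, and note that simplicity is by definition invariant under compositions of $\tw^{\pm1}$ and $\du^{\pm1}$. The paper phrases the spin$^c$ check slightly differently (emphasizing that $\twist^\pm$ and $\extend$ are merely boundary reparametrizations, so $|\operatorname{Spin}^c(M)|$ is unchanged independently of the loop count), but the content is the same.
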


\begin{proof}
It is an immediate consequence of Proposition \ref{prp:tw-and-ex-on-loops} that if $\CFD(M,\alpha,\beta)$ is represented by a collection of loops then the same is true for $\CFD(\twist(M,\alpha,\beta))$ and $\CFD(\extend(M,\alpha,\beta))$, since the operations $\tw^{\pm1}$ and $\ex$ take loops to loops. Moreover, if the loops defining $\CFD(M,\alpha,\beta)$ are simple then the loops resulting from $\twist^\pm$ and $\extend$ are simple, since changing framing by Dehn twists does not, by definition, change whether or not a loop is simple. It only remains to check that $\CFD(\twist(M,\alpha,\beta))$ and $\CFD(\extend(M,\alpha,\beta))$ have exactly one loop for each $\operatorname{spin}^c$-structure on the corresponding manifolds. This is again clear since it is true for $\CFD(M,\alpha,\beta)$ and the operations $\twist^{\pm1}$ and $\extend$ amount to changing the parametrization on the boundary of $M$; changing the parametrization does not change the number of $\operatorname{spin}^c$-structures, and the loop operations $\tw^{\pm1}$ and $\ex$ do not change the number of loops.
\end{proof}

For the merge operation, we will restrict further to the case that the loop(s) representing $\CFD(M_1,\alpha_1,\beta_1)$ can all be written in standard notation with no stable chains. In this case, $\CFD(\merge_{1,2})$ is also a collection of loops.

\begin{remark}
By contrast, if $\CFD(M_1,\alpha_1,\beta_1)$ and $\CFD(M_2,\alpha_2,\beta_2)$ both contain stable chains in standard notation, $\CFD(\merge_{1,2})$ is not obviously of loop-type. However, in many cases it can be realized as a collection of loops after a homotopy equivalence.
\end{remark}

It is enough to describe $\merge$ on individual loops; we use $\me$ to denote the corresponding operation on abstract loops. If $\CFD(M_1,\alpha_1,\beta_1)$ is represented by a collection of loops $\{\lp_i\}_{i=1}^n$ and $\CFD(M_2,\alpha_2,\beta_2)$ is represented by a collection of loops $\{\lp_j\}_{j=1}^m$ then $\CFD(\merge_{1,2})$ is given by
$\bigcup_{1\le i\le n, 1\le j\le m} \me(\lp_i, \lp_j).$ Determining $\me(\lp_i,\lp_j)$ is a direct calculation using the trimodule and and a key application of loop calculus.

\begin{prop}\label{prop:merge_unstable_chains}
Let $\lp_1$ be a loop which can be written in standard notation with only type $d_k$ unstable chains and let $\lp_2$ be any loop. Then $\me(\lp_1, \lp_2)$ is a collection of loops. If $\lp_2$ cannot be written in standard notation, then $\me(\lp_1, \lp_2)$ is one copy of $\lp_2$ for each segment in $\lp_1$. Otherwise, $\me(\lp_1, \lp_2)$ is determined as follows: The $\iota_0$-vertices of the $\Alg$-decorated graph correspond to pairs $(u, v)$, where $u$ is a $\iota_0$-vertex of $\lp_1$ and $v$ is a $\iota_0$-vertex of $\lp_2$, and for each $d_k$ segment from $u_1$ to $u_2$ in $\lp_1$ we have:
\begin{enumerate}
\item for each $a_\subL$ segment from $v_1$ to $v_2$ in $\lp_2$, there is an $a_\subL$ segment from $(u_2, v_1)$ to $(u_2, v_2)$;

\item for each $b_\subL$ segment from $v_1$ to $v_2$ in $\lp_2$, there is n $b_\subL$ segment from $(u_1, v_1)$ to $(u_1, v_2)$;

\item for each $c_\subL$ segment from $v_1$ to $v_2$ in $\lp_2$, there is a $c_{\subL-k}$ segment from $(u_2, v_1)$ to $(u_1, v_2)$;

\item for each $d_\subL$ segment from $v_1$ to $v_2$ in $\lp_2$, there is a $d_{k + \subL}$ segment from $(u_1, v_1)$ to $(u_2, v_2)$;

\end{enumerate}
\end{prop}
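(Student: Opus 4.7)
The plan is to compute the box tensor product
\[N := \CFDAA(S^1 \times \pants) \boxtimes (N_{\lp_1}, N_{\lp_2})\]
directly from the trimodule operations in Table \ref{table:trimodule}, then apply the edge reduction algorithm of Section \ref{sub:typeD} to simplify to a reduced $\Alg$-decorated graph, in direct analogy with the proofs of Propositions \ref{prop:effect_of_Tstd} and \ref{prop:effect_of_Tdul}. First I will read off from the table the idempotent structure of the five trimodule generators on the $\rho$- and $\sigma$-sides (paired respectively with $\lp_1$ and $\lp_2$): $v$ has $(\iota_0, \iota_0)$, $w$ has $(\iota_0, \iota_1)$, $x$ has $(\iota_1, \iota_0)$, and both $y$ and $z$ have $(\iota_1, \iota_1)$. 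The $\iota_0$-vertices of $N$ will then be indexed by pairs $(u, v)$ of $\iota_0$-vertices of $\lp_1$ and $\lp_2$ arising from $v$-tensors, exactly as in the statement; the remaining $w$-, $x$-, $y$-, $z$-tensors will either be cancelled during edge reduction or will persist as interior $\iota_1$-vertices of the new stable and unstable chains.

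A separate, easier case will handle the degenerate situation in which $\lp_2$ cannot be written in standard notation (so $\lp_2$ is a collection of $e^*$ segments and has only $\iota_1$-vertices): then $v$- and $x$-tensors do not exist, and the operations $m_2(w, \sigma_{23})$, $m_2(y, \sigma_{23})$, and $m_2(z, \sigma_{23})$ (each of the form $\tau_{23} \otimes \cdot$) assemble the remaining generators into one copy of $\lp_2$ for each segment of $\lp_1$, as claimed.

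For the main case, the bulk of the proof will be a segment-by-segment analysis. Fixing a $d_k$ chain in $\lp_1$ from $\bullet$-vertices $u_1$ to $u_2$ (passing through $k$ interior $\circ$-vertices via one $\rho_{123}$, $k-1$ $\rho_{23}$'s, and one $\rho_2$), I will consider each segment type of $\lp_2$ in turn. For stable chains $a_\ell$ and $b_\ell$ in $\lp_2$, the $\sigma$-algebra acts through operations such as $m_2(v, \sigma_3) = \tau_3 \otimes w$ without interleaving with any $\rho$-inputs, so the segment is simply reproduced between the $v$-tensors based at $(u_2, \cdot)$ or $(u_1, \cdot)$ respectively. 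The more interesting cases are the unstable chains $c_\ell$ and $d_\ell$: here the higher $\Ainfty$-operations at $v$ of arities 3, 5, and 7 listed in Table \ref{table:trimodule} (for instance $m_3(v, \rho_{12}, \sigma_{12}) = \tau_{12} \otimes v$ and $m_5(v, \rho_3, \rho_2, \rho_1, \sigma_1) = \tau_1 \otimes z$) interleave the full $\rho$-chain of the $d_k$ segment with the $\sigma$-chain of the segment in $\lp_2$. After edge reduction, the resulting output is a single chain between $v$-tensors whose subscript arithmetic produces $c_{\ell-k}$ from $c_\ell$ and $d_{k+\ell}$ from $d_\ell$, with interior $\iota_1$-vertices supplied by the surviving $y$- and $z$-tensors.

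The main obstacle will be the subscript bookkeeping in the unstable-chain cases. The way the sequence of $k+1$ $\rho$-algebra elements along a $d_k$ chain splices with the $\sigma$-algebra of a $c_\ell$ or $d_\ell$ segment through the mixed $\Ainfty$-operations is not at all transparent from the table, and extracting the exact output subscript and orientation requires careful book-keeping and almost certainly figures analogous to Figures \ref{fig:effect_of_tau_m} and \ref{fig:effect_of_Tdual} exhibiting all the relevant cases.
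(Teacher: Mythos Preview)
Your approach is essentially the same as the paper's---compute the tensor with the trimodule segment by segment, then use edge reduction---but you have two points backwards.

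First, the $m_5$ and $m_7$ operations are \emph{irrelevant}, not essential. Their $\rho$-inputs begin with $(\rho_3, \rho_2, \ldots)$, which can only be fed by an $a_k$ chain in $\lp_1$; since $\lp_1$ contains only $d_k$ chains by hypothesis, these operations never fire. The paper makes this observation at the outset and uses it crucially: it is precisely because these higher operations vanish that no operation in the tensor product can span more than one segment of either loop, and this is what justifies the segment-by-segment analysis in the first place. You assert this locality without identifying its source.

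Second, the stable-chain cases are not as simple as you describe. When $\lp_2$ contributes an $a_\ell$ segment and $\lp_1$ contributes a $d_k$ segment, there are many generators of the form $x \otimes x_i \otimes v_j$, $y \otimes x_i \otimes y_j$, $z \otimes x_i \otimes y_j$ involving the interior $\circ$-vertices $x_i$ of the $d_k$ chain---the $\rho$-inputs \emph{do} interleave. These all cancel via a ladder of unlabeled edges (from operations such as $m_2(x,\sigma_3)=z$, $m_2(y,\sigma_{23})=z$, $m_2(y,\sigma_2)=x$), leaving only the $v$- and $w$-tensors at $u_2$, which form the surviving $a_\ell$ segment. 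The outcome you predict is correct, but the mechanism is edge reduction, not absence of interaction. Conversely, in the unstable-chain cases the relevant operations are the arity-2 and arity-3 ones (for instance $m_3(v,\rho_{123},\sigma_{123})=\tau_{123}\otimes z$), not arity 5 or 7; the subscript arithmetic $d_\ell \mapsto d_{k+\ell}$ emerges from a zig-zag cancellation pattern, with separate sub-cases depending on the signs of $k$ and $\ell$, much as in Figures~\ref{fig:effect_of_tau_m} and~\ref{fig:effect_of_Tdual}.
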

\begin{proof}
The type D module represented by $\me(\lp_1, \lp_2)$ is obtained by tensoring  the type D modules corresponding to $\lp_1$ and $\lp_2$ with the $\rho$ and $\sigma$ boundaries, respectively, of the trimodule $\CFDAA( \mathcal{Y}_\pants )$. We will denote this tensor by $\CFDAA( \mathcal{Y}_\pants )\boxtimes(\lp_1,\lp_2)$. This trimodule has five generators; the idempotents associated with each generator on the $\rho$, $\tau$, and $\sigma$ boundaries are as follows:

 \begin{center} \begin{tabular}{|l|l|l|l|l|l|}
    \hline
    Generator: & $v$ & $w$ & $x$ & $y$ & $z$ \\
    \hline
    Idempotents: &$(\iota_0^\rho, \iota_0^\sigma, \iota_0^\tau)$ & $(\iota_0^\rho, \iota_1^\sigma, \iota_1^\tau)$ &$(\iota_1^\rho, \iota_0^\sigma, \iota_1^\tau)$ & $(\iota_1^\rho, \iota_1^\sigma, \iota_1^\tau)$& $(\iota_1^\rho, \iota_1^\sigma, \iota_1^\tau)$\\ 
    \hline
\end{tabular}\end{center}

Note that the $\iota_0$ generators in $\widehat{\mathit{CFDAA}}( S^1\times\pants )\boxtimes (\lp_1,\lp_2)$ arise precisely from the the generator $v$ tensored with $\iota_0$ generators in $\lp_1$ and $\lp_2$.

First suppose that $\lp_2$ is written in standard notation. Note also that since $\lp_1$ is assumed to have no $a_k$ segments, we may ignore the $m_5$ and $m_7$ operations in Table \ref{table:trimodule}. As a result, there are no operations in $\me(\lp_1, \lp_2)$ that arise from more than one segment in either loop. This means to compute $\me(\lp_1, \lp_2)$ we can feed $\lp_1$ and $\lp_2$ into the trimodule one segment at a time. For each combination of segment in $\lp_1$ and segment in $\lp_2$, the resulting portion of $\widehat{\mathit{CFDAA}}( S^1\times\pants )\boxtimes (\lp_1,\lp_2)$ is homotopy equivalent to the segment determined by $(1)-(4)$ in the statement of the proposition. The proof is essentially contained in Figures \ref{fig:merge1}, \ref{fig:merge2}, and \ref{fig:merge3}; we will describe one case in detail and leave the details of the other cases to the reader, with the figures as a guide.

Consider a segment $d_k$ in $\lp_1$ and a segment $a_\subL$ in $\lp_2$ with $k,\subL > 0$ (see the top left box in Figure \ref{fig:merge1} for the case of $k = \subL = 2$). Let the generators in these two segments be labelled as follows:
$$d_k = \quad \overset{\rho_{123}}\longrightarrow \underset{x_1}\circ \overset{\rho_{23}}\longrightarrow \cdots \overset{\rho_{23}}\longrightarrow \underset{x_k}\circ \overset{\rho_2}\longrightarrow \underset{u_2}\bullet \to$$
$$a_\subL = \quad \leftarrow \underset{v_1}\bullet \overset{\sigma_{3}}\longrightarrow \underset{y_1}\circ \overset{\sigma_{23}}\longrightarrow \cdots \overset{\sigma_{23}}\longrightarrow \underset{y_\subL}\circ \overset{\sigma_2}\longrightarrow \underset{v_2}\bullet \to$$
Note that the arrow adjacent to $u_2$ on the right is determined by the segment following $d_k$ in $\lp_1$, but following Section \ref{sub:loop-type} it must be an outgoing $\rho_1$, $\rho_{12}$ or $\rho_{123}$ arrow. Similarly, the arrows to the left of $v_1$ and the right of $v_2$ are outgoing $\sigma_1$, $\sigma_{12}$, or $\sigma_{123}$ arrows. The portion of $\widehat{\mathit{CFDAA}}( S^1\times\pants )\boxtimes (\lp_1,\lp_2)$ coming from $d_k$ and $a_\subL$ has the following generators:
\begin{align*}
x \otimes x_i \otimes v_j \quad &\text{for } 1\le i\le k, j\in\{1,2\} \\
y \otimes x_i \otimes y_j \quad &\text{for } 1\le i\le k, 1\le j\le \subL \\
z \otimes x_i \otimes y_j \quad &\text{for } 1\le i\le k, 1\le j\le \subL \\
v \otimes u_2 \otimes v_j \quad &\text{for } j\in\{1,2\} \\
w \otimes u_2 \otimes y_j \quad &\text{for } 1\le j\le \subL
\end{align*}
For each $i$ in $\{1, \ldots, k\}$, the trimodule operations $m_2(x, \sigma_3)$, $m_2(y, \sigma_{23})$, and $m_2(y, \sigma_2)$ give rise to the following unlabeled edges:
\begin{align*}
x \otimes x_i \otimes v_1 \to z\otimes x_i\otimes y_{1\phantom{+1}} \quad &\text{for } 1\le i\le k \\
y \otimes x_i \otimes y_j \to z\otimes x_i\otimes y_{j+1} \quad &\text{for } 1\le i\le k, 1\le j\le \subL-1 \\
y \otimes x_i \otimes y_\subL \to x\otimes x_i\otimes v_{2\phantom{+1}} \quad &\text{for } 1\le i\le k
\end{align*}
We can cancel these unlabeled edges using the edge reduction algorithm described in Section \ref{sub:typeD}. We cancel them in order of increasing $i$, and for fixed $i$ in the order above. It is not difficult to check that the only additional incoming arrows at $z\otimes x_i\otimes y_j$ and $z\otimes x_i\otimes v_2$ are given by
\begin{align*}
z \otimes x_i \otimes y_j \overset{\tau_{23}}\longrightarrow z\otimes x_i\otimes y_{j+1} \quad &\text{for } 1\le i\le k, 1\le j\le \subL-1 \\
z \otimes x_i \otimes y_\subL \overset{\tau_{23}}\longrightarrow x\otimes x_i\otimes v_{2\phantom{+1}} \quad &\text{for } 1\le i\le k \\
y \otimes x_{i-1} \otimes y_j \longrightarrow z\otimes x_i\otimes y_{j\phantom{+1}} \quad &\text{for } 1\le i\le k, 1\le j\le \subL-1 \\
x \otimes x_{i-1} \otimes y_\subL \longrightarrow x\otimes x_i\otimes v_{2\phantom{+1}} \quad &\text{for } 1\le i\le k
\end{align*}
It follows that each time we use the edge reduction algorithm on one of the unlabeled arrows mentioned above, there are no other incoming arrows at the terminal vertex that have not already been canceled, and so canceling the arrow produces no new arrows. After canceling all of the unlabeled arrows, the only remaining generators are $v \otimes u_2 \otimes v_1$, $v \otimes u_2 \otimes v_2$, and $v \otimes u_2 \otimes y_j$ for $1\le j\le \subL$. Arrows between these generators arise from trimodule operations involving only the generators $v$ and $w$ and with no $\rho$ inputs; there are only three:
$$m_2(v, \sigma_3) = \tau_3\otimes w, \quad m_2(w, \sigma_{23})=\tau_{23}\otimes w, \quad \text{and} \quad m_2(w, \sigma_2) = \tau_2 \otimes v.$$
It follows that the only arrows in the portion of $\widehat{\mathit{CFDAA}}( S^1\times\pants)\boxtimes (\lp_1,\lp_2)$ coming from the segments $d_k$ and $a_\subL$ are
\begin{align*}
v \otimes u_2 \otimes v_1 &\overset{\tau_3}\longrightarrow w\otimes u_2\otimes y_1, \\
w \otimes u_2 \otimes y_j &\overset{\tau_{23}}\longrightarrow w\otimes u_2\otimes y_{j+1} \quad \text{for } 1\le j\le \subL-1, \\
w\otimes u_2 \otimes y_\subL &\overset{\tau_2}\longrightarrow v\otimes u_2\otimes v_2.
\end{align*}
That is, the portion of $\widehat{\mathit{CFDAA}}( S^1\times\pants)\boxtimes (\lp_1,\lp_2)$ coming from the segments $d_k$ and $a_\subL$ is an $a_\subL$ segment from $v \otimes u_2 \otimes v_1$ to $v \otimes u_2 \otimes v_2$.

Finally, note that there can be no arrows connecting $w\otimes u_2\otimes y_j$ to any other portions of $\CFDAA( S^1\times\pants)\boxtimes (\lp_1,\lp_2)$ arising from different segments, since the only arrow connecting $u_2$ to a generator in a different segment of $\lp_1$ is an outgoing $\rho_1$, $\rho_{12}$ or $\rho_{123}$ arrow, and the only trimodule operations involving these inputs also have $\sigma_1$, $\sigma_{12}$, or $\sigma_{123}$ as an input. The outgoing arrows from $u_2$ and from $v_1$ and $v_2$ do give rise to additional arrows out of $v \otimes u_2 \otimes v_1$ and $v \otimes u_2 \otimes v_2$; these show up in the portion of $\CFDAA( \mathcal{Y}_\pants )\boxtimes (\lp_1,\lp_2)$ coming from the segment following $d_k$ in $\lp_1$ and the segment following or preceding $a_\subL$ in $\lp_2$.

For other pairs of segments in $\lp_1$ and $\lp_2$, the proof is similar. Figure \ref{fig:merge1} depicts the relevant portion of $\CFDAA( S^1\times\pants )\boxtimes (\lp_1,\lp_2)$ for $d_2$, $d_0$, or $d_{-2}$ paired with $a_2$ or $b_2$. Any $d_k$ paired with $a_\subL$ or $b_\subL$ behaves like one of these cases, depending on the sign of $k$. Note that if $\subL<0$ we simply take the mirror image of these diagrams, since $a_{-\subL} = \bar{a}_\subL$. This proves $(1)$ and $(2)$.

$(3)$ can be deduced from $(4)$ by observing that a $c_\subL$ segment from $v_1$ to $v_2$ is the same as a $d_{-\subL}$ segment from $v_2$ to $v_1$. To prove $(4)$, consider pairing $d_k$ in $\lp_1$ with $d_\subL$ in $\lp_2$. The behavior depends on the sign of $k$ and $\subL$; Figures \ref{fig:merge2} and \ref{fig:merge3} depict the cases with $k$ in $\{2, 0, -2\}$ and $\subL$ in $\{3, 0, -3\}$. If $k = 0$, it is clear that the result is a segment of type $d_\subL$, and similarly if $\subL = 0$ the result is a segment of type $d_k$. The case of $k$ and $\subL$ positive behaves like the top left box in Figure \ref{fig:merge2}, and the case of $k$ and $\subL$ negative behaves like the bottom box in Figure \ref{fig:merge3}. In each case all generators cancel except for those along the top and right edges, resulting in a segment of type $d_{k+\subL}$.

The case that $k$ and $\subL$ have opposite signs is slightly more complicated. Assume first that $k$ is negative and $\subL$ is positive. If $k \ge -\subL$, the resulting complex looks like the bottom left box in Figure \ref{fig:merge2}. Note that starting from the top left corner, there is a path to the bottom right corner consisting of a $\tau_1$ arrow, an odd length ``zig-zag" sequence of unlabelled arrows, $k+\subL$ $\tau_{23}$ arrows, and a $\tau_2$ arrow. Everything in the diagram not involved in this sequence can be canceled without adding new arrows. Cancelling the remaining unlabelled arrows turns the $\tau_1$ arrow and the first $\tau_{23}$ arrow into a $\tau_{123}$ arrow, if $k+\subL > 0$, or it turns the $\tau_1$ arrow and $\tau_2$ arrow into a $\tau_{12}$ arrow if $k+\subL = 0$. The result is a segment of type $d_{k+\subL}$. The case that $k< -\subL$ is slightly different. It is not pictured separately, but the main difference is that the zig-zag sequence of unlabelled arrows starting at the end of the $\tau_1$ arrow has even length and ends on the right side of the diagram instead of the bottom. It is then followed by $-k-\subL-1$ backwards $\tau_{23}$ arrows and a backwards $\tau_3$ arrow. Everything not involved in this sequence cancels, and removing the unlabelled arrows produces a segment of type $d_{k+\subL}$ from the top left corner to the bottom right corner.

To complete the proof of (4), consider the case that $k$ is positive and $\subL$ is negative. If $\subL < -k$, the resulting complex looks like the top box in Figure \ref{fig:merge3}. The complex reduces to a $\tau_1$ arrow followed by a chain of unlabelled arrows, $-k-\subL-1$ backwards $\tau_{23}$ arrows and a backwards $\tau_{3}$ arrow. This further reduces to a chain of type $d_{k+\subL}$. If instead $\subL \ge -k$, the chain of unlabelled arrows following the $\tau_1$ arrow ends on the right side of the diagram instead of the bottom. Again, the complex reduces to a chain of type $d_{k+\subL}$.

Finally, we must consider the case the case that $\lp_2$ cannot be written in standard notation. In this case, $\lp_2$ is a collection of $e^*$ segments.  Note that in this case we can ignore all operations in Table \ref{table:trimodule} that involve $v$, $x$, or $\sigma$ inputs other than $\sigma_{23}$ (this only leaves seven operations). It is easy to see that the complex $\widehat{\mathit{CFDAA}}(S^1\times\pants) \boxtimes (\lp_1 , \lp_2)$ collapses to a copy of of $\lp_2$ for each $d_k$ segment in $\lp_1$ (see Figure \ref{fig:merge4}). 
\end{proof}

\begin{figure}
\begin{center}
\begin{tikzpicture}[scale = .8, > = latex]
\scriptsize
\node [color = gray] (g00) at (0,9) {$\circ$};
\node [color = gray] (g01a) at (3,9.5) {$\circ$};
\node [color = gray] (g01b) at (3,8.5) {$\circ$};
\node [color = gray] (g02a) at (6,9.5) {$\circ$};
\node [color = gray] (g02b) at (6,8.5) {$\circ$};
\node [color = gray] (g03) at (9,9) {$\circ$};

\node [color = gray] (g10) at (0,6) {$\circ$};
\node [color = gray] (g11a) at (3,6.5) {$\circ$};
\node [color = gray] (g11b) at (3,5.5) {$\circ$};
\node [color = gray] (g12a) at (6,6.5) {$\circ$};
\node [color = gray] (g12b) at (6,5.5) {$\circ$};
\node [color = gray] (g13) at (9,6) {$\circ$};

\node (g20) at (0,3) {$\bullet$};
\node (g21) at (3,3) {$\circ$};
\node (g22) at (6,3) {$\circ$};
\node (g23) at (9,3) {$\bullet$};

\node (top-1) at (-1, 12) {};
\node (top0) at (0,12) {$\bullet$};
\node (top1) at (3,12) {$\circ$};
\node (top2) at (6,12) {$\circ$};
\node (top3) at (9,12) {$\bullet$};
\node (top4) at (10,12) {};

\node (left-1) at (-2,11) {};
\node (left0) at (-2,9)  {$\circ$};
\node (left1) at (-2,6) {$\circ$};
\node (left2) at (-2,3) {$\bullet$};
\node (left3) at (-2,2) {};

\node (top left corner) at (-1, 2) {};
\node (bottom right corner) at (10, 2) {};

\draw (-1, 2) -- (10, 2) -- (10, 11) -- (-1, 11) -- (-1, 2);

\draw [->] (left-1) to node[left]{$\rho_{123}$} (left0); 
\draw [->] (left0) to node[left]{$\rho_{23}$} (left1); 
\draw [->] (left1) to node[left]{$\rho_{2}$} (left2); 
\draw [->] (left2) to (left3); 

\draw [->] (top0) to (top-1);
\draw [->] (top0) to node[above]{$\sigma_3$} (top1); 
\draw [->] (top1) to node[above]{$\sigma_{23}$} (top2); 
\draw [->] (top2) to node[above]{$\sigma_2$} (top3);
\draw [->] (top3) to (top4);

\draw [->] (g20) to (top left corner);

\draw [->, ultra thick] (g00) to (g01a);
\draw [->, ultra thick] (g01b) to (g02a);
\draw [->, ultra thick] (g02b) to (g03);
\draw [->, ultra thick] (g10) to (g11a);
\draw [->, ultra thick] (g11b) to (g12a);
\draw [->, ultra thick] (g12b) to (g13);
\draw [->, color = gray] (g01b) to (g11a);
\draw [->, color = gray] (g02b) to (g12a);
\draw [->, color = gray] (g11b) to (g21);
\draw [->, color = gray] (g12b) to (g22);

\draw [->, color = gray] (g00) to node[below]{$\tau_{23}$} (g01b);
\draw [->, color = gray] (g01a) to node[above]{$\tau_{23}$} (g02a);
\draw [->, color = gray] (g01b) to node[below]{$\tau_{23}$} (g02b);
\draw [->, color = gray] (g02a) to node[above]{$\tau_{23}$} (g03);
\draw [->, color = gray] (g10) to node[below]{$\tau_{23}$} (g11b);
\draw [->, color = gray] (g11a) to node[above]{$\tau_{23}$} (g12a);
\draw [->, color = gray] (g11b) to node[below]{$\tau_{23}$} (g12b);
\draw [->, color = gray] (g12a) to node[above]{$\tau_{23}$} (g13);
\draw [->] (g20) to node[above]{$\tau_{3}$} (g21);
\draw [->] (g21) to node[above]{$\tau_{23}$} (g22);
\draw [->] (g22) to node[above]{$\tau_{2}$} (g23);

\draw [->, color = gray] (g00) to node[left]{$\tau_{23}$} (g10);
\draw [->, color = gray] (g10) to node[left]{$\tau_{2}$} (g20);
\draw [->, color = gray] (g03) to node[left]{$\tau_{23}$} (g13);
\draw [->, color = gray] (g13) to node[left]{$\tau_{2}$} (g23);
\draw [->, color = black] (g23) to (bottom right corner);

\end{tikzpicture}
\begin{tikzpicture}[scale = .8, > = latex]
\scriptsize
\node (g01a) at (3,9.5) {$\circ$};
\node [color = gray] (g01b) at (3,8.5) {$\circ$};
\node (g02a) at (6,9.5) {$\circ$};
\node [color = gray] (g02b) at (6,8.5) {$\circ$};

\node [color = gray] (g11a) at (3,6.5) {$\circ$};
\node [color = gray] (g11b) at (3,5.5) {$\circ$};
\node [color = gray] (g12a) at (6,6.5) {$\circ$};
\node [color = gray] (g12b) at (6,5.5) {$\circ$};

\node [color = gray] (g21) at (3,3) {$\circ$};
\node [color = gray] (g22) at (6,3) {$\circ$};

\node (top0) at (1,12) {};
\node (top1) at (3,12) {$\circ$};
\node (top2) at (6,12) {$\circ$};
\node (top3) at (8,12) {};

\node (top left corner) at (1, 11) {};
\node (top right corner) at (8, 11) {};
\node at (1,2) {};

\draw (1, 2) -- (8, 2) -- (8, 11) -- (1, 11) -- (1, 2);

\draw [->] (top0) to node[above]{$\sigma_{123}$} (top1); 
\draw [->] (top1) to node[above]{$\sigma_{23}$} (top2); 
\draw [->] (top3) to node[above]{$\sigma_1$} (top2);

\draw [->] (top left corner) to node[above right = -2pt]{$\tau_{123}$} (g01a);

\draw [->, color = gray] (g01b) to (g02a);
\draw [->, color = gray] (g11b) to (g12a);
\draw [->, ultra thick] (g01b) to (g11a);
\draw [->, ultra thick] (g02b) to (g12a);
\draw [->, ultra thick] (g11b) to (g21);
\draw [->, ultra thick] (g12b) to (g22);

\draw [->] (g01a) to node[above]{$\tau_{23}$} (g02a);
\draw [->, color = gray] (g01b) to node[below]{$\tau_{23}$} (g02b);
\draw [->, color = gray] (g11a) to node[above]{$\tau_{23}$} (g12a);
\draw [->, color = gray] (g11b) to node[below]{$\tau_{23}$} (g12b);
\draw [->, color = gray] (g21) to node[above]{$\tau_{23}$} (g22);

\draw [->, color = black] (top right corner) to node[above left = -2pt]{$\tau_1$} (g02a);

\end{tikzpicture}

\begin{tikzpicture}[scale = .8, >=latex]
\scriptsize
\node (g00) at (0,1) {$\bullet$};
\node (g01) at (3,1) {$\circ$};
\node (g02) at (6,1) {$\circ$};
\node (g03) at (9,1) {$\bullet$};

\node (left-1) at (-2,3) {};
\node (left0) at (-2,1)  {$\bullet$};
\node (left1) at (-2,0) {};

\node (top left corner) at (-1, 3) {};
\node (bottom right corner) at (10, 0) {};
\node (bottom left corner) at (-1, 0) {};

\draw (-1, 0) -- (10, 0) -- (10, 3) -- (-1, 3) -- (-1, 0);

\draw [->] (left-1) to node[left]{$\rho_{12}$} (left0); 
\draw [->] (left0) to (left1); 

\draw [->] (g00) to (bottom left corner);
\draw [->] (g00) to node[above]{$\tau_{3}$} (g01);
\draw [->] (g01) to node[above]{$\tau_{23}$} (g02);
\draw [->] (g02) to node[above]{$\tau_{2}$} (g03);
\draw [->] (g03) to (bottom right corner);

\end{tikzpicture} 
\begin{tikzpicture}[scale = .8, >=latex]
\scriptsize
\node (g00) at (0,0) {$\circ$};
\node (g01) at (3,0) {$\circ$};

\node (top left corner) at (-2, 2) {};
\node (bottom right corner) at (5, -1) {};
\node (top right corner) at (5, 2) {};
\draw (-2, -1) -- (5, -1) -- (5, 2) -- (-2, 2) -- (-2, -1);

\draw [->] (top left corner) to node[above right = -2pt]{$\tau_{123}$} (g00);
\draw [->] (g00) to node[above]{$\tau_{23}$} (g01);
\draw [->] (top right corner) to node[above left = -2pt]{$\tau_{1}$} (g01);

\end{tikzpicture}

\begin{tikzpicture}[scale = .8, > = latex]
\scriptsize
\node [color = gray] (g00) at (0,9) {$\circ$};
\node [color = gray] (g01a) at (3,9.5) {$\circ$};
\node [color = gray] (g01b) at (3,8.5) {$\circ$};
\node [color = gray] (g02a) at (6,9.5) {$\circ$};
\node [color = gray] (g02b) at (6,8.5) {$\circ$};
\node [color = gray] (g03) at (9,9) {$\circ$};

\node [color = gray] (g10) at (0,6) {$\circ$};
\node [color = gray] (g11a) at (3,6.5) {$\circ$};
\node [color = gray] (g11b) at (3,5.5) {$\circ$};
\node [color = gray] (g12a) at (6,6.5) {$\circ$};
\node [color = gray] (g12b) at (6,5.5) {$\circ$};
\node [color = gray] (g13) at (9,6) {$\circ$};

\node (g20) at (0,3) {$\bullet$};
\node (g21) at (3,3) {$\circ$};
\node (g22) at (6,3) {$\circ$};
\node (g23) at (9,3) {$\bullet$};

\node (left-1) at (-2,11) {};
\node (left0) at (-2,9)  {$\circ$};
\node (left1) at (-2,6) {$\circ$};
\node (left2) at (-2,3) {$\bullet$};
\node (left3) at (-2,2) {};

\node (bottom left corner) at (-1, 2) {};
\node (bottom right corner) at (10, 2) {};
\node (top left corner) at (-1, 11) {};

\draw (-1, 2) -- (10, 2) -- (10, 11) -- (-1, 11) -- (-1, 2);

\draw [->] (left-1) to node[left]{$\rho_{1}$} (left0); 
\draw [->] (left1) to node[left]{$\rho_{23}$} (left0); 
\draw [->] (left2) to node[left]{$\rho_{3}$} (left1); 
\draw [->] (left2) to (left3); 

\draw [->] (g20) to (bottom left corner);

\draw [->, ultra thick] (g00) to (g01b);
\draw [->, ultra thick] (g01a) to (g02b);
\draw [->, ultra thick] (g02a) to (g03);
\draw [->, ultra thick] (g10) to (g11b);
\draw [->, ultra thick] (g11a) to (g12b);
\draw [->, ultra thick] (g12a) to (g13);
\draw [->, color = gray] (g11a) to (g01b);
\draw [->, color = gray] (g12a) to (g02b);
\draw [->, color = gray] (g21) to (g11b);
\draw [->, color = gray] (g22) to (g12b);

\draw [->, color = gray] (g00) to node[above]{$\tau_{23}$} (g01a);
\draw [->, color = gray] (g01a) to node[above]{$\tau_{23}$} (g02a);
\draw [->, color = gray] (g01b) to node[below]{$\tau_{23}$} (g02b);
\draw [->, color = gray] (g02b) to node[below]{$\tau_{23}$} (g03);
\draw [->, color = gray] (g10) to node[above]{$\tau_{23}$} (g11a);
\draw [->, color = gray] (g11a) to node[above]{$\tau_{23}$} (g12a);
\draw [->, color = gray] (g11b) to node[below]{$\tau_{23}$} (g12b);
\draw [->, color = gray] (g12b) to node[below]{$\tau_{23}$} (g13);
\draw [->] (g20) to node[above]{$\tau_{3}$} (g21);
\draw [->] (g21) to node[above]{$\tau_{23}$} (g22);
\draw [->] (g22) to node[above]{$\tau_{2}$} (g23);

\draw [->, color = gray] (g10) to node[left]{$\tau_{23}$} (g00);
\draw [->, color = gray] (g20) to node[left]{$\tau_{3}$} (g10);
\draw [->, color = gray] (g13) to node[left]{$\tau_{23}$} (g03);
\draw [->, color = gray] (g23) to node[left]{$\tau_{3}$} (g13);
\draw [->, color = black] (g23) to (bottom right corner);

\end{tikzpicture}
\begin{tikzpicture}[scale = .8, > = latex]
\scriptsize
\node (g01a) at (3,9.5) {$\circ$};
\node [color = gray] (g01b) at (3,8.5) {$\circ$};
\node (g02a) at (6,9.5) {$\circ$};
\node [color = gray] (g02b) at (6,8.5) {$\circ$};

\node [color = gray] (g11a) at (3,6.5) {$\circ$};
\node [color = gray] (g11b) at (3,5.5) {$\circ$};
\node [color = gray] (g12a) at (6,6.5) {$\circ$};
\node [color = gray] (g12b) at (6,5.5) {$\circ$};

\node [color = gray] (g21) at (3,3) {$\circ$};
\node [color = gray] (g22) at (6,3) {$\circ$};

\node (top left corner) at (1, 11) {};
\node (top right corner) at (8, 11) {};
\node at (1,2) {};

\draw (1, 2) -- (8, 2) -- (8, 11) -- (1, 11) -- (1, 2);

\draw [->] (top left corner) to node[above right = -2pt]{$\tau_{123}$} (g01a);
\draw [->, color = gray] (top left corner) to node[below left = -2pt]{$\tau_{1}$} (g01b);

\draw [->, color = gray] (g01a) to (g02b);
\draw [->, color = gray] (g11a) to (g12b);
\draw [->, ultra thick] (g11a) to (g01b);
\draw [->, ultra thick] (g12a) to (g02b);
\draw [->, ultra thick] (g21) to (g11b);
\draw [->, ultra thick] (g22) to(g12b);

\draw [->] (g01a) to node[above]{$\tau_{23}$} (g02a);
\draw [->, color = gray] (g01b) to node[below]{$\tau_{23}$} (g02b);
\draw [->, color = gray] (g11a) to node[above]{$\tau_{23}$} (g12a);
\draw [->, color = gray] (g11b) to node[below]{$\tau_{23}$} (g12b);
\draw [->, color = gray] (g21) to node[above]{$\tau_{23}$} (g22);

\draw [->, color = black] (top right corner) to node[above left = -2pt]{$\tau_1$} (g02a);

\end{tikzpicture}

\caption{Each box contains the portion of $\widehat{\mathit{CFDAA}}(S^1\times\pants) \boxtimes (\lp_1 , \lp_2)$ coming from a $d_2$, $d_0$, or $d_{-2}$ segment in $\lp_1$ (top, middle, bottom, respectively) and an $a_2$ or $b_2$ segment in $\lp_2$ (left and right, respectively). Thick unlabeled arrows can be removed with the edge reduction algorithm; gray indicates generators and arrows that are eliminated when the differentials are canceled.}
\label{fig:merge1}
\end{center}
\end{figure}
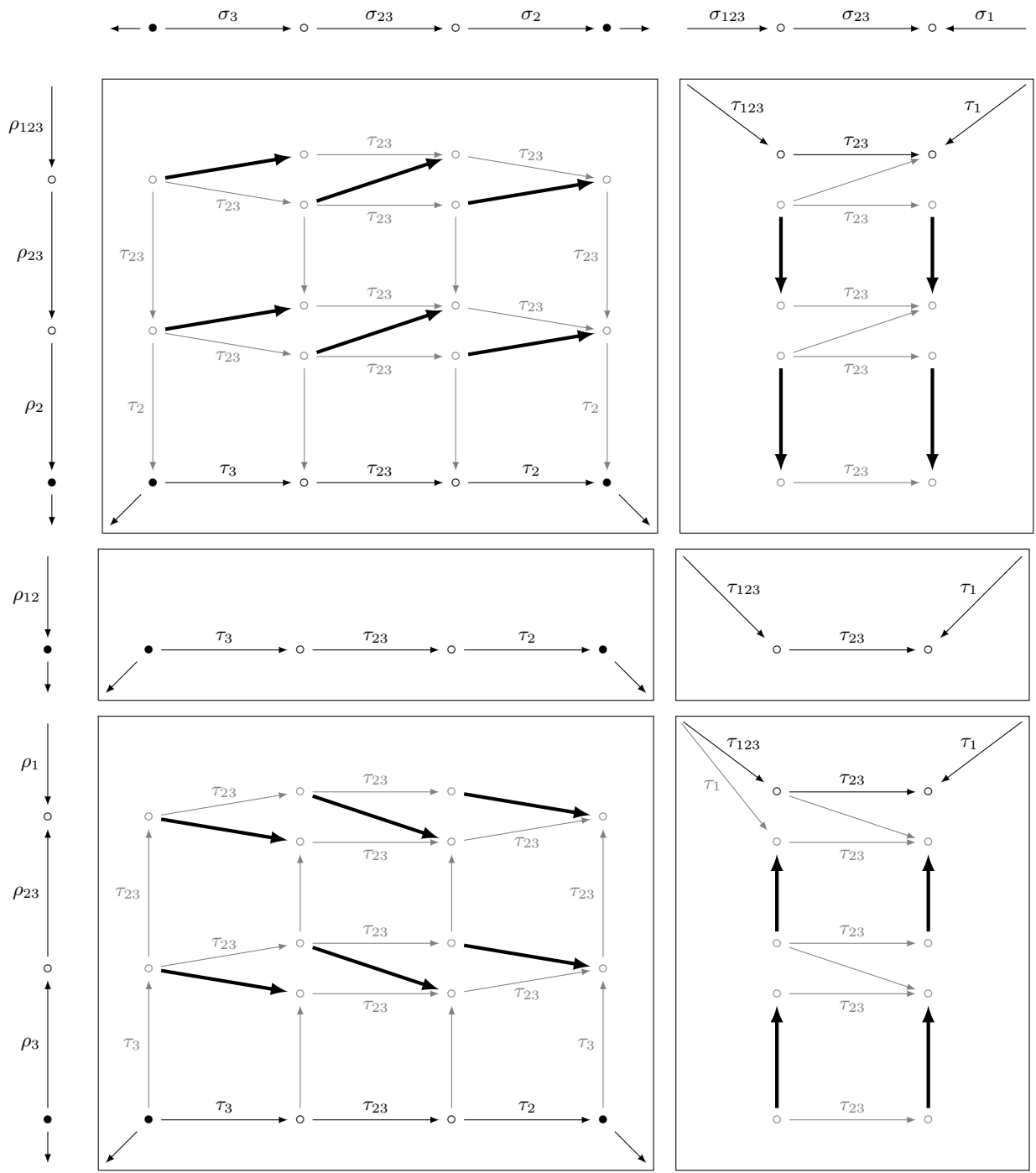

\begin{figure}
\begin{center}
\begin{tikzpicture}[scale = .8, >=latex]
\scriptsize
\node (g00a) at (0,9.5) {$\circ$};
\node [color = gray] (g00b) at (0,8.5) {$\circ$};
\node (g01a) at (3,9.5) {$\circ$};
\node [color = gray] (g01b) at (3,8.5) {$\circ$};
\node (g02a) at (6,9.5) {$\circ$};
\node [color = gray] (g02b) at (6,8.5) {$\circ$};
\node (g03) at (9,9) {$\circ$};

\node [color = gray] (g10a) at (0,6.5) {$\circ$};
\node [color = gray] (g10b) at (0,5.5) {$\circ$};
\node [color = gray] (g11a) at (3,6.5) {$\circ$};
\node [color = gray] (g11b) at (3,5.5) {$\circ$};
\node [color = gray] (g12a) at (6,6.5) {$\circ$};
\node [color = gray] (g12b) at (6,5.5) {$\circ$};
\node  (g13) at (9,6) {$\circ$};

\node [color = gray] (g20) at (0,3) {$\circ$};
\node [color = gray] (g21) at (3,3) {$\circ$};
\node [color = gray] (g22) at (6,3) {$\circ$};
\node (g23) at (9,3) {$\bullet$};

\node (top-1) at (-2, 12) {};
\node (top0) at (0,12) {$\circ$};
\node (top1) at (3,12) {$\circ$};
\node (top2) at (6,12) {$\circ$};
\node (top3) at (9,12) {$\bullet$};
\node (top4) at (10,12) {};

\node (left-1) at (-3,11) {};
\node (left0) at (-3,9)  {$\circ$};
\node (left1) at (-3,6) {$\circ$};
\node (left2) at (-3,3) {$\bullet$};
\node (left3) at (-3,2) {};

\node (top left corner) at (-2, 11) {};
\node (bottom right corner) at (10, 2) {};

\draw (-2, 2) -- (10, 2) -- (10, 11) -- (-2, 11) -- (-2, 2);

\draw [->] (left-1) to node[left]{$\rho_{123}$} (left0); 
\draw [->] (left0) to node[left]{$\rho_{23}$} (left1); 
\draw [->] (left1) to node[left]{$\rho_{2}$} (left2); 
\draw [->] (left2) to (left3); 

\draw [->] (top-1) to node[above]{$\sigma_{123}$} (top0); 
\draw [->] (top0) to node[above]{$\sigma_{23}$} (top1); 
\draw [->] (top1) to node[above]{$\sigma_{23}$} (top2); 
\draw [->] (top2) to node[above]{$\sigma_2$} (top3);
\draw [->] (top3) to (top4);

\draw [->] (top left corner) to node[above right = -2pt]{$\tau_{123}$} (g00a);

\draw [->, color = gray] (g00b) to (g01a);
\draw [->, color = gray] (g01b) to (g02a);
\draw [->, color = gray] (g02b) to (g03);
\draw [->, color = gray] (g10b) to (g11a);
\draw [->, color = gray] (g11b) to (g12a);
\draw [->, color = gray] (g12b) to (g13);
\draw [->, ultra thick] (g00b) to (g10a);
\draw [->, ultra thick] (g01b) to (g11a);
\draw [->, ultra thick] (g02b) to (g12a);
\draw [->, ultra thick] (g10b) to (g20);
\draw [->, ultra thick] (g11b) to (g21);
\draw [->, ultra thick] (g12b) to (g22);

\draw [->, color = black] (g00a) to node[above]{$\tau_{23}$} (g01a);
\draw [->, color = gray] (g00b) to node[below]{$\tau_{23}$} (g01b);
\draw [->, color = black] (g01a) to node[above]{$\tau_{23}$} (g02a);
\draw [->, color = gray] (g01b) to node[below]{$\tau_{23}$} (g02b);
\draw [->, color = black] (g02a) to node[above]{$\tau_{23}$} (g03);
\draw [->, color = gray] (g10a) to node[above]{$\tau_{23}$} (g11a);
\draw [->, color = gray] (g10b) to node[below]{$\tau_{23}$} (g11b);
\draw [->, color = gray] (g11a) to node[above]{$\tau_{23}$} (g12a);
\draw [->, color = gray] (g11b) to node[below]{$\tau_{23}$} (g12b);
\draw [->, color = gray] (g12a) to node[above]{$\tau_{23}$} (g13);
\draw [->, color = gray] (g20) to node[above]{$\tau_{23}$} (g21);
\draw [->, color = gray] (g21) to node[above]{$\tau_{23}$} (g22);
\draw [->, color = gray] (g22) to node[above]{$\tau_{2}$} (g23);

\draw [->, color = black] (g03) to node[left]{$\tau_{23}$} (g13);
\draw [->, color = black] (g13) to node[left]{$\tau_{2}$} (g23);
\draw [->, color = black] (g23) to (bottom right corner);

\end{tikzpicture}  \qquad 
\begin{tikzpicture}[scale = .8, >=latex]
\scriptsize
\node (g00) at (0,9) {$\circ$};
\node (g10) at (0,6) {$\circ$};
\node (g20) at (0,3) {$\bullet$};

\node (top-1) at (-2, 12) {};
\node (top0) at (0,12) {$\bullet$};
\node (top1) at (1,12) {};

\node (top left corner) at (-2, 11) {};
\node (bottom right corner) at (1, 2) {};

\draw (-2, 2) -- (1, 2) -- (1, 11) -- (-2, 11) -- (-2, 2);

\draw [->] (top-1) to node[above]{$\sigma_{12}$} (top0); 
\draw [->] (top0) to (top1);

\draw [->] (top left corner) to node[above right = -2pt]{$\tau_{123}$} (g00);

\draw [->, color = black] (g00) to node[left]{$\tau_{23}$} (g10);
\draw [->, color = black] (g10) to node[left]{$\tau_{2}$} (g20);
\draw [->, color = black] (g20) to (bottom right corner);

\end{tikzpicture}


\begin{tikzpicture}[scale = .8, >=latex]
\scriptsize
\node (g00) at (0,1) {$\circ$};
\node (g01) at (3,1) {$\circ$};
\node (g02) at (6,1) {$\circ$};
\node (g03) at (9,1) {$\bullet$};

\node (left-1) at (-3,3) {};
\node (left0) at (-3,1)  {$\bullet$};
\node (left1) at (-3,0) {};

\node (top left corner) at (-2, 3) {};
\node (bottom right corner) at (10, 0) {};

\draw (-2, 0) -- (10, 0) -- (10, 3) -- (-2, 3) -- (-2, 0);

\draw [->] (left-1) to node[left]{$\rho_{12}$} (left0); 
\draw [->] (left0) to (left1); 

\draw [->] (top left corner) to node[above right = -2pt]{$\tau_{123}$} (g00);
\draw [->] (g00) to node[above]{$\tau_{23}$} (g01);
\draw [->] (g01) to node[above]{$\tau_{23}$} (g02);
\draw [->] (g02) to node[above]{$\tau_{2}$} (g03);
\draw [->] (g03) to (bottom right corner);

\end{tikzpicture}  \qquad
\begin{tikzpicture}[scale = .8, >=latex]
\scriptsize
\node (g00) at (0,0) {$\bullet$};

\node (top left corner) at (-2, 2) {};
\node (bottom right corner) at (1, -1) {};

\draw (-2, -1) -- (1, -1) -- (1, 2) -- (-2, 2) -- (-2, -1);

\draw [->] (top left corner) to node[above right = -2pt]{$\tau_{12}$} (g00);
\draw [->] (g00) to (bottom right corner);

\end{tikzpicture}

\begin{tikzpicture}[scale = .8, >=latex]
\scriptsize
\node [color = gray] (g00a) at (0,9.5) {$\circ$};
\node [color = gray] (g00b) at (0,8.5) {$\circ$};
\node [color = gray] (g01a) at (3,9.5) {$\circ$};
\node [color = gray] (g01b) at (3,8.5) {$\circ$};
\node [color = gray] (g02a) at (6,9.5) {$\circ$};
\node [color = gray] (g02b) at (6,8.5) {$\circ$};
\node [color = gray] (g03) at (9,9) {$\circ$};

\node [color = gray] (g10a) at (0,6.5) {$\circ$};
\node [color = gray] (g10b) at (0,5.5) {$\circ$};
\node [color = gray] (g11a) at (3,6.5) {$\circ$};
\node [color = gray] (g11b) at (3,5.5) {$\circ$};
\node [color = gray] (g12a) at (6,6.5) {$\circ$};
\node [color = gray] (g12b) at (6,5.5) {$\circ$};
\node [color = gray] (g13) at (9,6) {$\circ$};

\node [color = gray] (g20) at (0,3) {$\circ$};
\node [color = black] (g21) at (3,3) {$\circ$};
\node [color = black] (g22) at (6,3) {$\circ$};
\node (g23) at (9,3) {$\bullet$};

\node (left-1) at (-3,11) {};
\node (left0) at (-3,9)  {$\circ$};
\node (left1) at (-3,6) {$\circ$};
\node (left2) at (-3,3) {$\bullet$};
\node (left3) at (-3,2) {};

\node (top left corner) at (-2, 11) {};
\node (bottom right corner) at (10, 2) {};

\draw (-2, 2) -- (10, 2) -- (10, 11) -- (-2, 11) -- (-2, 2);

\draw [->] (left-1) to node[left]{$\rho_{1}$} (left0); 
\draw [->] (left1) to node[left]{$\rho_{23}$} (left0); 
\draw [->] (left2) to node[left]{$\rho_{3}$} (left1); 
\draw [->] (left2) to (left3); 

\draw [->, color = gray] (top left corner) to node[above right]{$\tau_{123}$} (g00a);
\draw [->] (top left corner) to node[below left]{$\tau_{1}$} (g00b);

\draw [->, ultra thick] (g00a) to (g01b);
\draw [->, ultra thick] (g01a) to (g02b);
\draw [->, ultra thick] (g02a) to (g03);
\draw [->, ultra thick] (g10a) to (g11b);
\draw [->, ultra thick] (g11a) to (g12b);
\draw [->, ultra thick] (g12a) to (g13);
\draw [<-, ultra thick] (g00b) to (g10a);
\draw [<-, color = gray] (g01b) to (g11a);
\draw [<-, color = gray] (g02b) to (g12a);
\draw [<-, ultra thick] (g10b) to (g20);
\draw [<-, ultra thick] (g11b) to (g21);
\draw [<-, color = gray] (g12b) to (g22);

\draw [->, color = gray] (g00a) to node[above]{$\tau_{23}$} (g01a);
\draw [->, color = gray] (g00b) to node[below]{$\tau_{23}$} (g01b);
\draw [->, color = gray] (g01a) to node[above]{$\tau_{23}$} (g02a);
\draw [->, color = gray] (g01b) to node[below]{$\tau_{23}$} (g02b);
\draw [->, color = gray] (g02b) to node[below]{$\tau_{23}$} (g03);
\draw [->, color = gray] (g10a) to node[above]{$\tau_{23}$} (g11a);
\draw [->, color = gray] (g10b) to node[below]{$\tau_{23}$} (g11b);
\draw [->, color = gray] (g11a) to node[above]{$\tau_{23}$} (g12a);
\draw [->, color = gray] (g11b) to node[below]{$\tau_{23}$} (g12b);
\draw [->, color = gray] (g12b) to node[below]{$\tau_{23}$} (g13);
\draw [->, color = gray] (g20) to node[above]{$\tau_{23}$} (g21);
\draw [->, color = black] (g21) to node[above]{$\tau_{23}$} (g22);
\draw [->, color = black] (g22) to node[above]{$\tau_{2}$} (g23);

\draw [->, color = gray] (g13) to node[left]{$\tau_{23}$} (g03);
\draw [->, color = gray] (g23) to node[left]{$\tau_{3}$} (g13);
\draw [->, color = black] (g23) to (bottom right corner);

\end{tikzpicture}  \qquad 
\begin{tikzpicture}[scale = .8, >=latex]
\scriptsize
\node (g00) at (0,9) {$\circ$};
\node (g10) at (0,6) {$\circ$};
\node (g20) at (0,3) {$\bullet$};

\node (top left corner) at (-2, 11) {};
\node (bottom right corner) at (1, 2) {};

\draw (-2, 2) -- (1, 2) -- (1, 11) -- (-2, 11) -- (-2, 2);

\draw [->] (top left corner) to node[above right = -2pt]{$\tau_{1}$} (g00);

\draw [->, color = black] (g10) to node[left]{$\tau_{23}$} (g00);
\draw [->, color = black] (g20) to node[left]{$\tau_{2}$} (g10);
\draw [->, color = black] (g20) to (bottom right corner);

\end{tikzpicture}

\caption{The portions of $\widehat{\mathit{CFDAA}}(S^1\times\pants) \boxtimes (\lp_1 ,\lp_2)$ coming from a $d_2$, $d_0$, or $d_{-2}$ segment in $\lp_1$ and a $d_3$ or $d_0$ segment in $\lp_2$. Thick unlabeled arrows can be removed with the edge reduction algorithm; gray indicates generators and arrows that are eliminated when the differentials are canceled.}
\label{fig:merge2}
\end{center}
\end{figure}

\begin{figure}
\begin{center}
\begin{tikzpicture}[scale = .8, >=latex]
\scriptsize
\node (g00a) at (0,9.5) {$\circ$};
\node [color = gray] (g00b) at (0,8.5) {$\circ$};
\node [color = gray] (g01a) at (3,9.5) {$\circ$};
\node [color = gray] (g01b) at (3,8.5) {$\circ$};
\node [color = gray] (g02a) at (6,9.5) {$\circ$};
\node [color = gray] (g02b) at (6,8.5) {$\circ$};
\node [color = gray] (g03) at (9,9) {$\circ$};

\node [color = gray] (g10a) at (0,6.5) {$\circ$};
\node [color = gray] (g10b) at (0,5.5) {$\circ$};
\node [color = gray] (g11a) at (3,6.5) {$\circ$};
\node [color = gray] (g11b) at (3,5.5) {$\circ$};
\node [color = gray] (g12a) at (6,6.5) {$\circ$};
\node [color = gray] (g12b) at (6,5.5) {$\circ$};
\node [color = gray] (g13) at (9,6) {$\circ$};

\node [color = gray] (g20) at (0,3) {$\circ$};
\node [color = gray] (g21) at (3,3) {$\circ$};
\node [color = gray] (g22) at (6,3) {$\circ$};
\node (g23) at (9,3) {$\bullet$};

\node (top-1) at (-2, 12) {};
\node (top0) at (0,12) {$\circ$};
\node (top1) at (3,12) {$\circ$};
\node (top2) at (6,12) {$\circ$};
\node (top3) at (9,12) {$\bullet$};
\node (top4) at (10,12) {};

\node (left-1) at (-3,11) {};
\node (left0) at (-3,9)  {$\circ$};
\node (left1) at (-3,6) {$\circ$};
\node (left2) at (-3,3) {$\bullet$};
\node (left3) at (-3,2) {};

\node (top left corner) at (-2, 11) {};
\node (bottom right corner) at (10, 2) {};

\draw (-2, 2) -- (10, 2) -- (10, 11) -- (-2, 11) -- (-2, 2);

\draw [->] (left-1) to node[left]{$\rho_{123}$} (left0); 
\draw [->] (left0) to node[left]{$\rho_{23}$} (left1); 
\draw [->] (left1) to node[left]{$\rho_{2}$} (left2); 
\draw [->] (left2) to (left3); 

\draw [->] (top-1) to node[above]{$\sigma_{1}$} (top0); 
\draw [<-] (top0) to node[above]{$\sigma_{23}$} (top1); 
\draw [<-] (top1) to node[above]{$\sigma_{23}$} (top2); 
\draw [<-] (top2) to node[above]{$\sigma_3$} (top3);
\draw [->] (top3) to (top4);

\draw [->] (top left corner) to node[above right = -2pt]{$\tau_{1}$} (g00a);

\draw [->, ultra thick] (g01b) to (g00a);
\draw [->, ultra thick] (g02b) to (g01a);
\draw [->, ultra thick] (g03) to (g02a);

\draw [->, color = gray] (g11b) to (g10a);
\draw [->, ultra thick] (g12b) to (g11a);
\draw [->, ultra thick] (g13) to (g12a);

\draw [->, ultra thick] (g00b) to (g10a);
\draw [->, ultra thick] (g01b) to (g11a);
\draw [->, color = gray] (g02b) to (g12a);
\draw [->, ultra thick] (g10b) to (g20);
\draw [->, ultra thick] (g11b) to (g21);
\draw [->, ultra thick] (g12b) to (g22);

\draw [<-, color = gray] (g00a) to node[above]{$\tau_{23}$} (g01a);
\draw [<-, color = gray] (g00b) to node[below]{$\tau_{23}$} (g01b);
\draw [<-, color = gray] (g01a) to node[above]{$\tau_{23}$} (g02a);
\draw [<-, color = gray] (g01b) to node[below]{$\tau_{23}$} (g02b);
\draw [<-, color = gray] (g02b) to node[below]{$\tau_{23}$} (g03);
\draw [<-, color = gray] (g10a) to node[above]{$\tau_{23}$} (g11a);
\draw [<-, color = gray] (g10b) to node[below]{$\tau_{23}$} (g11b);
\draw [<-, color = gray] (g11a) to node[above]{$\tau_{23}$} (g12a);
\draw [<-, color = gray] (g11b) to node[below]{$\tau_{23}$} (g12b);
\draw [<-, color = gray] (g12b) to node[below]{$\tau_{23}$} (g13);
\draw [<-, color = gray] (g20) to node[above]{$\tau_{23}$} (g21);
\draw [<-, color = gray] (g21) to node[above]{$\tau_{23}$} (g22);
\draw [<-, color = black] (g22) to node[above]{$\tau_{3}$} (g23);

\draw [->, color = gray] (g03) to node[left]{$\tau_{23}$} (g13);
\draw [->, color = gray] (g13) to node[left]{$\tau_{2}$} (g23);
\draw [->, color = black] (g23) to (bottom right corner);

\end{tikzpicture}


\begin{tikzpicture}[scale = .8, >=latex]
\scriptsize
\node (g00) at (0,1) {$\circ$};
\node (g01) at (3,1) {$\circ$};
\node (g02) at (6,1) {$\circ$};
\node (g03) at (9,1) {$\bullet$};

\node (left-1) at (-3,3) {};
\node (left0) at (-3,1)  {$\bullet$};
\node (left1) at (-3,0) {};

\node (top left corner) at (-2, 3) {};
\node (bottom right corner) at (10, 0) {};

\draw (-2, 0) -- (10, 0) -- (10, 3) -- (-2, 3) -- (-2, 0);

\draw [->] (left-1) to node[left]{$\rho_{12}$} (left0); 
\draw [->] (left0) to (left1); 

\draw [->] (top left corner) to node[above right = -2pt]{$\tau_{1}$} (g00);
\draw [<-] (g00) to node[above]{$\tau_{23}$} (g01);
\draw [<-] (g01) to node[above]{$\tau_{23}$} (g02);
\draw [<-] (g02) to node[above]{$\tau_{3}$} (g03);
\draw [->] (g03) to (bottom right corner);

\end{tikzpicture} 

\begin{tikzpicture}[scale = .8, >=latex]
\scriptsize
\node (g00a) at (0,9.5) {$\circ$};
\node [color = gray] (g00b) at (0,8.5) {$\circ$};
\node (g01a) at (3,9.5) {$\circ$};
\node [color = gray] (g01b) at (3,8.5) {$\circ$};
\node (g02a) at (6,9.5) {$\circ$};
\node [color = gray] (g02b) at (6,8.5) {$\circ$};
\node (g03) at (9,9) {$\circ$};

\node [color = gray] (g10a) at (0,6.5) {$\circ$};
\node [color = gray] (g10b) at (0,5.5) {$\circ$};
\node [color = gray] (g11a) at (3,6.5) {$\circ$};
\node [color = gray] (g11b) at (3,5.5) {$\circ$};
\node [color = gray] (g12a) at (6,6.5) {$\circ$};
\node [color = gray] (g12b) at (6,5.5) {$\circ$};
\node  (g13) at (9,6) {$\circ$};

\node [color = gray] (g20) at (0,3) {$\circ$};
\node [color = gray] (g21) at (3,3) {$\circ$};
\node [color = gray] (g22) at (6,3) {$\circ$};
\node (g23) at (9,3) {$\bullet$};

\node (left-1) at (-3,11) {};
\node (left0) at (-3,9)  {$\circ$};
\node (left1) at (-3,6) {$\circ$};
\node (left2) at (-3,3) {$\bullet$};
\node (left3) at (-3,2) {};

\node (top left corner) at (-2, 11) {};
\node (bottom right corner) at (10, 2) {};

\draw (-2, 2) -- (10, 2) -- (10, 11) -- (-2, 11) -- (-2, 2);

\draw [->] (left-1) to node[left]{$\rho_{1}$} (left0); 
\draw [<-] (left0) to node[left]{$\rho_{23}$} (left1); 
\draw [<-] (left1) to node[left]{$\rho_{3}$} (left2); 
\draw [->] (left2) to (left3); 

\draw [->] (top left corner) to node[above right = -2pt]{$\tau_{1}$} (g00a);

\draw [->, color = gray] (g01a) to (g00b);
\draw [->, color = gray] (g02a) to (g01b);
\draw [->, color = gray] (g03) to (g02b);

\draw [->, color = gray] (g11a) to (g10b);
\draw [->, color = gray] (g12a) to (g11b);
\draw [->, color = gray] (g13) to (g12b);

\draw [->, ultra thick] (g10a) to (g00b);
\draw [->, ultra thick] (g11a) to (g01b);
\draw [->, ultra thick] (g12a) to (g02b);
\draw [->, ultra thick] (g20) to (g10b);
\draw [->, ultra thick] (g21) to (g11b);
\draw [->, ultra thick] (g22) to (g12b);

\draw [<-, color = black] (g00a) to node[above]{$\tau_{23}$} (g01a);
\draw [<-, color = gray] (g00b) to node[below]{$\tau_{23}$} (g01b);
\draw [<-, color = black] (g01a) to node[above]{$\tau_{23}$} (g02a);
\draw [<-, color = gray] (g01b) to node[below]{$\tau_{23}$} (g02b);
\draw [<-, color = black] (g02a) to node[above]{$\tau_{23}$} (g03);
\draw [<-, color = gray] (g10a) to node[above]{$\tau_{23}$} (g11a);
\draw [<-, color = gray] (g10b) to node[below]{$\tau_{23}$} (g11b);
\draw [<-, color = gray] (g11a) to node[above]{$\tau_{23}$} (g12a);
\draw [<-, color = gray] (g11b) to node[below]{$\tau_{23}$} (g12b);
\draw [<-, color = gray] (g12a) to node[above]{$\tau_{23}$} (g13);
\draw [<-, color = gray] (g20) to node[above]{$\tau_{23}$} (g21);
\draw [<-, color = gray] (g21) to node[above]{$\tau_{23}$} (g22);
\draw [<-, color = gray] (g22) to node[above]{$\tau_{3}$} (g23);

\draw [->, color = black] (g13) to node[left]{$\tau_{23}$} (g03);
\draw [->, color = black] (g23) to node[left]{$\tau_{3}$} (g13);
\draw [->, color = black] (g23) to (bottom right corner);

\end{tikzpicture}

\caption{Each box contains the portion of $\widehat{\mathit{CFDAA}}(S^1\times\pants) \boxtimes (\lp_1, \lp_2)$ coming from a $d_2$, $d_0$, or $d_{-2}$ segment in $\lp_1$ and a $d_{-3}$ segment in $\lp_2$. Thick unlabeled arrows can be removed with the edge reduction algorithm; gray indicates generators and arrows that are eliminated when the differentials are canceled.}
\label{fig:merge3}
\end{center}
\end{figure}

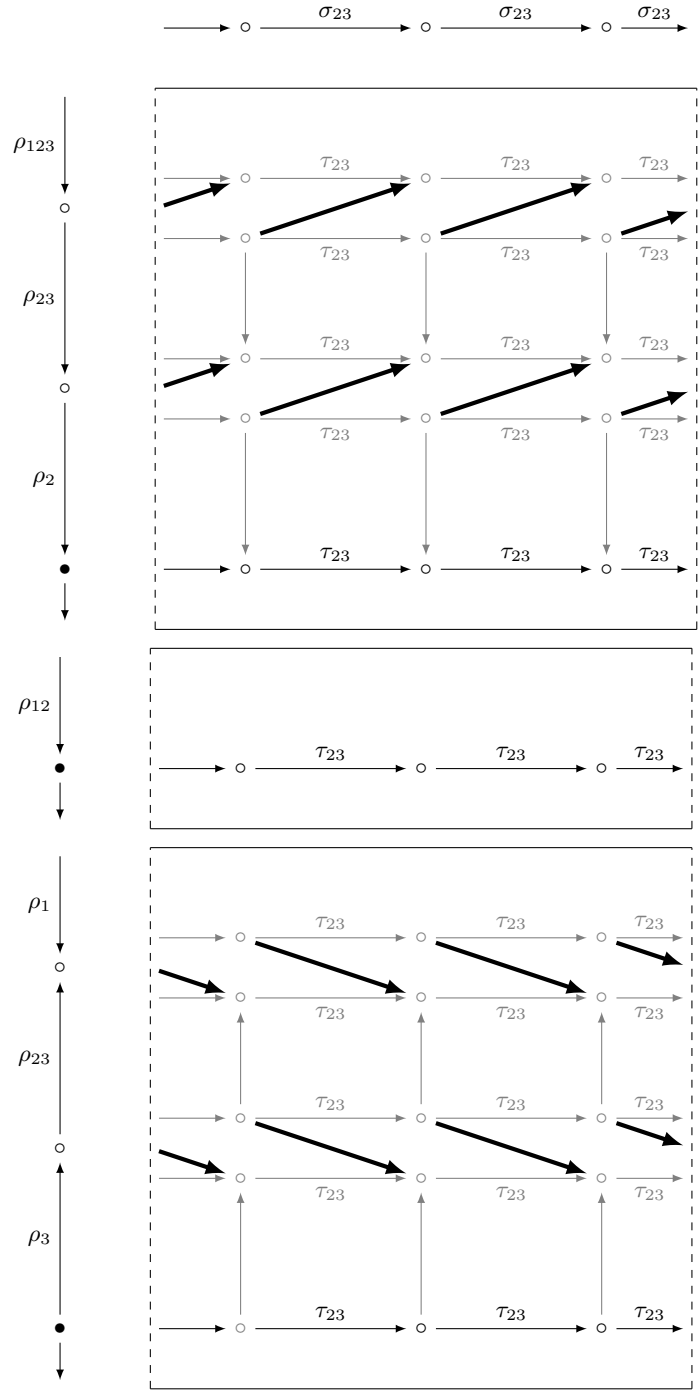
\begin{figure}
\begin{center}
\begin{tikzpicture}[scale = .8, >=latex]
\scriptsize
\node [color = gray] (g00a) at (0,9.5) {$\circ$};
\node [color = gray] (g00b) at (0,8.5) {$\circ$};
\node [color = gray] (g01a) at (3,9.5) {$\circ$};
\node [color = gray] (g01b) at (3,8.5) {$\circ$};
\node [color = gray] (g02a) at (6,9.5) {$\circ$};
\node [color = gray] (g02b) at (6,8.5) {$\circ$};

\node [color = gray] (g10a) at (0,6.5) {$\circ$};
\node [color = gray] (g10b) at (0,5.5) {$\circ$};
\node [color = gray] (g11a) at (3,6.5) {$\circ$};
\node [color = gray] (g11b) at (3,5.5) {$\circ$};
\node [color = gray] (g12a) at (6,6.5) {$\circ$};
\node [color = gray] (g12b) at (6,5.5) {$\circ$};

\node (g03) at (7.5, 9) {};
\node (g03a) at (7.5, 9.5) {};
\node (g03b) at (7.5, 8.5) {};
\node (g0-1) at (-1.5, 9) {};
\node (g0-1a) at (-1.5, 9.5) {};
\node (g0-1b) at (-1.5, 8.5) {};
\node (g13) at (7.5, 6) {};
\node (g13a) at (7.5, 6.5) {};
\node (g13b) at (7.5, 5.5) {};
\node (g1-1) at (-1.5, 6) {};
\node (g1-1a) at (-1.5, 6.5) {};
\node (g1-1b) at (-1.5, 5.5) {};
\node (g2-1) at (-1.5, 3) {};
\node (g23) at (7.5, 3) {};

\node (g20) at (0,3) {$\circ$};
\node (g21) at (3,3) {$\circ$};
\node (g22) at (6,3) {$\circ$};

\node (top-1) at (-1.5, 12) {};
\node (top0) at (0,12) {$\circ$};
\node (top1) at (3,12) {$\circ$};
\node (top2) at (6,12) {$\circ$};
\node (top3) at (7.5,12) {};

\node (left-1) at (-3,11) {};
\node (left0) at (-3,9)  {$\circ$};
\node (left1) at (-3,6) {$\circ$};
\node (left2) at (-3,3) {$\bullet$};
\node (left3) at (-3,2) {};

\node (top left corner) at (-1.5, 3) {};
\node (bottom right corner) at (7.5, 3) {};

\draw (-1.5, 2) -- (7.5, 2);
\draw (-1.5,11) -- (7.5,11);
\draw[dashed] (-1.5, 2) -- (-1.5,11);
\draw[dashed] (7.5,2) -- (7.5,11);

\draw [->] (left-1) to node[left]{$\rho_{123}$} (left0); 
\draw [->] (left0) to node[left]{$\rho_{23}$} (left1); 
\draw [->] (left1) to node[left]{$\rho_{2}$} (left2); 
\draw [->] (left2) to (left3); 

\draw [->] (top-1) to (top0); 
\draw [->] (top0) to node[above]{$\sigma_{23}$} (top1); 
\draw [->] (top1) to node[above]{$\sigma_{23}$} (top2); 
\draw [->] (top2) to node[above]{$\sigma_{23}$} (top3);

\draw [->, ultra thick] (g00b) to (g01a);
\draw [->, ultra thick] (g01b) to (g02a);
\draw [->, ultra thick] (g10b) to (g11a);
\draw [->, ultra thick] (g11b) to (g12a);
\draw [->, ultra thick] (g02b) to (g03);
\draw [->, ultra thick] (g0-1) to (g00a);
\draw [->, ultra thick] (g12b) to (g13);
\draw [->, ultra thick] (g1-1) to (g10a);
\draw [->, color = gray] (g00b) to (g10a);
\draw [->, color = gray] (g01b) to (g11a);
\draw [->, color = gray] (g02b) to (g12a);
\draw [->, color = gray] (g10b) to (g20);
\draw [->, color = gray] (g11b) to (g21);
\draw [->, color = gray] (g12b) to (g22);

\draw [->, color = gray] (g00a) to node[above]{$\tau_{23}$} (g01a);
\draw [->, color = gray] (g00b) to node[below]{$\tau_{23}$} (g01b);
\draw [->, color = gray] (g01a) to node[above]{$\tau_{23}$} (g02a);
\draw [->, color = gray] (g01b) to node[below]{$\tau_{23}$} (g02b);
\draw [->, color = gray] (g02a) to node[above]{$\tau_{23}$} (g03a);
\draw [->, color = gray] (g02b) to node[below]{$\tau_{23}$} (g03b);
\draw [->, color = gray] (g0-1a) to (g00a);
\draw [->, color = gray] (g0-1b) to (g00b);

\draw [->, color = gray] (g10a) to node[above]{$\tau_{23}$} (g11a);
\draw [->, color = gray] (g10b) to node[below]{$\tau_{23}$} (g11b);
\draw [->, color = gray] (g11a) to node[above]{$\tau_{23}$} (g12a);
\draw [->, color = gray] (g11b) to node[below]{$\tau_{23}$} (g12b);
\draw [->, color = gray] (g12a) to node[above]{$\tau_{23}$} (g13a);
\draw [->, color = gray] (g12b) to node[below]{$\tau_{23}$} (g13b);
\draw [->, color = gray] (g1-1a) to (g10a);
\draw [->, color = gray] (g1-1b) to (g10b);

\draw [->, color = black] (g20) to node[above]{$\tau_{23}$} (g21);
\draw [->, color = black] (g21) to node[above]{$\tau_{23}$} (g22);
\draw [->, color = black] (g22) to node[above]{$\tau_{23}$} (g23);
\draw [->, color = black] (g2-1) to (g20);

\end{tikzpicture}

\begin{tikzpicture}[scale = .8, >=latex]
\scriptsize
\node (g00) at (0,1) {$\circ$};
\node (g01) at (3,1) {$\circ$};
\node (g02) at (6,1) {$\circ$};
\node (g03) at (7.5,1) {};
\node (g0-1) at (-1.5,1) {};

\node (left-1) at (-3,3) {};
\node (left0) at (-3,1)  {$\bullet$};
\node (left1) at (-3,0) {};

\draw (-1.5, 0) -- (7.5, 0);
\draw (-1.5,3) -- (7.5,3);
\draw[dashed] (-1.5,0) -- (-1.5,3);
\draw[dashed] (7.5,0) -- (7.5,3);

\draw [->] (left-1) to node[left]{$\rho_{12}$} (left0); 
\draw [->] (left0) to (left1); 

\draw [->] (g0-1) to (g00);
\draw [->] (g00) to node[above]{$\tau_{23}$} (g01);
\draw [->] (g01) to node[above]{$\tau_{23}$} (g02);
\draw [->] (g02) to node[above]{$\tau_{23}$} (g03);

\end{tikzpicture}

\begin{tikzpicture}[scale = .8, >=latex]
\scriptsize
\node [color = gray] (g00a) at (0,9.5) {$\circ$};
\node [color = gray] (g00b) at (0,8.5) {$\circ$};
\node [color = gray] (g01a) at (3,9.5) {$\circ$};
\node [color = gray] (g01b) at (3,8.5) {$\circ$};
\node [color = gray] (g02a) at (6,9.5) {$\circ$};
\node [color = gray] (g02b) at (6,8.5) {$\circ$};
\node [color = gray] (g03) at (7.5,9) {};
\node [color = gray] (g03a) at (7.5,9.5) {};
\node [color = gray] (g03b) at (7.5,8.5) {};
\node [color = gray] (g0-1) at (-1.5,9) {};
\node [color = gray] (g0-1a) at (-1.5,9.5) {};
\node [color = gray] (g0-1b) at (-1.5,8.5) {};

\node [color = gray] (g10a) at (0,6.5) {$\circ$};
\node [color = gray] (g10b) at (0,5.5) {$\circ$};
\node [color = gray] (g11a) at (3,6.5) {$\circ$};
\node [color = gray] (g11b) at (3,5.5) {$\circ$};
\node [color = gray] (g12a) at (6,6.5) {$\circ$};
\node [color = gray] (g12b) at (6,5.5) {$\circ$};
\node [color = gray] (g13) at (7.5,6) {};
\node [color = gray] (g13a) at (7.5,6.5) {};
\node [color = gray] (g13b) at (7.5,5.5) {};
\node [color = gray] (g1-1) at (-1.5,6) {};
\node [color = gray] (g1-1a) at (-1.5,6.5) {};
\node [color = gray] (g1-1b) at (-1.5,5.5) {};

\node [color = gray] (g20) at (0,3) {$\circ$};
\node [color = black] (g21) at (3,3) {$\circ$};
\node [color = black] (g22) at (6,3) {$\circ$};
\node (g23) at (7.5,3) {};
\node (g2-1) at (-1.5,3) {};

\node (left-1) at (-3,11) {};
\node (left0) at (-3,9)  {$\circ$};
\node (left1) at (-3,6) {$\circ$};
\node (left2) at (-3,3) {$\bullet$};
\node (left3) at (-3,2) {};

\draw (-1.5, 2) -- (7.5, 2);
\draw (-1.5,11) -- (7.5,11);
\draw[dashed] (-1.5, 2) -- (-1.5,11);
\draw[dashed] (7.5,2) -- (7.5,11);

\draw [->] (left-1) to node[left]{$\rho_{1}$} (left0); 
\draw [->] (left1) to node[left]{$\rho_{23}$} (left0); 
\draw [->] (left2) to node[left]{$\rho_{3}$} (left1); 
\draw [->] (left2) to (left3); 

\draw [->, ultra thick] (g00a) to (g01b);
\draw [->, ultra thick] (g01a) to (g02b);
\draw [->, ultra thick] (g02a) to (g03);
\draw [->, ultra thick] (g0-1) to (g00b);
\draw [->, ultra thick] (g10a) to (g11b);
\draw [->, ultra thick] (g11a) to (g12b);
\draw [->, ultra thick] (g12a) to (g13);
\draw [->, ultra thick] (g1-1) to (g10b);
\draw [<-, color = gray] (g00b) to (g10a);
\draw [<-, color = gray] (g01b) to (g11a);
\draw [<-, color = gray] (g02b) to (g12a);
\draw [<-, color = gray] (g10b) to (g20);
\draw [<-, color = gray] (g11b) to (g21);
\draw [<-, color = gray] (g12b) to (g22);

\draw [->, color = gray] (g00a) to node[above]{$\tau_{23}$} (g01a);
\draw [->, color = gray] (g00b) to node[below]{$\tau_{23}$} (g01b);
\draw [->, color = gray] (g01a) to node[above]{$\tau_{23}$} (g02a);
\draw [->, color = gray] (g01b) to node[below]{$\tau_{23}$} (g02b);
\draw [->, color = gray] (g02a) to node[above]{$\tau_{23}$} (g03a);
\draw [->, color = gray] (g02b) to node[below]{$\tau_{23}$} (g03b);
\draw [->, color = gray] (g0-1a) to (g00a);
\draw [->, color = gray] (g0-1b) to (g00b);

\draw [->, color = gray] (g10a) to node[above]{$\tau_{23}$} (g11a);
\draw [->, color = gray] (g10b) to node[below]{$\tau_{23}$} (g11b);
\draw [->, color = gray] (g11a) to node[above]{$\tau_{23}$} (g12a);
\draw [->, color = gray] (g11b) to node[below]{$\tau_{23}$} (g12b);
\draw [->, color = gray] (g12a) to node[above]{$\tau_{23}$} (g13a);
\draw [->, color = gray] (g12b) to node[below]{$\tau_{23}$} (g13b);
\draw [->, color = gray] (g1-1a) to (g10a);
\draw [->, color = gray] (g1-1b) to (g10b);

\draw [->] (g2-1) to (g20);
\draw [->, color = black] (g20) to node[above]{$\tau_{23}$} (g21);
\draw [->, color = black] (g21) to node[above]{$\tau_{23}$} (g22);
\draw [->, color = black] (g22) to node[above]{$\tau_{23}$} (g23);

\end{tikzpicture}

\caption{The portions of $\widehat{\mathit{CFDAA}}(S^1\times\pants) \boxtimes (\lp_1,\lp_2)$ coming from a $d_2$, $d_0$, or $d_{-2}$ segment in $\lp_1$ when $\lp_2$ is a collection of $e^*$ segments. The left and right edges of each box should be identified. Thick unlabeled arrows can be removed with the edge reduction algorithm; gray indicates generators and arrows that are eliminated when the differentials are canceled. The result is a copy of $\lp_2$ for each $\iota_0$-generator in $\lp_1$.}
\label{fig:merge4}
\end{center}
\end{figure}

In practice, it is helpful to compute $\me(\lp_1, \lp_2)$ by creating an $i$ by $j$ grid, where $i$ is the length of $\lp_1$ and $j$ is the length of $\lp_2$. The $(i,j)$ entry of this grid is a square containing a single segment connecting two of its corners, as dictated by Proposition \ref{prop:merge_unstable_chains} (see, for example, Figure \ref{fig:merge_example}). The collection of loops $\me(\lp_1, \lp_2)$ can now be read off the grid by identifying the top and bottom edges and the left and right edges. Note that the number of disjoint loops in $\me(\lp_1, \lp_2)$ is given by $\gcd(i, j')$, where $i$ is the number of $d_k$ segments in $\lp_1$ (by assumption, this is the length of $\lp_1$) and $j'$ is the number of $d_k$ segments in $\lp_2$ minus the number of $c_k$ segments in $\lp_2$ (if $j' = 0$, we use the convention that $\gcd(i, 0) = i$; our assumptions rule out the case that $i = 0$). Note that $i$ and $j'$ can be given in terms of the $(\Ztwo)$-grading on $\lp_1$ and $\lp_2$: up to sign, we have $i = \chi_\bullet(\lp_1)$ and $j' = \chi_\bullet(\lp_2)$. To see this, observe that, for an appropriate choice of relative grading, each $d_k$ segment contributes $1$ to $\chi_\bullet$, each $c_k$ segment contributes $-1$, and each $a_k$ or $b_k$ segment contributes 0.

\begin{lem}\label{lem:merge_loop_type}
Consider a pair of loop-type, bordered, rational homology solid tori $(M_1,\alpha_1,\beta_1)$ and $(M_2,\alpha_2,\beta_2)$. If the loops representing $\CFD(M_1,\alpha_1,\beta_1)$ contain only standard unstable chains then \[\merge_{1,2} = \merge\left((M_1,\alpha_1,\beta_1),(M_2,\alpha_2,\beta_2)\right)\] is of loop-type. If, in addition, the loops representing $\CFD(M_2,\alpha_2,\beta_2)$ contain only standard unstable chains, then $\merge_{1,2}$ is of simple loop-type.
\end{lem}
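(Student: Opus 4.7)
The plan is to apply Proposition \ref{prop:merge_unstable_chains} pair-by-pair and then verify the Spin$^c$ count demanded by Definition \ref{def:loop-type}. Write $\CFD(M_i,\alpha_i,\beta_i) \simeq \bigsqcup_k \lp_i^k$ for $i=1,2$. By hypothesis each $\lp_1^k$ consists only of standard unstable chains; after reversing orientation on each such loop if necessary, we may assume it contains only $d_m$-type segments, so Proposition \ref{prop:merge_unstable_chains} applies to every pair $(\lp_1^k,\lp_2^j)$. Since $\widehat{\mathit{CFDAA}}(S^1\times\pants)$ distributes over direct sums on each type A input, we obtain a decomposition
\[
\CFD(\merge_{1,2}) \;\simeq\; \bigsqcup_{k,j}\, \me(\lp_1^k,\lp_2^j),
\]
and each summand on the right is, by the proposition, a (possibly disconnected) collection of loops. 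This establishes that $\CFD(\merge_{1,2})$ can be represented by loops.

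To complete the loop-type verification one must check the component-counting condition of Definition \ref{def:loop-type}: the total number of connected loops on the right must equal $|\operatorname{Spin}^c(\merge_{1,2})|$. The remark following Proposition \ref{prop:merge_unstable_chains} gives the component count of $\me(\lp_1^k,\lp_2^j)$ as $\gcd(|\chi_\bullet(\lp_1^k)|,|\chi_\bullet(\lp_2^j)|)$ (with a special case when one term vanishes). On the other side, I would use Proposition \ref{prop:rational_longitude} to read off the rational longitudes $-\chi_\circ/\chi_\bullet$ of each summand of $M_i$, then compute $H_1(\merge_{1,2};\Z)$ via Mayer--Vietoris applied to $M_1\cup_{T^2}(S^1\times\pants)\cup_{T^2}M_2$, using that the pair-of-pants bundle identifies the rational longitudes of the $M_i$ with the section/fiber classes of $S^1\times\pants$. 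Matching this count against the sum of $\gcd$'s is the main technical obstacle; the arithmetic should reduce to a standard splicing/plumbing computation, and is also naturally organized by summing over Spin$^c$-structures on each factor so that each pair $(\lp_1^k,\lp_2^j)$ contributes the correct number of Spin$^c$-structures on the merged manifold.

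For the simple loop-type claim, assume in addition that every $\lp_2^j$ consists only of standard unstable chains, so that each segment of $\lp_2^j$ is of type $c_\ell$ or $d_\ell$. Then cases (1) and (2) of Proposition \ref{prop:merge_unstable_chains} (the $a_\ell$ and $b_\ell$ cases) never occur, and the surviving cases (3) and (4) produce only segments of type $c_{\ell-k}$ and $d_{k+\ell}$. Hence each loop in $\me(\lp_1^k,\lp_2^j)$ is made up entirely of unstable chains in standard notation, and is therefore simple in the sense of Definition \ref{def:simple} (with the trivial sequence of $\tw^{\pm1},\du^{\pm1}$ operations). Combined with the loop-type conclusion of the previous paragraphs, this shows $\merge_{1,2}$ is of simple loop-type.
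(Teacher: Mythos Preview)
Your approach is correct and essentially identical to the paper's: apply Proposition \ref{prop:merge_unstable_chains} loop-by-loop to get the loop structure, read off the component count as $\gcd(|\chi_\bullet(\lp_1^k)|,|\chi_\bullet(\lp_2^j)|)$, and match this against a Mayer--Vietoris computation of $H_1(\merge_{1,2},\partial\merge_{1,2})$ using Proposition \ref{prop:rational_longitude} to identify $q_i$ with $\chi_\bullet$. The only difference is that you leave the Mayer--Vietoris step as a sketch; the paper carries it out and finds
\[
H_1(M_3,\partial M_3)\;\cong\; H_1(M_1,\partial M_1)\oplus H_1(M_2,\partial M_2)\oplus \langle b_1\rangle/(q_1 b_1 = q_2 b_1 = 0),
\]
so each pair of Spin$^c$-structures on $M_1,M_2$ gives exactly $\gcd(q_1,q_2)$ Spin$^c$-structures on $M_3$, which matches the loop count on the nose.
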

\begin{proof}
If $\CFD(M_1,\alpha_1,\beta_1)$ and $\CFD(M_2,\alpha_2,\beta_2)$ are collections of loops and the loops in $\CFD(M_1,\alpha_1,\beta_1)$ contain only standard unstable chains, then by Proposition \ref{prop:merge_unstable_chains} $\CFD(\merge_{1,2})$ is a collection of loops. Moreover, if the loops in $\CFD(M_2,\alpha_2,\beta_2)$ also contain only standard unstable chains, Proposition \ref{prop:merge_unstable_chains} implies that the resulting loops in $\CFD(\merge_{1,2})$ contain only standard unstable chains, and in particular are simple. It only remains to check that $\CFD(\merge_{1,2})$ has exactly one loop for each spin$^c$-structure on $\merge_{1,2}$.

Recall that the operation $\merge$ corresponds to gluing two bordered manifolds to two of the three boundary components of $\pants\times S^1$, where $\pants$ is $S^2$ with three disks removed. Thus $\partial(S^1\times\pants)$ has three connected components; denote the $i^{\text{th}}$ connected component by $\partial(S^1\times\pants)_i$. For each  $i \in \{1, 2, 3\}$ let $f_i$ denote a curve in $\partial(S^1\times\pants)_i$ which is a fiber $\{\text{pt}\}\times S^1$ and let $b_i$ denote the relevant component of $\partial\pants\times \{\text{pt}\}$. According to the conventions introduced in Section \ref{sub:prelim}, applying the operation $\merge$ to two bordered manifolds $(M_1, \alpha_1, \beta_1)$ and $(M_2, \alpha_2, \beta_2)$ corresponds to gluing $M_1$ and $M_2$ to $\pants\times S^1$ by identifying $\alpha_1$ with $f_1$, $\beta_1$ with $b_1$, $\alpha_2$ with $f_2$, and $\beta_2$ with $b_2$. The result is the manifold $M_3=\merge_{1,2}$.

We consider homology groups with coefficients in $\Z$. The spin$^c$-structures on a 3-manifold $M$ with boundary are indexed by $H^2(M) \cong H_1(M, \partial M)$. Using the appropriate Mayer-Vietoris sequences, we have that $H_1(M_3, \partial M_3)$ is homomorphic to the quotient 
\[H_1(S^1\times\pants, \partial (S^1\times\pants)_3) \oplus H_1(M_1) \oplus H_1(M_2) / \{\alpha_1 \sim f_1, \beta_1\sim b_1, \alpha_2\sim f_2, \beta_2\sim b_2\}\]
Note that $H_1(S^1\times\pants, \partial (S^1\times\pants)_3)$ is generated by $f_i$ and $b_i$ for $i \in \{1,2,3\}$ with the relations $f_1 = f_2 = f_3$ and $b_1 + b_2 = b_3 = 0$. For $i = 1,2$, since $M_{i}$ is a rational solid torus there is a unique (possibly disconnected) curve in $\partial M_i$ that bounds a surface in $M_i$, to which we associate $p_i/ q_i$ (where $p_i$ and $q_i$ may not be relatively prime) so that $p_i \alpha_i + q_i \beta_i$ generates the kernel of the inclusion of $H_1(\partial M_i)$ into $H_1(M_i)$. The long exact sequence for relative homology gives
\[ H_1(M_i) \cong H_1(M_i, \partial M_i) \oplus H_1(\partial M_i) / \langle p_i \alpha_i + q_i \beta_i \rangle \]
In $H_1(M_3, \partial M_3)$, the relation $p_i \alpha_i + q_i \beta_i = 0$ translates to $p_i f_i + q_i b_i = q_i b_i = 0$. It follows that
\[ H_1(M_3, \partial M_3) \cong H_1(M_1, \partial M_1) \oplus H_1(M_2, \partial M_2) \oplus \langle b_1 \rangle / (q_1 b_1 = q_2 b_1 = 0) \]

Thus for each spin$^c$-structure on $M_1$ and each spin$^c$-structure on $M_2$, there are $\gcd(q_1, q_2)$ spin$^c$-structures on $M_3$. Note that $q_2$ may be 0; in this case we use the convention that $\gcd(q, 0) = q$. The assumption that $\CFD(M_1,\alpha_1,\beta_1)$ contains only standard unstable chains implies that $q_1 \neq 0$.

Recall that the rational longitude of a rational homology solid torus can be read off of the bordered invariants. More precisely, by Proposition \ref{prop:rational_longitude}, the curves in $\partial M_i$ which are nullhomologous in $M_i$ are integer multiples of 
\[ \chi_\circ(\CFD(M_i, \alpha_i, \beta_i; \spinc_i)) \alpha_i + \chi_\bullet(\CFD(M_i, \alpha_i, \beta_i; \spinc_i)) \beta_i,\]
where $\spinc_i$ is any spin$^c$-structure on $M_i$. Thus $q_i = \chi_\bullet(\CFD(M_i, \alpha_i, \beta_i; \spinc_i))$. By assumption $\CFD(M_i, \alpha_i, \beta_i; \spinc_i)$ is a loop for each $\spinc_i$. It was observed above that given two loops $\lp_1$ and $\lp_2$, with $\lp_1$ consisting only of standard unstable chains, $\me(\lp_1, \lp_2)$ is a collection of $\gcd(\chi_\bullet(\lp_1), \chi_\bullet(\lp_2))$ loops. Thus for each loop in $\CFD(M_1,\alpha_1,\beta_1)$ and for each loop in $\CFD(M_2,\alpha_2,\beta_2)$, there are $\gcd(q_1, q_2)$ loops in $\CFD(\merge_{1,2})$. It follows that $\CFD(\merge_{1,2})$ has exactly one loop for each spin$^c$-structure on $M_3$ (where $\merge_{1,2}\cong (M_3,\alpha_3,\beta_3)$ as bordered manifolds).
\end{proof}

\begin{figure}
\begin{tikzpicture}[scale = 1.5,>=latex]

\scriptsize
\draw[->] (1.1, 4.5) to node[above]{$d_{\subL_1}$} (1.9, 4.5);
\draw[->] (2.1, 4.5) to node[above]{$d_{\subL_2}$} (2.9, 4.5);
\draw[->] (3.1, 4.5) to node[above]{$b_{\subL_3}$} (3.9, 4.5);
\draw[->] (4.1, 4.5) to node[above]{$c_{\subL_4}$} (4.9, 4.5);
\draw[->] (5.1, 4.5) to node[above]{$a_{\subL_5}$} (5.9, 4.5);
\draw[->] (6.1, 4.5) to node[above]{$d_{\subL_6}$} (6.9, 4.5);

\draw[->] (.5, 3.9) to node[left]{$d_{k_1}$} (.5, 3.1);
\draw[->] (.5, 2.9) to node[left]{$d_{k_2}$} (.5, 2.1);
\draw[->] (.5, 1.9) to node[left]{$d_{k_3}$} (.5, 1.1);
\draw[->] (.5, .9) to node[left]{$d_{k_4}$} (.5, 0.1);

\draw[dotted] (1,0) -- (7,0);
\draw[dotted] (1,1) -- (7,1);
\draw[dotted] (1,2) -- (7,2);
\draw[dotted] (1,3) -- (7,3);
\draw[dotted] (1,4) -- (7,4);

\draw[dotted] (1,0) -- (1,4);
\draw[dotted] (2,0) -- (2,4);
\draw[dotted] (3,0) -- (3,4);
\draw[dotted] (4,0) -- (4,4);
\draw[dotted] (5,0) -- (5,4);
\draw[dotted] (6,0) -- (6,4);
\draw[dotted] (7,0) -- (7,4);

\draw[->] (1.1, 3.9) to node[above right = -2pt]{$d_{k_1+\subL_1}$} (1.9,3.1);
\draw[->] (2.1, 3.9) to node[above right = -2pt]{$d_{k_1+\subL_2}$} (2.9,3.1);
\draw[->, bend right = 45] (3.1, 3.9) to node[below = -2pt]{$b_{\subL_3}$} (3.9,3.9);
\draw[->] (4.1, 3.1) to node[below right = -2pt]{$c_{\subL_4-k_1}$} (4.9,3.9);
\draw[->, bend left = 45] (5.1, 3.1) to node[above = -2pt]{$a_{\subL_5}$} (5.9,3.1);
\draw[->] (6.1, 3.9) to node[above right = -2pt]{$d_{k_1+\subL_6}$} (6.9,3.1);

\draw[->] (1.1, 2.9) to node[above right = -2pt]{$d_{k_1+\subL_1}$} (1.9,2.1);
\draw[->] (2.1, 2.9) to node[above right = -2pt]{$d_{k_1+\subL_2}$} (2.9,2.1);
\draw[->, bend right = 45] (3.1, 2.9) to node[below = -2pt]{$b_{\subL_3}$} (3.9,2.9);
\draw[->] (4.1, 2.1) to node[below right = -2pt]{$c_{\subL_4-k_1}$} (4.9,2.9);
\draw[->, bend left = 45] (5.1, 2.1) to node[above = -2pt]{$a_{\subL_5}$} (5.9,2.1);
\draw[->] (6.1, 2.9) to node[above right = -2pt]{$d_{k_1+\subL_6}$} (6.9,2.1);

\draw[->] (1.1, 1.9) to node[above right = -2pt]{$d_{k_1+\subL_1}$} (1.9,1.1);
\draw[->] (2.1, 1.9) to node[above right = -2pt]{$d_{k_1+\subL_2}$} (2.9,1.1);
\draw[->, bend right = 45] (3.1, 1.9) to node[below = -2pt]{$b_{\subL_3}$} (3.9,1.9);
\draw[->] (4.1, 1.1) to node[below right = -2pt]{$c_{\subL_4-k_1}$} (4.9,1.9);
\draw[->, bend left = 45] (5.1, 1.1) to node[above = -2pt]{$a_{\subL_5}$} (5.9,1.1);
\draw[->] (6.1, 1.9) to node[above right = -2pt]{$d_{k_1+\subL_6}$} (6.9,1.1);

\draw[->] (1.1, 0.9) to node[above right = -2pt]{$d_{k_1+\subL_1}$} (1.9,0.1);
\draw[->] (2.1, 0.9) to node[above right = -2pt]{$d_{k_1+\subL_2}$} (2.9,0.1);
\draw[->, bend right = 45] (3.1, 0.9) to node[below = -2pt]{$b_{\subL_3}$} (3.9,0.9);
\draw[->] (4.1, 0.1) to node[below right = -2pt]{$c_{\subL_4-k_1}$} (4.9,0.9);
\draw[->, bend left = 45] (5.1, 0.1) to node[above = -2pt]{$a_{\subL_5}$} (5.9,0.1);
\draw[->] (6.1, 0.9) to node[above right = -2pt]{$d_{k_1+\subL_6}$} (6.9,0.1);

\end{tikzpicture}

\caption{Computing $\me(\lp_1, \lp_2)$ for two loops $\lp_1 = (d_{k_1} d_{k_2} d_{k_3} d_{k_4})$ and ${\lp_2 = (d_{\subL_1} d_{\subL_2} b_{\subL_3} c_{\subL_4} a_{\subL_5} d_{\subL_6})}$. }
\label{fig:merge_example}

\end{figure}

\subsection{Bordered invariants of graph manifolds}\label{sec:graph_mfd_algorithm} With this description of $\twist$, $\extend$, and $\merge$ on bordered invariants in hand, we may now return focus to graph manifolds. The first author has described (and implemented) an algorithm for computing the (bordered) Heegaard Floer invariants of graph manifolds \cite{Hanselman2013}; we will now outline a version of this algorithm for graph manifolds with a single boundary component and adapt it to the loops setup. Recall that, given a graph $\Gamma$ with associated bordered manifold $(M_\Gamma, \alpha,\beta)$ (as described in Section \ref{sub:prelim}), we write $\CFD(\Gamma)$ for $\CFD(M_\Gamma, \alpha,\beta)$.

In order to compute $\CFD(\Gamma)$, we inductively build up the plumbing tree $\Gamma$ using the three plumbing tree operations $\twist^{\pm 1}$, $\extend$, and $\merge$ depicted in Figure \ref{fig:tree_operations}, starting from the plumbing tree 
\begin{center}
\begin{tikzpicture}
\node (a) at (0,0) {$\bullet$};
\node[left = 5pt] at (a) {$\Gamma_0 = $};
\node[above = 1pt] at (a) {$0$};
\draw[dashed] (0,0)--(1,0);
\end{tikzpicture}
\end{center}
Note that $M_{\Gamma_0}$ is a solid torus, with bordered structure $(M_{\Gamma_0}, \alpha, \beta)$ where $\alpha$ is a fiber of the $S^1$-bundle over $D^2$ (i.e. a longitude of the solid torus) and $\beta$ is a curve in the base surface (i.e. a meridian of the solid torus, identified by $\partial D^2\times\{\text{pt}\}$). Thus $\CFD(\Gamma_0)$ is represented by the loop $\lp_\bullet \sim (d_0)$. As we apply the operations $\twist$, $\extend$, and $\merge$, we keep track of the bordered invariants of the relevant manifolds. Let $\Gamma_1$ and $\Gamma_2$ be single boundary plumbing trees. As shown in the previous section:
\begin{align*}
\CFD( \twist ( \Gamma_1) ) &\cong \Ts \boxtimes\CFD( \Gamma_1 )\\
\CFD( \extend( \Gamma_1) ) &\cong \Ts\boxtimes\Td\boxtimes\Ts\boxtimes\CFD( \Gamma_1 )\\
\CFD( \merge(\Gamma_1, \Gamma_2))&\cong\CFDAA(\Gamma_\merge)\boxtimes\left(\CFD(\Gamma_1),\CFD(\Gamma_2)\right)
\end{align*} 

Specializing Lemmas \ref{lem:twist_extend_loop_type} and \ref{lem:merge_loop_type} to graph manifold leads to the following observation: 
\begin{lem}
Let $\Gamma$ be a single boundary plumbing tree constructed as described above from copies of $\Gamma_0$ using the operations $\twist^\pm$, $\extend$, and $\merge$. $\CFD(\Gamma)$ has simple loop-type as long as each time the operation $\merge$ is applied, the two input plumbing trees have simple loop-type bordered invariants with only unstable chains in standard notation.
\end{lem}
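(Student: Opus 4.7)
The plan is to proceed by induction on the number of operations used to build $\Gamma$ from copies of $\Gamma_0$. The result follows almost directly from combining Lemmas \ref{lem:twist_extend_loop_type} and \ref{lem:merge_loop_type}; the induction is just a bookkeeping device to thread the hypothesis at each $\merge$-step.

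For the base case, $\Gamma = \Gamma_0$ and $\CFD(\Gamma_0)$ is represented by the loop $\lp_\bullet \sim (d_0)$, which is a single simple loop consisting only of unstable chains in standard notation. In particular $\CFD(\Gamma_0)$ has simple loop-type.

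For the inductive step, suppose $\Gamma$ is obtained from one or two strictly smaller plumbing trees by one of the three operations, and suppose the hypothesis on $\merge$ is satisfied throughout the construction. There are three cases to consider:

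(i) If $\Gamma = \twist^{\pm}(\Gamma')$, then by the inductive hypothesis $\CFD(\Gamma')$ has simple loop-type, and by Lemma \ref{lem:twist_extend_loop_type}, so does $\CFD(\Gamma) \cong \Ts^{\pm 1}\boxtimes\CFD(\Gamma')$.

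(ii) If $\Gamma = \extend(\Gamma')$, then again $\CFD(\Gamma')$ has simple loop-type by induction, and Lemma \ref{lem:twist_extend_loop_type} gives that $\CFD(\Gamma)$ has simple loop-type.

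(iii) If $\Gamma = \merge(\Gamma_1, \Gamma_2)$, then by the standing hypothesis of the lemma both $\CFD(\Gamma_1)$ and $\CFD(\Gamma_2)$ are simple loop-type with only unstable chains in standard notation. Lemma \ref{lem:merge_loop_type} then applies and yields that $\CFD(\Gamma)$ has simple loop-type.

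This exhausts the cases and completes the induction. I do not expect a real obstacle here, since all the hard content is packaged into the two lemmas used; the only mildly subtle point is that $\twist^\pm$ and $\extend$ may well introduce stable chains into $\CFD(\Gamma')$ even when they were absent from $\CFD(\Gamma_0)$, so the ``only unstable chains in standard notation'' property is \emph{not} preserved by the induction in general. This is exactly why the hypothesis is phrased as a condition at every $\merge$-step rather than globally; the induction above needs nothing more than simple loop-type to pass through cases (i) and (ii), and it invokes the hypothesis only in case (iii), precisely where Lemma \ref{lem:merge_loop_type} needs it.
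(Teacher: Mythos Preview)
Your proof is correct and takes essentially the same approach as the paper: both arguments reduce the claim to Lemmas \ref{lem:twist_extend_loop_type} and \ref{lem:merge_loop_type}, with the paper giving a two-sentence sketch and you spelling out the induction explicitly. Your closing remark about why the hypothesis is imposed only at $\merge$-steps is a helpful clarification that the paper leaves implicit.
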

\begin{proof}
This follows from Lemma \ref{lem:twist_extend_loop_type}, which says that $\twist^\pm$ and $\extend$ take simple loop-type manifolds to simple loop-type manifolds, and Lemma \ref{lem:merge_loop_type}, which says that $\merge$ takes two simple loop-type manifolds to a simple loop-type manifold provided the simple loops corresponding to both inputs consist only of unstable chains.
\end{proof}

With this observation, we can describe a large family of simple loop-type manifolds.

For each vertex $v$ of a plumbing tree $\Gamma$, let $w(v)$ denote the Euler weight associated to $v$ and let $n_+(v)$ and $n_-(v)$ denote the number of neighboring vertices $v'$ for which $w(v') \ge 0$ and $w(v')\le 0$, respectively. We will say that $v$ is a \emph{bad vertex} if $-n_-(v) < w(v) < n_+(v)$, and otherwise $v$ is a \emph{good vertex} (this should be viewed as a generalization of the notion of bad vertices defined for negative definite plumbing trees in \cite{OSz-plumbing}).

\begin{prop}\label{prop:good_vertices}
Let $\Gamma$ be a plumbing tree with a single boundary edge at the vertex $v_0$, and suppose that every vertex other than $v_0$ is good. Then $\CFD(\Gamma)$ is a collection of loops consisting only of standard unstable chains; up to reversal we can assume these unstable chains are of type $d_k$. Moreover, if $w(v_0)$ is (strictly) greater than $n_+(v_0)$ then these unstable chains all have subscripts (strictly) greater than 0. If $w(v_0)$ is (strictly) less than $n_-(v_0)$ then the unstable chains all have subscript (strictly) less than 0.
\end{prop}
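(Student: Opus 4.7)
The plan is to proceed by induction on the number of non-boundary vertices of $\Gamma$. For the base case, $\Gamma$ has only $v_0$ with some weight $w$, so $\Gamma = \twist^w(\Gamma_0)$ and by Proposition~\ref{prp:tw-and-ex-on-loops}, $\CFD(\Gamma) = \tw^w((d_0)) = (d_w)$, a single loop whose one unstable chain has subscript $w$; the sign conditions match trivially since $n_+(v_0) = n_-(v_0) = 0$.

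For the inductive step, decompose $\Gamma$ at $v_0$ into subtrees $\Gamma_1, \ldots, \Gamma_d$ rooted at the neighbors $v_1, \ldots, v_d$ of $v_0$, with $v_i$ serving as the boundary vertex of $\Gamma_i$. Non-boundary vertices of $\Gamma_i$ retain their $\Gamma$-neighborhoods, hence remain good in $\Gamma_i$, so the inductive hypothesis yields that each $\CFD(\Gamma_i)$ is a collection of loops with only $d_k$ chains. Since $v_i$ itself is good in $\Gamma$ and $n_\pm(v_i, \Gamma_i) \le n_\pm(v_i, \Gamma)$, the inductive sign condition applies to $\Gamma_i$, forcing the subscripts of $\CFD(\Gamma_i)$ to have a consistent sign correlated with $w(v_i)$.

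Next, I reconstruct $\Gamma$ via the algorithm of Section~\ref{sec:graph_mfd_algorithm}: start with $(d_{w(v_0)})$ (obtained from $\Gamma_0$ by $\twist^{w(v_0)}$), and for each $i = 1, \ldots, d$, apply $\extend$ to $\Gamma_i$ and $\merge$ the result into the current tree. By the explicit formula in the proof of Lemma~\ref{lem:ex_operation}, $\ex$ sends a loop of $d_k$ chains with subscripts $\ge 0$ (resp.\ $\le 0$) to a loop whose standard segments are all unstable of type $c$ (resp.\ $d$) with subscripts in $\{0, 1\}$. Proposition~\ref{prop:merge_unstable_chains} then governs each merge: formulas (3) and (4) give $d_k \otimes c_\ell \mapsto c_{\ell-k}$ (which reverses to $d_{k-\ell}$) and $d_k \otimes d_\ell \mapsto d_{k+\ell}$, so the merge preserves the property that the current loop consists only of $d_k$ unstable chains (up to reversal). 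Iterating, this invariant propagates to $\CFD(\Gamma)$.

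To track signs, observe that merging with $\ex(\Gamma_i)$ when $\Gamma_i$ has subscripts $\ge 0$ shifts the current $d$-subscripts by $\{-1, 0\}$, while the $\le 0$ case shifts by $\{0, +1\}$; via the goodness of $v_i$, these shift directions correspond exactly to whether $v_i$ is counted in $n_+(v_0)$ or $n_-(v_0)$. Consequently, after all $d$ merges, the subscripts lie in $[w(v_0) - n_+(v_0), w(v_0) + n_-(v_0)]$, and the claimed strict and non-strict sign conclusions follow by direct comparison. The main technical obstacle is the careful handling of degenerate cases --- in particular, when some $\Gamma_i$ is a weight-$0$ leaf so that $\ex(\CFD(\Gamma_i)) = (e^*)$ (which cannot be written in standard notation), the merge must be analyzed via the special case of Proposition~\ref{prop:merge_unstable_chains}, and propagating strict versus non-strict inequalities through iterated merges requires careful bookkeeping of the refined inductive hypothesis for each $\Gamma_i$.
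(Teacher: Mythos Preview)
Your overall strategy---fully decomposing $\Gamma$ at $v_0$ into branches $\Gamma_i$ rooted at the neighbours $v_i$, and reassembling via iterated $\me$ with $\ex(\CFD(\Gamma_i))$---is different from the paper's one-step induction but is sound for the first conclusion (only unstable chains). The gap is in your sign argument. Your claim that ``$\ex$ sends a loop of $d_k$ chains with subscripts $\ge 0$ to a loop whose standard segments are all unstable with subscripts in $\{0,1\}$'' is false: the explicit formula in the proof of Lemma~\ref{lem:ex_operation} requires each $k_i \ge 1$, not merely $k_i \ge 0$. Concretely, take the linear tree $v_0(-5)\text{ --- }v_1(1)\text{ --- }v_2(2)$; every non-boundary vertex is good. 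Then $\Gamma_1$ has boundary vertex $v_1$ and $\CFD(\Gamma_1)=(d_0d_1)$, with subscripts $\ge 0$ but including $0$. A direct computation gives $\ex((d_0d_1))=(c_2)$, so the merge shift is by $-2$, not by $\{-1,0\}$. The resulting $\CFD(\Gamma)=(d_{-7})$ lies outside your asserted interval $[w(v_0)-n_+(v_0),\,w(v_0)+n_-(v_0)]=[-6,-5]$. Thus the interval claim, and the shift bound it rests on, are simply wrong as stated.

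The repair is exactly the ``careful bookkeeping'' you defer: the shift bound $\{-1,0\}$ holds only when $\CFD(\Gamma_i)$ has \emph{strictly} positive subscripts, i.e.\ when $w(v_i)>n_+(v_i,\Gamma_i)$. This strict inequality is forced precisely under the hypothesis you are trying to use it for: if $w(v_0)\ge n_+(v_0)$ then $w(v_0)\ge 0$, so $v_0$ is counted in $n_+(v_i,\Gamma)$ but not in $n_+(v_i,\Gamma_i)$, and goodness of $v_i$ in $\Gamma$ upgrades to the strict inequality in $\Gamma_i$. With this case analysis the lower bound $\min\ge w(v_0)-n_+(v_0)$ does go through (and symmetrically for the upper bound when $w(v_0)\le -n_-(v_0)$), but the unconditional interval is not available. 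The paper sidesteps all of this with a cleaner induction: for valence two it peels off one $\extend$ and $\twist^{w(v_0)}$; for higher valence it writes $\Gamma=\merge(\Gamma_1,\Gamma_2)$ where \emph{both} $\Gamma_j$ retain a copy of $v_0$ as boundary vertex, with weights chosen as $w_1+w_2=w(v_0)$ so that the inductive sign hypothesis applies directly to each piece. The additivity of max/min subscripts under $\me$ then yields the sign claims without any $\ex$ analysis in the merge step.
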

\begin{proof}
We proceed by induction on the number of vertices of $\Gamma$. The base case, where $\Gamma$ has only one vertex $v_0$, is trivial; $\CFD(\Gamma)$ in this case is given by the loop $(d_{w(v_0)})$.

For the inductive step, first suppose the boundary vertex $v_0$ of $\Gamma$ has valence two (including the boundary edge). That is, $\Gamma$ has the form
\begin{center}
\begin{tikzpicture}
\node (a) at (0, 0) {$\bullet$};
\node at (-2, 0) {$\Gamma = $};
\node at (1.5, 0) {$\Gamma'$};
\node[above] at (a) {$w_0$};

\draw[dashed] (-1,0) -- (0, 0);
\draw (0,0) -- (1,0);
\draw (1.5,0) circle (5 mm);

\end{tikzpicture}
\end{center}
where $w_0 = w(v_0)$. Let $v_1$ be the boundary vertex of $\Gamma'$, and let $w_1$ be the corresponding weight. Let $n_\pm(v_1)$ denote the counts of neighboring vertices defined above for $v_1$ as a vertex in $\Gamma$, and let $n'_\pm(v_1)$ denote these counts for $v_1$ as a vertex in $\Gamma'$ (i.e., ignoring the vertex $v_0$). Since $v_1$ is a good vertex, we have one of the following two cases:
\begin{enumerate}
\item $w_1 \le -n_-(v_1) \le -n'_-(v_1) \le 0$
\item $w_1 \ge n_+(v_1)\ge n'_+(v_1) \ge 0$
\end{enumerate}
Note that $\Gamma = \twist^{w_0}( \extend( \Gamma' ) )$, and so $\CFD(\Gamma)$ is given by $\tw^{w_0}(\ex(\CFD(\Gamma')))$. Furthermore, we assume by induction that the proposition holds for $\Gamma'$.

In case (1), we have that $\CFD(\Gamma')$ consists of $d_k$ segments with nonpositive subscripts. Thus in dual notation $\ex(\CFD(\Gamma'))$ is a loop consisting of $d^*_k$ segments with nonnegative subscripts, and in standard notation $\ex(\CFD(\Gamma')$ consists of $d_k$ segments with nonnegative subscripts. Moreover if $w_0 \le 0$, then $w_1$ is strictly less than $-n'_-(v_1) = -n_-(v_1) + 1$, so the $d_k$ segments in $\CFD(\Gamma)$ have strictly negative subscripts, the $d^*_k$ segments in $\ex(\CFD(\Gamma'))$ have strictly positive subscripts, and in standard notation $\ex(\CFD(\Gamma'))$ consists only of $d_0$ and $d_1$ segments. It follows that $\CFD(\Gamma)$ is a collection of loops consisting of $d_k$ segments with subscripts satisfying $k < 0$ if $w_0 < -n_-(v_0) = -1$, $k\le0$ if $w_0 = n_-(v_0) = -1$, $k \ge 0$ if $w_0 = n_+(v_0) \ge 0$ and $k > 0$ if $w_0 > n_+(v_0) \ge 0$. Thus the proposition holds for $\Gamma$.

In case (2), $\CFD(\Gamma')$ consists of $d_k$ segments with nonnegative subscripts. Thus $\ex(\CFD(\Gamma'))$ consists of $d^*_k$ with nonpositive subscripts in dual notation and of $d_k$ with nonpositive subscripts in standard notation. Moreover, if $w_0 \ge 0$ then $n_+(v_1) > n'_+(v_1)$, so the $d_k$ segments in $\CFD(\Gamma')$ have strictly positive subscripts, the $d^*_k$ segments in $\ex(\CFD(\Gamma'))$ have strictly negative subscripts, and the $d_k$ segments in $\ex(\CFD(\Gamma'))$ have subscripts in $\{0, -1\}$.  It follows that $\CFD(\Gamma)$ is a collection of loops consisting of $d_k$ segments with subscripts satisfying $k < 0$ if $w_0 < -n_-(v_0) \le 0$, $k\le0$ if $w_0 = n_-(v_0) \le 0$, $k \ge 0$ if $w_0 = n_+(v_0) = 1$ and $k > 0$ if $w_0 > n_+(v_0) = 1$. Thus the proposition holds for $\Gamma$.

Now suppose that $v_0$ has valence higher than two. This means that $\Gamma$ can be obtained as $\merge(\Gamma_1, \Gamma_2)$, where $\Gamma_1$ and $\Gamma_2$ have fewer vertices than $\Gamma$. By induction, the proposition holds for $\Gamma_1$ and $\Gamma_2$, and so $\CFD(\Gamma_1)$ and $\CFD(\Gamma_2)$ may be represented by a collection of loops consisting only of standard type $d_k$ chains. By Proposition \ref{prop:merge_unstable_chains}, merging two (collections of) loops with only $d_k$ chains produces a new collection of loops with only $d_k$ chains. Moreover, each chain in the $\CFD(\Gamma)$ is of the form $d_{k+\subL}$ for some chains $d_k$ in $\CFD(\Gamma_1)$ and $d_\subL$ in $\CFD(\Gamma_2)$, so the maximum (resp. minimum) subscript in $\CFD(\Gamma)$ is the sum of the maximum (resp. minimum) subscripts in $\CFD(\Gamma_1)$ and $\CFD(\Gamma_2)$.

For $i\in\{0,1\}$, let $v_i$ be the boundary vertex of $\Gamma_i$ and let $w_i = w(v_i)$ be the corresponding weight. We can choose any values for $w_i$ provided $w_1 + w_2 = w_0$. If $w_0 \ge n_+(v_0) = n_+(v_1) + n_+(v_2)$, then we can choose $w_1 = n_+(v_1)$ and $w_2 \ge n_+(v_2)$. By the inductive assumption, $\CFD(\Gamma_1)$ and $\CFD(\Gamma_2)$ both have only nonnegative subscripts, so the same is true of $\CFD(\Gamma_2)$. Furthermore if $w_0 > 0$ then $w_2 > n_+(v_2)$, so $\CFD(\Gamma_2)$ has only strictly positive subscripts and the same follows for $\CFD(\Gamma)$. Similarly, if $w_0$ is (strictly) less than $n_-(v_0) = n_-(v_1) + n_-(v_2)$, we can choose $w_1 = n_-(v_1)$ and $w_2$ (strictly) less than $n_-(v_2)$ and conclude that $\CFD(\Gamma)$ only has subscripts (strictly) less than 0.
\end{proof}

Note that if $\Gamma$ is a plumbing tree with at most one bad vertex, we can compute $\HFhat(M_\Gamma)$ by taking the dual filling of $\CFD(\Gamma')$, where $\Gamma'$ is obtained from $\Gamma$ by adding a boundary edge to the bad vertex, or to any vertex if there are no bad vertices. $\CFD(\Gamma')$, which has simple loop-type by Proposition \ref{prop:good_vertices}, can be computed using the operations $\tw^{\pm 1}, \ex$, and $\me$. In particular, we have the following generalization of \cite[Lemma 2.6]{OSz-plumbing}:

\begin{cor}
If $\Gamma$ is a closed plumbing tree with no bad vertices then the manifold $M_\Gamma$ is an $L$-space. \end{cor}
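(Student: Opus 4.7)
The plan is to reduce the claim to Proposition~\ref{prop:good_vertices} by opening $\Gamma$ along a single boundary edge and then applying the pairing theorem. First, select any vertex $v_0$ of $\Gamma$ and form the single-boundary plumbing tree $\Gamma'$ by attaching a boundary edge at $v_0$. Following the conventions of Section~\ref{sub:prelim}, $M_\Gamma$ is recovered from the bordered manifold $M_{\Gamma'}$ by capping off the resulting torus with a solid torus whose meridian is the curve in the base surface at $v_0$, i.e., the dual slope $\beta$. By the pairing theorem,
\[
\CFhat(M_\Gamma) \;\cong\; \lp_\circ^A \boxtimes \CFD(\Gamma'),
\]
where $\lp_\circ = (e^*)$ is the loop representing the dual solid torus.

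Every non-boundary vertex of $\Gamma'$ is a vertex of $\Gamma$ with unchanged neighbor counts $n_\pm$, hence good by hypothesis. Proposition~\ref{prop:good_vertices} then applies: $\CFD(\Gamma')$ is a disjoint union of loops $\{\lp_i\}$, each composed exclusively of standard unstable chains. Since $v_0$ is itself good in $\Gamma$---so either $w(v_0)\ge n_+(v_0)$ or $w(v_0)\le -n_-(v_0)$---the additional conclusions of that proposition ensure that, after possibly reversing each $\lp_i$, every unstable chain is of type $d_k$ with subscripts of a common sign. I would take this sign to be nonnegative, since the negative case is symmetric.

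It then remains to verify that $\lp_\circ^A \boxtimes \lp_i$ is an L-space complex for each $i$. By Proposition~\ref{prop:Dehn_filling_Lspaces}, this is equivalent to $0$ being an L-space slope for $\lp_i$, which holds precisely when $\lp_i$ admits a dual-notation representative containing at least one $d^*_k$-letter and no $c^*_k$-letter. Applying the rules (D1)--(D2) of Section~\ref{sec:notations_for_loops} to a loop built from $d_k$-segments with $k\ge 0$, each maximal run of $d_0 = e$ letters between consecutive nonzero-subscript segments produces a dual $d^*_m$ with $m\ge 1$, while each $d_{k_i}$ with $k_i>0$ contributes $k_i-1$ copies of $d^*_0 = e^*$; no $c^*$-letter ever arises. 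Hence $0$ is an L-space slope for every $\lp_i$, and summing contributions gives that $\CFhat(M_\Gamma)$ has rank equal to $|\chi(\CFhat(M_\Gamma))| = |H_1(M_\Gamma;\Z)|$, as required. The main subtlety to handle is the degenerate possibility that some $\lp_i$ is the solid-torus-like loop $(e^n)$, which cannot be written in dual notation at all; then $\lp_\circ^A \boxtimes \lp_i$ vanishes and $M_\Gamma$ fails to be a rational homology sphere---a case naturally excluded by the L-space convention, and avoidable whenever some vertex satisfies the strict inequality $w(v_0)>n_+(v_0)$ or $w(v_0)<-n_-(v_0)$, since Proposition~\ref{prop:good_vertices} then forces every subscript to be nonzero.
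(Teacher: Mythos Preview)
Your argument is essentially the paper's own: open $\Gamma$ at any (good) vertex, apply Proposition~\ref{prop:good_vertices} to obtain loops of sign-coherent $d_k$-segments, and conclude that $0$ is an L-space slope for each loop via the dual description---you unwind the rules (D1)--(D2) directly where the paper cites Observation~\ref{obs:dual_stable_chains}. One small correction on the degenerate case: $\lp_\circ^A\boxtimes(e^n)$ does not vanish (after passing to a bounded model as in the remark preceding Definition~\ref{def:abstractL} its homology is two-dimensional with generators of opposite grading), but your conclusion that $M_\Gamma$ then fails to be a rational homology sphere, and is therefore excluded by the standing convention of Section~\ref{sec:manifolds}, is correct.
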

\begin{proof}
Add a boundary edge to any vertex of $\Gamma$ to produce a single boundary plumbing tree $\Gamma'$. By Proposition \ref{prop:good_vertices}, $\CFD(\Gamma')$ is a collection of loops consisting only of type $d_k$ segments with either all subscripts greater than or equal to zero or all subscripts less than or equal to zero. By Observation \ref{obs:dual_stable_chains}, it follows that in dual notation the loops representing $\CFD(\Gamma')$ have no stable chains. $M_\Gamma$ is obtained from $M_{\Gamma'}$ by dual filling, and dual filling is an $L$-space if there are no stable chains in dual notation. \end{proof}

Recall that in the context of Theorem \ref{thm:gluing} it is important to distinguish solid torus-like manifolds from other manifolds of simple loop-type. Toward that end, we check the following:

\begin{prop}\label{prop:solid_torus_graph_manifolds}
Let $\Gamma$ be a plumbing tree with a single boundary edge at the vertex $v_0$, and suppose that every vertex other than $v_0$ is good. Then $M_\Gamma$ is solid torus-like if and only if it is a solid torus.
\end{prop}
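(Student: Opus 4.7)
The plan is to prove the forward direction trivially and attack the converse by induction on the number of vertices of $\Gamma$, using the three-operation decomposition $(\twist, \extend, \merge)$ from Section \ref{sec:graph_mfd_algorithm}. Since the solid torus $D^2\times S^1$ has $\CFD \cong (e)$, which is solid torus-like by Definition \ref{def:solid torus-like}, the forward direction is immediate. For the converse, assume $M_\Gamma$ is solid torus-like. By Proposition \ref{prop:good_vertices}, $\CFD(\Gamma)$ is a collection of loops consisting only of $d_k$ segments, with all subscripts of the same sign (up to reversing each loop).

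The base case of the induction (single vertex) is trivial, since $M_{\Gamma_0}$ is a solid torus. For the inductive step, split on the valence of $v_0$. If $v_0$ has valence $2$ in $\Gamma$, write $\Gamma = \twist^{w_0}\circ\extend(\Gamma')$ where $\Gamma'$ is obtained by deleting $v_0$ and marking its unique interior neighbor as the new boundary vertex. Both $\twist$ and $\extend$ only reparametrize the bordered structure, so $M_{\Gamma'} \cong M_\Gamma$ as unbordered $3$-manifolds; the interior vertices of $\Gamma'$ are the interior vertices of $\Gamma$ minus the new boundary vertex, and their weights and neighbor sets are unchanged, so the goodness condition is inherited. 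Since $\tw^{-w_0}$ and $\ex^{-1}$ preserve solid torus-likeness (by Definition \ref{def:solid torus-like}), $M_{\Gamma'}$ is solid torus-like, and the inductive hypothesis yields that $M_{\Gamma'}\cong M_\Gamma$ is a solid torus.

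If $v_0$ has valence at least $3$, write $\Gamma = \merge(\Gamma_1,\Gamma_2)$, splitting the weight $w_0 = w_1 + w_2$ so that each $w_i$ has the same sign as $w_0$ (or both equal zero when $w_0 = 0$). This sign-compatible split preserves the goodness of all former neighbors of $v_0$ inside the respective $\Gamma_i$, so the good-vertex condition is inherited and Proposition \ref{prop:good_vertices} again gives that each $\CFD(\Gamma_i)$ is a collection of $d_k$-only loops. By Proposition \ref{prop:merge_unstable_chains}, the subscripts of loops in $\CFD(\Gamma)$ are precisely the sums $k_i + \subL_j$ of one subscript from each side. For these sums to lie in at most two consecutive integers, as required by solid torus-likeness, the ranges of subscripts of $\CFD(\Gamma_1)$ and $\CFD(\Gamma_2)$ must sum to at most $1$; combined with the goodness condition on vertices adjacent to $v_0$, this forces $\CFD(\Gamma_i)$ itself to be solid torus-like for $i=1,2$. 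By induction, each $M_{\Gamma_i}$ is a solid torus with $\CFD(\Gamma_i) = (d_{k_i})$, and Proposition \ref{prop:merge_unstable_chains} applied to the single-segment loops gives $\CFD(\Gamma) = (d_{k_1+k_2})$, i.e.\ $M_\Gamma$ is a solid torus.

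The main obstacle will be in Case $2$: carefully executing the subscript-arithmetic argument to conclude that solid torus-likeness of the merged loop descends to solid torus-likeness on each side. The key combinatorial input is that the subscript range is additive under $\merge$ for $d_k$-only loops, together with the fact that a solid torus-like $d_k$-only loop, by construction as an iterate of $\tw^{\pm 1}$ and $\du^{\pm 1}$ applied to $(e\cdots e)$, has subscripts confined to a window of length at most one and, under the goodness constraint, corresponds geometrically to a single-vertex plumbing after the inductive reductions.
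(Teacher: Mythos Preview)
Your inductive framework and the valence-two case are correct and match the paper. The gap is in the merge case.

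From range-additivity you correctly conclude that the two subscript ranges sum to at most $1$, so one side --- say $\Gamma_2$ --- has range $0$; each loop in $\CFD(\Gamma_2)$ is then of the form $(d_n)^r$, which is visibly solid torus-like, and induction gives that $M_{\Gamma_2}$ is a solid torus with $\CFD(\Gamma_2)=(d_n)$. The unjustified step is the next one: you assert that ``combined with the goodness condition'' the \emph{other} side $\CFD(\Gamma_1)$ is also solid torus-like. You have only established that its subscript range is at most $1$, and for abstract loops that is not enough. For instance, $(d_0 d_0 d_1 d_1)$ has range $1$ but no $\Z/2\Z$ rotational symmetry, hence cannot be obtained from $(e)^2$ by iterated $\tw^{\pm 1},\du^{\pm 1}$, while $\gcd(\chi_\bullet,\chi_\circ)=2$ rules out $(e)$ as a starting point; so this loop is not solid torus-like. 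The bare reference to goodness of neighbouring vertices does not supply an argument. There is also a smaller issue in your last line: from $\CFD(\Gamma)=(d_{k_1+k_2})$ you declare $M_\Gamma$ a solid torus, but that is precisely the implication being proved.

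The paper sidesteps both problems with one observation. Once $M_{\Gamma_2}$ is a solid torus with $\CFD(\Gamma_2)=(d_n)$, the tree $\Gamma_2$ is Neumann-equivalent to the single-vertex tree of weight $n$, and merging with that tree is, by the definition of $\merge$, the same as applying $\twist^n$. Hence $\merge(\Gamma_1,\Gamma_2)$ and $\twist^n(\Gamma_1)$ are equivalent plumbing trees, so $M_{\merge(\Gamma_1,\Gamma_2)}\cong\twist^n(M_{\Gamma_1})$ as manifolds and $\CFD(\merge(\Gamma_1,\Gamma_2))\cong\tw^n\bigl(\CFD(\Gamma_1)\bigr)$. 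Solid torus-likeness of the merged invariant therefore passes to $\CFD(\Gamma_1)$, and induction on the strictly smaller tree $\Gamma_1$ finishes: $M_{\Gamma_1}$ is a solid torus, hence so is $\twist^n(M_{\Gamma_1})=M_\Gamma$. The idea you are missing is that merging with a solid torus \emph{is} a twist, on modules and on manifolds alike; you never need both factors to be solid torus-like at once.
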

\begin{proof}
We proceed by induction on the size of $\Gamma$ as in the proof of Proposition \ref{prop:good_vertices}. Applying Dehn twists does not change whether or not a manifold is a solid torus, nor does it change whether or not the corresponding bordered invariants are solid torus-like. It follows that the proposition holds for $\twist^\pm(\Gamma)$ and $\extend(\Gamma)$ if it holds for $\Gamma$. We only need to check that it holds for $\merge(\Gamma_1, \Gamma_2)$, assuming by induction that it holds for $\Gamma_1$ and $\Gamma_2$. By Proposition \ref{prop:good_vertices}, we may assume that the loops representing $\CFD(\Gamma_1)$ and $\CFD(\Gamma_2)$ consist only of $d_k$ chains.

Suppose $\CFD(\merge(\Gamma_1, \Gamma_2))$ is solid torus-like. Recall that, by the appropriate generalization of Lemma \ref{lem:solid-torus-restrictions}, the loops representing $\CFD(\merge(\Gamma_1, \Gamma_2))$ consist only of $d_k$ segments such that the maximum and minimum subscripts appearing differ by at most one. Note also that the difference between maximum and minimum subscripts is additive under $\merge$. It follows that either $\Gamma_1$ or $\Gamma_2$ must have bordered invariant consisting only of $d_n$ segments for a fixed $n\in \Z$; say $\Gamma_2$ has this property. $M_{\Gamma_2}$ is solid torus-like; by induction, $M_{\Gamma_2}$ is a solid torus, and $\CFD(\Gamma_2)$ is given by the single loop $(d_n)$. Proposition \ref{prop:merge_unstable_chains} then implies that $\CFD(\merge(\Gamma_1, \Gamma_2))$ is obtained from $\CFD(\Gamma_1)$ by adding $n$ to the subscript of each segment. In other words, $\CFD(\merge(\Gamma_1, \Gamma_2)) \cong \CFD(\twist^n(\Gamma_1))$.

This equivalence does not only hold on the level of bordered invariants. Indeed, $\merge(\Gamma_1, \Gamma_2)$ is equivalent to $\twist^n(\Gamma_1)$ up to the graph moves in \cite{Neumann}; in particular, the corresponding graph manifolds are diffeomorphic. To see this, note that $\Gamma_2$ must be equivalent to the plumbing tree
\begin{center}
\begin{tikzpicture}
\node (a) at (0,0) {$\bullet$};
\node[above = 1pt] at (a) {$n$};
\draw[dashed] (0,0)--(1,0);
\end{tikzpicture}
\end{center}
since $\CFD$ of this tree is $(d_n)$, and it is clear from Figure \ref{fig:tree_operations} that merging with this tree has the same effect as applying $\twist^n$. Since $\CFD(\twist^n(\Gamma_1))$ is solid torus like, $\CFD(\Gamma_1)$ is solid torus-like and, by the inductive hypothesis, $M_{\Gamma_1}$ is a solid torus. It follows that $M_{\merge(\Gamma_1, \Gamma_2)} = M_{\twist^n(\Gamma_1)} = \twist^n(M_{\Gamma_1})$ is a solid torus.
\end{proof}

\subsection{An explicit example: The Poincar\'e homology sphere} As an example of the algorithm and loop operations described above, we will compute $\HFhat$ of the Poincare homology sphere using the plumbing tree
\begin{center}
\begin{tikzpicture}

\node (a) at (0,1) {$\bullet$};
\node (b) at (-1,0) {$\bullet$};
\node (c) at (0,2) {$\bullet$};
\node (d) at (1,0) {$\bullet$};
\node at (-1.5, 1) {$\Gamma = $};

\draw (0,1) -- (-1,0);
\draw (0,2) -- (0,1) -- (1,0);

\node[right] at (a) {$-1$};
\node[left] at (b) {$-3$};
\node[right] at (c) {$-2$};
\node[right] at (d) {$-5$};

\end{tikzpicture}
\end{center}

We start with the loop $(d_0)$ representing $\CFD(\Gamma_0)$. For this example, by abuse of notation, we will equate the bordered invariants with their loop representatives, thus $\CFD(\Gamma_0)= (d_0)$. We use the twist and extend operations to compute invariants for the following plumbing trees:
\begin{center}
\begin{tikzpicture}
\node (a) at (0,0) {$\bullet$};
\node[left = 5pt] at (a) {$\Gamma_1 = $};
\node[above = 1pt] at (a) {$-2$};
\node (b) at (1,0) {$\bullet$};
\node[above = 1pt] at (b) {$0$};
\draw (0,0)--(1,0);
\draw[dashed] (1,0)--(2,0);
\end{tikzpicture}
\qquad
\begin{tikzpicture}
\node (a) at (0,0) {$\bullet$};
\node[left = 5pt] at (a) {$\Gamma_2 = $};
\node[above = 1pt] at (a) {$-3$};
\node (b) at (1,0) {$\bullet$};
\node[above = 1pt] at (b) {$0$};
\draw (0,0)--(1,0);
\draw[dashed] (1,0)--(2,0);\end{tikzpicture}
\qquad 
\begin{tikzpicture}
\node (a) at (0,0) {$\bullet$};
\node[left = 5pt] at (a) {$\Gamma_3 = $};
\node[above = 1pt] at (a) {$-5$};
\node (b) at (1,0) {$\bullet$};
\node[above = 1pt] at (b) {$0$};
\draw (0,0)--(1,0);
\draw[dashed] (1,0)--(2,0);\end{tikzpicture}
\end{center}
Note that $\Gamma_1 = \extend(\twist^{-2}(\Gamma_0))$, so
$$\CFD(\Gamma_1) = \ex(\tw^{-2}( (d_0) )) = \ex( (d_{-2}) ) = (d^*_2) \sim (d_1 d_0).$$
Similarly, we find that
$$\CFD(\Gamma_2) = \ex(\tw^{-3}( (d_0) )) = \ex( (d_{-3}) ) = (d^*_3) \sim (d_1 d_0 d_0),$$
$$\CFD(\Gamma_3) = \ex(\tw^{-5}( (d_0) )) = \ex( (d_{-5}) ) = (d^*_5) \sim (d_1 d_0 d_0 d_0 d_0).$$
Now let
\begin{center}
\begin{tikzpicture}
\node (a) at (0,0) {$\bullet$};
\node (b) at (-1,-1) {$\bullet$};
\node (c) at (1,-1) {$\bullet$};

\node at (-1.5,0) {$\Gamma_4 = $};
\node[right = 3pt] at (a) {$0$};
\node[below = 1pt] at (b) {$-2$};
\node[below = 1pt] at (c) {$-3$};
\draw (-1,-1)--(0,0)--(1,-1);
\draw[dashed] (0,0)--(0,1);
\end{tikzpicture}
\qquad
\begin{tikzpicture}
\node (a) at (0,0) {$\bullet$};
\node (b) at (-1,-1) {$\bullet$};
\node (c) at (0,-1) {$\bullet$};
\node (d) at (1,-1) {$\bullet$};

\node at (-1.5,0) {$\Gamma_5 = $};
\node[right = 3pt] at (a) {$0$};
\node[below = 1pt] at (b) {$-2$};
\node[below = 1pt] at (c) {$-3$};
\node[below = 1pt] at (d) {$-5$};
\draw (-1,-1)--(0,0)--(1,-1);
\draw (0,0) -- (0,-1);
\draw[dashed] (0,0)--(0,1);
\end{tikzpicture}
\qquad 
\begin{tikzpicture}
\node (a) at (0,0) {$\bullet$};
\node (b) at (-1,-1) {$\bullet$};
\node (c) at (0,-1) {$\bullet$};
\node (d) at (1,-1) {$\bullet$};

\node at (-1.5,0) {$\Gamma_6 = $};
\node[right = 3pt] at (a) {$-1$};
\node[below = 1pt] at (b) {$-2$};
\node[below = 1pt] at (c) {$-3$};
\node[below = 1pt] at (d) {$-5$};
\draw (-1,-1)--(0,0)--(1,-1);
\draw (0,0) -- (0,-1);
\draw[dashed] (0,0)--(0,1);
\end{tikzpicture}
\end{center}

We have that $\Gamma_4 = \merge(\Gamma_1, \Gamma_2)$, $\Gamma_5= \merge(\Gamma_4, \Gamma_3)$, and $\Gamma_6 = \twist^{-1}(\Gamma_5)$. Reading diagonally from the first grid below, we see that $\CFD(\Gamma_4) = \me\left( (d_1 d_0), (d_1 d_0 d_0) \right) = (d_2 d_0 d_1 d_1 d_1 d_0)$.
$$\begin{array}{c | ccc}
& d_1 & d_0 & d_0 \\
\hline
d_1 & d_2 & d_1 & d_1 \\
d_0 & d_1 & d_0 & d_0 
\end{array}
\hspace{1 in}
\begin{array}{c | ccccc}
& d_1 & d_0 & d_0 & d_0 & d_0\\
\hline
d_2 & d_3 & d_2 & d_2 & d_2 & d_2\\
d_0 & d_1 & d_0 & d_0 & d_0 & d_0\\
d_1 & d_2 & d_1 & d_1 & d_1 & d_1\\
d_1 & d_2 & d_1 & d_1 & d_1 & d_1\\
d_1 & d_2 & d_1 & d_1 & d_1 & d_1\\
d_0 & d_1 & d_0 & d_0 & d_0 & d_0
\end{array}$$
The second grid tells us that
$$\CFD(\Gamma_5) = (d_3 d_0 d_1 d_1 d_1 d_1 d_2 d_0 d_1 d_1 d_2 d_0 d_2 d_0 d_1 d_2 d_1 d_0 d_2 d_0 d_2 d_1 d_1 d_0 d_2 d_1 d_1 d_1 d_1 d_0).$$
Applying the operation $\tw^{-1}$, we find that
$$\CFD(\Gamma_6) = (d_2 d_{-1} d_0 d_0 d_0 d_0 d_1 d_{-1} d_0 d_0 d_1 d_{-1} d_1 d_{-1} d_0 d_1 d_0 d_{-1} d_1 d_{-1} d_1 d_0 d_0 d_{-1} d_1 d_0 d_0 d_0 d_0 d_{-1}).$$
Finally, to compute $\CFhat$ of the closed manifold $M_\Gamma$ we must fill in the boundary of $(M_{\Gamma_6}, \alpha, \beta)$ with a $D^2 \times S^1$ such that the meridian $\partial D^2\times \{\text{pt}\}$ glues to $\beta$ and the longitude $\{\text{pt}\}\times S^1$ glues to $\alpha$. In other words, $M_\Gamma$ is the $0$-filling of $(M_{\Gamma_6}, \alpha, \beta)$, so $\CFhat(M_\Gamma)$ is obtained from $\CFD(\Gamma_6)$ by tensoring with $\lp_\circ^A$. To do this, we first write $\CFD(\Gamma_6)$ in dual notation (using the procedure described in Section \ref{sec:notations_for_loops}):
$$\CFD(\Gamma_6) = (d^*_0 b^*_1 a^*_5 b^*_1 a^*_3 b^*_1 a^*_1 b^*_1 a^*_2 b^*_2 a^*_1 b^*_1 a^*_1 b^*_3 a^*_1 b^*_5 a^*_1).$$
Tensoring with $\lp_\circ$ produces one generator for each segment in dual notation, but also one differential for each type $a^*$ segment. Since $\CFD(\Gamma_6)$ has 17 dual segments and 8 $a^*$ segments, all but one generator in $\lp_\circ^A \boxtimes \CFD(\Gamma_6) \cong \CFhat(M_\Gamma)$ cancels in homology. Thus $\HFhat(M_\Gamma)$ has dimension 1, as is now well known (this was first calculated in \cite[Section 3.2]{OSz2004-b}).

\begin{remark}
While computer computation is not our primary motivation, it is worth pointing out that using loop calculus as described in this section instead of taking box tensor products of modules, bimodules, and trimodules greatly improves the efficiency of the algorithm in \cite{Hanselman2013} for rational homology sphere graph manifolds. This is illustrated by the fact that the example above can easily be done by hand, while computing the relevant tensor products would be tedious without a computer. The largest computation in \cite{Hanselman2013} (for which  the dimension of $\HFhat$ is $213312$) took roughly 12 hours; when the computer implementation is adapted to use loop calculus, the same computation runs in 30 seconds. The caveat is that the purely loop calculus algorithm does not work for all rational homology sphere graph manifolds, since there may be graph manifold rational homology solid tori that are not of loop-type, but in practice it works for most examples. 
\end{remark}

\section{L-spaces and non-left-orderability}\label{sec:proofs}

We conclude by proving the remaining results quoted in the introduction: Theorem \ref{thm:Seifert-simple}, Theorem \ref{thm:detection-via-TIBOKB}, and finally, Theorem \ref{thm:main}.

To begin, we observe that Seifert fibered rational homology tori have simple loop-type bordered invariants. Any Seifert fibered space over $S^2$ can be given a star shaped plumbing tree in which all vertices of valence one or two have weight at most $-2$; in particular, there are no bad vertices except for the central vertex. Adding a boundary edge to the central vertex corresponds to removing a neighborhood of a regular fiber, creating a Seifert fibered space over $D^2$. By Proposition \ref{prop:good_vertices}, such a plumbing tree has simple loop-type $\CFD$. Moreover, by Proposition \ref{prop:solid_torus_graph_manifolds}, such a plumbing tree is non-solid torus-like unless the corresponding manifold is a solid torus. 

The only other option to consider is a Seifert fibered space over the M\"obius band, since a Seifert fibered space over any other base orbifold has $b_1 > 0$. Such a manifold can be obtained from a Seifert fibered space over $D^2$ by removing a neighborhood of a regular fiber and gluing in the Euler number 0 bundle over the M\"obius band, fiber to fiber and base to base. On the plumbing tree, this corresponds to adding
\begin{center}
\begin{tikzpicture}
\node (b) at (1,0) {$\bullet$};
\node (c) at (2,.6) {$\bullet$};
\node (d) at (2,-.6) {$\bullet$};

\node[above = 2pt] at (b) {$0$};
\node[right] at (c) {$2$};
\node[right] at (d) {$-2$};

\draw (2, .6) -- (1, 0) -- (2,-.6);
\draw[dashed] (0,0) -- (1,0);
\end{tikzpicture}
\end{center}
to the central vertex, or equivalently to merging with the plumbing tree
\begin{center}
\begin{tikzpicture}
\node (a) at (0,0) {$\bullet$};
\node (b) at (1,0) {$\bullet$};
\node (c) at (2,.6) {$\bullet$};
\node (d) at (2,-.6) {$\bullet$};

\node at (-2,0) {$\Gamma_N =$};

\node[above = 2pt] at (a) {$0$};
\node[above = 2pt] at (b) {$0$};
\node[right] at (c) {$2$};
\node[right] at (d) {$-2$};

\draw (2, .6) -- (1, 0) -- (2,-.6);
\draw (0,0) -- (1,0);
\draw[dashed] (-1,0) -- (0,0);
\end{tikzpicture}
\end{center}

\begin{prop}\label{lem:CFD_N2}
$\CFD$ for the plumbing tree $\Gamma_N$ above consists of two loops, one $(a_1 b_1) \sim (d^*_1 d^*_{-1})$ and the other $(e^* e^*)$.
\end{prop}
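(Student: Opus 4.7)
The plan is to construct $\Gamma_N$ from the trivial tree $\Gamma_0$ by a short sequence of $\twist$, $\extend$, and $\merge$ operations, and then transport the construction to loop calculus via Propositions~\ref{prp:tw-and-ex-on-loops} and~\ref{prop:merge_unstable_chains}. Specifically, the two ``branches'' of $\Gamma_N$ meeting at the central vertex $v_1$ are $\Gamma_{\pm} = \extend(\twist^{\pm 2}(\Gamma_0))$: a leaf of weight $\pm 2$ attached to a $0$-framed boundary vertex. The merge $\merge(\Gamma_+,\Gamma_-)$ identifies the two boundary vertices to form $v_1$ of weight $0+0=0$ with both leaves attached, and one final $\extend$ inserts $v_0$ between $v_1$ and the boundary edge; hence $\Gamma_N = \extend(\merge(\Gamma_+,\Gamma_-))$.

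Next I would compute the two branches using Lemma~\ref{lem:ex_operation}: $\CFD(\Gamma_\pm) = \ex(\tw^{\pm 2}((d_0))) = (d^*_{\mp 2})$. Rewriting each loop in standard notation so that every segment is of type $d_k$ (compare the computation of $\CFD(\Gamma_1)\sim (d_1 d_0)$ in the Poincar\'e example just above), I would obtain $\CFD(\Gamma_-) \sim (d_1 d_0)$ and, after reversing the loop orientation, $\CFD(\Gamma_+) \sim (d_0 d_{-1})$. This is exactly the form required to apply Proposition~\ref{prop:merge_unstable_chains}; the $2 \times 2$ grid for $\me((d_0 d_{-1}),(d_1 d_0))$ is populated by rule~(4) with $d_{k_i+\subL_j}$ segments joining opposite corners, and because $\gcd(2,2) = 2$ it decomposes into two diagonal cycles of length two, yielding the loops $(d_1 d_{-1})$ and $(d_0 d_0)$. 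This two-component answer matches $|\operatorname{Spin}^c(N)| = 2$ as required by Definition~\ref{def:loop-type}.

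Applying $\ex$ letterwise using $\ex(d_k) = d^*_{-k}$ gives $\ex((d_1 d_{-1})) = (d^*_{-1} d^*_1) \sim (d^*_1 d^*_{-1})$ and $\ex((d_0 d_0)) = (e^* e^*)$, which are the two loops claimed. To finish, I would identify $(a_1 b_1)$ with $(d^*_1 d^*_{-1})$ by drawing the four-vertex, four-arrow graph of the standard loop $(a_1 b_1)$ and cutting at its two $\circ$-vertices; the resulting dual segments are $d^*_1 = \circ \xrightarrow{\rho_2} \bullet \xrightarrow{\rho_{123}} \circ$ and $\bar c^*_1 = d^*_{-1} = \circ \xleftarrow{\rho_1} \bullet \xrightarrow{\rho_3} \circ$. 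The whole argument is largely bookkeeping; the only subtlety is choosing the orientation of each $\CFD(\Gamma_\pm)$ so that its standard representative uses only $d_k$-letters, since this is precisely what makes the merge rule of Proposition~\ref{prop:merge_unstable_chains} directly applicable.
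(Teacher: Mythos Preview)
Your proof is correct and follows essentially the same route as the paper: both decompose $\Gamma_N = \extend(\merge(\extend(\twist^{2}(\Gamma_0)),\extend(\twist^{-2}(\Gamma_0))))$, compute the two branches as $(d_{-1}d_0)$ and $(d_1 d_0)$ in standard notation, merge via Proposition~\ref{prop:merge_unstable_chains} to obtain $(d_0 d_0)\amalg(d_1 d_{-1})$, and apply $\ex$ to arrive at $(e^* e^*)\amalg(d^*_1 d^*_{-1})$. Your added remarks (the explicit $2\times 2$ grid and the verification that $(a_1 b_1)\sim(d^*_1 d^*_{-1})$) are fine; the ``reversing orientation'' comment for $\CFD(\Gamma_+)$ is harmless but unnecessary, since $(d_0 d_{-1})$ and $(d_{-1} d_0)$ are the same cyclic word.
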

\begin{proof}
Using the loop operations described in the preceding section, this is a simple computation. Note that $\Gamma_N = \extend( \merge( \extend( \twist^2( \Gamma_0) ), \extend( \twist^{-2}( \Gamma_0 ) ) ) )$. Thus $\CFD(\Gamma_N)$ is given by
\begin{align*}
\ex( \me( \ex( \tw^2( (d_0) ) ), \ex( \tw^{-2}( (d_0) ) ) ) ) 
&= \ex( \me( \ex( (d_2) ), \ex( (d_{-2}) ) ) ) \\
& = \ex( \me( (d_{-1} d_0), (d_1 d_0) ) ) \\
& = \ex(  (d_0 d_0) \amalg (d_1 d_{-1}) ) \\
& = (d^*_0 d^*_0) \amalg (d^*_1 d^*_{-1}) \qedhere\end{align*}
\end{proof}
\begin{remark}Note that this result was first established by a direct calculation by Boyer, Gordon and Watson \cite{BGW2013}; this calculation is greatly simplified by appealing to loop calculus. \end{remark}

We now complete the proof that Seifert fibered rational homology tori have simple loop-type.

\begin{proof}[Proof of Theorem \ref{thm:Seifert-simple}]
As observed above, the case of Seifert fibered manifolds over $D^2$ is a special case of Proposition \ref{prop:good_vertices}. For a Seifert fibered manifold over the M\"obius band, a plumbing tree $\Gamma$ is given by $\merge(\Gamma', \Gamma_N)$, where $\Gamma'$ is a star shaped plumbing tree for a Seifert fibered manifold over $D^2$. By Proposition \ref{prop:good_vertices}, the loops in $\CFD(\Gamma')$ contain only unstable chains in standard notation. By Proposition \ref{prop:merge_unstable_chains} and Proposition \ref{lem:CFD_N2}, we find that $\CFD(\Gamma)$ is a collection of disjoint copies of $\CFD(\Gamma_N)$, and in particular is a collection of simple loops. Moreover, by Lemma \ref{lem:merge_loop_type} there is one loop for each spin$^c$-structure, and so $\CFD(\Gamma)$ is of simple loop-type. Finally, with the foregoing in place (in particular, Proposition \ref{prop:solid_torus_graph_manifolds}) one checks that the only solid torus-like manifold in this class is the solid torus itself. 
\end{proof}

We are now in a position to assemble the pieces and give the proof of Theorem \ref{thm:main}. A key observation is the following consequence of our gluing theorem, which provides some alternate characterizations of the set of strict L-space slopes of a given simple loop-type manifold. 

\begin{thm}\label{thm:characterize}
Let $M$ be a simple loop-type manifold that is not solid torus-like. The following are equivalent:
\begin{enumerate}
\item[(i)] $\gamma$ is a strict L-space slope for $M$, that is, $\gamma\in\mathcal{L}_M^\circ$;
\item[(ii)] $M\cup_h M'$ is an L-space, where $h(\gamma)=\lambda$ and $M'$ is a non solid torus-like, simple, loop-type manifold with rational longitude $\lambda$ for which $\mathcal{L}_{M'}$ includes every slope other than $\lambda$;
\item[(iii)]  $M\cup_h N$ is an L-space, where $h(\gamma)=\lambda$ and $N$ is the twisted $I$-bundle over the Klein bottle with rational longitude $\lambda$. 
\end{enumerate}
\end{thm}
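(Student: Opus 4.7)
The proof splits into two halves: $(i) \Leftrightarrow (ii)$ follows from the gluing theorem, while $(ii) \Leftrightarrow (iii)$ reduces to verifying that $N$ is a concrete instance of the class of manifolds $M'$ appearing in $(ii)$.

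For $(i) \Leftrightarrow (ii)$, I would first upgrade the hypothesis to $\mathcal{L}_{M'}^\circ = \hat\Q \setminus \{\lambda\}$. Since $M'$ is a rational homology solid torus distinct from the solid torus, Dehn filling along the rational longitude $\lambda$ yields a manifold of positive first Betti number, so $\lambda \notin \mathcal{L}_{M'}$ and therefore $\mathcal{L}_{M'} = \hat\Q \setminus \{\lambda\}$; this set is open in $\hat\Q$ and hence equals its own interior. Applying Theorem \ref{thm:gluing_manifolds}, $M \cup_h M'$ is an L-space if and only if every slope $\mu$ on $\partial M$ satisfies $\mu \in \mathcal{L}_M^\circ$ or $h(\mu) \in \mathcal{L}_{M'}^\circ$; the second clause fails precisely when $\mu = h^{-1}(\lambda) = \gamma$, so the condition reduces to $\gamma \in \mathcal{L}_M^\circ$, giving $(i) \Leftrightarrow (ii)$.

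For $(ii) \Leftrightarrow (iii)$, it suffices to verify that $N$ meets the hypotheses imposed on $M'$. By Theorem \ref{thm:Seifert-simple}, $N$ is simple loop-type, and Proposition \ref{prop:solid_torus_graph_manifolds} confirms that it is not solid torus-like. To check $\mathcal{L}_N \supseteq \hat\Q \setminus \{\lambda\}$, I would use Proposition \ref{lem:CFD_N2}, which identifies $\CFD(N)$ with two loops, $\lp_1 = (e^* e^*)$ and $\lp_2 = (d_1^* d_{-1}^*) \sim (a_1 b_1)$. Appealing to Propositions \ref{prop:Dehn_filling_Lspaces} and \ref{prop:strict_Lspace_slopes} together with the actions of $\tw$ and $\du$, I would verify that each slope distinct from the common abstract rational longitude is an L-space slope for both loops. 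For $\lp_1$ this reduces to the solid-torus case via Proposition \ref{prop:gluing_loops_solid_torus_like}. For $\lp_2$ one observes that after any reparametrization by $\tw^{\pm 1}$ or $\du^{\pm 1}$ the resulting loop consists solely of unstable chains of a single sign---the condition of Proposition \ref{prop:Dehn_filling_Lspaces} for an L-space slope---except at the rational longitude.

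The principal technical obstacle is the explicit verification for $\lp_2$, where one must track how the stable and unstable chains transform under arbitrary sequences of $\tw^{\pm 1}$ and $\du^{\pm 1}$. This is nonetheless a short combinatorial check, since $\lp_2$ has only two segments; moreover, the key L-space properties of $N$ were essentially established by Boyer, Gordon, and Watson in \cite{BGW2013}, whose computation the present machinery reproduces through loop calculus.
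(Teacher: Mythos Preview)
Your argument is correct and follows essentially the same route as the paper: both proofs reduce everything to Theorem~\ref{thm:gluing_manifolds} together with the observation that $\mathcal{L}_{M'}^\circ = \hat\Q \setminus \{\lambda\}$, and both handle (iii) by checking that $N$ is an admissible instance of $M'$. The only differences are organizational---the paper runs the cycle (i)$\Rightarrow$(ii)$\Rightarrow$(iii)$\Rightarrow$(i), whereas you prove (i)$\Leftrightarrow$(ii) for every admissible $M'$ and then specialize---and that the paper simply cites \cite{BGW2013} for the L-space fillings of $N$ rather than sketching the loop-calculus check. One small caution: Proposition~\ref{prop:solid_torus_graph_manifolds} does not literally apply to the plumbing tree $\Gamma_N$ (the interior $0$-weighted vertex is bad), so to conclude $N$ is not solid torus-like you should either pass to the star-shaped tree for $N$ over $D^2$ with two $(-2)$-leaves, or argue directly from $\CFD(N)$ that the loop $(a_1 b_1)$ is not solid torus-like.
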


\begin{proof}
Given $\gamma\in\mathcal{L}_M^\circ$ and a simple, loop-type manifold $M'$ for which $\mathcal{L}_{M'}$ includes every slope other than $\lambda$, Theorem \ref{thm:gluing_manifolds} ensures that $M\cup_h M'$ is an L-space. Indeed, for any $\gamma'\ne\gamma$ we have that $\lambda\ne h(\gamma')\in\mathcal{L}_{M'}^\circ$. This proves that (i) implies (ii). 

To see that (ii) implies (iii), is suffices to observe that $N(\gamma)$ is an L-space for all $\gamma$ other than the rational longitude; this is indeed the case as observed, for example, in \cite[Proposition 5]{BGW2013} (alternatively, this fact is an exercise in loop calculus).   

Finally, suppose that  $M\cup_h N$ is an L-space and consider the slope $\gamma$ in $\partial M$ determined by $h^{-1}(\lambda)$. Since $N(\lambda)$ is not an L-space, by Theorem \ref{thm:gluing_manifolds} it must be that $\gamma\in\mathcal{L}_M^\circ$ as required, so that (iii) implies (i). \end{proof}

Boyer and Clay consider a collection of Seifert fibered rational homology solid tori $\{N_t\}$ for integers $t>1$. In this collection $N_2=N$, the twisted $I$-bundle over the Klein bottle. More generally, the $N_t$ are examples of Heegaard Floer homology solid tori considered in \cite{Watson}. These manifolds are easily described by the plumbing tree:
\[
\begin{tikzpicture}
\node (a) at (0,0) {$\bullet$};
\node (b) at (1,0) {$\bullet$};
\node (c) at (2,.6) {$\bullet$};
\node (d) at (2,-.6) {$\bullet$};

\node[above = 2pt] at (a) {$0$};
\node[above = 2pt] at (b) {$0$};
\node[right] at (c) {$t$};
\node[right] at (d) {$-t$};

\draw (2, .6) -- (1, 0) -- (2,-.6);
\draw (0,0) -- (1,0);
\draw[dashed] (-1,0) -- (0,0);
\end{tikzpicture}
\]
As observed in \cite{Watson}, translated into loop notation, the invariant $\CFD(N_t,\varphi,\lambda)$ is simple described by \[(d_0^*)^t\amalg\left[\amalg_{i=1}^{t-1} (d^*_i d^*_{i-t})\right]\]
This calculation is similar to that of Proposition \ref{lem:CFD_N2}; since these are members of a much larger class of manifolds that are interesting in their own right, we leave a detailed study of their properties to \cite{Watson}. Note, in particular, that $N_t(\gamma)$ is an L-space for all $\gamma$ other than the rational longitude; see Section \ref{sec:char} but compare also \cite{BGW2013}. Therefore, the $N_t$ satisfy the conditions of (ii) in Theorem \ref{thm:characterize} and we have the following:

\begin{cor}\label{cor:characterize}
Let $M$ be a simple loop-type manifold that is not solid torus-like. The following are equivalent:
\begin{enumerate}
\item[(i)] $\gamma$ is a strict L-space slope for $M$, that is, $\gamma\in\mathcal{L}_M^\circ$;
\item[(ii)] $M\cup_h N_t$ is an L-space, where $h(\gamma)=\lambda$ is the rational longitude, for any integer $t>1$;
\item[(iii)]  $M\cup_h N_2$ is an L-space, where $h(\gamma)=\lambda$ is the rational longitude.
\end{enumerate}
\end{cor}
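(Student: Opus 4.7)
The plan is to deduce this corollary directly from Theorem \ref{thm:characterize} by verifying that each $N_t$ (for $t > 1$) satisfies the hypotheses imposed on the manifold $M'$ in condition (ii) of that theorem. Since (iii) is literally the special case $t=2$ of (ii) (using $N_2 = N$), the three-way equivalence then follows immediately.

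First, I would observe that $N_t$ is a Seifert fibered rational homology solid torus, so by Theorem \ref{thm:Seifert-simple} it has simple loop-type bordered Heegaard Floer homology. The explicit calculation quoted in the excerpt,
\[
\CFD(N_t,\varphi,\lambda) \;\sim\; (d_0^*)^t \,\amalg\, \coprod_{i=1}^{t-1} (d^*_i\, d^*_{i-t}),
\]
makes this transparent at the level of loops, and also shows that $N_t$ is not solid torus-like (for instance, because for $t > 1$ the loops $(d^*_i d^*_{i-t})$ are not obtainable from $(ee\cdots e)$ via $\tw^{\pm 1}$ and $\du^{\pm1}$, as can be seen from their Euler characteristics $\chi_\circ \neq 0$ together with the presence of unstable chains of both signs in dual notation; compare Proposition \ref{prop:solid_torus_graph_manifolds}). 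Thus $N_t$ is a non solid torus-like, simple, loop-type manifold with rational longitude $\lambda$, as required.

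Next, I would verify that $\mathcal{L}_{N_t}$ contains every slope other than $\lambda$. This is the known fact that each $N_t$ is a Heegaard Floer homology solid torus (in the sense of \cite{Watson}); alternatively, it is a direct loop-calculus check applied to the explicit loops above, using Proposition \ref{prop:Dehn_filling_Lspaces} (and its reparametrized versions via $\tw^{\pm1}, \du^{\pm1}$) to see that every Dehn filling other than the rational longitudinal one yields an L-space complex. This is precisely the hypothesis on the auxiliary manifold $M'$ appearing in clause (ii) of Theorem \ref{thm:characterize}.

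With these two facts in hand, Theorem \ref{thm:characterize} applied to $M' = N_t$ yields that (i) is equivalent to the statement that $M\cup_h N_t$ is an L-space whenever $h(\gamma) = \lambda$. This is exactly clause (ii) of the corollary for the chosen $t$. Taking $t=2$ gives the equivalence of (i) and (iii), since $N_2 = N$ is the twisted $I$-bundle over the Klein bottle. Since the argument works uniformly in $t > 1$, clauses (ii) and (iii) are also equivalent to each other. There is no substantive obstacle here: the only real work, namely the gluing/detection theorem underlying Theorem \ref{thm:characterize} and the computation of $\CFD(N_t,\varphi,\lambda)$, has already been done.
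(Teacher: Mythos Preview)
Your proposal is correct and follows essentially the same approach as the paper: the paper simply notes, in the sentence immediately preceding the corollary, that each $N_t$ satisfies the hypotheses on $M'$ in Theorem~\ref{thm:characterize}(ii) (simple loop-type, not solid torus-like, and every non-longitudinal filling is an L-space), and declares the corollary as an immediate consequence. Your write-up supplies a bit more detail on why $N_t$ is not solid torus-like and why all non-longitudinal fillings are L-spaces, but the logical structure is identical.
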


Note that Theorem \ref{thm:detection-via-TIBOKB} follows from the equivalence between (i) and (iii) in Corollary \ref{cor:characterize}. This answers \cite[Question 1.8]{BC} and considerably simplifies \cite[Theorem 1.6]{BC} when  restricting to Seifert fibered rational homology solid tori. Indeed, we have shown:

\begin{thm}\label{thm:detect-new}
Suppose $M\ncong D^2\times S^1$ is a Seifert fibered rational homology solid torus. The following are equivalent: 
\item[(i)] $\gamma\in(\mathcal{L}_M^\circ)^c$;
\item[(ii)] $\gamma$ is detected by a left-order (in the sense of Boyer and Clay \cite{BC});
\item[(iii)]   $\gamma$ is detected by a taut foliation (in the sense of Boyer and Clay \cite{BC}).
\end{thm}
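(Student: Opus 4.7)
The plan is to derive the three-way equivalence by concatenating the characterization of strict L-space slopes furnished by Corollary \ref{cor:characterize} (via gluings with the manifolds $N_t$, in particular the twisted $I$-bundle $N = N_2$ over the Klein bottle) with the detection theorems of Boyer and Clay for Seifert fibered rational homology solid tori.

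First I would verify that the hypotheses of Corollary \ref{cor:characterize} apply to $M$. Since $M \ncong D^2 \times S^1$ is a Seifert fibered rational homology solid torus, Theorem \ref{thm:Seifert-simple} gives that $M$ is of simple loop-type; moreover, by Proposition \ref{prop:solid_torus_graph_manifolds} (together with the concluding remark in the proof of Theorem \ref{thm:Seifert-simple}) $M$ is not solid torus-like, as the solid torus is the only solid torus-like example in this class. Corollary \ref{cor:characterize} therefore applies and yields that $\gamma \in \mathcal{L}_M^\circ$ if and only if the closed manifold $M \cup_h N_t$ is an L-space, for any $t > 1$ and any homeomorphism $h$ sending $\gamma$ to the rational longitude of $N_t$. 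Taking complements, (i) is equivalent to the statement that $\gamma$ is NLS-detected in the sense of \cite[Definition 7.16]{BC}.

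Second, I would invoke the detection theory developed by Boyer and Clay in \cite{BC}. For Seifert fibered rational homology solid tori, their work produces an explicit interval of slopes on $\partial M$ and shows that this interval simultaneously equals the set of left-order detected slopes, the set of foliation detected slopes, and the set of NLS-detected slopes (this is the Seifert case of \cite[Theorem 1.6]{BC}, combined with their characterization of NLS-detection). Chaining these biconditionals with the one coming from Corollary \ref{cor:characterize} gives (i) $\Leftrightarrow$ (ii) $\Leftrightarrow$ (iii), as desired. Along the way, the equivalence (ii) $\Leftrightarrow$ (iii) is recovered directly as a special case of the Boyer-Clay equivalence for Seifert fibered pieces, so no appeal to Theorem \ref{thm:main} is needed and the argument is not circular.

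The main obstacle is a careful bookkeeping check that our notion of NLS-detection, characterized by gluings to the family $\{N_t\}$ with parameters as in Corollary \ref{cor:characterize}, coincides on the nose with the notion used in \cite{BC}. This alignment is precisely the content of the discussion following Theorem \ref{thm:detection-via-TIBOKB} and resolves \cite[Question 1.8]{BC}; once it is verified, the remaining argument is a routine concatenation of equivalences. Some care is also warranted in handling the spin$^c$-decomposition of $M \cup_h N_t$ when the Seifert fiber slope of $M$ happens to match that of $N_t$, so that the closed manifold is itself Seifert fibered rather than genuinely a single-JSJ graph manifold, but in either sub-case the equivalences above go through unchanged.
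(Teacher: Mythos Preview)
Your proposal is correct and follows essentially the same approach as the paper: the paper's proof is the one-line statement that the result follows immediately from \cite[Theorem 1.6]{BC} combined with Corollary \ref{cor:characterize}, and you have simply unpacked this by verifying the hypotheses of Corollary \ref{cor:characterize} (via Theorem \ref{thm:Seifert-simple} and Proposition \ref{prop:solid_torus_graph_manifolds}) and making explicit the passage through NLS-detection. Your additional cautionary remarks about aligning the two notions of NLS-detection and about the Seifert-fibered sub-case are reasonable sanity checks but are not needed beyond what the paper already records.
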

\begin{proof}This follows immediately from \cite[Theorem 1.6]{BC} combined with Corollary \ref{cor:characterize}.\end{proof}

\begin{proof}[The proof of Theorem \ref{thm:main}]
The equivalence between (ii) and (iii) are due to Boyer and Clay \cite{BC}. To see that (i) is equivalent to either of these we first note that, if $M$ is one of the two seifert fibered pieces in $Y$ then, according to Theorem \ref{thm:intervals}, the set of all slopes $\hat\Q$ is divided into (the restriction to $\hat \Q$ of) two disconnected intervals $\mathcal{L}_M^\circ$ and $(\mathcal{L}_M^\circ)^c$. The latter is precisely the set of {\em NLS detected slopes} in the sense of Boyer and Clay \cite[Definition 7.16]{BC}, according to Theorem \ref{thm:detect-new}. (In particular, this observation should be compared with \cite[Theorem 8.1]{BC}.) Thus the desired equivalence follows from Theorem \ref{thm:gluing}, on comparison with \cite[Theorem 1.7]{BC} restricted to rational homology solid tori.    
\end{proof}

\bibliographystyle{plain}
\bibliography{references/loops_bibliography}

\end{document}